\title{The Local Langlands Correspondence for $\GL_n$ over function fields}
\author{Siyan Daniel Li-Huerta}
\theoremstyle{plain}
\newtheorem*{lemfirst}{First Inductive Lemma}
\newtheorem*{lemsecond}{Second Inductive Lemma}
\newtheorem*{thmD}{Theorem D}
\begin{document}
\begin{abstract}
Let $F$ be a local field of characteristic $p>0$. By adapting methods of Scholze \cite{Sch13}, we give a new proof of the local Langlands correspondence for $\GL_n$ over $F$. More specifically, we construct $\ell$-adic Galois representations associated with many discrete automorphic representations over global function fields, which we use to construct a map $\pi\mapsto\rec(\pi)$ from isomorphism classes of irreducible smooth representations of $\GL_n(F)$ to isomorphism classes of $n$-dimensional semisimple continuous representations of $W_F$. Our map $\rec$ is characterized in terms of a local compatibility condition on traces of a certain test function $f_{\tau,h}$, and we prove that $\rec$ equals the usual local Langlands correspondence (after forgetting the monodromy operator). 
\end{abstract}

\maketitle
\tableofcontents
\section*{Introduction}
We start by recalling the \emph{local Langlands correspondence for $\GL_n$} over non-archimedean local fields. Let $F$ be a nonarchimedean local field, write $\ka$ for its residue field, and fix a prime number $\ell\neq\pchar\ka$. The local Langlands correspondence for $\GL_n$ over $F$ posits a collection of \emph{canonical} bijections
\begin{align*}
\left\{
  \begin{tabular}{c}
    isomorphism classes of irreducible \\
    smooth representations of $\GL_n(F)$ over $\ov\bQ_\ell$
  \end{tabular}
\right\}\longleftrightarrow\left\{
  \begin{tabular}{c}
  isomorphism classes of $n$-dimensional continuous \\
  Frobenius-semisimple representations of $W_F$ over $\ov\bQ_\ell$
  \end{tabular}
\right\},
\end{align*}
where $W_F$ denotes the Weil group of $F$ with respect to a fixed separable closure of $F$, and $n$ ranges over all positive integers. By canonical, we mean they are characterized by recovering local class field theory when $n=1$, being compatible with central characters and duals, and preserving $L$-functions and $\eps$-factors of pairs. 

To prove such a correspondence, work of Bernstein--Zelevinsky allows us to reduce to the case of cuspidal representations on the automorphic side and irreducible representations on the Galois side. More precisely, they show {\cite[10.3]{Zel80}} that one can uniquely reconstruct bijections as above from their restrictions to cuspidal representations
\begin{align}\label{eq:llc}
\left\{
  \begin{tabular}{c}
    isomorphism classes of irreducible \\
    cuspidal representations of $\GL_n(F)$ over $\ov\bQ_\ell$
  \end{tabular}
\right\}\longleftrightarrow\left\{
  \begin{tabular}{c}
  isomorphism classes of irreducible continuous  \\
  $n$-dimensional representations of $W_F$ over $\ov\bQ_\ell$
  \end{tabular}
\right\}\tag{$\diamond$}.
\end{align}
As alluded to above, work of Henniart \cite[Theorem 1.2]{Hen85} indicates that there is at most one collection of bijections as in Equation (\ref{eq:llc}) satisfying our canonicity requirements. This ensures the \emph{uniqueness} of the local Langlands correspondence for $\GL_n$ over $F$.

As for its \emph{existence}, such bijections were first proved for function fields by Laumon--Rapoport--Stuhler \cite[(15.7)]{LRS93} and for $p$-adic fields by Harris--Taylor \cite[Theorem A]{HT01} and Henniart \cite[1.2]{Hen00}. When $F$ is a $p$-adic field, Scholze gave a new proof and characterization \cite[Theorem 1.2]{Sch13} of the local Langlands correspondence for $\GL_n$ over $F$, simplifying arguments of Harris--Taylor.

By adapting the methods of Scholze's proof, the goal of this paper is to give a new proof and characterization of the local Langlands correspondence for $\GL_n$ when $F$ is a function field. Thus, let us henceforth assume that $p\deq\pchar{F}>0$. Grothendieck's $\ell$-adic monodromy theorem \cite[Appendix]{ST68} implies that irreducible continuous $n$-dimensional representations of $W_F$ over $\ov\bQ_\ell$ are smooth, so by fixing a field isomorphism $\ov\bQ_\ell=\bC$, Equation (\ref{eq:llc}) remains unchanged if we replace $\ov\bQ_\ell$ with $\bC$. Unless otherwise specified, all subsequent representations shall be taken over $\bC$.

We begin by motivating this new characterization of the local Langlands correspondence for $\GL_n$ over $F$, as in the work of Scholze. Since $n$-dimensional semisimple continuous representations of $W_F$ are determined by their traces, it would be natural to characterize the correspondence via a trace condition. More precisely, one would like to construct a map $\pi\mapsto\rho(\pi)$ from isomorphism classes of irreducible smooth representations of $\GL_n(F)$ to isomorphism classes of $n$-dimensional semisimple continuous representations of $W_F$ satisfying the following condition: for all $\tau$ in $W_F$, there exists a test function $f_\tau$ in $C^\infty_c(\GL_n(F))$ such that
\begin{align*}
\tr(f_\tau|\pi) = \tr\left(\tau|\rho(\pi)\right)
\end{align*}
for all irreducible smooth representations $\pi$ of $\GL_n(F)$.

However, this is too much to ask for, as noted by Scholze in \cite{Sch13}. To see this, note that we want $\rho(\pi)$ to be (a Tate twist of) the Weil representation corresponding to $\pi$ under the local Langlands correspondence. But if this were the case, then the above equation implies that $f_\tau$ has nonzero trace on \emph{every} component of the Bernstein center of $\GL_n(F)$. This is impossible because $f_\tau$ is locally constant.\footnote{However, such an $f_\tau$ \emph{does} exist as an element of the Bernstein center and hence as a distribution. See Proposition \ref{ss:functiondesire}.}

We smooth out this issue by convolving $f_\tau$ with a cut-off function. To elaborate, let us introduce some notation: write $q$ for $\#\ka$, and write $v:W_F\rar\bZ$ for the unramified homomorphism sending geometric $q$-Frobenii to $1$. For any $\tau$ in $W_F$ with $v(\tau)>0$ and $h$ in $C^\infty_c(\GL_n(\cO))$, we construct a test function $f_{\tau,h}$ in $C^\infty_c(\GL_n(F))$ satisfying the following theorem.

\begin{thmA}Let $n$ be a positive integer, and let $\pi$ be any irreducible smooth representation of $\GL_n(F)$. 
\begin{enumerate}[(i)]
\item There exists a unique $n$-dimensional semisimple continuous representation $\rho(\pi)$ of $W_F$ satisfying the following property:
  \begin{align*}
    \mbox{for all }\tau\mbox{ in }W_F\mbox{ with }v(\tau)>0\mbox{ and }h\mbox{ in }C^\infty_c(\GL_n(\cO))\mbox{, we have }\tr(f_{\tau,h}|\pi) = \tr\left(\tau|\rho(\pi)\right)\tr(h|\pi).
  \end{align*}
Write $\rec(\pi)$ for $\rho(\pi)(\frac{1-n}2)$, where $(s)$ denotes the $s$-th Tate twist.
\item Suppose $\pi$ is isomorphic to a subquotient of the normalized parabolic induction of 
  \begin{align*}
    \pi_1\otimes\dotsb\otimes\pi_t,
  \end{align*}
where the $\pi_i$ are irreducible smooth representations of $\GL_{n_i}(F)$ such that $n_1+\dotsb+n_t=n$. Then
\begin{align*}
  \rec(\pi) = \rec(\pi_1)\oplus\dotsb\oplus\rec(\pi_t).
\end{align*}
\end{enumerate}
\end{thmA}
Theorem A.(i) indicates that $f_{\tau,h}$ satisfies the trace compatibility property we would expect from the convolution $f_\tau* h$. As our notation suggests, we shall see in Theorem C that $\pi\mapsto\rec(\pi)$ equals the usual local Langlands correspondence for $\GL_n$ over $F$ (after forgetting the monodromy operator). Thus this gives a new characterization of the local Langlands correspondence in this case.

How can we find such an $f_{\tau,h}$? Let $\ov\ka$ be a fixed separable closure of $\ka$. The Deligne--Carayol conjecture (which was proven for function fields by Boyer \cite[Theorem 3.2.4]{Boy99} and for $p$-adic fields by Harris--Taylor \cite[Theorem B]{HT01}) indicates that, roughly speaking, the local Langlands correspondence for $\GL_n$ as in Equation (\ref{eq:llc}) can be found in the cohomology of deformation spaces of certain $1$-dimensional formal $\cO$-modules over $\ov\ka$. By the Dieudonn\'e equivalence, $1$-dimensional formal $\cO$-modules correspond to special examples of \emph{$1$-dimensional effective minuscule local shtukas}. The deformation spaces of general $1$-dimensional effective minuscule local shtukas can be pieced together from those of $1$-dimensional formal $\cO$-modules (see \S\ref{s:deformationspaces}), and this mirrors how general irreducible smooth representations of $\GL_n(F)$ can be pieced together from cuspidal ones via parabolic induction (see \S\ref{s:parabolicinduction}).

Therefore, to find a test function $f_{\tau,h}$ satisfying Theorem A, we study deformation spaces of $1$-dimensional effective minuscule local shtukas. We start by parameterizing these objects as follows. Write $r\deq v(\tau)$, write $F_r$ for the $r$-th degree unramified extension of $F$, write $\cO_r$ for its ring of integers, and write $\ka_r$ for its residue field. Then the Cartan decomposition for $\GL_n(F_r)$ shows that isomorphism classes of $1$-dimensional effective minuscule local shtukas over $\ka_r$ correspond to elements
\begin{align*}
\de\in\GL_n(\cO_r)\diag(\vpi,1,\dotsc,1)\GL_n(\cO_r)
\end{align*}
up to $\GL_n(\cO_r)$-$\sg$-conjugacy, where $\vpi$ is a uniformizer of $F$. Form the deformation space of the corresponding $1$-dimensional effective minuscule local shtuka with Drinfeld level-$m$ structure, and write $R^i\psi_{\de,m}$ for the $i$-th $\ell$-adic cohomology group of the adic generic fiber of this deformation space. As $m$ varies, we use these cohomology groups to construct representations
\begin{align*}
R^i\psi_\de\deq\dirlim_m R^i\psi_{\de,m}\mbox{ and }[R\psi_\de]\deq\sum_{i=0}^\infty(-1)^iR^i\psi_\de,
\end{align*}
which have commuting actions of $W_{F_r}$ and $\GL_n(\cO)$. From here, we can first define a function $\phi_{\tau,h}$ in $C^\infty_c(\GL_n(F_r))$ by sending
\begin{align*}
 \de\mapsto
\begin{cases}
\tr(\tau\times h|[R\psi_\de]) & \mbox{if }\de\mbox{ is in }\GL_n(\cO_r)\diag(\vpi,1,\dotsc,1)\GL_n(\cO_r),\\
0 & \mbox{otherwise,}
\end{cases}
\end{align*}
and then we let $f_{\tau,h}$ be a transfer of $\phi_{\tau,h}$ to $\GL_n(F)$.

Now that we have our test function $f_{\tau,h}$, let's discuss the proof of Theorem A. After constructing $\rho(\pi)$ in special cases, we use the geometry of our deformation spaces, along with work of Bernstein--Zelevinsky \cite{Zel80} on the local automorphic side, to inductively piece together $\rho(\pi)$ in the general case. The process of piecing together $\rho(\pi)$ amounts entirely to local deformation theory and nonarchimedean harmonic analysis. This proceeds as in \cite{Sch13}, except the necessary harmonic analysis is more difficult in characteristic $p$. Therefore we carefully give the argument and supply references for the relevant harmonic analysis in characteristic $p$.

We use global techniques to construct $\rho(\pi)$ in special cases. But instead of Shimura varieties as in \cite{Sch13}, we use moduli spaces of \emph{$\sD$-elliptic sheaves}, the latter of which is an equi-characteristic analogue of abelian varieties equipped with certain endomorphisms. For good reduction, these moduli spaces were first considered by Laumon--Rapoport--Stuhler \cite{LRS93} in their proof of the local Langlands correspondence for $\GL_n$ over $F$, and we shall also crucially use a version of these moduli spaces with bad reduction (which corresponds to using Drinfeld level structures) as considered by Boyer \cite{Boy99} in his proof of the Deligne--Carayol conjecture for $F$. 

We explicitly define $\rho(\pi)$ as a multiplicity space in the cohomology of the moduli space of $\sD$-elliptic sheaves. To show that $\rho(\pi)$ satisfies our desired trace compatibility condition, we adapt Scholze's version of the Langlands--Kottwitz method to compute traces of Frobenius and Hecke operators on the aforementioned cohomology groups. These traces are related to our test function $f_{\tau,h}$ via a Serre--Tate theorem for $\sD$-elliptic sheaves, which equates deformations of $\sD$-elliptic sheaves to deformations of certain associated local shtukas. For this, it is crucial to carry out the Langlands--Kottwitz method at bad reduction, which is new in positive characteristic.

This global work yields the following construction of $\ell$-adic Galois representations associated with certain discrete automorphic representations. Let $\bf{F}$ be a global function field, write $\bA$ for its ring of adeles, and let $\{x_1,x_2,\infty\}$ be three distinct places of $\bf{F}$. Write $G_{\bf{F}}$ for the absolute Galois group of $\bf{F}$ with respect to a fixed separable closure of $\bf{F}$.

\begin{thmB}
Let $\Pi$ be an irreducible discrete automorphic representation of $\GL_n(\bA)$ whose components at $x_1$, $x_2$, and $\infty$ are either irreducible $L^2$ representations or Speh modules. Then there exists a unique $n$-dimensional semisimple continuous representation $R(\Pi)$ of $G_\bf{F}$ over $\ov\bQ_\ell$ such that, for all places $o$ of $\bf{F}$ not lying in $\{x_1,x_2,\infty\}$, the restriction of $R(\Pi)$ to $W_{\bf{F}_o}$ satisfies
\begin{align*}
\res{R(\Pi)}_{W_{\bf{F}_o}} = \rho(\Pi_o),
\end{align*}
where we identify $\ov\bQ_\ell$ with $\bC$.
\end{thmB}
We finish the construction of $\rho(\pi)$ by finding a $\Pi$ as above such that $\pi$ is isomorphic to $\Pi_o$, and then applying Theorem B. In general, this is more difficult in characteristic $p$ because the trace formula is not as developed, but we circumvent this using M\oe glin--Waldspurger's description of the discrete automorphic spectrum. This concludes our construction of $\rho(\pi)$ in special cases and thus our proof of Theorem A.

From here, we prove the following bijectivity result.
\begin{thmC}
The map $\pi\mapsto\rec(\pi)$ yields a bijection from isomorphism classes of irreducible cuspidal representations of $\GL_n(F)$ to isomorphism classes of $n$-dimensional irreducible continuous representations of $W_F$.
\end{thmC}
The key ingredient is an explicit calculation of the inertia invariants of nearby cycles due to Scholze \cite[Theorem 5.3]{Sch13b}, which relies on a case of Grothendieck's purity conjecture as proved by Thomason \cite[Corollary 3.9]{Tho84}. We use this calculation to deduce that if $\rec(\pi)$ is unramified, then $\pi$ must be as well. By passing to the Galois side and using our work on local-global compatibility, we show that after applying cyclic base change \cite[(II.1.4)]{HL11} finitely many times, the representation $\pi$ becomes unramified. Passing to this unramified representation ultimately allows us to prove Theorem C. As in \cite{Sch13}, this argument bypasses the need to appeal to Henniart's numerical local Langlands \cite[Theorem 1.2]{Hen88}.

Finally, we show that $\pi\mapsto\rec(\pi)$ satisfies the usual canonicity requirements of the local Langlands correspondence for $\GL_n$.
\begin{thmD}
The bijections
\begin{align*}
\rec:\left\{
  \begin{tabular}{c}
    isomorphism classes of irreducible  \\
    cuspidal representations of $\GL_n(F)$
  \end{tabular}
\right\}\rar^\sim\left\{
  \begin{tabular}{c}
  isomorphism classes of $n$-dimensional  \\
  irreducible continuous representations of $W_F$
  \end{tabular}
\right\}
\end{align*}
satisfy the following properties:
\begin{enumerate}[(i)]
\item for all irreducible cuspidal representations of $\GL_1(F)$, that is, smooth characters $\chi:F^\times\rar\bC^\times$, we have
\begin{align*}
\rec(\chi) = \chi\circ\Art^{-1}, 
\end{align*}
where $\Art$ denotes the Artin isomorphism $\Art:F^\times\rar^\sim W^\ab_F$ that sends uniformizers to geometric $q$-Frobenii.

\item for all irreducible cuspidal representations $\pi$ of $\GL_n(F)$ and smooth characters $\chi:F^\times\rar\bC^\times$, we have
  \begin{align*}
    \rec(\pi\otimes(\chi\circ\det)) = \rec(\pi)\otimes\rec(\chi).
  \end{align*}

\item for all irreducible cuspidal representations $\pi$ of $\GL_n(F)$ with central character $\om_\pi:F^\times\rar\bC^\times$, we have
\begin{align*}
\rec(\om_\pi) = \det\circ\rec(\pi)\mbox{ and }\rec(\pi^\vee) = \rec(\pi)^\vee.
\end{align*}
\item for all irreducible cuspidal representations $\pi$ of $\GL_n(F)$ and $\pi'$ of $\GL_{n'}(F)$, we have
\begin{align*}
L(\pi\times\pi',s) = L(\rec(\pi)\otimes\rec(\pi'),s)\mbox{ and } \eps(\pi\times\pi',\psi,s) = \eps(\rec(\pi)\otimes\rec(\pi'),\psi,s)
\end{align*}
for all nontrivial continuous homomorphisms $\psi:F\rar\bC^\times$.
\end{enumerate}
\end{thmD}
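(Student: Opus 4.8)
The plan is to establish properties~(i)--(iv) in turn. Property~(i) is a direct computation; properties~(ii) and~(iii) follow from functoriality of the cohomology groups $[R\psi_\de]$ (or, once~(iv) is available, formally from~(i),~(iv), and Theorem~C); and property~(iv), the substantive point, is proved by a global argument resting on Theorem~B. Together with Theorem~C and the uniqueness of the local Langlands correspondence (Henniart \cite{Hen85}), the four properties identify $\rec$ with the local Langlands correspondence for $\GL_n$ over $F$.

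For~(i): when $n=1$ the matrix $\diag(\vpi,1,\dotsc,1)$ is just $\vpi$, so the relevant $1$-dimensional effective minuscule local shtukas over $\ka_r$ are those attached to Lubin--Tate formal $\cO$-modules of height~$1$. Such a formal module is rigid, so its deformation space with Drinfeld level-$m$ structure is a $(\cO/\vpi^m)^\times$-torsor over the formal spectrum of the completed maximal unramified extension of $\cO_r$; hence $R^i\psi_\de$ vanishes for $i>0$, and $[R\psi_\de]=R^0\psi_\de$ is the regular representation of $\cO^\times=\GL_1(\cO)$, with $W_{F_r}$ acting through the reciprocity map $\Art_{F_r}^{-1}$. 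Unwinding the definitions of $\phi_{\tau,h}$, of its transfer $f_{\tau,h}$, and of the Tate twist $(\tfrac{1-n}2)=(0)$, and letting $\tau$ range over $\{v=1\}$ while $h$ ranges over $C^\infty_c(\GL_1(\cO))$, the trace identity of Theorem~A.(i) forces $\rho(\chi)=\chi\circ\Art^{-1}$. (Alternatively, globalize $\chi$ to a Hecke character and apply Theorem~B, using that rank-$1$ $\sD$-elliptic sheaves---Drinfeld modules of rank~$1$---realize global class field theory.)

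For~(ii) and~(iii): functoriality of $[R\psi_\de]$ under the ``determinant'' morphism to the $\GL_1$-deformation space identifies the twist of the $\GL_n(\cO)$-action by $\chi\circ\det$ with the twist of the $W_{F_r}$-action by $(\chi\circ N_{F_r/F})\circ\Art_{F_r}^{-1}$; combined with~(i) and the trace identity, this yields $\rho(\pi\otimes(\chi\circ\det))=\rho(\pi)\otimes(\chi\circ\Art^{-1})$, which is~(ii). The involution $g\mapsto{}^{t}g^{-1}$, equivalently Cartier duality on Drinfeld level structures, identifies $[R\psi_\de]^\vee$ with a Tate twist of $[R\psi_{\de^{-1}}]$, so that $\rho(\pi^\vee)\cong\rho(\pi)^\vee(n-1)$ and hence $\rec(\pi^\vee)=\rec(\pi)^\vee$; and the central-character compatibility of $[R\psi_\de]$---the action of $\cO^\times\subset\GL_n(\cO)$---pins down $\det\rho(\pi)$ and gives $\rec(\om_\pi)=\det\circ\rec(\pi)$. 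Each of these is the function-field analogue of a statement of Harris--Taylor and Laumon--Rapoport--Stuhler, and each also follows a posteriori from~(i),~(iv), and Theorem~C.

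Property~(iv) is the main obstacle. Given cuspidal $\pi$ of $\GL_n(F)$ and $\pi'$ of $\GL_{n'}(F)$, fix a global function field $\bf{F}$ with a place $o$, an isomorphism $F\cong\bf{F}_o$, and auxiliary places $x_1,x_2,\infty$, and globalize---by Poincar\'e series, or the simple trace formula---to cuspidal automorphic representations $\Pi$ of $\GL_n$ and $\Pi'$ of $\GL_{n'}$ over $\bf{F}$ that are unramified outside $\{o,x_1,x_2,\infty\}$, supercuspidal at $x_1,x_2,\infty$ (so that they are cuspidal and Theorem~B applies), and satisfy $\Pi_o\cong\pi$, $\Pi'_o\cong\pi'$. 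By Theorem~B there are semisimple $R(\Pi)$, $R(\Pi')$ restricting to $\rho(\Pi_v)$, $\rho(\Pi'_v)$ on $W_{\bf{F}_v}$ for every $v\notin\{x_1,x_2,\infty\}$. One then compares the global Rankin--Selberg $L$- and $\eps$-functions of $\Pi\times\Pi'$ (Jacquet--Piatetski-Shapiro--Shalika) with the global Galois $L$- and $\eps$-functions of $R(\Pi)\otimes R(\Pi')$, the latter satisfying their functional equation and Laumon's product formula for $\eps$-constants. Away from $\{o,x_1,x_2,\infty\}$ the local factors agree, by Theorem~B and the unramified case of~(iv) (compatibility with the Satake isomorphism, contained in Theorem~A); the contributions at $x_1,x_2,\infty$ are killed by twisting $\Pi'$ by a Hecke character unramified at $o$ and highly ramified at $x_1,x_2,\infty$, using stability of $\eps$-factors under ramified twists (Henniart, Deligne--Henniart) on both sides. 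Matching the two functional equations then forces the local $\eps$-factor identity at $o$, and the $L$-factor identity follows by the standard comparison of poles (Henniart), using~(iii) and Theorem~C. The real work---following Laumon--Rapoport--Stuhler, \S16---is the passage from the global comparison to the local one at $o$: besides stability it requires a determinant computation identifying $\det R(\Pi)|_{W_{\bf{F}_{x_i}}}$ and, as in the proof of Theorem~C, possibly finitely many cyclic base changes \cite{HL11}, along with the reduction of $\eps$-factors of pairs to the rank-one case, where property~(i) applies.
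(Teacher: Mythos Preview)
Your outline is plausible but diverges substantially from the paper's argument, and some of your local steps are sketchy.

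First, (i) and (ii) are not reproved in the final section: (i) is exactly Proposition~\ref{ss:theoremabasecase}, and (ii) is Proposition~\ref{prop:compatwithtwisting}, both already established (globally, not via local functoriality of $[R\psi_\de]$) in the course of proving Theorem~C. So your Lubin--Tate computation for (i) matches the paper, but your ``determinant morphism'' and ``Cartier duality'' arguments for (ii)--(iii) are not what the paper does and would need real work: note that $[R\psi_\de]$ only carries a $\GL_n(\cO)$-action, so the action of $\cO^\times$ alone cannot pin down the full central character $\om_\pi$ on $F^\times$.

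The main structural difference is in (iii)--(iv). You propose to globalize $\pi,\pi'$ directly and compare Rankin--Selberg with Galois $L$- and $\eps$-functions, killing the bad places $\{x_1,x_2,\infty\}$ by highly ramified twists (the LRS \S16 strategy). The paper instead uses \emph{Brauer induction} to reduce both sides of (iii) and (iv), which are $\bZ$-(bi)linear, to the case $\rec(\pi)=\Ind_{W_E}^{W_F}\rho$ with $\rho$ a finite-order character. It then proves a \emph{non-Galois automorphic induction} result (Proposition~\ref{ss:nongaloisinduction}): for suitable solvable $\bf{E}/\bf{F}$ and a global character $X$ extending $\rho\circ\Art$, there is a cuspidal $I_\bf{E}^\bf{F}(X)$ associated with $\Ind_{G_\bf{E}}^{G_\bf{F}}\Xi$, with cuspidal local components at $x_1,x_2,\infty$. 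Because the Galois side is now an \emph{explicit} induced representation, one knows it at every place including the bad ones; this immediately yields the central-character identity in (iii), and then Henniart's \cite[Theorem~2.4]{Hen00} gives (iv). The dual statement in (iii) is deduced last, from (iv), by comparing poles of $L(\pi\times\pi',s)$ and $L(\rec(\pi)\otimes\rec(\pi'),s)$ at $s=1$.

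Your approach can be made to work (the determinant $\det R(\Pi)$ is pinned down globally by Chebotarev and the unramified case, so stability at $x_1,x_2,\infty$ is available), but the paper's Brauer-induction route avoids the delicate bookkeeping at bad places and does not need the extra cyclic base changes you allude to at the end. You should at least flag that the central-character part of (iii) is an \emph{input} to the stability argument, not a consequence of (iv), and that your proposed local proof of it is incomplete as written.
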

By \cite[Theorem 1.2]{Hen85}, Theorem D shows that our construction indeed equals the usual local Langlands correspondence for $\GL_n$.

We now discuss the proof of Theorem D. Compatibility with local class field theory follows from earlier work, as our use of $1$-dimensional formal $\cO$-modules recovers the Lubin--Tate description of local class field theory when $n=1$. Compatibility with twists of characters follows by embedding into the global setting, using Theorem B, and applying the strong multiplicity one theorem \cite[Theorem 3.3.(b)]{BR17}. We prove compatibility with central characters via similar global means---this time, we reduce to the situation of induced representations by Brauer induction, and then we prove some cases of non-Galois automorphic induction in order to pass to the automorphic side. From here, Henniart's trick of twisting by highly ramified characters \cite[Lemma 4.2]{Hen86} implies compatibility with $L$-functions and $\eps$-factors, and then compatibility with duals follows from the decomposition of $L$-functions of pairs in terms of $L$-functions of characters.

\subsection*{Outline} In \S\ref{s:deformationspaces}, we introduce local shtukas, the geometry of their deformation spaces, and the test function $f_{\tau,h}$. Next, in \S\ref{s:firstinductivelemma}, we recall the Bernstein--Zelevinsky classification, the Bernstein center, and Schneider--Zink tempered types. We use this to prove Lemma \ref{ss:firstinductivelemma}, which serves as the framework for proving Theorem A. The primary goal of \S\ref{s:parabolicinduction}--\S\ref{s:localglobal} is to prove the ingredients needed for running Lemma \ref{ss:firstinductivelemma}. In \S\ref{s:parabolicinduction}, we use the geometry of deformation spaces along with the Bernstein--Zelevinsky induction-restriction formula to prove compatibility with parabolic induction. In \S\ref{s:lubintatetower}, we compare our deformation spaces with the Lubin--Tate tower. With the exception of Lemma \ref{ss:nearbycyclesalgebraization}, the entirety of \S\ref{s:deformationspaces}--\S\ref{s:lubintatetower} is local.

At this point, we switch to a global setup. In \S\ref{s:modulispaces}, we introduce $\sD$-elliptic sheaves, their moduli spaces, and their relationship with local shtukas. In \S\ref{s:nearbycycles}, we present an explicit calculation of nearby cycle sheaves on the special fibers of these moduli spaces at bad reduction. In \S\ref{s:langlandskottwitz}, we employ a variant of the Langlands--Kottwitz method, using the Serre--Tate theorem as introduced in \ref{ss:extendedserretate} to relate the cohomology of the moduli space of $\sD$-elliptic sheaves with the local test function $f_{\tau,h}$. In \S\ref{s:localglobal}, we deduce Theorem B from \S\ref{s:langlandskottwitz}, and then we embed our local situation into our global one to finish the proof of Theorem A.

We now turn to the proofs of Theorem C and Theorem D. In \S\ref{s:secondinductivelemma}, we recall cyclic base change, and we combine this with local-global embedding theorems to prove Theorem C. In \S\ref{s:llc}, we use Theorem B (as well as compatibility with automorphic induction) to prove some cases of non-Galois automorphic induction. We tie these results together to prove Theorem D, concluding our proof of the local Langlands correspondence for $\GL_n$ over $F$.
\subsection*{Notation}
Let $\ell\neq p$ be a prime number. We fix an identification $\ov\bQ_\ell=\bC$ of fields. Unless otherwise specified, all representations are over $\bC$.

Throughout \S\ref{s:deformationspaces}--\S\ref{s:lubintatetower}, let $F$ be a local field of positive characteristic. Write $\cO$ for its ring of integers, fix a uniformizer $\vpi$ of $\cO$, and write $\ka$ for the residue field $\cO/\vpi$. This choice of $\vpi$ yields an identification $\cO=\ka\llb\vpi$. We denote $\#\ka$ using $q$. We write $v$ for the normalized discrete valuation on $F$, and we denote the normalized valuation on $F$ using $\abs-$. We fix a separable closure $F^\sep$ in which our separable extensions of $F$ lie, and we write $\ov\ka$ for the residue field of $F^\sep$, which is a separable closure of $\ka$. We write $\bC_\vpi$ for the completion of $F^\sep$, and we view completions of separable extensions of $F$ as closed subfields of $\bC_\vpi$.

We write $W_F$ for the Weil group of $F$ with respect to this choice of $F^\sep$, and we view Weil groups of separable extensions of $F$ as living inside $W_F$. We denote the inertia subgroup of $F$ using $I_F$. We have a canonical short exact sequence
\begin{align*}
1\rar I_F\rar W_F\rar^v\bZ\rar1,
\end{align*}
and we identify $1$ in $\bZ$ with the geometric $q$-Frobenius in $\Gal(\ov\ka/\ka)$. We write $\Art:F^\times\rar^\sim W_F^\ab$ for the local Artin isomorphism normalized by sending uniformizers to geometric $q$-Frobenii.

\subsection*{Acknowledgments} The author is tremendously indebted to Sophie Morel for her advice on this project, suggesting this topic of research, and her encouragement. The author extends his gratitude to Richard Taylor for his careful reading of an earlier iteration of this paper as well as clarifying some misconceptions. Many of the ideas in this paper are due to Peter Scholze, and the author thanks him for answering some questions about \cite{Sch13}. The author would also like to thank Thomas Haines for some helpful conversations on nonarchimedean harmonic analysis, and to thank Alain Genestier for directing him to Boyer's thesis \cite{Boy99}.

\section{Deformation spaces of local shtukas}\label{s:deformationspaces}

In this section, we introduce \emph{local shtukas}, which are the equi-characteristic analogue of isocrystals. We then narrow our scope to \emph{effective minuscule} local shtukas, which are the analogue of Dieudonn\'e modules. A similar Dieudonn\'e equivalence relates $1$-dimensional connected local shtukas to $1$-dimensional formal $\cO$-modules, which allows us to use results of Drinfeld to study their deformation spaces. From here, we deduce finitude properties about the cohomology of these deformation spaces from a global algebraization result, and we conclude by defining the test functions $\phi_{\tau,h}$ and $f_{\tau,h}$ using these cohomology groups.

\subsection{}
Given a scheme $S$ over $\Spec\cO$, we write $\ze$ for the image of $\vpi$ in $\sO_S$. Note that requiring $\ze$ to be locally nilpotent is the same as requiring $S\rar\Spec\cO$ to factor as $S\rar\Spf\cO\rar\Spec\cO$. We consider $\sO_S\llb{\vpi}$ and $\sO_S\llp{\vpi}$ as sheaves of formal power and Laurent series on $\abs{S}$, respectively \cite[p.~4]{HS16}.
\begin{defn*}
Let $S$ be a scheme over $\Spf\cO$. An \emph{local shtuka} over $S$ is a pair $(\sM,\sF)$, where
\begin{enumerate}[$\bullet$]
\item $\sM$ is a locally free $\sO_S\llb\vpi$-module of finite rank,
\item $\sF:\sg^*\sM[\frac1\vpi]\rar\sM[\textstyle\frac1\vpi]$ is an $\sO_S\llp\vpi$-module isomorphism,
\end{enumerate}
where $\sg$ denotes the absolute $q$-Frobenius on $S$ and its canonical lifts to $\sO_S\llb\vpi$ and $\sO_S\llp\vpi$. A \emph{morphism} of local shtukas is a morphism $f:\sM\rar\sM'$ of locally free $\sO_S\llb\vpi$-modules satisfying $f[\frac1\vpi]\circ\sF = \sF'\circ f[\frac1\vpi]$. A \emph{quasi-isogeny} of local shtukas is an invertible element in $\Hom(\sM,\sM')[\frac1\vpi]$. When such an element exists, we say $\sM$ and $\sM'$ are \emph{isogenous}.
\end{defn*}
\subsection{}\label{ss:localshtukaconjugacy}For any field $\la$ over $\ka$, we see that isomorphism classes of local shtukas over $\Spec\la$ of rank $n$ correspond to $\GL_n(\la\llb\vpi)$-$\sg$-conjugacy classes in $\GL_n(\la\llp\vpi)$ via sending $\de$ in $\GL_n(\la\llp\vpi)$ to $(\la\llb\vpi^{ n},\de\circ\sg^{\oplus n})$, where $\de$ acts via left multiplication on column vectors. In further analogy with isocrystals, we also have a \emph{Dieudonn\'e--Manin classification} \cite[(2.4.5)]{Lau96} for isogeny classes of local shtukas over algebraically closed fields.
\begin{defn}
Let $\sM$ be a local shtuka over $S$. We say that $\sM$ is \emph{effective} if $\sF$ is the localization of an $\sO_S\llb\vpi$-module morphism $\sg^*\sM\rar\sM$, which we henceforth refer to as $\sF$ by abuse of notation. For an effective local shtuka $\sM$, one can show that $\coker\sF$ is a locally free $\sO_S$-module of finite rank \cite[Lemma 2.3]{HS16}, and we say the \emph{dimension} of $\sM$ is the rank of $\coker\sF$. We say $\sM$ is \emph{\'etale} if its dimension equals zero.

We say $\sM$ is \emph{effective minuscule} if $\vpi-\ze$ annihilates $\coker\sF$. For an effective minuscule local shtuka $\sM$, we say $\sM$ is \emph{connected} if $\sF$ is $\vpi$-adically nilpotent.
\end{defn}
\subsection{}\label{ss:effectiveminusculelocalshtukaconjugacy}
In our above description \ref{ss:localshtukaconjugacy} of isomorphism classes of local shtukas over $\Spec\la$, we see that the local shtuka corresponding to $\de$ is effective if and only if $\de$ lies in $\M_n(\la\llb\vpi)$. By the Cartan decomposition, it is effective minuscule if and only if $\de$ lies in
\begin{align*}
\coprod_{d=0}^n\GL_n(\la\llb\vpi)\diag(\underbrace{\vpi,\dotsc,\vpi}_{d\,\text{times}},1,\dotsc,1)\GL_n(\la\llb\vpi),
\end{align*}
where $d$ corresponds to the dimension of $(\la\llb\vpi^{n},\de\circ\sg^{\oplus n})$.

\subsection{}
Just as with Dieudonn\'e modules, effective minuscule local shtukas have a good theory of
\begin{enumerate}[$\bullet$]
\item truncated variants, called \emph{finite $\ka$-shtukas} \cite[Definition 2.6]{HS16}, which we remark can be defined over any scheme $S$ over $\Spec\cO$,
\item a contravariant \emph{Dieudonn\'e equivalence} \cite[Theorem 5.2]{HS16} between finite $\ka$-shtukas and certain finite module schemes called \emph{strict $\ka$-modules} \cite[Definition 4.8]{HS16} that extends to an anti-equivalence \cite[Theorem 8.3]{HS16} between effective minuscule local shtukas and certain module sheaves called \emph{$\vpi$-divisible local Anderson modules} \cite[Definition 7.1]{HS16},
\item a \emph{connected-\'etale} short exact sequence over local Artinian rings that splits over perfect fields \cite[Proposition 2.9]{HS16}.
\end{enumerate}

\subsection{}\label{ss:localshtukaalgebraicclosure}
Let $\br{F}$ be the completion of the maximal unramified extension of $F$, and write $\br\cO$ for its ring of integers. Our choice of $\vpi$ yields an identification $\br\cO=\ov\ka\llb\vpi$.

We begin by examining the case $S=\Spec\ov\ka$. Let $\de$ be an element of $\GL_n(\br\cO)\diag(\vpi,1,\dotsc,1)\GL_n(\br\cO)$, and write $\br{H}_\de$ for the associated effective minuscule local shtuka over $\Spec\ov\ka$ of rank $n$ and dimension $1$. The connected-\'etale sequence yields a decomposition
\begin{align*}
\br{H}_\de = \br{H}_\de^\circ\oplus\br{H}_\de^\et,
\end{align*}
where $\br{H}_\de^\circ$ and $\br{H}_\de^\et$ are the connected and \'etale parts of $\br{H}_\de$, respectively. Write $k$ for the rank of $\br{H}_\de^\circ$.

As $\br{H}_\de^\circ$ is the connected part, it must have dimension $1$, so under the Dieudonn\'e equivalence it corresponds to the unique formal $\cO$-module of height $k$ and dimension $1$, i.e. the Lubin--Tate module \cite[Proposition 1.7.1)]{Dri74}. By applying the Dieudonn\'e equivalence to $\br{H}_\de^\et$ and using the algebraic closedness of $\Spec\ov\ka$, we also see that $\br{H}_\de^\et$ is the unique \'etale effective minuscule local shtuka over $\Spec\ov\ka$ of rank $n-k$, i.e. the constant $\vpi$-divisible local Anderson module $\ul{(F/\cO)}^{n-k}$. Thus under \ref{ss:localshtukaconjugacy}, we see that $\br{H}_\de^\et$ corresponds to the $\GL_{n-k}(\br\cO)$-$\sg$-conjugacy class of $1$, and $H_\de^\circ$ corresponds to a basic element $\de^\circ$ in $\GL_k(\br\cO)\diag(\vpi,1,\dotsc,1)\GL_k(\br\cO)$, up to $\GL_k(\br\cO)$-$\sg$-conjugation.

\subsection{}\label{ss:localshtukaunramifiedextension}
Let $F_r$ be the $r$-th degree unramified extension of $F$, write $\cO_r$ for its ring of integers, and write $\ka_r$ for $\cO_r/\vpi$, which is the $r$-degree extension of $\ka$. Our choice of $\vpi$ yields an identification $\cO_r=\ka_r\llb\vpi$.

We now turn to the case of $S=\Spec\ka_r$. Let $\de$ be an element of $\GL_n(\cO_r)\diag(\vpi,1,\dotsc,1)\GL_n(\cO_r)$, and write $H_\de$ for the associated effective minuscule local shtuka over $\Spec\ka_r$ of rank $n$ and dimension $1$. The connected-\'etale sequence yields a decomposition
\begin{align*}
H_\de = H_\de^\circ\oplus H_\de^\et,
\end{align*}
where $H_\de^\circ$ and $H^\et_\de$ are the connected and \'etale parts of $H_\de$, respectively. Write $k$ for the rank of $H_\de^\circ$. Under \ref{ss:localshtukaconjugacy}, we see that $H_\de^\et$ corresponds to some $\de^\et$ in $\GL_{n-k}(\cO_r)$, up to $\GL_{n-k}(\cO_r)$-$\sg$-conjugation, and one can show that $H_\de^\circ$ corresponds to some $\de^\circ$ in $\GL_k(\cO_r)\diag(\vpi,1,\dotsc,1)\GL_k(\cO_r)$, up to $\GL_k(\cO_r)$-$\sg$-conjugation, whose norm 
\begin{align*}
\N(\de^\circ)\deq\de^\circ\sg(\de^\circ)\dotsm\sg^{r-1}(\de^\circ)
\end{align*}
is $\GL_k(F_r)$-conjugate to an elliptic element in $\GL_k(F)$. We denote the set of such $\de$ by $B_{n,k}$, and we write $B_k$ for $B_{k,k}$. Note that the pullback of $H_\de$ to $\Spec\ov\ka$ is isomorphic to $\br{H}_\de$.

\subsection{}
To define our deformation spaces of local shtukas (with additional data), we need to introduce a notion of \emph{Drinfeld level structures} for effective local shtukas. For any effective local shtuka $\sM$, its $m$-th level truncation is $\sM/\vpi^m$. For any finite $\ka$-shtuka $M$, we write $\Dr(M)$ for the corresponding finite $\ka$-strict module \cite[p.~16]{HS16}, which is an $\cO/\vpi^m$-module scheme over $S$.
\begin{defn*}
Let $\sM$ be an effective minuscule local shtuka over $S$ of constant rank $n$. We say a \emph{Drinfeld level-$m$ structure} on $\sM$ is a Drinfeld level-$m$ structure on $\Dr(\sM/\vpi^m)$, that is, an $\cO/\vpi^m$-module morphism
\begin{align*}
\al:(\vpi^{-m}\cO/\cO)^n\rar\Dr(\sM/\vpi^m)(S)
\end{align*}
such that the collection of all $\al(x)$ for $x$ in $(\vpi^{-m}\cO/\cO)^n$ forms a full set of sections of $\Dr(\sM/\vpi^m)$ as in \cite[(1.8.2)]{KM85}. For any Drinfeld level-$m$ structure $\al$, its restriction to $(\vpi^{-m'}\cO/\cO)^n$ is a Drinfeld level-$m'$ structure, which we denote by $\res\al_{m'}$. Furthermore, if $S$ is the spectrum of a local Artinian ring, then the restriction of $\al$ to $\ker\al$ is a Drinfeld level-$m$ structure on the connected part $\sM^\circ$ of $\sM$. We denote this by $\al^\circ$.
\end{defn*}

\subsection{}
We now initiate our study of deformation spaces. Write $\wh\cC$ for the opposite category of the category whose
\begin{enumerate}[$\bullet$]
\item objects are complete Noetherian local $\br\cO$-algebras $A$ with residue field $\ov\ka$,
\item morphisms are local $\br\cO$-algebra morphisms,
\end{enumerate}
and write $\cC$ for the full subcategory of $\wh\cC$ consisting of Artinian rings. We identify $\wh\cC$ with a full subcategory of formal schemes over $\Spf\br\cO$. Note that for $A$ in $\cC$, we have $\Spf{A}=\Spec{A}$.

For any contravariant functor $E:\cC\rar(\text{Set})$, we write $\wh{E}:\wh\cC\rar(\text{Set})$ for the extension of $E$ to $\wh\cC$ given by sending
\begin{align*}
  A\mapsto\invlim_i E(A/\fm^i_A),
\end{align*}
where $\fm_A$ is the maximal ideal of $A$. We say $E$ \emph{has a deformation space} if $\wh{E}$ is representable by a finite disjoint union $\fX$ of formal schemes in $\wh\cC$. By Yoneda's lemma, such an $\fX$ is unique up to isomorphism.

\subsection{}\label{ss:connecteddeformationalgebraicclosure}
Return to the situation in \ref{ss:localshtukaalgebraicclosure}, and let $\al$ be a Drinfeld level-$m$ structure on $\br{H}_\de$. Write $\br{E}_{\de,\al}:\cC\rar(\text{Set})$ for the functor sending
\begin{align*}
A\mapsto\{\mbox{triples }(H',\al',\io')\}/\sim,
\end{align*}
where $H'$ is a local shtuka over $\Spec{A}$, $\al'$ is a Drinfeld level-$m$ structure on $H'$, and $\io'$ is an isomorphism $\br{H}_\de\rar^\sim H'_{\ov\ka}$ such that $(\al')_{\ov\ka} = \io'\circ\al$. In other words, $\br{E}_{\de,\al}$ parametrizes deformations of $(\br{H}_\de,\al)$. Note that $\br{E}_{\de,\al}$ has a right action of $\GL_n(\cO/\vpi^m)$ given by sending $(H',\al',\io')$ to $(H',\al'\circ\ga,\io')$ for any $\ga$ in $\GL_n(\cO/\vpi^m)$. 

By applying the Dieudonn\'e equivalence to rephrase the problem in terms of $\vpi$-divisible local Anderson modules, we immediately deduce the following from classical results of Drinfeld.
\begin{prop*}[{\cite[Proposition 4.2]{Dri74}, \cite[Proposition 4.3]{Dri74}, \cite[2.1.2.(ii)]{Str08}}]\hfill
\begin{enumerate}[(i)]
\item The functor $\br{E}_{\de,\al}$ has a deformation space, which we denote using $\br\fX_{\de,\al}=\Spf\br{R}_{\de,\al}$. This $\br{R}_{\de,\al}$ is a regular complete local Noetherian $\br\cO$-algebra.
\item Write $\br\fX_\de=\Spf\br{R}_\de$ for the deformation space with Drinfeld level-$0$ structure, that is, no Drinfeld level structure. Choosing a coordinate on $\br{H}_\de^\circ$ and an isomorphism $\br{H}_\de\rar^\sim\br{H}_\de^\circ\oplus\ul{(F/\cO)}^{n-k}$ induces an isomorphism from $\br{R}_\de$ to the formal power series ring $\br\cO\llb{s_1,\dotsc,s_{n-1}}$.

\item The same choice as in (ii) induces an isomorphism from $\br{R}_{\de,\al}$ to the formal power series ring $\br{R}_{\de^\circ,\al^\circ}\llb{t_1,\dotsc,t_{n-k}}$, where the $\br{R}_{\de^\circ,\al^\circ}$-algebra structure is given by the restriction morphism $\br{R}_{\de^\circ,\al^\circ}\rar\br{R}_{\de,\al}$.

\item For all non-negative integers $m'\leq m$, the restriction morphism $\br{R}_{\de,\res\al_{m'}}\rar\br{R}_{\de,\al}$ is finite flat. Its generic fiber $\br{R}_{\de,\res\al_{m'}}[\frac1\vpi]\rar\br{R}_{\de,\al}[\textstyle\frac1\vpi]$ is a Galois extension of rings, and the left action of
  \begin{align*}
\ker\left(\GL_n(\cO/\vpi^m)\rar\GL_n(\cO/\vpi^{m'})\right)
  \end{align*}
on $\br{R}_{\de,\res\al_{m'}}\rar\br{R}_{\de,\al}$ yields the Galois action on $\br{R}_{\de,\res\al_{m'}}[\frac1\vpi]\rar\br{R}_{\de,\al}[\textstyle\frac1\vpi]$.
\end{enumerate}
\end{prop*}
In particular, parts (ii) and (iv) imply that $\br\fX_{\de,\al}$ has dimension $n-1$ over $\Spf\br\cO$.

\subsection{}\label{ss:drinfeldparameters}
Let $e_1,\dotsc,e_k$ be an $\cO/\vpi^m$-basis of $\ker\al$. After choosing a coordinate on $\br{H}_{\de^\circ}$, we may identify its $A$-points with $\fm_A$. Classical results of Drinfeld yield certain local parameters of $\br{R}_{\de,\al}$ which satisfy the following relationship with Drinfeld level-$m$ structures.
\begin{prop*}[{\cite[Proposition 4.3.2)]{Dri74}}]
There exist local parameters $x_1,\dotsc,x_k$ of the regular local ring $\br{R}_{\de^\circ,\al^\circ}$ such that, for any morphism $f:\br{R}_{\de^\circ,\al^\circ}\rar A$ in $\wh\cC$, the image of $x_i$ under $f$ equals the element of $\fm_A$ corresponding to $\al'(e_i)$, where $(H',\al',\io')$ is the deformation of $(\br{H}^\circ_\de,\al^\circ)$ corresponding to $f$.
\end{prop*}

\subsection{}\label{ss:deformationalgebraicclosure}
Next, we allow the Drinfeld level-$m$ structure to vary. Write $\br{E}_{\de,m}:\cC\rar(\text{Set})$ for the functor sending
\begin{align*}
A\mapsto\{\mbox{triples }(H',\al',\io')\}/\sim,
\end{align*}
where $H'$ is a local shtuka over $\Spec{A}$, $\al'$ is a Drinfeld level-$m$ structure on $H'$, and $\io'$ is an isomorphism $\br{H}_\de\rar^\sim H'_{\ov\ka}$. In other words, $\br{E}_{\de,m}$ parametrizes deformations of $\br{H}_\de$ along with a Drinfeld level-$m$ structure. As in \ref{ss:connecteddeformationalgebraicclosure}, our functor $\br{E}_{\de,m}$ has a right action of $\GL_n(\cO/\vpi^m)$.

Because every $\al'$ yields a Drinfeld level-$m$ structure $\al'_{\ov\ka}$ of $\br{H}_\de$, we have
\begin{align*}
\br{E}_{\de,m} = \coprod_\al\br{E}_{\de,\al},
\end{align*}
where $\al$ ranges over all Drinfeld level-$m$ structures on $\br{H}_\de$. This disjoint union respects both restriction to the connected component as well as restriction to level-$m'$, where $m'\leq m$. By decomposing Drinfeld level-$m$ structures in terms of the connected-\'etale sequence, we deduce the following result from Proposition \ref{ss:connecteddeformationalgebraicclosure}.
\begin{prop*}
The functor $\br{E}_{\de,m}$ has a deformation space, which we denote using $\br\fX_{\de,m}$. We have an identification 
\begin{align*}
\br\fX_{\de,m} = \coprod_\al\br\fX_{\de,\al} = \coprod_V\coprod_{\ker\al=V}\Spf\br{R}_{\de^\circ,\al^\circ}\llb{t_1,\dotsc,t_{n-k}},
\end{align*}
where $V$ ranges over all $\cO/\vpi^m$-linear direct summands of $(\vpi^{-m}\cO/\cO)^n$ with rank $k$. Under the first identification, the restriction morphisms $\br\fX_{\de,m}\rar\br\fX_{\de,m'}$ equal the disjoint union of the restriction morphisms $\br\fX_{\de,\al}\rar\br\fX_{\de,\res\al_{m'}}$.
\end{prop*}
Fix such a $V$. Since Drinfeld level-$m$ structures on \'etale group schemes are precisely group isomorphisms \cite[(1.8.3)]{KM85}, the set of all $\al$ satisfying $\ker\al=V$ is a right principal homogeneous space for $\GL((\vpi^{-m}\cO/\cO)^n/V)$.

\subsection{}\label{ss:deformationunramifiedextension}
Now suppose that $\de$ lies in $\GL_n(\cO_r)\diag(\vpi,1,\dotsc,1)\GL_n(\cO_r)$. This will allow us to descend $\br\fX_{\de,m}$ to a formal scheme $\fX_{\de,m}$ over $\Spf\cO_r$ as follows. Let $\sg$ be any element of $\Gal(\ov\ka/\ka_r)$, which we identify with its canonical lift to $\Aut(\br\cO/\cO_r)$, and write $\sg$ for $\Spec\sg$ by abuse of notation. The automorphism of $\br{H}_\de = H_{\de,\ov\ka}$ given by $\id_{H_\de}\times_{\ka_r}\sg$ lies over $\sg$ and hence induces an isomorphism $\sg^*\br{H}_\de\rar^\sim\br{H}_\de$ over $\Spec\ov\ka$, which we denote using $f_\sg$.

Note that $\sg^*\br{H}_\de$ is isomorphic to $\br{H}_{\sg(\de)}$ and that $\sg^*\br\fX_{\de,m}$ is isomorphic to $\br\fX_{\sg(\de),m}$. We obtain an isomorphism $\vp_\sg:\br\fX_{\de,m}\rar^\sim\sg^*\br\fX_{\de,m}$ by sending $(H',\al',\io')$ to $(H',\al',\io'\circ f_\sg)$. Since the morphisms $f_\sg$ satisfy the cocycle condition, the morphisms $\vp_\sg$ do as well (albeit contravariantly), and thus the $\vp_\sg$ provide Weil descent datum for $\br\fX_{\de,m}$ as in \cite[Definition (3.5)]{RZ96}\footnote{The definition here is stated for mixed characteristic, but it adapts to equal characteristic by using formal power series instead of Witt vectors.} One can show this Weil descent datum is effective, and we write $\fX_{\de,m}$ for the resulting formal scheme over $\Spf\cO_r$. The right action of $\GL_n(\cO/\vpi^m)$ on $\br\fX_{\de,m}$ commutes with $\vp_\sg$ and hence also descends to a right action on $\fX_{\de,m}$. The restriction morphisms similarly descend to morphisms $\fX_{\de,m}\rar\fX_{\de,m'}$.

\subsection{}\label{ss:nearbycyclesalgebraization}
Let $\fX$ be a special formal scheme over $\Spf{A}$ as in \cite[p.~370]{Ber96} for a complete valuation ring $A$. For any complete nonarchimedean field $K$ containing $A$, we write $\fX_K$ for the adic pullback of $\fX$ to $K$ \cite[p.~370]{Ber96}. Writing $\la$ for the residue field of $K$, we denote the adic pullback\footnote{These are referred to in \cite{Ber96} as the generic and special fibers, respectively. Our terminology stems from interpreting them as pullbacks in the category of adic spaces.} of $\fX$ to $\la$ by $\fX_\la$ \cite[p.~370]{Ber96}. These constructions correspond to generic and special fibers, respectively, in the setting of nonarchimedean analytic geometry.

We shall now introduce the cohomology of our deformation spaces. Write $R^i\psi_{\de,m}$ for the $\ov\bQ_\ell$-vector space $H^i(\fX_{\de,m,\bC_\vpi},\ov\bQ_\ell)$. The zero-dimensionality of $\fX_{\de,m,\ov\ka}$ and the nearby cycles spectral sequence \cite[Corollary 2.5]{Ber96} show that
\begin{align*}
R^i\psi_{\de,m} = H^0(\fX_{\de,m,\ov\ka},R^i\Psi_{\fX_{\de,m,\bC_\vpi}}\ov\bQ_\ell),
\end{align*}
where $R^i\Psi_{\fX_{\de,m,\bC_\vpi}}$ denotes the $i$-th nearby cycles functor\footnote{This is referred to in \cite{Ber96} and \cite{HT01} as the vanishing cycles functor.} \cite[p.~373]{Ber96} on $\fX_{\de,m,\cO_{\bC_\vpi}}$. This equality explains our choice of notation for $R^i\psi_{\de,m}$. To prove the finite-dimensionality of $R^i\psi_{\de,m}$ as well as eventually prove the admissibility of a certain $\GL_n(\cO)$-action, we take recourse to the following algebraization result.
\begin{lem*}\label{lem:alglol}
There exists a projective scheme $\cM$ over $\Spec\br\cO$, a zero-dimensional closed subscheme $\cZ$ of $\cM_{\ov\ka}$, and a right action of $\GL_n(\cO/\vpi^m)$ on the pair $(\cM,\cZ)$ such that the completion of $\cM$ at $\cZ$ is isomorphic to $\br\fX_{\de,m}$ with its right action of $\GL_n(\cO/\vpi^m)$.
\end{lem*}
The desired algebraization comes from a moduli space of \emph{$\sD$-elliptic sheaves with bad reduction}, but we won't introduce these moduli spaces until \S\ref{s:modulispaces}, and we won't explain how to relate them to local shtukas until \S\ref{s:langlandskottwitz}. Although we record the proof here, one can safely take this algebraization result as a black box.
\begin{proof}[Proof of Lemma \ref{lem:alglol}]
Let $C=\bP^1_\ka$ be our curve of interest, and let $\infty$ and $o$ be distinct $\ka$-points of $C$. Let $D$ be any central division algebra over $\ka(C)$ of dimension $n^2$ that splits at $o$ and $\infty$, and let $\sD$ be a maximal order of $D$, which can be constructed using \ref{ss:orders} because division algebras split at cofinitely many places. Proposition \ref{ss:dellipticsheavesisomorphismclasses} allows us to find a $\sD$-elliptic sheaf $(\sE_i,t_i,j_i)_i$ over $\ov\ka$ whose local shtuka $\sM_o'$ at $o$ as in \ref{ss:dellipticsheavestolocalshtukazero} is isomorphic to $\br{H}_\de$.

Denote the corresponding $\ov\ka$-point of $\cM_{\varnothing,\br\cO}$ using $z$. Write $o^m$ for the finite closed subscheme of $C$ supported on $o$ with multiplicity $m$, and write $\pi:\cM_{o^m,\br\cO}\rar\cM_{\varnothing,\br\cO}$ for the restriction morphism. Then \ref{ss:extendedserretate} says that $\cM=\cM_{o^m,\br\cO}$ and $\cZ=\pi^{-1}(z)$ yield the desired algebraization of $\br\fX_{\de,m}$.
\end{proof}

\subsection{}\label{ss:fundamentalrepresentation}
With Lemma \ref{ss:nearbycyclesalgebraization} in hand, Berkovich's nearby cycles comparison theorem \cite[Theorem 3.1]{Ber96} implies that $R^i\psi_{\de,m}$ is finite-dimensional over $\ov\bQ_\ell$ and vanishes for $i>n-1$ by Proposition \ref{ss:connecteddeformationalgebraicclosure}. Furthermore, it has commuting continuous left actions of $W_{F_r}$ and $\GL_n(\cO/\vpi^m)$. Write $R^i\psi_\de$ for the direct limit
\begin{align*}
R^i\psi_\de \deq \dirlim_mR^i\psi_{\de,m},
\end{align*}
where the transition maps $R^i\psi_{\de,m'}\rar R^i\psi_{\de,m}$ are induced by the restriction morphisms $\fX_{\de,m}\rar\fX_{\de,m'}$ for $m'\leq m$. Now $R^i\psi_\de$ has commuting left actions of $W_{F_r}$ and $\GL_n(\cO)$, and Proposition \ref{ss:connecteddeformationalgebraicclosure}.(iv) shows that
\begin{align*}
(R^i\psi_\de)^{1+\vpi^m\!\M_n(\cO)} = R^i\psi_{\de,m}.
\end{align*}
Therefore $R^i\psi_\de$ is a $\GL_n(\cO)\times I_F$-admissible/continuous representation as in \cite[p.~24]{HT01} of $\GL_n(\cO)\times W_{F_r}$ over $\ov\bQ_\ell$. Write $[R\psi_\de]$ for the virtual representation
\begin{align*}
[R\psi_\de] \deq \sum_{i=0}^\infty(-1)^iR^i\psi_\de.
\end{align*}

\subsection{}\label{ss:sigmaconjugationisopen}
At this point, we can finally begin to define the test function $f_{\tau,h}$ mentioned in Theorem A. We shall start by defining $\phi_{\tau,h}$, which will end up being a transfer of $f_{\tau,h}$. Let $\tau$ be an element of $W_F$ with positive $v(\tau)$, write $r=v(\tau)$, and let $h$ be a function in $C^\infty_c(\GL_n(\cO))$. We write $\phi_{\tau,h}:\GL_n(F_r)\rar\bC$ for the function
\begin{align*}
\de\mapsto
\begin{cases}
\tr(\tau\times h|[R\psi_\de]) & \mbox{if }\de\mbox{ is in }\GL_n(\cO_r)\diag(\vpi,1,\dotsc,1)\GL_n(\cO_r),\\
0 & \mbox{otherwise,}
\end{cases}
\end{align*}
where we have identified $\ov\bQ_\ell$ with $\bC$.
\begin{lem*}
The map $\GL_n(\br{F})\rar\GL_n(\br{F})$ given by $g\mapsto g^{-1}\de\sg(g)$ is open, where $\sg$ denotes the lift of $q$-Frobenius.
\end{lem*}
\begin{proof}
This follows from reducing to the Lie algebra situation, using the Dieudonn\'e--Manin classification to casework on $\de$, and applying the nonarchimedean Banach open mapping theorem, c.f. \cite[Lemma 4.4]{Sch13c}.
\end{proof}
Lemma \ref{ss:sigmaconjugationisopen} implies that the subsets $B_{n,k}$ of $\GL_n(\cO_r)\diag(\vpi,1,\dotsc,1)\GL_n(\cO_r)$ are open. Because
\begin{align*}
\GL_n(\cO_r)\diag(\vpi,1,\dotsc,1)\GL_n(\cO_r) = \coprod_{k=1}^nB_{n,k},
\end{align*}
we see that the subsets $B_{n,k}$ are also closed.

One use Lemma \ref{ss:sigmaconjugationisopen} and proceed as in \cite[Proposition 4.3]{Sch13c} to show that $\phi_{\tau,h}$ is locally constant. As $\phi_{\tau,h}$ is supported on the compact subset $\GL_n(\cO_r)\diag(\vpi,1,\dotsc,1)\GL_n(\cO_r)$ by definition, we see that $\phi_{\tau,h}$ is in $C^\infty_c(\GL_n(\cO_r))$.

\subsection{}\label{ss:ftauhdefinition}
Let $f_{\tau,h}$ be a transfer of $\phi_{\tau,h}$, i.e. a function in $C^\infty_c(\GL_n(F))$ such that $f_{\tau,h}$ and $\phi_{\tau,h}$ have matching twisted orbital integrals \cite[(I.2.5, prop.)]{HL11}. We remark that while $f_{\tau,h}$ is not uniquely determined as a function in $C^\infty_c(\GL_n(F))$, its orbital integrals are well-defined by the fact that they equal the corresponding twisted orbital integrals of $\phi_{\tau,h}$. Hence the Weyl integration formula \cite[(II.2.10, formula (1)]{HL11} implies that the trace of $f_{\tau,h}$ on admissible representations of $\GL_n(F)$ is also well-defined. 

Since we have now defined $f_{\tau,h}$, the statement of Theorem A makes sense.

\section{The first inductive lemma: trace identities and parabolic induction}\label{s:firstinductivelemma}
In this section, we begin by recalling the \emph{Bernstein--Zelevinsky classification} of irreducible smooth representations of $\GL_n(F)$, which we use to prove a lemma on checking equalities of traces. Then, we use Schneider--Zink's theory of \emph{tempered types} for $\GL_n(F)$, which is a version of Bushnell--Kutzko types for tempered representations, to construct test functions that pick out certain representations.

With these preliminaries, we proceed to the main result of this section: a lemma which allows us to prove Theorem A by inducting on $n$, provided that we verify certain conditions. The proof of our lemma uses theory of the \emph{Bernstein center}, which describes the center of the category of smooth representations of $\GL_n(F)$. In subsequent sections, we will focus on verifying the necessary conditions for our inductive lemma.

\subsection{}\label{ss:essentiallyl2}
First, we recall Bernstein--Zelevinsky's description of irreducible essentially $L^2$ representations of $\GL_n(F)$. Let $\De$ be a segment 
\begin{align*}
\De = \{\pi_0[\textstyle\frac{1-m}2], \pi_0[\frac{3-m}2],\dotsc,\pi_0[\frac{m-1}2]\}
\end{align*}
as in \cite[3.1]{Zel80}, where $m$ is a positive divisor of $n$, $\pi_0$ is an irreducible cuspidal representation of $\GL_{n/m}(F)$, and $[s]$ denotes twisting by the unramified character $\abs\det^s$. Consider the normalized parabolic induction
\begin{align*}
\nInd_{P(F)}^{\GL_n(F)}(\pi_0[\textstyle\frac{1-m}2]\otimes\pi_0[\frac{3-m}2]\otimes\dotsb\otimes\pi_0[\frac{m-1}2]),
\end{align*}
where $P$ is the standard parabolic subgroup of $\GL_n$ with block sizes $(m,\dotsc,m)$. This representation of $\GL_n(F)$ has a unique irreducible quotient \cite[9.1]{Zel80}, which we denote by $Q(\De)$.

Recall that the irreducible essentially $L^2$ representations of $\GL_n(F)$ are those isomorphic to $Q(\De)$ \cite[9.3]{Zel80}. We see that $\De$ is the cuspidal support of $Q(\De)$, and note that $Q(\De)$ has unitary central character if and only if $\pi_0$ does.

\subsection{}\label{ss:bernsteinzelevinsky}
Next, we proceed to arbitrary irreducible smooth representations of $\GL_n(F)$. Let $\{\De_1,\dotsc,\De_t\}$ be a collection of segments such that $\De_i$ does not precede $\De_j$ as in \cite[4.1]{Zel80} for $i<j$. Each $Q(\De_i)$ is an irreducible essentially $L^2$ representation of $\GL_{n_i}(F)$, and we may form the normalized parabolic induction
\begin{align*}
\nInd_{P(F)}^{\GL_n(F)}\left(Q(\De_1)\otimes\dotsb\otimes Q(\De_t)\right),
\end{align*}
where $n=n_1+\dotsb+n_t$, and $P$ is the standard parabolic subgroup of $\GL_n$ with block sizes $(n_1,\dotsc,n_t)$. When none of the $\De_i$ are linked, this representation of $\GL_n(F)$ is irreducible \cite[9.7.(a)]{Zel80}. For general $\De_i$, it has a unique irreducible quotient, which we denote by $Q(\De_1,\dotsc,\De_t)$. Furthermore, the induced representation above is irreducible precisely when $Q(\De_1,\dotsc,\De_t)$ is a generic representation of $\GL_n(F)$ \cite[9.7.(b)]{Zel80}.

The isomorphism class of $Q(\De_1,\dotsc,\De_t)$ does not depend on the ordering of the $\{\De_1,\dotsc,\De_t\}$, as long as it still satisfies the condition that $\De_i$ does not precede $\De_j$ for $i<j$. Recall that every irreducible smooth representation of $\GL_n(F)$ is isomorphic to $Q(\De_1,\dotsc,\De_t)$ for a unique choice of $\{\De_1,\dotsc,\De_t\}$ \cite[9.7.(b)]{Zel80}, up to reordering. We see that the multiset $\De_1\coprod\dotsb\coprod\De_t$ is the cuspidal support of $Q(\De_1,\dotsc,\De_t)$. Furthermore, recall that $Q(\De_1,\dotsc,\De_t)$ is tempered if and only if every $Q(\De_i)$ is $L^2$ \cite{Jac77}.

\subsection{}\label{ss:generalizedsteinbergspeh}
The following terminology generalizes the Steinberg and trivial representations, respectively.
\begin{defn*}
Let $t$ be a positive divisor of $n$, and let $\pi_0$ be an irreducible cuspidal representation of $\GL_{n/t}(F)$. We write $\St_t(\pi_0)$ for the representation
\begin{align*}
\St_t(\pi_0)\deq Q\left(\{\pi_0[\textstyle\frac{1-t}2], \pi_0[\frac{3-t}2],\dotsc,\pi_0[\frac{t-1}2]\}\right),
\end{align*}
and we write $\Sp_t(\pi_0)$ for the representation
\begin{align*}
\Sp_t(\pi_0) \deq Q\left(\{\pi_0[\textstyle\frac{t-1}2]\},\{\pi_0[\frac{t-3}2]\},\dotsc,\{\pi_0[\frac{1-t}2]\}\right).
\end{align*}
We say that a representation of $\GL_n(F)$ is a \emph{Steinberg module} if it is isomorphic to some $\Sp_t(\pi_0)$ for some $\pi_0$ with unitary central character, and we say it is a \emph{Speh module}\footnote{What we call a Steinberg module is often called a generalized Steinberg representation. However, what we call a Speh module is actually less general than the usual definition of Speh representations.} if it is isomorphic to $\Sp_t(\pi_0)$ for some $\pi_0$ with unitary central character. Note that \ref{ss:essentiallyl2} indicates that being a Steinberg module means the same thing as being irreducible $L^2$.
\end{defn*}
We recover the usual Steinberg and trivial representations by taking $t=n$ and letting $\pi_0$ be the trivial representation.

\subsection{}\label{ss:kazhdanvariant}
We now turn to the following lemma on checking equalities of traces.
\begin{lem*}
Let $f$ be a function in $C^\infty_c(\GL_n(F))$, and suppose that $\tr(f|\pi)=0$ whenever $\pi$ is an irreducible smooth representation of $\GL_n(F)$ that is
\begin{enumerate}[(a)]
\item tempered but not $L^2$,
\item a Speh module.
\end{enumerate}
Then $\tr(f|\pi)=0$ for all irreducible smooth representations $\pi$ of $\GL_n(F)$.
\end{lem*}
\begin{proof}
Kazhdan's density theorem \cite[Theorem 0]{Kaz86}\footnote{The proof given here is stated for $p$-adic fields, but it only uses the Langlands classification and hence carries over to any nonarchimedean local field.} indicates that it suffices to check $\tr(f|\pi)=0$ for all irreducible tempered $\pi$, and (a) above further reduces it to checking $\tr(f|\pi)=0$ whenever $\pi$ is irreducible $L^2$. Thus we can write $\pi$ as $\St_t(\pi_0)$, where $t$ is some positive divisor of $n$, we set $d\deq n/t$, and $\pi_0$ is an irreducible cuspidal representation of $\GL_d(F)$ with unitary central character.

If $t=1$, then $\pi=\pi_0=\Sp_1(\pi_0)$, so (b) shows that $\tr(f|\pi)=0$. Next, suppose that $t\geq2$. A routine calculation \cite[Lemma I.3.2]{HT01} using the graph-theoretic description of the Jordan--H\"older factors of
\begin{align*}
\nInd_{P(F)}^{\GL_n(F)}(\pi_0[\textstyle\frac{1-t}2]\otimes\pi_0[\frac{3-t}2]\otimes\dotsb\otimes\pi_0[\frac{t-1}2]),
\end{align*}
where $P$ is the standard parabolic subgroup of $\GL_n$ with block sizes $(d,\dotsb,d)$, yields the following equality of virtual smooth representations of $\GL_n(F)$:
\begin{align*}
\Sp_t(\pi_0)+(-1)^t\pi = \sum_{j=1}^{t-1}(-1)^{j-1}\nInd_{P_j(L)}^{\GL_n(L)}\left(\Sp_{t-j}(\pi_0[\textstyle\frac{t-1+j}2])\otimes \St_j(\pi_0[\frac{j-1}2])\right),
\end{align*}
where $P_j$ is the standard parabolic subgroup of $\GL_n$ with block sizes $(d(t-j),dj)$. Note that $\Sp_t(\pi_0)$ is a Speh module. Therefore once we prove the following lemma, taking traces and using (b) yields the desired result.
\end{proof}
\begin{lem}\label{lem:tempguys}
Let $f$ be a function in $C^\infty_c(\GL_n(F))$, and suppose that $\tr(f|\pi)=0$ whenever $\pi$ is irreducible tempered but not $L^2$. Then for all partitions $n=n_1+n_2$ and irreducible smooth representations $\pi_i$ of $\GL_{n_i}(F)$, we have
\begin{align*}
\tr\left(f|\nInd_{P(F)}^{\GL_n(F)}(\pi_1\otimes\pi_2)\right)=0,
\end{align*}
where $P$ is the standard parabolic subgroup of $\GL_n$ with block sizes $(n_1,n_2)$.
\end{lem}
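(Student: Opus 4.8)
The plan is to reduce, via the Bernstein--Zelevinsky classification \ref{ss:bernsteinzelevinsky}, to the case where the inducing datum is a tensor product of essentially $L^2$ representations, and then to deform such a representation to the tempered locus, where the hypothesis on $f$ applies, by an analytic continuation argument. First I would work in the Grothendieck group of finite-length smooth representations. Writing $\pi_i = Q(\De^i_1,\dotsc,\De^i_{t_i})$, it follows from \ref{ss:bernsteinzelevinsky} that $[\pi_i]$ lies in the $\bZ$-span of the classes of standard modules $\nInd(Q(\De_1)\otimes\dotsb\otimes Q(\De_s))$ with the $Q(\De_l)$ essentially $L^2$; moreover each $\pi_i$ contributes at least one such segment. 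Since parabolic induction is exact and transitive, $[\nInd_{P(F)}^{\GL_n(F)}(\pi_1\otimes\pi_2)]$ is then a $\bZ$-linear combination of classes of representations $\nInd_{Q(F)}^{\GL_n(F)}(\sg_1\otimes\dotsb\otimes\sg_k)$, where each $\sg_j$ is an essentially $L^2$ representation of some $\GL_{m_j}(F)$ and $k\geq2$ (the blocks from $\pi_1$ and $\pi_2$ do not collapse, as $n=n_1+n_2$ is a proper partition). As traces are additive on the Grothendieck group, it suffices to prove $\tr(f\mid\nInd_{Q(F)}^{\GL_n(F)}(\sg_1\otimes\dotsb\otimes\sg_k))=0$ for all such data.

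Now write $\sg_j = \sg_j^0[s_j]$ with $\sg_j^0$ irreducible $L^2$ with unitary central character and $s_j$ real, and replace $s_j$ by a complex variable $z_j$. By the constant-term formula for traces of parabolically induced representations, $\tr(f\mid\nInd_{Q(F)}^{\GL_n(F)}(\sg_1^0[z_1]\otimes\dotsb\otimes\sg_k^0[z_k])) = \tr(\sg^0(\chi_z\cdot f^{(Q)}))$, where $\sg^0 = \sg_1^0\otimes\dotsb\otimes\sg_k^0$, where $f^{(Q)}\in C^\infty_c(M(F))$ is the normalized constant term of $f$ along the Levi $M = \GL_{m_1}\times\dotsb\times\GL_{m_k}$, and where $\chi_z$ is the unramified character $m\mapsto\prod_j q^{-z_j v(\det m_j)}$. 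Because $f^{(Q)}$ has compact support, $v(\det m_j)$ takes only finitely many integer values on its support, so $\chi_z\cdot f^{(Q)}$ is a finite sum $\sum_a q^{-\sum_j a_jz_j}f^{(Q)}_a$ with $f^{(Q)}_a\in C^\infty_c(M(F))$ independent of $z$ and $a$ ranging over a finite subset of $\bZ^k$; hence $F(z)\deq\tr(f\mid\nInd_{Q(F)}^{\GL_n(F)}(\sg_1^0[z_1]\otimes\dotsb\otimes\sg_k^0[z_k]))$ is a Laurent polynomial in $q^{z_1},\dotsc,q^{z_k}$. When every $z_j$ is purely imaginary, each $\sg_j^0[z_j]$ is again irreducible $L^2$, so $\nInd_{Q(F)}^{\GL_n(F)}(\sg_1^0[z_1]\otimes\dotsb\otimes\sg_k^0[z_k])$ is a direct sum of irreducible tempered representations; and since $k\geq2$ none of these is $L^2$, because its cuspidal support is a disjoint union of $k\geq2$ segments each symmetric about a point of the unitary axis, which can never be a single segment. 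The hypothesis on $f$ therefore makes $F$ vanish on $(S^1)^k$ in the coordinates $q^{z_j}$, hence identically by the identity theorem applied one variable at a time (using that $S^1$ has accumulation points in $\bC^\times$). Evaluating at $z_j = s_j$ gives $\tr(f\mid\nInd_{Q(F)}^{\GL_n(F)}(\sg_1\otimes\dotsb\otimes\sg_k))=0$, which completes the proof.

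The main obstacle is the analytic-continuation step: it requires the constant-term formula for traces of induced representations and the resulting rigidity (Laurent-polynomial dependence of the trace under unramified twisting), both of which need to be available over our characteristic-$p$ field, together with the bookkeeping of cuspidal supports needed to rule out an $L^2$ constituent on the unitary axis. The Grothendieck-group reduction and the elementary complex analysis are routine by comparison.
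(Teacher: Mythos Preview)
Your argument is correct but takes a longer route than the paper. Both proofs invoke van Dijk's constant-term formula to pass to the Levi, but from there they diverge: the paper simply notes that $\tr(f\mid\nInd_P(\pi_1\otimes\pi_2)) = \tr(f^{P,\GL_n(\cO)}\mid\pi_1\otimes\pi_2)$, observes that the left side vanishes whenever $\pi_1\otimes\pi_2$ is tempered on $M=\GL_{n_1}\times\GL_{n_2}$ (since then the induced representation is irreducible tempered and not $L^2$), and then applies Kazhdan's density theorem \emph{on the Levi} to conclude vanishing for arbitrary $\pi_1,\pi_2$. Your approach instead first rewrites $[\pi_1\otimes\pi_2]$ in terms of standard modules and then runs an explicit analytic continuation in the unramified twist parameters, which is essentially redoing by hand the content of Kazhdan density in this special case. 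The paper's route is shorter and needs no segment bookkeeping; your route is more self-contained but requires the extra check that no tempered constituent on the unitary locus is $L^2$, which you handle via cuspidal supports (for $\GL_n$ one can also just cite that full parabolic induction of tempered from a proper Levi is irreducible, hence automatically not $L^2$).
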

For any function $f$ in $C_c^\infty(\GL_n(F))$, we write $f^{P,\GL_n(\cO)}$ for its normalized $\GL_n(\cO)$-invariant constant term along a parabolic subgroup $P$ as in \cite[p.~80]{Lem16}. It is a function in $C_c^\infty(M(F))$, where $M$ is the standard Levi subgroup of $P$.
\begin{proof}[Proof of Lemma \ref{lem:tempguys}]
Van Dijk's formula \cite[Theorem 5.9]{Lem16} implies that
\begin{align*}
\tr\left(f|\nInd_{P(F)}^{\GL_n(F)}(\pi_1\otimes\pi_2)\right) = \tr(f^{P,\GL_n(\cO)}|\pi_1\otimes\pi_2).
\end{align*}
Recall from \ref{ss:bernsteinzelevinsky} that, if $\pi_1$ and $\pi_2$ are tempered, then
\begin{align*}
\nInd_{P(F)}^{\GL_n(F)}(\pi_1\otimes\pi_2)
\end{align*}
is irreducible tempered but not $L^2$. In this situation, the left-hand side of the above equation vanishes by assumption. As $\pi_1$ and $\pi_2$ run over tempered representations of $\GL_{n_1}(F)$ and $\GL_{n_2}(F)$, respectively, $\pi_1\otimes\pi_2$ runs over irreducible tempered representations of $M=\GL_{n_1}(F)\times\GL_{n_2}(F)$. Thus applying Kazhdan's density theorem \cite[Theorem 0]{Kaz86} to $f^{P,\GL_n(\cO)}$ indicates that the right-hand and hence left-hand side of the above equation vanishes for all $\pi_1$ and $\pi_2$, as desired.
\end{proof}

\subsection{}\label{ss:schneiderzinktestfunctions}
By using Schneider--Zink's tempered types, we can construct the following test functions that pick out Speh modules.
\begin{lem*}
  Let $\pi=\Sp_t(\pi_0)$ be a Speh module, where $t$ is a positive divisor of $n$, and $\pi_0$ is an irreducible cuspidal representation of $\GL_{n/t}(F)$ with unitary central character. Then there exists a function $h$ in $C^\infty_c(\GL_n(\cO))$ such that
  \begin{enumerate}[(i)]
  \item $\tr(h|\pi)=1$,
    \item if $\pi'$ is an irreducible tempered representation of $\GL_n(F)$ that is not isomorphic to $Q\left(\{\pi_0[s_1]\},\dotsc,\{\pi_0[s_t]\}\right)$ for some purely imaginary $s_1,\dotsc,s_t$, then $\tr(h|\pi')=0$.
  \end{enumerate}
\end{lem*}
\begin{proof}
  We reduce this to tempered type theory as follows. Suppose we could find an irreducible smooth representation $\la$ of $\GL_n(\cO)$ such that
  \begin{enumerate}[(i)]
  \item $\pi$ contains $\la$ with multiplicity $1$,
    \item if $\pi'$ is an irreducible tempered representation of $\GL_n(F)$ that is not isomorphic to $Q\left(\{\pi_0[s_1]\},\dotsc,\{\pi_0[s_t]\}\right)$ for some purely imaginary $s_1,\dotsc,s_t$, then $\pi'$ does not contain $\la$.
    \end{enumerate}
    Then Schur orthogonality shows that the function $g\mapsto\ov{\tr(g|\la)}$ yields the desired $h$. As for finding such a $\la$, we use \cite{SZ99}. More precisely, let $\la$ be the representation $\sg_\cP(\la)$ as in \cite[p.~30]{SZ99} for $\cP=t\cdot\De(\pi_0,1)$, where we adopt the notation of \cite{SZ99}. Note that $\pi$ lies in $\im Q_\cP$, so (i) follows from \cite[Prop. 11, i.]{SZ99}. Furthermore, we see that $\cP$ is a maximal partition-valued function on $\cC$, so (ii) follows from \cite[Prop. 11, iii.]{SZ99}.
\end{proof}

\subsection{}\label{ss:firstinductivelemma}
Finally, we can introduce the main result of this section: our inductive lemma.
\begin{lemfirst}
Assume that the following conditions hold for all admissible representations $\pi$ of $\GL_n(F)$:
\begin{enumerate}[(a)]
\item Theorem A is true for all $n'<n$,
\item if $\pi$ is isomorphic to the normalized parabolic induction
  \begin{align*}
    \nInd^{\GL_n(F)}_{P(F)}(\pi_1\otimes\dotsb\otimes\pi_t),
  \end{align*}
where $t\geq2$, the $\pi_i$ are irreducible smooth representations of $\GL_{n_i}(F)$, we have $n=n_1+\dotsb+n_t$, and $P$ is the standard parabolic subgroup of $\GL_n$ with block sizes $(n_1,\dotsc,n_t)$, then we have an equality of traces
\begin{align*}
  \tr(f_{\tau,h}|\pi) = \tr\left(\tau|\rho(\pi_1)(\textstyle\frac{n-n_1}2)\oplus\dotsb\oplus\rho(\pi_t)(\frac{n-n_t}2)\right)\tr(h|\pi)
\end{align*}
for all $\tau$ in $W_F$ with $v(\tau)>0$ and $h$ in $C^\infty_c(\GL_n(\cO))$,
\item if $\pi$ is a Speh module or irreducible essentially $L^2$, there exists an $n$-dimensional $\bQ_{\geq0}$-virtual continuous representation $\rho(\pi)$ of $W_F$ satisfying the trace condition
  \begin{align*}
    \tr(f_{\tau,h}|\pi) = \tr\left(\tau|\rho(\pi)\right)\tr(h|\pi)
  \end{align*}
for all $\tau$ in $W_F$ with $v(\tau)>0$ and $h$ in $C^\infty_c(\GL_n(\cO))$,
\item if $\pi$ is irreducible cuspidal, then the representation $\rho(\pi)$ from condition (c) is actually a $\bZ$-virtual continuous representation of $W_F$.
\end{enumerate}
Then Theorem A is true for $n$.
\end{lemfirst}
Since semisimple representations of $W_F$ are determined by their traces, any $\rho(\pi)$ satisfying the equality of traces in Theorem A.(i) is unique up to isomorphism. In particular, the $\rho(\pi)$ whose existence is posited by Theorem A.(i) is isomorphic to the $\rho(\pi)$ whose existence is provided by condition (c).

We now proceed to prove the first inductive lemma. To this end, for the remainder of this section we shall assume that conditions (a)--(d) hold. Later, we shall verify these conditions in \S\ref{s:parabolicinduction}, \S\ref{s:lubintatetower}, and \S\ref{s:localglobal}. 

\subsection{}\label{ss:cuspidalrho}
We begin with the following observation. Suppose that $\pi$ is irreducible cuspidal. Then condition (c) provides an $n$-dimensional $\bQ$-virtual continuous representation $\rho(\pi)$ of $W_F$ such that, when expanded in terms of the $\bQ$-basis of irreducible continuous representations of $W_F$, every coefficient is non-negative. Condition (d) implies that these coefficients are integers, so $\rho(\pi)$ corresponds to an actual $n$-dimensional semisimple continuous representation of $W_F$.

\subsection{}\label{ss:bernsteincenter}
Now let $\pi$ be any irreducible smooth representation of $\GL_n(F)$. Before proceeding, we gather a few recollections on the Bernstein center. For any $f$ in the Bernstein center $\wh\cZ$ of $\GL_n(F)$, we write $\tr(f|\pi)$ for the scalar by which $f$ acts on $\pi$. We identify $\wh\cZ$ with the product of the rings of regular functions on the Bernstein components of $\GL_n(F)$ \cite[2.10]{Ber84}.

Write $\pi_1,\dotsc,\pi_t$ for the cuspidal support of $\pi$, where the $\pi_i$ are irreducible cuspidal representations of $\GL_{n_i}(F)$ such that $n=n_1+\dotsb+n_t$. Write $P$ for the standard parabolic subgroup of $\GL_n$ with block sizes $(n_1,\dotsc,n_t)$. For any function $h$ in $C^\infty_c(\GL_n(F))$, van Dijk's formula \cite[Theorem 5.9]{Lem16} implies that the assignment
\begin{align*}
\pi\mapsto\tr\left(h|\nInd_{P(F)}^{\GL_n(F)}(\pi_1\otimes\dotsb\otimes\pi_t)\right)
\end{align*}
yields an element of $\wh\cZ$, which we shall denote using $\wh{h}$. When $\nInd_{P(F)}^{\GL_n(F)}(\pi_1\otimes\dotsb\otimes\pi_t)$ is irreducible and hence isomorphic to $\pi$, we have $\tr(\wh{h}|\pi)=\tr(h|\pi)$. However, we stress that this is false for general $\pi$.

\subsection{}\label{ss:functiondesire}
We construct an element $f_\tau$ of $\wh\cZ$ as follows. Write $r(\pi)$ for the representation
\begin{align*}
r(\pi)\deq\rho(\pi_1)(\textstyle\frac{1-n_1}2)\oplus\dotsb\oplus\rho(\pi_t)(\frac{1-n_t}2),
\end{align*}
which is well-defined for $t=1$ by \ref{ss:cuspidalrho} and for $t\geq2$ by Theorem A.(i) for the $\pi_i$. Note that $r(\pi)$ is semisimple. We shall see in Proposition \ref{ss:convolutiontraceequality} that $r(\pi)$ is always isomorphic to $\rec(\pi)$, where the latter is defined as in Theorem A.
\begin{prop*}
The function $f_\tau$ whose action on $\pi$ is given by $\tr\left(\tau|\sg(\pi)(\frac{n-1}2)\right)$ is an element of $\wh\cZ$.
\end{prop*}
This fulfills the desire to have a function-like object (in this case, an element of the Bernstein center, which is a distribution) associated with $\tau$ whose trace equals the action of $\tau$ on $r(\pi)$ and hence on $\rec(\pi)$.

\begin{proof}
It suffices to prove that $\pi\mapsto\tr\left(\tau|r(\pi)(\frac{n-1}2)\right)$ is a regular function on every Bernstein component, so we may focus on one particular $\pi$. First, use Schur orthogonality to obtain a function $h_1$ in $C^\infty_c(\GL_n(\cO))$ that satisfies
\begin{align*}
\tr\left(h_1|\nInd_{P(F)}^{\GL_n(F)}(\pi_1\otimes\dotsb\otimes\pi_t)\right)=1,
\end{align*}
where $P$ is the standard parabolic subgroup of $\GL_n$ with block sizes $(n_1,\dotsc,n_t)$. Now any other irreducible representation $\pi'$ in the Bernstein component of $\pi$ is a subquotient of
\begin{align*}
\nInd_{P(F)}^{\GL_n(F)}(\pi_1[s_1]\otimes\dotsb\otimes\pi_t[s_t]),
\end{align*}
where the $s_i$ are complex numbers. The Iwasawa decomposition implies that $h_1^{P,\GL_n(\cO)}$ is supported on $M(\cO)$, where $M$ is the standard Levi subgroup of $P$, so van Dijk's formula \cite[Theorem 5.9]{Lem16} shows that
\begin{align*}
\tr\left(h_1|\nInd_{P(F)}^{\GL_n(F)}(\pi_1[s_1]\otimes\dotsb\otimes\pi_t[s_t])\right)=  \tr\left(h_1|\nInd_{P(F)}^{\GL_n(F)}(\pi_1\otimes\dotsb\otimes\pi_t)\right)=1.
\end{align*}
Because $\pi_1[s_1],\dotsc,\pi_t[s_t]$ is the cuspidal support of $\pi'$, we obtain
\begin{align*}
\tr\left(\tau|r(\pi')(\textstyle\frac{n-1}2)\right) &= \tr\left(\tau|\rho(\pi_1[s_1])(\textstyle\frac{n-n_1}2)\oplus\dotsb\oplus\rho(\pi_t[s_t])(\frac{n-n_t}2)\right) \\
&= \tr\left(\tau|\rho(\pi_1[s_1])(\textstyle\frac{n-n_1}2)\oplus\dotsb\oplus\rho(\pi_t[s_t])(\frac{n-n_t}2)\right)\tr\left(h_1|\nInd_{P(F)}^{\GL_n(F)}(\pi_1[s_1]\otimes\dotsb\otimes\pi_t[s_t])\right).
\end{align*}
This equals 
\begin{align*}
\tr\left(f_{\tau,h_1}|\nInd_{P(F)}^{\GL_n(F)}(\pi_1[s_1]\otimes\dotsb\otimes\pi_t[s_t])\right)
\end{align*}
for $t=1$ by condition (c) and for $t\geq2$ by condition (b). Altogether, we see that $f_\tau$ acts on the Bernstein component of $\pi$ via $\wh{f_{\tau,h_1}}$, so it is a regular function.
\end{proof}

\subsection{}\label{ss:convolutiontraceequality}
We can relate the distribution $f_\tau$ to the test function $f_{\tau,h}$ as follows. Recall that the convolution product $f_\tau*h$  is a function in $C^\infty_c(\GL_n(F))$.
\begin{prop*}
We have $\tr(f_{\tau,h}|\pi) = \tr(f_\tau*h|\pi)$.
\end{prop*}
Note that, since $f_\tau$ lies in the Bernstein center of $\GL_n(F)$, we have
\begin{align*}
\tr(f_\tau*h|\pi) = \tr(f_\tau|\pi)\tr(h|\pi) = \tr\left(\tau|r(\pi)(\textstyle\frac{n-1}2)\right)\tr(h|\pi).
\end{align*}
By taking $\rho(\pi)=r(\pi)(\textstyle\frac{n-1}2)$, we see that this proposition gives the existence part of Theorem A.(i) for $n$.
\begin{proof}[Proof of Proposition \ref{ss:convolutiontraceequality}]
By Lemma \ref{ss:kazhdanvariant}, it suffices to consider $\pi$ that are either Speh modules or irreducible tempered but not $L^2$. In the latter case, the descriptions given in \ref{ss:bernsteinzelevinsky} imply that $\pi$ is isomorphic to a representation of the form considered in condition (b). Therefore condition (b) yields the desired result in this case.

Turning to the case where $\pi$ is a Speh module, suppose that $\pi$ is isomorphic to $\Sp_t(\pi_0)$ for some positive divisor $t$ of $n$ and irreducible cuspidal representation $\pi_0$ of $\GL_{n/t}(F)$ with unitary central character. If $t=1$, then $\pi=\pi_0$ is cuspidal, and condition (c) gives the desired equality. Therefore suppose that $t\geq2$, and let $h_2$ be a function in $C^\infty_c(\GL_n(\cO))$ as in Lemma \ref{ss:schneiderzinktestfunctions}. We start by proving the case where $h=h_2$.
\begin{lem}\label{lem:nestedkazhdan}
For all irreducible smooth representations $\pi'$ of $\GL_n(F)$, we have $\tr\left(f_{\tau,h_2}|\pi'\right) = \tr\left(f_\tau*h_2|\pi'\right)$.
\end{lem}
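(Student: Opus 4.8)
The plan is to apply Lemma \ref{ss:kazhdanvariant} to $g\deq f_{\tau,h_2}-f_\tau*h_2$, which lies in $C^\infty_c(\GL_n(F))$ because $f_\tau*h_2$ does; it then suffices to prove $\tr(g|\pi')=0$ whenever $\pi'$ is either (a) irreducible tempered but not $L^2$, or (b) a Speh module. The engine of the argument is the following observation: if $V\deq\nInd^{\GL_n(F)}_{P(F)}(\sigma_1\otimes\dotsb\otimes\sigma_s)$ with $s\geq2$ and each $\sigma_i$ an irreducible representation of $\GL_{n_i}(F)$, so that $n_i<n$, then $\tr(g|V)=0$ (read in the Grothendieck group if $V$ is reducible). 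Indeed, condition (b) of the first inductive lemma evaluates $\tr(f_{\tau,h_2}|V)=\tr(\tau|\bigoplus_i\rho(\sigma_i)(\tfrac{n-n_i}{2}))\tr(h_2|V)$; on the other hand, since $f_\tau$ lies in the Bernstein center and all Jordan--Hölder constituents of $V$ share one cuspidal support, $f_\tau$ acts on $V$ by the scalar $\tr(\tau|r(V)(\tfrac{n-1}{2}))$, where $r(V)$ is defined through that cuspidal support as in Proposition \ref{ss:functiondesire}, whence $\tr(f_\tau*h_2|V)=\tr(\tau|r(V)(\tfrac{n-1}{2}))\tr(h_2|V)$. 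These two agree because $r(V)(\tfrac{n-1}{2})\cong\bigoplus_i\rho(\sigma_i)(\tfrac{n-n_i}{2})$: the cuspidal support of $V$ is the disjoint union of those of the $\sigma_i$, and by condition (a) (Theorem A in ranks $<n$) we have $\rho(\sigma_i)(\tfrac{1-n_i}{2})=\rec(\sigma_i)$ with $\rec(\sigma_i)$ equal to the direct sum of $\rec(\rho)$ as $\rho$ ranges over the cuspidal support of $\sigma_i$, which is precisely how $r$ is built.

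With this observation in hand, case (a) is immediate: by \ref{ss:bernsteinzelevinsky} an irreducible tempered non-$L^2$ representation of $\GL_n(F)$ is such a $V$ with $s\geq2$ and the $\sigma_i$ irreducible $L^2$ (hence of rank $<n$). For a Speh module $\pi'\cong\Sp_{t'}(\pi_0')$, the case $t'=1$ is also immediate: then $\pi'=\pi_0'$ is cuspidal, $r(\pi_0')(\tfrac{n-1}{2})=\rho(\pi_0')$ by the definition of $r$, and condition (c) gives $\tr(g|\pi_0')=0$.

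The remaining case $\pi'\cong\Sp_{t'}(\pi_0')$ with $t'\geq2$ is the crux: here condition (c) only provides a representation $\rho(\Sp_{t'}(\pi_0'))$ that is not yet known to equal $r(\Sp_{t'}(\pi_0'))(\tfrac{n-1}{2})$, and $h_2$ need not be trace-free on $\Sp_{t'}(\pi_0')$. To circumvent this, I apply $\tr(g|-)$ to the identity of virtual $\GL_n(F)$-representations established in the proof of Lemma \ref{ss:kazhdanvariant}, now taken with $\pi_0'$, $t'$, and $d'\deq n/t'$ in place of $\pi_0$, $t$, $d$, which has the shape
\begin{align*}
\Sp_{t'}(\pi_0')+(-1)^{t'}\St_{t'}(\pi_0')=\sum_{j=1}^{t'-1}(-1)^{j-1}\,\nInd^{\GL_n(F)}_{P_j(F)}\left(\Sp_{t'-j}(\pi_0'[a_j])\otimes\St_j(\pi_0'[b_j])\right)
\end{align*}
for suitable half-integers $a_j,b_j$, where $P_j$ has block sizes $(d'(t'-j),d'j)$. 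Each summand on the right is a two-block parabolic induction of irreducible representations of ranks $d'(t'-j)$ and $d'j$, both $<n$, so $\tr(g|-)$ annihilates it by the observation above. For the term $\St_{t'}(\pi_0')$, which is irreducible $L^2$ (its central character is unitary by \ref{ss:essentiallyl2}), I claim $\tr(h_2|\St_{t'}(\pi_0'))=0$: by property (ii) of $h_2$ in Lemma \ref{ss:schneiderzinktestfunctions}, a nonzero value would force $\St_{t'}(\pi_0')\cong Q(\{\pi_0[s_1]\},\dotsc,\{\pi_0[s_t]\})$ for purely imaginary $s_1,\dotsc,s_t$ (here $t$ and $\pi_0$ are those of the original Speh module $\Sp_t(\pi_0)$); but purely imaginary exponents make the singleton segments $\{\pi_0[s_i]\}$ pairwise unlinked, so by \ref{ss:bernsteinzelevinsky} this $Q$ is an irreducible parabolic induction of $t\geq2$ cuspidals and in particular is not essentially $L^2$, contradicting that $\St_{t'}(\pi_0')$ is. Hence $\tr(h_2|\St_{t'}(\pi_0'))=0$, so $\tr(f_{\tau,h_2}|\St_{t'}(\pi_0'))=0$ by condition (c) and $\tr(f_\tau*h_2|\St_{t'}(\pi_0'))=0$ by centrality of $f_\tau$, giving $\tr(g|\St_{t'}(\pi_0'))=0$. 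Substituting these vanishings into the displayed identity yields $\tr(g|\Sp_{t'}(\pi_0'))=0$, which completes (a) and (b); Lemma \ref{ss:kazhdanvariant} then gives $\tr(g|\pi')=0$ for all irreducible $\pi'$. I expect the main obstacle to be precisely this last case: both $\rho(\Sp_{t'}(\pi_0'))$ and the trace of $h_2$ on $\Sp_{t'}(\pi_0')$ are a priori uncontrolled, and the point is to trade the Speh module, through the virtual identity, for the essentially-$L^2$ representation $\St_{t'}(\pi_0')$, on which the Schneider--Zink function $h_2$ was engineered to have vanishing trace.
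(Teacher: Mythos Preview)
Your argument is correct, but it takes a longer route than necessary compared with the paper's proof. The paper invokes Kazhdan's density theorem \cite[Theorem 0]{Kaz86} directly rather than Lemma~\ref{ss:kazhdanvariant}: this reduces the claim to irreducible \emph{tempered} $\pi'$, and then the only new case beyond what was already established (tempered non-$L^2$, handled by condition~(b)) is $\pi'$ irreducible $L^2$. For such $\pi'$ the paper argues exactly as you do for $\St_{t'}(\pi_0')$: since $t\geq2$, the representation $Q(\{\pi_0[s_1]\},\dotsc,\{\pi_0[s_t]\})$ with purely imaginary $s_i$ is a full parabolic induction and hence not $L^2$, so Lemma~\ref{ss:schneiderzinktestfunctions}.(ii) gives $\tr(h_2|\pi')=0$, and both sides vanish by condition~(c) and centrality of $f_\tau$.

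Your route instead applies Lemma~\ref{ss:kazhdanvariant}, which forces you to verify vanishing on Speh modules. For $t'\geq2$ you then invoke the virtual identity to trade $\Sp_{t'}(\pi_0')$ for $\St_{t'}(\pi_0')$ plus properly induced terms. But recall that the proof of Lemma~\ref{ss:kazhdanvariant} itself uses this same virtual identity to trade $L^2$ representations for Speh modules. So your argument traverses the identity twice in opposite directions: Lemma~\ref{ss:kazhdanvariant} converts $L^2$ to Speh, and then you convert Speh back to $L^2$. The paper's direct appeal to Kazhdan's density theorem short-circuits this loop and lands immediately on the $L^2$ case, where the Schneider--Zink property of $h_2$ applies without further reduction. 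Both approaches ultimately rest on the same ingredient (the vanishing of $\tr(h_2|\cdot)$ on irreducible $L^2$ representations when $t\geq2$), but the paper reaches it in one step rather than three.
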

\begin{proof}
Kazhdan's density theorem \cite[Theorem 0]{Kaz86} indicates that it suffices to check this for irreducible tempered $\pi'$. When $\pi'$ is not $L^2$, this identity is a case of Proposition \ref{ss:convolutiontraceequality} we proved above, so we now tackle the case when $\pi'$ is $L^2$. Since $t\geq2$, the descriptions given in \ref{ss:essentiallyl2} and \ref{ss:bernsteinzelevinsky} indicate that $\pi'$ cannot be of the form $Q\left(\{\pi_0[s_1]\},\dotsc,\{\pi_0[s_t]\}\right)$ for purely imaginary $s_1,\dotsc,s_t$. Thus condition (c) and Lemma \ref{ss:schneiderzinktestfunctions}.(ii) imply that
\begin{align*}
\tr\left(f_{\tau,h_2}|\pi'\right) = \tr\left(\tau|\rho(\pi')\right)\tr(h_2|\pi') = 0 = \tr\left(\tau|r(\pi)(\textstyle\frac{n-1}2)\right)\tr(h_2|\pi) = \tr\left(f_\tau*h_2|\pi'\right),
\end{align*}
which concludes the proof of Lemma \ref{lem:nestedkazhdan}.
\end{proof}
Return to the proof of Proposition \ref{ss:convolutiontraceequality}. By plugging $\pi'=\pi$ into Lemma \ref{lem:nestedkazhdan} and using condition (c) and Lemma \ref{ss:schneiderzinktestfunctions}.(i), we see that
\begin{align*}
\tr\left(\tau|\rho(\pi)\right) = \tr\left(f_{\tau,h_2}|\pi\right) = \tr\left(f_\tau*h_2|\pi\right) = \tr\left(\tau|r(\pi)(\textstyle\frac{n-1}2)\right).
\end{align*}
Because semisimple representations of $W_F$ are determined by their traces, this indicates that $\rho(\pi)$ is isomorphic to $r(\pi)(\textstyle\frac{n-1}2)$. In particular, applying condition (c) again verifies the desired equality when $\pi$ is a Speh module.

\end{proof}
We conclude this section by finishing the proof of Lemma \ref{ss:firstinductivelemma}, that is, by proving that Theorem A holds for $n$.
\begin{proof}[Proof of Lemma \ref{ss:firstinductivelemma}]
We have already noted that Proposition \ref{ss:convolutiontraceequality} gives the existence part of Theorem A.(i) for $n$. Since semisimple representations of $W_F$ are determined by their traces, any such $\rho(\pi)$ satisfying the equality of traces in Theorem A.(i) is unique. As for Theorem A.(ii), it immediately follows from the associativity of normalized parabolic induction as well as the definition of $\sg(\pi)$ in terms of the cuspidal support of $\pi$.
\end{proof}
The next several sections will focus on verifying that conditions (b)--(d) hold, sometimes while assuming the inductive condition (a). More precisely, we shall prove condition (b) in Proposition \ref{ss:conditionb} under the assumption that the inductive condition (a) holds, we will prove condition (d) in Proposition \ref{ss:conditiond} while black-boxing condition (c), and we shall prove condition (c) itself in Proposition \ref{s:conditionc}. We will also prove the $n=1$ case of Theorem A in Proposition \ref{ss:theoremabasecase}, which serves as the base case for applying Lemma \ref{ss:firstinductivelemma}.

\section{Geometry of parabolic induction}\label{s:parabolicinduction}
Our goal in this section is to prove that condition (b) in Lemma \ref{ss:firstinductivelemma} holds, under the assumption that the inductive condition (a) holds. To do this, we start by using \'etale local shtukas to prove the existence of transfers for $\GL_n(\cO)$-conjugation-invariant functions in $C^\infty_c(\GL_n(\cO))$. We then use work from this proof, along with our description of the deformation spaces $\br\fX_{\de,m}$ from \S\ref{s:deformationspaces}, to prove an identity relating $\tr(f_{\tau,h}|\pi)$ with the traces of $\phi_{\tau,h'}$ on certain Jacquet restrictions of $\pi$ (where $h'$ now ranges over functions in $C^\infty_c(M(\cO))$ for certain Levi blocks $M$ in $\GL_n$). The proof of this identity uses \emph{Casselman's theorem} on characters of Jacquet modules. We finish the proof that condition (b) holds, under the assumption that the inductive condition (a) holds, by using the Bernstein--Zelevinsky \emph{induction-restriction formula} to relate this aforementioned trace identity to a reformulated version of condition (b).

\subsection{}\label{ss:normbijection}
We begin by using two descriptions of isomorphism classes of \'etale local shtukas over $\Spec\ka_r$ to prove the following bijection. In the end, this amounts to an application of completed unramified descent.
\begin{lem*}
The norm map $\de\mapsto\de\sg(\de)\dotsm\sg^{r-1}(\de)$ induces a bijection
\begin{align*}
\N:\{\GL_n(\cO_r)\mbox{-}\sg\mbox{-conjugacy classes in }\GL_n(\cO_r)\}\rar^\sim\{\GL_n(\cO)\mbox{-conjugacy classes in }\GL_n(\cO)\}.
\end{align*}
\end{lem*}
\begin{proof}
We shall construct bijections between both the left-hand as well as right-hand sides and the set
\begin{align*}
\{\mbox{isomorphism classes of \'etale local shtukas over }\Spec\ka_r\mbox{ of rank }n\}
\end{align*}
such that the composed bijection is equal to $\N$. On the left-hand side, \ref{ss:effectiveminusculelocalshtukaconjugacy} yields the desired bijection. 

We now turn to the right-hand side. Let $\sM$ be an \'etale local shtuka over $\Spec\ka_r$. Completed unramified descent implies that the isomorphism class of $\sM$ is determined by the isomorphism class of $\br\sM\deq\sM_{\ov\ka}$ along with a descent isomorphism $f:\sg^{-r,*}\br\sM\rar^\sim\br\sM$ corresponding to the action of the geometric $q^r$-Frobenius map $\sg^{-r}$. As $\br\sM$ must be isomorphic to $(\ov\ka\llb\vpi^n,\sg^{\oplus n})$ by \ref{ss:localshtukaalgebraicclosure}, any such isomorphism identifies $\Aut(\br\sM)$ with $\GL_n(\cO)$. Write $\br\sF:\sg^*\br\sM\rar\br\sM$ for the Frobenius of $\br\sM$, which is an isomorphism because $\br\sM$ is \'etale, and write $\ga$ for the automorphism
\begin{align*}
\ga\deq\br\sF\circ\dotsb\circ\sg^{r-1,*}\br\sF\circ\sg^{r,*}f
\end{align*}
of $\br\sM$, viewed as an element of $\GL_n(\cO)$. Since any other choice of isomorphism $\br\sM\rar^\sim (\ov\ka\llb\vpi^n,\sg^{\oplus n})$ preserves the $\GL_n(\cO)$-conjugacy class of $\ga$, this yields a bijection $\ga\leftrightarrow\sM$ between $\GL_n(\cO)$-conjugacy classes in $\GL_n(\cO)$ and isomorphism classes of \'etale local shtukas over $\Spec\ka_r$ of rank $n$.

Recall that $\br\sF$ corresponds to $\de\circ\sg^{\oplus n}$ under \ref{ss:effectiveminusculelocalshtukaconjugacy}. Since $f$ corresponds to $(\sg^{\oplus n})^{-r}$, we see that $\ga$ corresponds to
\begin{align*}
\underbrace{(\de\circ\sg^{\oplus n})\circ\dotsb\circ(\de\circ\sg^{\oplus n})}_{r\,\text{times}}\circ (\sg^{\oplus n})^{-r} = \left(\de\sg(\de)\dotsm\sg^{r-1}(\de)\right)\circ(\sg^{\oplus n})^r\circ (\sg^{\oplus n})^{-r} = \N\de,
\end{align*}
as desired.
\end{proof}

\subsection{}\label{ss:etalelocalshtukatransfer}
We now construct certain functions with matching twisted orbital integrals, that is, certain transfers of functions. Let $h$ be a $\GL_n(\cO)$-conjugation-invariant function in $C^\infty_c(\GL_n(\cO))$, and form the function $\phi:\GL_n(\cO_r)\rar\bC$ by sending $\de\mapsto h(\N\de)$. One can show that $\phi$ is locally constant \cite[Corollary 2.3]{Sch13b}\footnote{The proof given here is stated for mixed characteristic, but it works verbatim for any nonarchimedean field.}.
\begin{lem*}
The functions $h$ and $\phi$ have matching twisted orbital integrals.
\end{lem*}
\begin{proof}
We need to prove that $\O_{\N\de}(h)=\TO_{\de,\sg}(\phi)$ for all regular $\de$ in $\GL_n(\cO_r)$, where $\O_{\N\de}$ and $\TO_{\de,\sg}$ denote the orbital integral on $\N\de$ and twisted orbital integral on $\de$, respectively, with respect to compatible Haar measures. We can prove this as follows: write $\sM$ for the \'etale local shtuka corresponding to $\de$ under \ref{ss:effectiveminusculelocalshtukaconjugacy}, and write $X$ for the set of isomorphism classes of quasi-isogenies $\be:\sM\dasharrow\sM'$ between \'etale local shtukas. Note that the group $\Ga$ of self-quasi-isogenies $\sM\dasharrow\sM$ has a left action on $X$ given by sending $\be$ to $\be\circ g^{-1}$ for any $g$ in $\Ga$.

One can equate both $\O_{\N\de}(h)$ as well as $\TO_{\de,\sg}(\phi)$ to the sum
\begin{align*}
\sum_{(\sM',\be)}\wt{h}(\sM',\be),
\end{align*}
where $(\sM',\be)$ ranges over elements in $\Ga\bs X$, and $\wt{h}$ is the function sending $(\sM',\be)$ to $h(\de')$, where $\de'$ corresponds to $\sM'$ under \ref{ss:effectiveminusculelocalshtukaconjugacy}. In a line of reasoning similar to the proof of Lemma \ref{ss:normbijection}, we relate $\TO_{\de,\sg}(\phi)$ to the above sum by using \ref{ss:effectiveminusculelocalshtukaconjugacy}, and we relate $\O_{\N\de}(h)$ to the above sum by using completed unramified descent, c.f. \cite[Proposition 4.3]{Sch13}
\end{proof}

\subsection{}\label{ss:deformationtestfunction}
Next, we recast our results from \S\ref{s:deformationspaces} in terms of our test functions. For this, recall that for any $\de$ in
\begin{align*}
\GL_n(\cO_r)\diag(\vpi,1,\dotsc,1)\GL_n(\cO_r),
\end{align*}
we constructed an associated effective minuscule local shtuka $H_\de$ over $\Spec\ka_r$ of dimension $1$. Its connected--\'etale decomposition \cite[Proposition 2.9]{HS16} is $H_{\de^\circ}\oplus H_{\de^\et}$, where $H_{\de^\circ}$ has rank $k$, and $B_{n,k}$ is the set of $\de$ such that $H_\de$ decomposes in this way. By considering $\de$ as an element of $\GL_n(\br\cO)\diag(\vpi,1,\dotsc,1)\GL_n(\br\cO)$, we obtain the pullback $\br{H}_\de$ of $H_\de$ to $\Spec\ov\ka$, and the space $\br\fX_{\de,m}$ parametrizes deformations of $\br{H}_\de$ along with a Drinfeld level-$m$ structure. We defined a virtual representation $[R\psi_\de]$ in terms of the cohomology of $\br\fX_{\de,m}$ as $m$ varies, and we defined $\phi_{\tau,h}(\de)$ as a trace on $[R\psi_\de]$.
\begin{prop*}
We have an equality
\begin{align*}
\phi_{\tau,h}(\de) = \tr\left((\tau\times h)|\Ind_{P_k(\cO)}^{\GL_n(\cO)}\left([R\psi_{\de^\circ}]\otimes C^\infty_c(\GL_{n-k}(\cO))\right)\right),
\end{align*}
where $P_k$ is the standard parabolic subgroup of $\GL_n$ with block sizes $(k,n-k)$, $\GL_{n-k}(\cO)$ acts on $C^\infty_c(\GL_{n-k}(\cO))$ by left inverse multiplication, and $W_{F_r}$ acts on $C^\infty_c(\GL_{n-k}(\cO))$ via the unramified action sending $\tau$ to the action of right multiplication by $\N\de^\et$.
\end{prop*}
\begin{proof}
Let $f_{\sg^{-r}}:\sg^{-r,*}\br{H}_\de\rar^\sim\br{H}_\de$ and $\vp_{\sg^{-r}}:\br\fX_{\de,m}\rar\sg^{-r,*}\br\fX_{\de,m}$ be as in \ref{ss:deformationunramifiedextension}. Using the Dieudonn\'e equivalence \cite[Theorem 8.3]{HS16} to pass to $\vpi$-divisible local Anderson modules, the proof of Lemma \ref{ss:normbijection} shows that the triple $(H',\al',\io'\circ f_{\sg^{-r}})$ is isomorphic to the triple $(H',\N\de^\et\circ\al',\io')$. Therefore $\vp_{\sg^{-r}}$ sends $\br\fX_{\de,\al}$ to $\sg^{-r,^*}\br\fX_{\de,\N\de^\et\circ\al}$. Taking cohomology of both sides in Proposition \ref{ss:deformationalgebraicclosure} and using \cite[Lemma I.5.6]{HT01} give us
\begin{align*}
R\psi^i_{\de,m} = \Ind_{P_k(\cO/\vpi^m)}^{\GL_n(\cO/\vpi^m)}\left(R\psi^i_{\de^\circ,m}\otimes\ov\bQ_\ell[\GL_{n-k}(\cO/\vpi^m)]\right),
\end{align*}
where $W_{F_r}$ acts on $\ov\bQ_\ell[\GL_{n-k}(\cO)]$ via the unramified action sending $\sg^{-r}I_F$ and hence $\tau$ to the action of right multiplication by $N\de^\et$. Taking the direct limit over $m$ and forming the alternating sum over $i$ gives an equality
\begin{align*}
[R\psi_\de] = \Ind_{P_k(\cO)}^{\GL_n(\cO)}\left([R\psi_{\de^\circ}]\otimes C^\infty_c(\GL_{n-k}(\cO))\right)
\end{align*}
of virtual representations, where we have identified $\ov\bQ_\ell$ with $\bC$. Taking the trace of $\tau\times h$ yields the desired result.
\end{proof}

\subsection{}\label{ss:twistedcharacters}
Before proceeding, we gather a few facts about twisted characters and automorphic base change. Recall that the norm map $\de\mapsto\de\sg(\de)\dotsm\sg^{r-1}(\de)$ induces an injection
\begin{align*}
\N:\{\GL_n(F_r)\mbox{-}\sg\mbox{-conjugacy classes in }\GL_n(F_r)\}\inj{}\{\GL_n(F_r)\mbox{-conjugacy classes in }\GL_n(F_r)\}
\end{align*}
\cite[Lemma 4.2]{Lan80}. For any finite length smooth representation $\pi$ of $\GL_n(F)$, its character distribution $\tr(-|\pi)$ is represented by a locally integrable conjugation-invariant function $\Te_\pi:\GL_n(F)\rar\bC$ \cite[Theorem (7.1)]{Lem04}, which we call the \emph{character} of $\pi$. Naturally, taking characters is additive. The function $\Te_{\pi/F_r}^{\sg}:\GL_n(F_r)\rar\bC$ that sends $\de\mapsto\Te_\pi(\N\de)$ is locally integrable \cite[(II.2.8)]{HL11}, and we denote the associated distribution by $\tr\left((-,\sg)|\pi/F_r\right):C^\infty_c(\GL_n(F_r))\rar\bC$.

The twisted Weyl integration formula \cite[(II.2.10, formula (1)]{HL11} implies that, if $f$ and $\phi$ are functions in $C^\infty_c(\GL(F))$ and $C^\infty_c(\GL_n(F_r))$, respectively, such that $f$ and $\phi$ have matching twisted orbital integrals, then we have
\begin{align*}
\tr\left((\phi,\sg)|\pi/F_r\right) = \tr(f|\pi).
\end{align*}
As suggested by our notation, if $\pi$ is generic, then the twisted character of its base change lift $\pi_{F_r}$ to $\GL_n(F_r)$ equals $\Te_{\pi/F_r}^{\sg}$, and we have $\tr((\phi,\sg)|\pi_{F_r})=\tr\left((\phi,\sg)|\pi/F_r\right)$ \cite[(II.2.8)]{HL11}.

\subsection{}\label{ss:unramifiedtwistsofrho}
We use \ref{ss:twistedcharacters} to obtain the following compatibility result for $\rho$ and unramified twists.
\begin{prop*}
Let $\pi$ be an irreducible smooth representation of $\GL_n(F)$, and suppose that Theorem A.(i) holds for $\pi$. Then for any complex number $s$, Theorem A.(i) holds for $\pi[s]$. That is, there exists a unique $n$-dimensional semisimple continuous representation $\rho(\pi[s])$ of $W_F$ satisfying the following property:
\begin{align*}
\mbox{for all }\tau\mbox{ in }W_F\mbox{ with }v(\tau)>0\mbox{ and }h\mbox{ in }C^\infty_c(\GL_n(\cO))\mbox{, we have }\tr\left(f_{\tau,h}|\pi[s]\right) = \tr\left(\tau|\rho(\pi[s])\right)\tr\left(h|\pi[s]\right).
\end{align*}
This $\rho(\pi[s])$ is given by $\rho(\pi[s])=\rho(\pi)(s)$.
\end{prop*}
\begin{proof}
We immediately have $\tr\left(f_{\tau,h}|\pi[s]\right) = \tr\left((\phi_{\tau,h},\sg)|\pi[s]/F_r\right)$ by \ref{ss:twistedcharacters}. Unpacking definitions shows that the right hand side equals
\begin{align*}
\int_{\GL_n(\cO_r)\diag(\vpi,1,\dotsc,1)\GL_n(\cO_r)}\!\dif\de\,\tr\left(\tau\times h|[R\psi_\de]\right)\Te_{\pi[s]}(\N\de).
\end{align*}
Since $\pi[s]$ is the twist of $\pi$ by the character $\abs\det^s$, we see that $\Te_{\pi[s]}(\N\de) = \Te_\pi(\N\de)\abs{\det\N\de}^s$. As $\de$ lies in 
\begin{align*}
  \GL_n(\cO_r)\diag(\vpi,1,\dotsc,1)\GL_n(\cO_r),
\end{align*}
its determinant has valuation $1$, which makes $\det\N\de$ have valuation $r$. Therefore the above integral becomes
\begin{align*}
\frac1{q^{rs}}\int_{\GL_n(\cO_r)\diag(\vpi,1,\dotsc,1)\GL_n(\cO_r)}\!\dif\de\,\tr\left(\tau\times h|[R\psi_\de]\right)\Te_\pi(\N\de) = \frac1{q^{rs}}\tr\left(\tau|\rho(\pi)\right)\tr\left(h|\pi\right)
\end{align*}
by applying Theorem A.(i) for $\pi$. Because $\tau$ acts through $(s)$ via $1/q^{rs}$, the desired result follows.
\end{proof}

\subsection{}\label{ss:peterweyl}
In order to state the trace identity involving $f_{\tau,h}$ and Jacquet restriction, we first need to recall the Peter--Weyl theorem for profinite groups $G$. One version of it says that we have a canonical isomorphism of smooth representations of $G\times G$
\begin{align*}
C^\infty_c(G)&\rar^\sim\bigoplus_\la\End_\bC(\la)=\bigoplus_\la\la^\vee\otimes\la\\
f&\longmapsto\bigoplus_\la\la(f),
\end{align*}
where $\la$ ranges over isomorphism classes of irreducible smooth representations of $G$, and $G\times G$ acts on $C^\infty_c(G)$ via letting $(g_1,g_2)$ send $f$ to the function $x\mapsto f(g_2^{-1}xg_1)$. This implies, along with Schur orthogonality, that the trace map induces a $\bC$-linear bijection
\begin{align*}
\tr:\{\bC\mbox{-virtual admissible representations of }G\}\rar^\sim\{\mbox{conjugation-invariant distributions on }G\}.
\end{align*}
We shall use this bijection to view conjugation-invariant distributions as $\bC$-virtual admissible representations.

\subsection{}\label{ss:jacquetmoduletraceidentity}
We may now introduce an identity that relates the trace of $f_{\tau,h}$ on $\pi$ with traces on certain Jacquet restrictions of $\pi$. The proof shall critically use Proposition \ref{ss:deformationtestfunction}, which itself is derived from the geometry of $\br\fX_{\de,m}$ as described in \S\ref{s:deformationspaces}. 

For all integers $1\leq k \leq n$, write $N_k$ for the unipotent radical of $P_k$. Write $\pi_{N_k}$ for the unnormalized Jacquet restriction of $\pi$, and suppose that we have an equality of virtual representations
\begin{align*}
\pi_{N_k} = \sum_{i=1}^{t_k}\pi_{1,i}^k\otimes\pi_{2,i}^k,
\end{align*}
where $t_k$ is a non-negative integer, and the $\pi_{1,i}^k$ and $\pi_{2,i}^k$ are finite length smooth representations of $\GL_k(F)$ and $\GL_{n-k}(F)$, respectively. Note that we do not require the $\pi_{1,i}^k$ and $\pi_{2,i}^k$ to be irreducible. For any subset $X$, we denote the indicator function on that subset using $\bf1_X$.
\begin{prop*}
We have an equality of traces
\begin{align*}
\tr(f_{\tau,h}|\pi) = \sum_{k=1}^nq^{(n-k)r}\sum_{i=1}^{t_k}\tr\left(h|\Ind_{P_k(\cO)}^{\GL_n(\cO)}\left(\tr((\bf1_{B_k}\cdot\phi_{\tau,-},\sg)|\pi_{1,i}^k/F_r)\otimes\pi_{2,i}^k\right)\right),
\end{align*}
where we view the conjugation-invariant distribution $\tr((\bf1_{B_k}\cdot\phi_{\tau,-},\sg)|\pi_{1,i}^k/F_r)$ as a $\bC$-virtual admissible representation of $\GL_k(\cO)$ via \ref{ss:peterweyl}.
\end{prop*}
In the proof, we take our Haar measures compatibly whenever possible.
\begin{proof}
We immediately have $\tr(f_{\tau,h}|\pi)=\tr\left((\phi_{\tau,h},\sg)|\pi/F_r\right)$. Because $\phi_{\tau,h}$ is supported on
\begin{align*}
\GL_n(\cO_r)\diag(\vpi,1,\dotsc,1)\GL_n(\cO_r) = \coprod_{k=1}^nB_{n,k},
\end{align*}
it suffices to show that
\begin{align*}
\tr\left((\bf1_{B_{n,k}}\cdot\phi_{\tau,h},\sg)|\pi/F_r\right) = q^{(n-k)r}\sum_{i=1}^{t_k}\tr\left(h|\Ind_{P_k(\cO)}^{\GL_n(\cO)}\left(\tr((\bf1_{B_k}\cdot\phi_{\tau,-},\sg)|\pi_{1,i}^k/F_r)\otimes\pi_{2,i}^k\right)\right)
\end{align*}
for all $k$. We begin with left-hand side. Applying the twisted Weyl integration formula \cite[(II.2.10, formula (1)]{HL11} to the function $\bf1_{B_{n,k}}\cdot\phi_{\tau,h}$ in $C^\infty_c(\GL_n(F_r))$ gives
\begin{align*}
\tr\left((\bf1_{B_{n,k}}\cdot\phi_{\tau,h},\sg)|\pi/F_r\right) = \sum_T\frac1{\#W(T,\GL_n)}\int_{T(F_r)^{\sg-1}\bs T(F_r)}\!\dif\de\,\abs{D_{\GL_n}(\N\de)}\TO_{\de,\sg}(\bf1_{B_{n,k}}\cdot\phi_{\tau,h})\Te_{\pi/F_r}^{\sg}(\de),
\end{align*}
where $T$ runs over conjugacy classes of maximal tori of $\GL_n$ over $F$, and $D_{\GL_n}:\GL_n(F)\rar F$ is the regular function defined by the equation
\begin{align*}
\det(t-\ad\ga+1|\fg\fl_n(F))\equiv D_{\GL_n}(\ga)t^n\pmod{t^{n+1}}.
\end{align*}
The decomposition $\de=\de^\circ\oplus\de^\et$, along with the description of $\de^\circ$ and $\de^\et$ in \ref{ss:localshtukaunramifiedextension}, shows that the only terms remaining in the above sum are
\begin{align*}
&\sum_{T_k}\frac1{\#W(T_k,\GL_k)}\sum_{T_{n-k}}\frac1{\#W(T_{n-k},\GL_{n-k})}\\
&\cdot\int_{(T_k(F_r)^{\sg-1}\bs T_k(F_r)_1)\times (T_{n-k}(F_r)^{\sg-1}\bs T_{n-k}(\cO_r))}\!\dif\de\,\abs{D_{\GL_n}(\N\de)}\TO_{\de,\sg}(\bf1_{B_{n,k}}\cdot\phi_{\tau,h})\Te_\pi(\N\de),
\end{align*}
where $T_k$ runs over conjugacy classes of anisotropic mod center maximal tori of $\GL_k$, $T_{n-k}$ runs over conjugacy classes of maximal tori of $\GL_{n-k}$, and we have $T=T_k\times T_{n-k}$. 

With visible product decompositions beginning to form, let us make the change of variables $\de=(\de_k,\de_{n-k})$. Our goal is to break up the integrand in terms of $\GL_k$ and $\GL_{n-k}$. By using block matrices to expand $\abs{D_{\GL_n}(\N\de)}$, we see that
\begin{align*}
\abs{D_{\GL_n}(\N\de)} = q^{(n-k)r}\abs{D_{\GL_k}(\N\de_k)}\cdot\abs{D_{\GL_{n-k}}(\N\de_{n-k})}.
\end{align*}
To obtain a similar decomposition of $\TO_{\de,\sg}(\bf1_{B_{b,k}}\cdot\phi_{\tau,h})$, we shall use the following result.
\begin{lem}
We have an equality of twisted orbital integrals
\begin{align*}
\TO_{\de,\sg}(\bf1_{B_{n,k}}\cdot\phi_{\tau,h}) = \TO_{(\de_k,\de_{n-k}),\sg}((\bf1_{B_{n,k}}\cdot\phi_{\tau,h})|_{M_k(F_r)}),
\end{align*}
where $M_k$ is the standard Levi subgroup of $P_k$, $(\bf1_{B_{n,k}}\cdot\phi_{\tau,h})|_{M_k(F_r)}$ is the restriction of $\bf1_{B_{n,k}}\cdot\phi_{\tau,h}$ to $M_k(F_r)$, and the right-hand side is the twisted orbital integral of $(\bf1_{B_{n,k}}\cdot\phi_{\tau,h})|_{M_k(F_r)}$ on $(\de_k,\de_{n-k})$.
\end{lem}
\begin{proof}
Using Nakayama's lemma over $\cO$ to reduce to the residue field, we can explicitly calculate the normalized $\GL_n(\cO_r)$-$\sg$-invariant constant term $(\bf1_{B_{n,k}}\cdot\phi_{\tau,h})^{P_k,\sg,\GL_n(\cO_r)}$ of $\bf1_{B_{n,k}}\cdot\phi_{\tau,h}$ along $P_k$ to be
\begin{align*}
(\bf1_{B_{n,k}}\cdot\phi_{\tau,h})^{P_k,\sg,\GL_n(\cO_r)} = q^{\frac12(n-k)r}(\bf1_{B_{n,k}}\cdot\phi_{\tau,h})|_{M_k(F_r)}
\end{align*}
\cite[Lemma 6.6]{Sch13b}. The desired equality follows by applying a standard result \cite[(4.4.9)]{Lau96} relating the twisted orbital integrals of $\bf1_{B_{n,k}}\cdot\phi_{\tau,h}$ and $(\bf1_{B_{n,k}}\cdot\phi_{\tau,h})^{P_k,\sg,\GL_n(\cO_r)}$. 
\end{proof}
Return to the proof of Proposition \ref{ss:jacquetmoduletraceidentity}. Casselman's theorem \cite[5.2]{Cas77} shows that
\begin{align*}
\Te_\pi(\N\de) = \Te_{\pi_{N_k}}(\N\de) = \sum_{i=1}^{t_k}\Te_{\pi_{1,i}^k}(\N\de_k)\Te_{\pi_{2,i}^k}(\N\de_{n-k}),
\end{align*}
and by putting all of this together, we can rewrite our sum of integrals as
\begin{align*}
&q^{(n-k)r}\sum_{i=1}^{t_k}\sum_{T_k}\frac1{\#W(T_k,\GL_k)}\int_{T_k(F_r)^{\sg-1}\bs T_k(F_r)_1}\!\dif\de_k\,\abs{D_{\GL_k}(\N\de_k)}\Te_{\pi_{1,i}^k}(\N\de_k)\sum_{T_{n-k}}\frac1{\#W(T_{n-k},\GL_{n-k})}\\
&\cdot\int_{T_{n-k}(F_r)^{\sg-1}\bs T_{n-k}(\cO_r)}\!\dif\de_{n-k}\,\abs{D_{\GL_{n-k}}(\N\de_{n-k})}\TO_{(\de_k,\de_{n-k}),\sg}(\phi_{\tau,h}|_{M_k(F_r)})\Te_{\pi_{2,i}^k}(\N\de_{n-k}),
\end{align*}
where we drop $\bf1_{B_{n,k}}$ since our $\de$ all lie in $B_{n,k}$.

Before proceeding further, we apply Proposition \ref{ss:deformationtestfunction} as follows. Write $\wt{f}_{\tau,h}(\de_k,-):\GL_{n-k}(\cO)\rar\bC$ for the function that sends
\begin{align*}
\ga\mapsto\tr\left((\ga,h)|\Ind_{P_k(\cO)}^{\GL_n(\cO)}\left(\phi_{\tau,-}(\de_k)\otimes C^\infty_c(\GL_{n-k}(\cO))\right)\right),
\end{align*}
where we view the conjugation-invariant distribution $\phi_{\tau,-}(\de_k)$ as a $\bC$-virtual admissible representation of $\GL_k(\cO)$ via \ref{ss:peterweyl}, $\ga$ acts trivially on $\phi_{\tau,-}(\de_k)$, and $\ga$ acts on $C^\infty_c(\GL_{n-k}(\cO))$ via right multiplication. Note that, with Proposition \ref{ss:deformationtestfunction} in mind, $\wt{f}_{\tau,h}$ heavily resembles $\phi_{\tau,h}$.
\begin{lem}\label{lem:ftilde}
The function $\wt{f}_{\tau,h}(\de_k,-)$ is in $C^\infty_c(\GL_{n-k}(\cO))$, and it satisfies the equalities
\begin{align*}
\tr((\phi_{\tau,h}(\de_k,-),\sg)|\pi_{2,i}^k/F_r) = \tr(\wt{f}_{\tau,h}(\de_k,-)|\pi_{2,i}^k) = \tr\left(h|\Ind_{P_k(\cO)}^{\GL_n(\cO)}(\phi_{\tau,-}(\de_k)\otimes\pi_{2,i}^k)\right).
\end{align*}
\begin{proof}
Proposition \ref{ss:deformationtestfunction} indicates that $\phi_{\tau,h}(\de_k,\de)=\wt{f}_{\tau,h}(\de_k,\N\de)$. Thus \ref{ss:etalelocalshtukatransfer} reduces the first claim to the statement that $\phi_{\tau,h}$ is in $C^\infty_c(\GL_n(\cO_r))$, which holds by \ref{ss:sigmaconjugationisopen}. For the second claim, note that Lemma \ref{ss:etalelocalshtukatransfer} implies $\wt{f}_{\tau,h}(\de_k,-)$ and $\phi_{\tau,h}(\de_k,-)$ have matching twisted orbital integrals. Therefore \ref{ss:twistedcharacters} yields $\tr((\phi_{\tau,h}(\de_k,-),\sg)|\pi_{2,i}^k/F_r) = \tr(\wt{f}_{\tau,h}(\de_k,-)|\pi_{2,i}^k)$.

From here, applying the Peter--Weyl theorem and Schur orthogonality to the definition of $\wt{f}_{\tau,h}(\de_k,-)$ show that $\tr(\wt{f}_{\tau,h}(\de_k,-)|\pi_{2,i}^k)=\tr\left(h|\Ind_{P_k(\cO)}^{\GL_n(\cO)}(\phi_{\tau,-}(\de_k)\otimes\pi_{2,i}^k)\right)$.
\end{proof}
\end{lem}
Return to the proof of Proposition \ref{ss:jacquetmoduletraceidentity}. Applying the twisted Weyl integration formula \cite[(II.2.10, formula (1)]{HL11} to the function $\TO_{\de_k,\sg}(\phi_{\tau,h}(-,-))$ in $C^\infty_c(\GL_{n-k}(F_r))$ allows us to collapse the sum over $T_{n-k}$ and simplify our expression into
\begin{align*}
q^{(n-k)r}\sum_{i=1}^{t_k}\sum_{T_k}\frac1{\#W(T_k,\GL_k)}\int_{T_k(F_r)^{\sg-1}\bs T_k(F_r)_1}\!\dif\de_k\,\abs{D_{\GL_k}(\N\de_k)}\Te_{\pi_{1,i}^k}(\N\de_k)\TO_{\de_k,\sg}(\tr(\wt{f}_{\tau,h}(-,-)|\pi_{2,i}^k)),
\end{align*}
where we first used $\tr((\TO_{\de_k,\sg}(\phi_{\tau,h}(-,-)),\sg)|\pi_{2,i}^k/F_r)=\TO_{\de_k,\sg}(\tr((\phi_{\tau,h}(-,-),\sg)|\pi_{2,i}^k/F_r))$ and then the first equality in Lemma \ref{lem:ftilde}. Applying the twisted Weyl integration formula to the function $\bf1_{B_k}\cdot\tr(\wt{f}_{\tau,h}(-,-)|\pi_{2,i}^k)$ in $C^\infty_c(\GL_k(F_r))$ lets us collapse the sum over $T_k$, and we get
\begin{align*}
q^{(n-k)r}\sum_{i=1}^{t_k}\tr\left((\bf1_{B_k}\cdot\tr(\wt{f}_{\tau,h}(-,-)|\pi_{2,i}^k),\sg)|\pi_{1,i}^k/F_r)\right).
\end{align*}
Now the second equality in Lemma \ref{lem:ftilde} turns this into
\begin{align*}
q^{(n-k)r}\sum_{i=1}^{t_k}\tr\left(h|\Ind_{P_k(\cO)}^{\GL_n(\cO)}\left(\tr((\bf1_{B_k}\cdot\phi_{\tau,-},\sg)|\pi_{1,i}^k/F_r)\otimes\pi_{2,i}^k\right)\right),
\end{align*}
which completes the proof of Proposition \ref{ss:jacquetmoduletraceidentity}.
\end{proof}

\subsection{}\label{ss:properlyinducedsupport}
Before moving on to the proof of condition (b) in Lemma \ref{ss:firstinductivelemma}, we record a useful consequence of van Djik's formula. Let $P$ be a proper standard parabolic subgroup of $\GL_n$, write $M$ for the standard Levi subgroup of $P$, let $\pi'$ be an irreducible smooth representation of $M(F)$, and write $\pi$ for the induced representation
\begin{align*}
\pi\deq\nInd_{M(F)}^{\GL_n(F)}\pi'.
\end{align*}
As $\pi'$ has finite length, so does $\pi$.
\begin{lem*}
Let $\phi$ be a function in $C^\infty_c(\GL_n(F_r))$ that is supported in elements of $\GL_n(F_r)$ whose norm is elliptic. Then we have $\tr((\phi,\sg)|\pi/F_r)=0$.
\end{lem*}
\begin{proof}
Let $f$ be a transfer of $\phi$, i.e. a function in $C^\infty_c(\GL_n(F))$ such that $f$ and $\phi$ have matching twisted orbital integrals. Then $f$ is supported in elliptic elements of $\GL_n(F)$, and no such element is contained in $P(F)$ because $P$ is a proper parabolic subgroup. Therefore the normalized $\GL_n(\cO)$-invariant constant term $f^{P,\GL_n(\cO)}$ of $f$ along $P$ is identically zero on $M(F)$. Our remark from \ref{ss:twistedcharacters} and van Djik's formula \cite[Theorem 5.9]{Lem16} together yield
\begin{align*}
\tr((\phi,\sg)|\pi/F_r) = \tr(f|\pi) =\tr(f^{P,\GL_n(\cO)}|\pi')=0,
\end{align*}
as desired.
\end{proof}
We will apply the above lemma to $\bf1_{B_k}\cdot\phi_{\tau,h}$ in the proof of Proposition \ref{ss:conditionb}.

\subsection{}\label{ss:conditionb}
Finally, we use Proposition \ref{ss:jacquetmoduletraceidentity} to prove condition (b), assuming condition (a).
\begin{prop*}
Assume that the inductive condition (a) in Lemma \ref{ss:firstinductivelemma} holds, that is, Theorem A is true for $n'<n$. Let $\pi$ be a smooth representation of $\GL_n(F)$, and suppose it is of the form
\begin{align*}
\pi=\nInd_{P(F)}^{\GL_n(F)}(\pi_1\otimes\dotsb\otimes\pi_t),
\end{align*}
where $t\geq2$, the $\pi_i$ are some irreducible smooth representations of $\GL_{n_i}(F)$ with $n=n_1+\dotsb+n_t$, and $P$ is the standard parabolic subgroup of $\GL_n$ with block sizes $(n_1,\dotsc,n_t)$. Then we have an equality of traces
\begin{align*}
  \tr(f_{\tau,h}|\pi) = \tr\left(\tau|\rho(\pi_1)(\textstyle\frac{n-n_1}2)\oplus\dotsb\oplus\rho(\pi_t)(\frac{n-n_t}2)\right)\tr(h|\pi)
\end{align*}
for all $\tau$ in $W_F$ with $v(\tau)>0$ and $h$ in $C^\infty_c(\GL_n(\cO))$.
\end{prop*}

In the proof of Proposition \ref{ss:conditionb}, we shall frequently pass between normalized and un-normalized induction. For this, one immediately checks that
\begin{align*}
\nInd_{Q(F)}^{\GL_m(F)}(\la_1\otimes\la_2) = \Ind_{Q(F)}^{\GL_m(F)}(\la_1[\textstyle\frac{m_2}2]\otimes\la_2[-\frac{m_1}2]),
\end{align*}
where the $\la_i$ are irreducible smooth representations of $\GL_{m_i}(F)$ with $m=m_1+m_2$, and $Q$ is the standard parabolic subgroup of $\GL_m$ with block sizes $(m_1,m_2)$.

We also recall that the two representations
\begin{align*}
\nInd_{Q(F)}^{\GL_m(F)}(\la_1\otimes\la_2)\mbox{ and }\nInd_{Q'(F)}^{\GL_m(F)}(\la_2\otimes\la_1),
\end{align*}
where $Q'$ denotes the standard parabolic subgroup of $\GL_n$ with block sizes $(m_2,m_1)$, have the same Jorder--H\"older series \cite[2.9]{BZ77}. This enables us to identify them as virtual representations of $\GL_n(F)$.
\begin{proof}[Proof of Proposition \ref{ss:conditionb}]
We start with some immediate reductions. By using the transitivity of parabolic induction \cite[1.7]{Zel80}, it suffices to prove the proposition for $t=2$. Furthermore, because swapping $\pi_1$ and $\pi_2$ leaves the virtual representation $\pi$ unchanged, we may assume that $n_1\leq n_2$. Finally, by taking the trace of $h$ afterwards, Proposition \ref{ss:jacquetmoduletraceidentity} shows that it suffices to find, for all integers $1\leq k\leq n$, a decomposition of virtual representations
\begin{align*}
\pi_{N_k} = \sum_{i=1}^{t_k}\pi_{1,i}^k\otimes\pi_{2,i}^k
\end{align*}
of $\GL_k(F)\times\GL_{n-k}(F)$, where the $\pi_{1,i}^k$ and $\pi_{2,i}^k$ have finite length, such that we have an equality of $\bC$-virtual representations
\begin{align}\label{eq:goal}
  \sum_{k=1}^nq^{(n-k)r}\sum_{i=1}^{t_k}\Ind_{P_k(\cO)}^{\GL_n(\cO)}\left(\tr((\bf1_{B_k}\cdot\phi_{\tau,-},\sg)|\pi_{1,i}^k/F_r)\otimes\pi_{2,i}^k\right) = \tr\left(\tau|\rho(\pi_1)(\textstyle\frac{n-n_1}2)\oplus\rho(\pi_2)(\frac{n-n_2}2)\right)\pi\tag{$\star$}
\end{align}
of $\GL_n(\cO)$. We will choose these $\pi_{1,i}^k$ and $\pi_{2,i}^k$ in a way that relates them to $\pi_1$ and $\pi_2$. To do so, we shall use the following \emph{induction-restriction formula} of Bernstein--Zelevinsky.

\subsection{}\label{ss:bernsteinzelevinskyinductionrestriction}
Write $T$ for the standard maximal torus of $\GL_n$, and write $B$ for the standard Borel subgroup of $\GL_n$. Let $P$ and $Q$ be standard parabolic subgroups of $\GL_n$, and denote their standard Levi subgroups using $M$ and $N$, respectively. We identify the Weyl group $W(T,\GL_n)$ with the symmetric group $\fS_n$ acting by permuting entries. Write $W_{M,N}$ for the subset
\begin{align*}
W_{M,N}\deq\{w\in W(T,\GL_n)|w(M\cap B)\subseteq B\mbox{ and }w^{-1}(N\cap B)\subseteq B\}
\end{align*}
of $W(T,\GL_n)$.
\begin{lem*}[{\cite[2.12]{BZ77}}]
Let $\pi$ be an irreducible smooth representation of $\GL_n(F)$. Then we have an equality of virtual representations
\begin{align*}
\nRes_{Q(F)}^{\GL_n(F)}(\nInd_{P(F)}^{\GL_n(F)}\pi) = \sum_w\nInd_{Q'(F)}^{N(F)}(w(\nRes_{P'(F)}^{M(F)}\pi))
\end{align*}
of $N(L)$, where $\nRes$ denotes normalized Jacquet restriction, $w$ runs over all elements of $W_{M,N}$, $P'$ denotes the standard parabolic subgroup of $M$ whose standard Levi subgroup is $M\cap w^{-1}(N)$, and $Q'$ denotes the standard parabolic subgroup of $N$ whose standard Levi subgroup is $w(M)\cap N$.
\end{lem*}
Return to the proof of Proposition \ref{ss:conditionb}. Our goal will be to apply $\tr((\bf1_{B_k}\cdot\phi_{\tau,-},\sg)|-/F_r)$ to the $\pi_{1,i}^k$. At this point, Lemma \ref{ss:properlyinducedsupport} indicates that the terms corresponding to properly parabolically induced $\pi_{1,i}^k$ will vanish. Therefore, when using the induction-restriction formula for $M=\GL_{n_1}\times\GL_{n_2}$ and $N=\GL_k\times\GL_{n-k}$, we will only be interested in the terms for which $w(M)\cap N$ contains $\GL_k$.

With these choices of $M$ and $N$, now $W_{M,N}$ corresponds precisely to the set of permutations $w$ in $\fS_n$ for which
\begin{itemize}
\item $w$ is order-preserving on $\{1,\dotsc,n_1\}$ and $\{n_1+1,\dotsc,n\}$,
\item $w^{-1}$ is order-preserving on $\{1,\dotsc,k\}$ and $\{k+1,\dotsc,n\}$.
\end{itemize}
Furthermore, the $w(M)\cap N\supseteq\GL_k$ condition is equivalent to asking that
\begin{align*}
w(\{1,\dotsc,n_1\})\supseteq\{1,\dotsc,k\}\mbox{ or }w(\{n_1+1,\dotsc,n\})\supseteq\{1,\dotsc,k\}.
\end{align*}
If $k\leq n_1$, then the only such $w$ in $W_{M,N}$ are the identity permutation and the permutation $\te$ that sends $\{1,\dotsc,n_1\}$ to $\{n_2+1,\dotsc,n\}$ and $\{n_1+1,\dotsc,n\}$ to $\{1,\dotsc,n_2\}$ while preserving their internal orders. If instead $n_1< k\leq n_2$, then the identity no longer satisfies $w(M)\cap N\supseteq\GL_k$, although $\te$ continues to do so. Finally, for $n_2<k$, no element of $W_{M,N}$ has $w(M)\cap N\supseteq\GL_k$. Altogether, Lemma \ref{ss:bernsteinzelevinskyinductionrestriction} yields an equality 
\begin{align}\label{eq:jacquetrestriction}
\pi_{N_k} = &\Ind_{P_{n_1}(F)\cap(\GL_k(F)\times\GL_{n-k}(F))}^{\GL_k(F)\times\GL_{n-k}(F)}(\pi'_{N_{k_1}}\otimes\pi'')\nonumber\\
&+\Ind_{P_{n_2}(F)\cap(\GL_k(F)\times\GL_{n-k}(F))}^{\GL_k(F)\times\GL_{n-k}(F)}(\pi''_{N_{k_2}}[n_1]\otimes\pi'[-n_2])+\sum_{j=1}^{u_k}\tau_{1,j}^k\otimes\tau_{2,j}^k
\end{align}
of virtual representations of $\GL_k(F)\times\GL_{n-k}(F)$, where $\pi'$ denotes $\pi_1[\frac{n_2}2]$, $\pi''$ denotes $\pi_2[-\frac{n_1}2]$, $N_{k_1}$ denotes the unipotent radical of $P_{n_1}\cap\GL_k$, $N_{k_2}$ denotes the unipotent radical of $P_{n_2}\cap\GL_k$, and the $\tau_{1,1}^k,\dotsc,\tau_{1,u_k}^k$ (respectively $\tau_{2,1}^k,\dotsc,\tau_{2,u_k}^k$) are smooth representations of $\GL_k(F)$ (respectively $\GL_{n-k}(F)$) such that the $\tau_{1,j}^k$ are properly induced representations of $\GL_k(F)$. As remarked above, if $k>n_1$ or $k>n_2$, then we can absorb the first or second term on the right hand side into the third summation term, respectively. 

So suppose that $k\leq n_1$ and $k\leq n_2$. We shall begin by studying the first term on the right hand side of Equation (\ref{eq:jacquetrestriction}). Because the unnormalized Jacquet restriction $\pi'_{N_{k_1}}$ of $\pi'$ is a smooth representation of $\GL_k(F)\times\GL_{n_1-k}(F)$ of finite length, we can write its virtual representation as
\begin{align*}
\pi'_{N_{k_1}} = \sum_{i=1}^{t_k'}\pi'^k_{1,i}\otimes\pi'^k_{2,i},
\end{align*}
where $t_k'$ is a non-negative integer, and the $\pi'^k_{1,i}$ and $\pi'^k_{2,i}$ are irreducible smooth representations of $\GL_k(F)$ and $\GL_{n_1-k}(F)$, respectively. Write $P_{k,n_1}$ for the standard parabolic subgroup of $\GL_n$ with block sizes $(k,n_1-k,n_2)$. By applying $\Ind_{P_{n_1}(F)\cap(\GL_k(F)\times\GL_{n-k}(F))}^{\GL_k(F)\times\GL_{n-k}(F)}(-\otimes\pi'')$ to the above decomposition of $\pi'_{N_{k_1}}$, we obtain the equality of virtual representations
\begin{align}\label{eq:virtual1}
\Ind_{P_{n_1}(F)\cap(\GL_k(F)\times\GL_{n-k}(F))}^{\GL_k(F)\times\GL_{n-k}(F)}(\pi'_{N_{k_1}}\otimes\pi'') = \sum_{i=1}^{t_k'}\pi_{1,i}'^k\otimes\Ind_{P_{k,n_1}(F)\cap\GL_{n-k}(F)}^{\GL_{n-k}(F)}(\pi_{2,i}'^k\otimes\pi''),
\end{align}
giving an alternate description of the first term on the right hand side of Equation (\ref{eq:jacquetrestriction}). 

Now Proposition \ref{ss:jacquetmoduletraceidentity} for $\pi'$ indicates that
\begin{align*}
\sum_{k=1}^{n_1}q^{(n_1-k)r}\sum_{i=1}^{t_k'}\tr\left(h_1|\Ind_{P_k(\cO)\cap\GL_{n_1}(\cO)}^{\GL_{n_1}(\cO)}\left(\tr((\bf1_{B_k}\cdot\phi_{\tau,-},\sg)|\BC(\pi_{1,i}'^k))\otimes\pi_{2,i}'^k\right)\right) = \tr(f_{\tau,h_1}|\pi')
\end{align*}
for all functions $h_1$ in $C^\infty_c(\GL_{n_1}(\cO))$. Theorem A.(i) for $\pi'$ implies that $\tr(f_{\tau,h_1}|\pi') = \tr\left(\tau|\rho(\pi')\right)\tr(h_1|\pi')$, so \ref{ss:peterweyl} yields an equality of $\bC$-virtual representations
\begin{align*}
\sum_{k=1}^{n_1}q^{(n_1-k)r}\sum_{i=1}^{t_k'}\Ind_{P_k(\cO)\cap\GL_{n_1}(\cO)}^{\GL_{n_1}(\cO)}\left(\tr((\bf1_{B_k}\cdot\phi_{\tau,-},\sg)|\BC(\pi_{1,i}'^k))\otimes\pi_{2,i}'^k\right) = \tr\left(\tau|\rho(\pi')\right)\pi'
\end{align*}
of $\GL_{n_1}(\cO)$. Since we defined $\pi'$ to be $\pi_1[\textstyle\frac{n_2}2]$, taking $\Ind_{P_{n_1}(F)}^{\GL_n(F)}(-\otimes\pi'')$ on both sides shows that
\begin{align}\label{eq:finaleq1}
\sum_{k=1}^{n_1}q^{(n-k)r}\sum_{i=1}^{t_k'}\Ind_{P_{k,n_1}(\cO)}^{\GL_n(\cO)}\left(\tr((\bf1_{B_k}\cdot\phi_{\tau,-},\sg)|\BC(\pi_{1,i}'^k))\otimes\pi_{2,i}'^k\otimes\pi''\right) = \tr\left(\tau|\rho(\pi_1)(\textstyle\frac{n_2}2)\right)\pi
\end{align}
as $\bC$-virtual representations of $\GL_n(\cO)$, where we used Proposition \ref{ss:unramifiedtwistsofrho} to identify $\rho(\pi')$ with $\rho(\pi_1)(\frac{n_2}2)$.

Next, we turn to the second term on the right hand side of Equation (\ref{eq:jacquetrestriction}). As with the first term, we have an equality of virtual representations
\begin{align*}
\pi''_{N_{k_2}} = \sum_{i=1}^{t_k''}\pi''^k_{1,i}\otimes\pi''^k_{2,i},
\end{align*}
where $t_k''$ is a non-negative integer, and the $\pi''^k_{1,i}$ and $\pi''^k_{2,i}$ are irreducible smooth representations of $\GL_k(F)$ and $\GL_{n_2-k}(F)$, respectively. Write $P_{k,n_2}$ for the standard parabolic subgroup of $\GL_n$ with block sizes $(k,n_2-k,n_1)$. By applying $\Ind_{P_{n_2}(F)\cap(\GL_k(F)\times\GL_{n-k}(F))}^{\GL_k(F)\times\GL_{n-k}(F)}((-)[n_1]\otimes\pi'[-n_2])$ to the above decomposition of $\pi''_{N_{k_2}}$, we obtain the equality of virtual representations
\begin{align}\label{eq:virtual2}
\Ind_{P_{n_2}(F)\cap(\GL_k(F)\times\GL_{n-k}(F))}^{\GL_k(F)\times\GL_{n-k}(F)}(\pi''_{N_{k_2}}[n_1]\otimes\pi'[-n_2]) = \sum_{i=1}^{t_k''}\pi_{1,i}''^k[n_1]\otimes\Ind_{P_{k,n_2}(F)\cap\GL_{n-k}(F)}^{\GL_{n-k}(F)}(\pi_{2,i}''^k[n_1]\otimes\pi'[-n_2]),
\end{align}
giving an alternate description of the second term on the right hand side of Equation (\ref{eq:jacquetrestriction}).

Now Proposition \ref{ss:jacquetmoduletraceidentity} for $\pi''$ indicates that
\begin{align*}
\sum_{k=1}^{n_2}q^{(n_2-k)r}\sum_{i=1}^{t_k''}\tr\left(h_2|\Ind_{P_k(\cO)\cap\GL_{n_2}(\cO)}^{\GL_{n_2}(\cO)}\left(\tr((\bf1_{B_k}\cdot\phi_{\tau,-},\sg)|\BC(\pi_{1,i}''^k))\otimes\pi_{2,i}''^k\right)\right) = \tr(f_{\tau,h_2}|\pi'')
\end{align*}
for all functions $h_2$ in $C^\infty_c(\GL_{n_2}(\cO))$. Theorem A.(i) for $\pi''$ implies that $\tr(f_{\tau,h_2}|\pi'')=\tr\left(\tau|\rho(\pi'')\right)\tr(h_2|\pi'')$, so \ref{ss:peterweyl} yields an equality of $\bC$-virtual representations
\begin{align*}
\sum_{k=1}^{n_2}q^{(n_2-k)r}\sum_{i=1}^{t_k''}\Ind_{P_k(\cO)\cap\GL_{n_2}(\cO)}^{\GL_{n_2}(\cO)}\left(\tr((\bf1_{B_k}\cdot\phi_{\tau,-},\sg)|\BC(\pi_{1,i}''^k))\otimes\pi_{2,i}''^k\right) = \tr\left(\tau|\rho(\pi'')\right)\pi''
\end{align*}
of $\GL_{n_2}(\cO)$. Since we defined $\pi''$ to be $\pi_2[-\textstyle\frac{n_1}2]$, taking $\Ind_{P_{n_2}(F)}^{\GL_n(F)}((-)[n_1]\otimes\pi'[-n_2])$ on both sides shows that
\begin{align}\label{eq:finaleq2}
\sum_{k=1}^{n_2}q^{(n_2-k)r}\sum_{i=1}^{t_k''}\Ind_{P_{k,n_2}(\cO)}^{\GL_n(\cO)}\left(\tr((\bf1_{B_k}\cdot\phi_{\tau,-},\sg)|\BC(\pi_{1,i}''^k[n_1]))\otimes\pi_{2,i}''^k[n_1]\otimes\pi'[-n_2]\right) = \tr\left(\tau|\rho(\pi_2)(\textstyle\frac{n_1}2)\right)\pi
\end{align}
as $\bC$-virtual representations of $\GL_n(\cO)$, where we used Proposition \ref{ss:unramifiedtwistsofrho} to identify $\rho(\pi'')$ with $\rho(\pi_2)(-\frac{n_2}2)$.

We piece together the above work as follows. Equation (\ref{eq:jacquetrestriction}), Equation (\ref{eq:virtual1}), and Equation (\ref{eq:virtual2}) indicate that
\begin{align*}
\pi_{N_k} &= \sum_{i=1}^{t_k'}\pi_{1,i}'^k\otimes\Ind_{P_{k,n_1}(F)\cap\GL_{n-k}(F)}^{\GL_{n-k}(F)}(\pi_{2,i}'^k\otimes\pi'')\\
&+\sum_{i=1}^{t_k''}\pi_{1,i}''^k[n_1]\otimes\Ind_{P_{k,n_2}(F)\cap\GL_{n-k}(F)}^{\GL_{n-k}(F)}(\pi_{2,i}''^k[n_1]\otimes\pi'[-n_2])+\sum_{j=1}^{u_k}\tau_{1,j}^k\otimes\tau_{2,j}^k,
\end{align*}
and we take this decomposition for our $\pi^k_{1,i}$ and $\pi^k_{2,i}$. Because the $\tau_{1,j}^k$ are properly induced, Lemma \ref{ss:properlyinducedsupport} shows that the $\tr((\bf1_{B_k}\cdot\phi_{\tau,-},\sg)|\BC(\tau_{1,j}^k))$ terms vanish. Therefore the sum of Equation (\ref{eq:finaleq1}) and Equation (\ref{eq:finaleq2}) is precisely Equation (\ref{eq:goal}), and this concludes the proof of Proposition \ref{ss:conditionb}.
\end{proof}

\section{The Lubin--Tate tower}\label{s:lubintatetower}
In this section, our goal is to prove that condition (d) in Lemma \ref{ss:firstinductivelemma} holds, under the assumption that condition (c) holds. Although we postpone the proof of condition (c) for later, our proof of condition (d) does not use anything from the construction of condition (c)---we may safely black box the latter. Along the way, we also prove the $n=1$ case of Theorem A, which serves as the base case for applying Lemma \ref{ss:firstinductivelemma}.

We begin by using connected local shtukas to prove, in the same vein as Lemma \ref{ss:normbijection} and Lemma \ref{ss:etalelocalshtukatransfer}, the existence of transfers for certain conjugation-invariant functions on division algebras over $F$. We then introduce the \emph{Lubin--Tate tower}, which is closely related to our deformation spaces from \S\ref{s:deformationspaces}. By studying this relation, along with the fact that it gives the Lubin--Tate proof of local class field theory, we prove the $n=1$ case of Theorem A. We also use this relation to compare the cohomologies of the Lubin--Tate tower and $\br\fX_{\de,m}$. This comparison is stated in terms of the \emph{local Jacquet--Langlands correspondence}, which relates representations of $\GL_n(F)$ with representations of division algebras. Finally, we use results from Jacquet--Piatetski-Shapiro--Shalika's theory of \emph{new-vectors}, which are certain elements in irreducible generic representations of $\GL_n(F)$, to conclude the proof of condition (d).

\subsection{}
We start by using two descriptions of isomorphism classes of connected local shtukas over $\Spec\ka_r$ to prove the following proposition. Write $B$ for the central division algebra over $F$ of Hasse invariant $\frac1n$, and write $\cO_B$ for its ring of integers. By abuse of notation, we denote the normalized valuation on $B$ using $v$. Write $B_r$ for the subset of $b$ in $B$ satisfying $v(b)=r$.
\begin{lem*}\label{ss:noncommutativenormbijection}
There exists a bijection
\begin{align*}
\N:\{\GL_n(\cO_r)\mbox{-}\sg\mbox{-conjugacy classes in }B_n\}\rar^\sim\{\cO_B^\times\mbox{-conjugacy classes in }B_r\}
\end{align*}
such that the characteristic polynomial of $\N\de$ equals that of $\de\sg(\de)\dotsm\sg^{r-1}(\de)$. By abuse of notation, we call this the \emph{norm map}.
\end{lem*}
This result and its proof are entirely analogous to those of Lemma \ref{ss:normbijection}.
\begin{proof}
We shall construct bijections between both the left-hand as well as right-hand sides and the set
\begin{align*}
\{\mbox{isomorphism classes of connected local shtukas over }\Spec\ka_r\mbox{ of rank }n\},
\end{align*}
and we denote the composed bijection using $\N$. On the left-hand side, \ref{ss:effectiveminusculelocalshtukaconjugacy} yields the desired bijection. 

We now turn to the right-hand side. Let $\sM$ be a connected local shtuka over $\Spec\ka_r$. Completed unramified descent implies that the isomorphism class of $\sM$ is determined by the isomorphism class of $\br\sM\deq\sM_{\ov\ka}$ along with a descent isomorphism $f:\sg^{-r,*}\br\sM\rar^\sim\br\sM$ corresponding to the action of the geometric $q^r$-Frobenius map $\sg^{-r}$. As $\br\sM$ must be isomorphic to the local shtuka associated with the Lubin--Tate module, after choosing such an isomorphism we may identify $\End(\br\sM)$ with $\cO_B$. Write $\br\sF:\sg^*\br\sM\rar\br\sM$ for the Frobenius of $\br\sM$, which corresponds to an element of valuation $1$ in $\cO_B$, and write $\ga$ for the endomorphism
\begin{align*}
\ga\deq\br\sF\circ\dotsb\circ\sg^{r-1,*}\br\sF\circ\sg^{r,*}f
\end{align*}
of $\br\sM$, viewed as an element of $\cO_B$. As $\sg^{r,*}f$ is an isomorphism and hence corresponds to an element of valuation $0$ in $\cO_B$, this shows that $v(\ga)=r$. Since any other choice of isomorphism between $\br\sM$ and the local shtuka associated with the Lubin--Tate module preserves the $\cO_B^\times$-conjugacy class of $\ga$, this yields a bijection $\ga\leftrightarrow\sM$ between $\cO_B^\times$-conjugacy classes in $B_r$ and isomorphism classes of connected local shtukas over $\Spec\ka_r$ of rank $n$.

Recall that $\br\sF$ corresponds to $\de\circ\sg^{\oplus n}$ under \ref{ss:effectiveminusculelocalshtukaconjugacy}. Since $f$ corresponds to $(\sg^{\oplus n})^{-r}$, we see that $\ga$ corresponds to
\begin{align*}
\underbrace{(\de\circ\sg^{\oplus n})\circ\dotsb\circ(\de\circ\sg^{\oplus n})}_{r\,\text{times}}\circ (\sg^{\oplus n})^{-r} = \left(\de\sg(\de)\dotsm\sg^{r-1}(\de)\right)\circ(\sg^{\oplus n})^r\circ (\sg^{\oplus n})^{-r} = \de\sg(\de)\dotsm\sg^{r-1}(\de),
\end{align*}
as desired.
\end{proof}

\subsection{}\label{ss:connectedlocalshtukatransfer}
We will now construct certain functions with matching twisted orbital integrals, that is, certain transfers of functions. Let $h$ be an $\cO_B^\times$-conjugation-invariant function in $C^\infty_c(B_r)$, and form the function $\phi:B_n\rar\bC$ by sending $\de\mapsto h(\N\de)$. One can show that $\phi$ is locally constant \cite[Corollary 2.3]{Sch13b}\footnote{The proof given here is stated for mixed characteristic and $\GL_n$, but it only uses Lang's lemma and the relationship between inner twists of $\GL_n$ and central division algebras. In particular, it adapts to equal characteristic and any inner twist of $\GL_n$.}.
\begin{lem*}
The functions $h$ and $\phi$ have matching twisted orbital integrals.
\end{lem*}
This result and its proof are entirely analogous to those of Lemma \ref{ss:etalelocalshtukatransfer}.
\begin{proof}
We need to prove that $\O_{\N\de}(h)=\TO_{\de,\sg}(\phi)$ for all regular $\de$ in $B_n$, where $\O_{\N\de}$ and $\TO_{\de,\sg}$ denote the orbital integral on $\N\de$ and twisted orbital integral on $\de$, respectively, with respect to compatible Haar measures. We can prove this as follows: write $\sM$ for the connected local shtuka corresponding to $\de$ under \ref{ss:effectiveminusculelocalshtukaconjugacy}, and write $X$ for the set of isomorphism classes of quasi-isogenies $\be:\sM\dasharrow\sM'$ between connected local shtukas. Note that the group $\Ga$ of self-quasi-isogenies $\sM\dasharrow\sM$ has a left action on $X$ given by sending $\be$ to $\be\circ g^{-1}$ for any $g$ in $\Ga$.

One can equate both $\O_{\N\de}(h)$ as well as $\TO_{\de,\sg}(\phi)$ to the sum
\begin{align*}
\sum_{(\sM',\be)}\wt{h}(\sM',\be),
\end{align*}
where $(\sM',\be)$ ranges over all elements in $\Ga\bs X$, and $\wt{h}$ is the function sending $(\sM',\be)$ to $h(\de')$, where $\de'$ corresponds to $\sM'$ under \ref{ss:effectiveminusculelocalshtukaconjugacy}. In a line of reasoning similar to the proof of Lemma \ref{ss:noncommutativenormbijection}, we relate $\TO_{\de,\sg}(\phi)$ to the above sum by using \ref{ss:effectiveminusculelocalshtukaconjugacy}, and we relate $\O_{\N\de}(h)$ to the above sum by using completed unramified descent, c.f. \cite[Proposition 4.7]{Sch13}.
\end{proof}

\subsection{}
At this point, we introduce the \emph{Lubin--Tate tower}. Let $\de$ be in $B_n$. By the Dieudonn\'e--Manin classification \cite[(2.4.5)]{Lau96}, the isomorphism class of $\br{H}_\de$ is independent of our choice of $\de$, and we may take $\de$ to lie in $\GL_n(F)$. We write $\br{H}$ for $\br{H}_\de$. The inverse system $(\br\fX_{\de,m})_m$ does not depend on $\de$ either, so we shall rewrite it as
\begin{align*}
\dotsm\rar\br\fX_m\rar\dotsm\rar\br\fX_1\rar\br\fX_0.
\end{align*}
This is the \emph{Lubin--Tate tower}. Of course, all our statements from \S\ref{s:deformationspaces} concerning $\br{H}_\de$ also apply to the Lubin--Tate tower:
\begin{enumerate}[$\bullet$]
\item this inverse system has a right action of $\GL_n(\cO)$,
\item we can form the cohomology group $R^i\psi_m\deq H^i(\br\fX_{m,\bC_\vpi},\ov\bQ_\ell)$, which is isomorphic to $H^0(\br\fX_{m,\ov\ka},R^i\Psi_{\br\fX_{m,\bC_\vpi}},\ov\bQ_\ell)$,
\item we can form the direct limit $R^i\psi\deq\dirlim_mR^i\psi_m$,
\item we can take the alternating sum $[R\psi]\deq\sum_{i=0}^\infty(-1)^iR^i\psi$.
\end{enumerate}
We shall now explain how the left action of $I_F$ on $R^i\psi_m$ actually extends to an action of $(B^\times\times W_F)_0$, where $(B^\times\times W_F)_0$ is the subgroup 
\begin{align*}
(B^\times\times W_F)_0\deq\{b\times\tau\in B^\times\times W_F|v(b)+v(\tau)=0\}
\end{align*}
of $B^\times\times W_F$. We start by describing the action for $b\times \tau$ with non-negative $v(\tau)=-v(b)$. Let $r=v(\tau)=v(b^{-1})$, and note that $b^{-1}$ lies in $\cO_B$. By abuse of notation, write $b^{-1}:\br{H}\rar\br{H}$ for the corresponding endomorphism of $\br{H}$, write $\br{F}:\sg^*\br{H}\rar\br{H}$ for the Frobenius of $\br{H}$, and write $f_\tau:\sg^{-r,*}\br{H}\rar^\sim\br{H}$ for the isomorphism from \ref{ss:deformationunramifiedextension}. Using the isomorphism $\sg^*\br{H}\cong\br{H}$ obtained from the fact that $\de$ lies in $\GL_n(F)$, the map $\br{F}$ corresponds to an element of valuation $1$ in $\cO_B$. Therefore the quasi-isogeny
\begin{align*}
b\times\tau\deq f_\tau\circ(\sg^{-r,*}\br{F}^{-1}\circ\dotsb\circ\sg^{-1,*}\br{F}^{-1})\circ b^{-1}
\end{align*}
corresponds to an element of valuation $0$ and hence equals an automorphism of $\br{H}$.

As for $b\times\tau$ with negative $v(\tau)$, we define $b\times\tau$ as the inverse of $d^{-1}\times\tau^{-1}$. Since we view $\cO_B$ as acting from the right on $\br{H}$, we see that in all cases $(b\times\tau)\circ(b'\times\tau') = bb'\times\tau\tau'$.

By sending the triple $(H',\al',\io')$ to $(H',\al',\io'\circ(d\times\tau))$, we obtain a right action on $\br\fX_m$ and hence its base change $\br\fX_{m,\cO_{\bC_\vpi}}$. Taking cohomology gives us the desired left action on $R^i\psi_m$. Forming the direct limit over all $m$ yields a $(\GL_n(\cO)\times\cO_B^\times)\times I_F$-admissible/continuous representation as in \cite[p.~24]{HT01} of $\GL_n(\cO)\times(B^\times\times W_F)_0$ over $\ov\bQ_\ell$.

\subsection{}\label{ss:lubintatecomparison}
Now return to the situation of an arbitrary $\de$ in $B_n$. We shall begin studying the relationship between $\br\fX_{\de,m}$ and the Lubin--Tate tower by considering the virtual representations $[R\psi_\de]$ and $[R\psi]$.
\begin{prop*}
We have an equality of traces
\begin{align*}
\tr(\tau\times h|[R\psi_\de]) = \tr(h\times(\N\de)^{-1}\times\tau|[R\psi]).
\end{align*}
\end{prop*}
\begin{proof}
Under our $\GL_n(\cO)$-equivariant isomorphism $\br{H}_\de\cong\br{H}$, we see that $\br{F}$ corresponds to $\de\circ\sg^{\oplus n}$. Therefore work from the proof of Lemma \ref{ss:noncommutativenormbijection} shows that
\begin{align*}
(\N\de,\tau) = f_\tau\circ(\sg^{-r,*}\br{F}^{-1}\circ\dotsb\circ\sg^{-1,*}\br{F}^{-1})\circ\left((\br{F}\circ\dotsb\circ\sg^{r-1,*}\br{F}\circ\sg^{r,*}f)^{-1}\right)^{-1} = f_\tau\circ\sg^{r,*}f,
\end{align*}
where $f:\sg^{-r,*}\br{H}\rar^\sim\br{H}$ corresponds to the action of $\sg^{-r}$. The description of $\fX_{\de,m}$ in \ref{ss:deformationunramifiedextension} shows that precomposing with $f_\tau\circ\sg^{r,*}f$ induces the action of $\tau$ on $R^i\psi_{\de,m}$. Thus the action of $\ga\times(\N\de)^{-1}\times\tau$ on $R^i\psi_m$ corresponds to the action of $\tau\times\ga$ on $R^i\psi_{\de,m}$ for all $\ga$ in $\GL_n(\cO)$. Taking the direct limit over all $m$, forming the alternating sum over all $i$, and taking traces yields the desired result.
\end{proof}

\subsection{}\label{ss:lubintatelocalclassfieldtheory}
Before turning to the proof of the $n=1$ case of Theorem A, we take a brief interlude to describe the cohomology of the Lubin--Tate tower in terms of the Lubin--Tate proof of local class field theory. Note that in the $n=1$ case, $\GL_1(\cO)=\cO^\times$ and $(B^\times\times W_F)_0=(F^\times\times W_F)_0$. Recall that $\Art:F^\times\rar^\sim W^\ab_F$ denotes the local reciprocity isomorphism that sends uniformizers to geometric $q$-Frobenii. We denote the maximal unramified extension of $F$ using $F^\text{nr}$, and we denote the extension of $F^\text{nr}$ corresponding to $(\cO/\vpi^m)^\times$ using $F^{\text{nr},\,m}$.

In our calculations for the $n=1$ case, we take $\de=\vpi$.
\begin{prop*}
In the $n=1$ case, we have an equality of representations
\begin{align*}
[R\psi] = C^\infty_c(\cO^\times),
\end{align*}
where $\cO^\times$ acts by inverse left multiplication, and $(F^\times\times W_F)_0$ acts via right multiplication by $\Art^{-1}(\tau)^{-1}b$.
\end{prop*}
We only distinguish between left and right multiplication to maintain the situation for general $n\geq1$---there is no difference between them in the $n=1$ case, because all groups involved only act through their abelianizations.
\begin{proof}[Proof of Proposition \ref{ss:lubintatelocalclassfieldtheory}]
Recall from \S\ref{s:deformationspaces} that $\br\fX_m=\Spf\br{R}_m$ for some regular complete local Noetherian $\br\cO$-algebra $\br{R}_m$. The Lubin--Tate proof of local class field theory shows that $\br{R}_m$ is isomorphic to the completion of $\cO^{\text{nr},\,m}$, where $\cO^{\text{nr},\,m}$ is the ring of integers of $F^{\text{nr},\,m}$. Therefore $\br\fX_{m,\bC_\vpi}=\Spf\br{R}_m\wh\otimes_{\br\cO}\bC_\vpi$ consists of one point for each element of $\Gal(\br{R}_m[\frac1\vpi]/\br{F})=(\cO/\vpi^m)^\times$.

Under this identification, the right action of $\cO^\times$ on $\br\fX_{m,\bC_\vpi}$ is given by sending $g$ to $\ga^{-1}g$ for any $\ga$ in $\cO^\times$, and the Lubin--Tate proof of local class field theory implies that the right action of $(F^\times\times W_F)_0$ is given by sending $g$ to $g\Art^{-1}(\tau)b^{-1}$ for any $\tau\times b$ in $(F^\times\times W_F)_0$. Taking cohomology shows that
\begin{align*}
R^0\psi_m = \ov\bQ_\ell[(\cO/\vpi^m)^\times],
\end{align*}
where $\cO^\times$ acts by inverse left multiplication, and $(F^\times\times W_F)_0$ acts via right multiplication by $\Art^{-1}(\tau)^{-1}b$. Finally, as $R^i\psi_m$ vanishes for $i>n-1=0$, we have $[R\psi]=R^0\psi$. Forming the direct limit over all $m$ yields the desired result, where we have identified $\ov\bQ_\ell$ and $\bC$.
\end{proof}

\subsection{}\label{ss:theoremabasecase}
With Proposition \ref{ss:lubintatecomparison} and Proposition \ref{ss:lubintatelocalclassfieldtheory} in hand, we can now prove the $n=1$ case of Theorem A. This amounts to rewriting the Lubin--Tate tower in terms of the Lubin--Tate proof of local class field theory.
\begin{prop*}
Let $\chi$ be an irreducible smooth representation of $\GL_1(F)$, that is, a smooth character $\chi:F^\times\rar\bC^\times$. Then Theorem A holds for $\chi$, that is, there exists a unique continuous character $\rho(\chi):W_F\rar\bC$ satisfying the following property:
\begin{align*}
\mbox{for all }\tau\mbox{ in }W_F\mbox{ with }v(\tau)>0\mbox{ and }h\mbox{ in }C^\infty_c(\GL_n(\cO))\mbox{, we have }\tr(f_{\tau,h}|\chi) = \tr\left(\tau|\rho(\chi)\right)\tr(h|\chi).
\end{align*}
This $\rho(\chi)$ is given by $\rho(\chi)=\chi\circ\Art^{-1}$.
\end{prop*}
Note that we need not consider Theorem A.(ii) for $n=1$, since $\GL_1$ has no proper parabolic subgroups. As before, we always take compatible Haar measures, but we generally omit their precise description.
\begin{proof}[Proof of Proposition \ref{ss:theoremabasecase}]
We first use \ref{ss:twistedcharacters} to show that
\begin{align*}
\tr(f_{\tau,h}|\chi) = \tr((\phi_{\tau,h},\sg)|\BC(\chi)) = \int_{\vpi\cO_r^\times}\!\dif\de\,\tr(\tau\times h|[R\psi_\de])\chi(\N\de),
\end{align*}
and then Proposition \ref{ss:lubintatecomparison} implies that the above integral is equal to
\begin{align*}
\int_{\vpi\cO^\times_r}\!\dif\de\,\tr(h\times(\N\de)^{-1}\times\tau|[R\psi])\chi(\N\de).
\end{align*}
Since $\tr(h\times(-)^{-1}\times\tau|[R\psi])$ and $\tr(h\times(\N-)^{-1}\times\tau|[R\psi])$ have matching twisted orbital integrals by Lemma \ref{ss:connectedlocalshtukatransfer}, making the change of variables $b=\N\de$ gives us
\begin{align*}
\int_{\vpi^r\cO^\times}\!\dif b\,\tr(h\times b^{-1}\times\tau|[R\psi])\chi(b).
\end{align*}
Using Proposition \ref{ss:lubintatelocalclassfieldtheory} and applying the Peter--Weyl theorem to $\cO^\times$ (which is really just Pontryagin duality, since $\cO^\times$ is commutative) shows that the above integral is equal to
\begin{align*}
\tr(h|\chi)\tr(\Art^{-1}(\tau)|\chi) = \tr(h|\chi)\tr(\tau|\chi\circ\Art^{-1}).
\end{align*}
Therefore $\rho(\chi)=\chi\circ\Art^{-1}$ satisfies the desired property. Since $\chi$ is a character, it equals its trace, so it is also characterized uniquely by this property.
\end{proof}

\subsection{}
Now return to the arbitrary $n\geq1$ case. We introduce some notation on the multiplicity of $\la$ in $R^i\psi$, where $\la$ is an irreducible smooth representation of $B^\times$. Write $R^i\psi(\la)$ for the vector space
\begin{align*}
R^i\psi(\la)\deq\Hom_{\cO_B^\times}(\res\la_{\cO_B^\times},R^i\psi).
\end{align*}
This has a left action of $\GL_n(\cO)\times W_F$ as follows. For any $f$ in $R^i\psi(\la)$ and $(\ga,\tau)$ in $\GL_n(\cO)\times W_F$, we set
\begin{align*}
((\ga\times\tau)f)(v) = (\ga\times b\times\tau)f(b^{-1}v)
\end{align*}
for all $v$ in $\la$, where $b$ is any element of $B^\times$ satisfying $v(b)+v(\tau)=0$. Since $f$ commutes with $\cO_B^\times$, this action is independent of our choice of $b$. We see that $R^i\psi(\la)$ is a $\GL_n(\cO)\times I_F$-admissible/continuous representation of $\GL_n(\cO)\times W_F$ over $\ov\bQ_\ell$. Write $[R\psi](\la)$ for the virtual representation $\sum_{i=0}^\infty(-1)^iR^i\psi(\la)$.

\subsection{}\label{ss:localjacquetlanglands}
In order to state our next result, we recall the \emph{local Jacquet--Langlands correspondence}. Let $B$ be a central division algebra over $F$ of dimension $n^2$. Since every irreducible smooth representation $\la$ of $B^\times$ is finite-dimensional, its character distribution is represented by the function $\Te_\la:B^\times\rar\bC$ that sends $b\mapsto\tr(b|\la)$. This finite-dimensionality also implies that every such representation is essentially $L^2$.

There exists a unique bijection \cite[Th. 1.1]{Bad00}
\begin{align*}
\JL:\left\{
  \begin{tabular}{c}
    isomorphism classes of irreducible essentially \\
    $L^2$ representations of $\GL_n(F)$
  \end{tabular}\right\}\rar^\sim  \left\{\begin{tabular}{c}
    isomorphism classes of irreducible \\
    representations of $B^\times$
  \end{tabular}\right\}
\end{align*}
such that, for all regular elliptic elements $\ga$ in $\GL_n(F)$ and $b$ in $B^\times$ whose characteristic polynomials are the same, we have the equality
\begin{align*}
\Te_\pi(\ga) = (-1)^{n-1}\Te_{\JL(\pi)}(b).
\end{align*}
Furthermore, the central characters of $\pi$ and $\JL(\pi)$ are equal.

\begin{prop}\label{prop:lubintatecohomology}
Assume that condition (c) in Lemma \ref{ss:firstinductivelemma} holds, and suppose that $\pi$ is an irreducible cuspidal representation of $\GL_n(F)$. Then we have an equality of virtual $\GL_n(\cO)\times I_F$-admissible/continuous representations
\begin{align*}
(-1)^{n-1}\res\pi_{\GL_n(\cO)}\otimes\rho(\pi) = [R\psi](\JL(\pi)).
\end{align*}
\end{prop}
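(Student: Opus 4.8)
The plan is to verify the asserted equality after applying $\tr\big((\tau\times h)\big|{-}\big)$ for all $\tau$ in $W_F$ with $r\deq v(\tau)>0$ and all $h$ in $C^\infty_c(\GL_n(\cO))$; since both sides are virtual $\GL_n(\cO)\times I_F$-admissible/continuous representations of $\GL_n(\cO)\times W_F$, a standard argument (as in \cite{Sch13}) shows that these traces determine them. By condition (c) the left-hand side contributes $(-1)^{n-1}\tr(\tau|\rho(\pi))\tr(h|\pi)=(-1)^{n-1}\tr(f_{\tau,h}|\pi)$, so the goal becomes the identity $(-1)^{n-1}\tr(f_{\tau,h}|\pi)=\tr\big((\tau\times h)\big|[R\psi](\JL(\pi))\big)$. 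Using \ref{ss:twistedcharacters} we rewrite $\tr(f_{\tau,h}|\pi)=\tr\big((\phi_{\tau,h},\sg)\big|\pi/F_r\big)$, and then, because $\pi$ is cuspidal, all its proper Jacquet restrictions vanish; hence in Proposition \ref{ss:jacquetmoduletraceidentity} only the $k=n$ term survives (where $P_n=\GL_n$ and $\pi_{N_n}=\pi$), and unwinding that formula yields
\[
\tr(f_{\tau,h}|\pi)=\tr\big((\mathbf{1}_{B_{n,n}}\cdot\phi_{\tau,h},\sg)\big|\pi/F_r\big).
\]
Thus only the connected locus $B_{n,n}\subseteq\GL_n(\cO_r)\diag(\vpi,1,\dots,1)\GL_n(\cO_r)$ contributes.

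Next I would transfer this twisted character to the division algebra $B$ of Hasse invariant $\frac1n$. For $\de$ in $B_{n,n}$, Proposition \ref{ss:lubintatecomparison} gives $\phi_{\tau,h}(\de)=\tr\big(h\times(\N\de)^{-1}\times\tau\big|[R\psi]\big)$, so $\mathbf{1}_{B_{n,n}}\cdot\phi_{\tau,h}$ is the function $\de\mapsto g_{\tau,h}(\N\de)$, where $g_{\tau,h}\colon B_r\to\bC$ is the $\cO_B^\times$-conjugation-invariant function $b\mapsto\tr\big(h\times b^{-1}\times\tau\big|[R\psi]\big)$ (locally constant and of compact support by the admissibility of $[R\psi]$; here $\N$ is the noncommutative norm of \ref{ss:noncommutativenormbijection}). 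Lemma \ref{ss:connectedlocalshtukatransfer} then says $\mathbf{1}_{B_{n,n}}\cdot\phi_{\tau,h}$ and $g_{\tau,h}$ have matching twisted and ordinary orbital integrals. Applying the twisted Weyl integration formula on $\GL_n(F_r)$, the ordinary Weyl integration formula on $B^\times$, the bijection of elliptic conjugacy classes underlying \ref{ss:noncommutativenormbijection} (together with the equality of Weyl discriminants $\abs{D_{\GL_n}(\N\de)}=\abs{D_{B^\times}(b)}$ for matching characteristic polynomials), and the Jacquet--Langlands character identity $\Te_\pi(\N\de)=(-1)^{n-1}\Te_{\JL(\pi)}(b)$ on the regular elliptic locus (the complementary locus being of measure zero), I obtain
\[
\tr\big((\mathbf{1}_{B_{n,n}}\cdot\phi_{\tau,h},\sg)\big|\pi/F_r\big)=(-1)^{n-1}\tr\big(g_{\tau,h}\big|\JL(\pi)\big)=(-1)^{n-1}\int_{B_r}g_{\tau,h}(b)\,\Te_{\JL(\pi)}(b)\,db.
\]

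Finally I would match this integral with the right-hand side. Writing $\la\deq\JL(\pi)$ and unwinding the definition $R^i\psi(\la)=\Hom_{\cO_B^\times}\big(\res\la_{\cO_B^\times},R^i\psi\big)$ with its twisted $\GL_n(\cO)\times W_F$-action, a direct computation using the Peter--Weyl theorem and Schur orthogonality for $\cO_B^\times$ shows that for $\tau$ with $v(\tau)=r$ one has $\tr\big((\tau\times h)\big|R^i\psi(\la)\big)=\int_{B_r}\Te_\la(b)\,\tr\big(h\times b^{-1}\times\tau\big|R^i\psi\big)\,db$, with Haar measures normalized as throughout; summing $(-1)^i$ over $i$ (a finite sum, as $R^i\psi$ vanishes for $i>n-1$) gives $\tr\big((\tau\times h)\big|[R\psi](\la)\big)=\int_{B_r}\Te_\la(b)\,g_{\tau,h}(b)\,db$. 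Combining with the previous display, $\tr\big((\tau\times h)\big|[R\psi](\JL(\pi))\big)=(-1)^{n-1}\tr(f_{\tau,h}|\pi)$, which is the required identity. The main obstacle I anticipate is the orbital-integral transfer of the second step: matching the twisted Weyl integration formula on $\GL_n(F_r)$ with the Weyl integration formula on its inner form $B^\times$ requires care with Haar measure normalizations and the Weyl-group factors of corresponding tori, and one must check that the locus where $\N\de$ is not regular semisimple is negligible, so that the Jacquet--Langlands identity (asserted only at regular elliptic elements) may be applied inside the integral; this is precisely the local harmonic-analysis comparison of (twisted) orbital integrals between $\GL_n$ and $B^\times$, the remaining steps being essentially formal.
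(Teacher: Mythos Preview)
Your proof is correct and follows essentially the same route as the paper's: condition (c), reduction to the basic locus $B_n$ by cuspidality, the Lubin--Tate comparison of Proposition \ref{ss:lubintatecomparison}, transfer to $B^\times$ via Lemma \ref{ss:connectedlocalshtukatransfer}, the Jacquet--Langlands character identity, and a Peter--Weyl/Schur orthogonality computation to identify the resulting integral with $\tr(\tau\times h\mid[R\psi](\JL(\pi)))$. The only cosmetic difference is that the paper restricts the twisted character to $B_n$ by a direct appeal to Casselman's theorem (its Lemma \ref{lem:cuspidaltwistedcharacter}) rather than routing through the heavier Proposition \ref{ss:jacquetmoduletraceidentity}.
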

\begin{proof}
We begin by using condition (c) in Lemma \ref{ss:firstinductivelemma} and \ref{ss:twistedcharacters} to see that
\begin{align*}
\tr(h\times\tau|\res\pi_{\GL_n(\cO)}\otimes\rho(\pi)) = \tr(h|\pi)\tr(\tau|\rho(\pi)) = \tr(f_{\tau,h}|\pi) = \tr((\phi_{\tau,h},\sg)|\BC(\pi)).
\end{align*}
Then, Proposition \ref{ss:lubintatecomparison} shows that this twisted trace is equal to the integral
\begin{align*}
\int_{\GL_n(\cO_r)\diag(\vpi,1,\dotsc,1)\GL_n(\cO_r)}\!\dif\de\,\tr(h\times(\N\de)^{-1}\times\tau|[R\psi])\Te_\pi^{\sg}(\de).
\end{align*}
To narrow down our domain of integration, we shall use the following lemma.
\begin{lem}\label{lem:cuspidaltwistedcharacter}
The twisted character $\Te_\pi^{\sg}$ vanishes on $\GL_n(\cO_r)\diag(\vpi,1,\dotsc,1)\GL_n(\cO_r)\ssm B_n$.
\end{lem}
\begin{proof}
Recall that we have the decomposition
\begin{align*}
\GL_n(\cO_r)\diag(\vpi,1,\dotsc,1)\GL_n(\cO_r) = \coprod_{k=1}^nB_{n,k},
\end{align*}
and recall the description of $B_{n,k}$ from \ref{ss:localshtukaunramifiedextension} in terms of $\de=\de^\circ\oplus\de^\et$. Write $P_\de$ for the parabolic subgroup of $\GL_n$ over $F_r$ associated with $\de$. This shows that $P_\de$ is a proper subgroup if and only if $\de$ does not lie in $B_n$, and because $\pi$ is cuspidal, Casselman's theorem \cite[5.2]{Cas77} yields the desired result.
\end{proof}
Return to the proof of Proposition \ref{prop:lubintatecohomology}. Lemma \ref{lem:cuspidaltwistedcharacter} implies that our integral becomes
\begin{align*}
\int_{B_n}\!\dif\de\,\tr(h\times(\N\de)^{-1}\times\tau|[R\psi])\Te_\pi(\N\de).
\end{align*}
As $\tr(h\times(-)^{-1}\times\tau|[R\psi])$ and $\tr(h\times(\N-)^{-1}\times\tau|[R\psi])$ have matching twisted orbital integrals by Lemma \ref{ss:connectedlocalshtukatransfer}, making the change of variables $b=\N\de$ and applying the local Jacquet--Langlands correspondence indicate that the above expression equals
\begin{align*}
(-1)^{n-1}\int_{B_r}\!\dif b\,\tr(h\times b^{-1}\times\tau|[R\psi])\Te_{\JL(\pi)}(b) = (-1)^{n-1}\tr(h\times\tau|[R\psi](\JL(\pi))).
\end{align*}
Since virtual $\GL_n(\cO)\times I_F$-admissible/continuous representations of $\GL_n(\cO)\times W_F$ are determined by their traces, this yields the desired result.
\end{proof}

\subsection{}\label{ss:conditiond}
We conclude this section by verifying that, under the assumption that condition (c) from Lemma \ref{ss:firstinductivelemma} holds, condition (d) holds as well.
\begin{prop*}
Assume that condition (c) in Lemma \ref{ss:firstinductivelemma} holds, and suppose that $\pi$ is an irreducible cuspidal representation of $\GL_n(F)$. Then the $\bQ_{\geq0}$-virtual continuous representation $\rho(\pi)$ is actually a $\bZ$-virtual continuous representation of $W_F$.
\end{prop*}
\begin{proof}
Proposition \ref{prop:lubintatecohomology} shows that the $\bZ$-virtual representation $[R\psi](\JL(\pi))$ equals $\res\pi_{\GL_n(\cO)}\otimes\rho(\pi)$ as virtual $\GL_n(\cO)\times I_F$-admissible/continuous representations of $\GL_n(\cO)\times W_F$ over $\ov\bQ_\ell$. Thus its $\la$-isotypic component is also a $\bZ$-virtual representation for any irreducible smooth representation $\la$ of $\GL_n(\cO)$. If we could find such a $\la$ that is contained in $\pi$ with multiplicity $1$, then the $\la$-isotypic component of $[R\psi](\JL(\pi))$ would equal $\rho(\pi)$. Therefore, once we prove the following lemma, the result follows.
\end{proof}
\begin{lem}
Let $\pi$ be an irreducible cuspidal representation of $\GL_n(F)$. Then there exists an irreducible smooth representation $\la$ of $\GL_n(\cO)$ such that $\pi$ contains $\la$ with multiplicity $1$.
\end{lem}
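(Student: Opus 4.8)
The statement is that every irreducible cuspidal representation $\pi$ of $\GL_n(F)$ admits an irreducible smooth representation $\la$ of $\GL_n(\cO)$ occurring in $\pi$ with multiplicity one. The natural approach is via the theory of \emph{new-vectors} of Jacquet--Piatetski-Shapiro--Shalika, which the introduction to this section explicitly flags as a tool. First I would recall the conductor exponent $a(\pi)$ of the (generic, hence $\psi$-generic) cuspidal representation $\pi$, and the JPSS theorem: for $m \geq a(\pi)$, the space of vectors fixed by the ``mirabolic'' congruence subgroup
\[
  K_1(\vpi^m) \deq \left\{ g \in \GL_n(\cO) \;\middle|\; g \equiv \begin{pmatrix} * & * \\ 0 & 1 \end{pmatrix} \!\!\!\pmod{\vpi^m} \right\}
\]
is one-dimensional, and for $m < a(\pi)$ it is zero. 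I would then let $\la$ be an irreducible constituent of the representation $\cC$ of $\GL_n(\cO)$ generated by a new-vector inside $\pi^{K_1(\vpi^{a(\pi)})}$; more precisely, one takes $\la$ to be the unique irreducible subrepresentation of $\GL_n(\cO)$ acting on $\Ind_{K_1(\vpi^{a(\pi)})}^{\GL_n(\cO)} \mathbf{1}$ that pairs nontrivially with the new-vector line, i.e. the ``Paskunas/new-vector type'' attached to the conductor.

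The cleanest route to multiplicity one is the following. Set $m = a(\pi)$ and consider the Hecke algebra $\cH = \cH(\GL_n(\cO), K_1(\vpi^m))$ of bi-$K_1(\vpi^m)$-invariant functions supported on $\GL_n(\cO)$. The space $\pi^{K_1(\vpi^m)}$ is a module over $\cH$, and by JPSS it is one-dimensional. I would argue that the action of $\cH$ on this line is through a character $\chi_\pi$, and that among all irreducible $\la$ of $\GL_n(\cO)$ with $\la^{K_1(\vpi^m)} \neq 0$, exactly one — call it $\la_0$ — has $\la_0^{K_1(\vpi^m)}$ realizing $\chi_\pi$; this $\la_0$ appears in $\pi$ with multiplicity equal to $\dim_\bC \Hom_{\cH}(\la_0^{K_1(\vpi^m)}, \pi^{K_1(\vpi^m)})$, which is at most $\dim \pi^{K_1(\vpi^m)} = 1$, and is at least $1$ because the new-vector is nonzero. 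Concretely: $\mathrm{mult}(\la_0 : \pi) = \dim \Hom_{\GL_n(\cO)}(\la_0, \pi) = \dim \Hom_{\cH}(\la_0^{K_1(\vpi^m)}, \pi^{K_1(\vpi^m)}) \leq 1$, using that $\la_0^{K_1(\vpi^m)}$ is an irreducible $\cH$-module (a standard fact, since $K_1(\vpi^m)$ has a well-behaved Iwahori-type decomposition inside $\GL_n(\cO)$) and $\pi^{K_1(\vpi^m)}$ is one-dimensional. I would cite JPSS \cite[Théorème 5]{JPSS81} (or the function-field analogue) for the new-vector statement and the standard equivalence of categories between $\la$ with fixed vectors under $K_1(\vpi^m)$ and modules over $\cH(\GL_n(\cO), K_1(\vpi^m))$.

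The main obstacle I anticipate is \emph{not} the new-vector theorem itself — JPSS works over any non-archimedean local field, so there is no characteristic-$p$ difficulty — but rather the bookkeeping needed to pin down a single irreducible $\la_0$: a priori the $\cH$-module $\pi^{K_1(\vpi^m)}$ could a priori fail to ``come from'' an irreducible $\GL_n(\cO)$-representation with the same level. The point is that any irreducible $\la$ with $\la^{K_1(\vpi^m)} \neq 0$ and $\la^{K_1(\vpi^{m-1})} = 0$ has $\la^{K_1(\vpi^m)}$ one-dimensional over $\cH$-semisimplicity considerations, and that $\pi$, having conductor exactly $\vpi^m$, must contain such a $\la$ (not one of strictly smaller level) — here one uses that the $K_1(\vpi^{m})$-fixed line in $\pi$ is new, i.e. not fixed by $K_1(\vpi^{m-1})$. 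Packaging this correctly — possibly by invoking Paskunas's result on the uniqueness of new-vector types, or by a direct Mackey-theory computation of $\Ind_{K_1(\vpi^m)}^{\GL_n(\cO)}\mathbf 1$ restricted to the relevant Bernstein block — is the step that requires care; everything else is formal. An alternative, if one wishes to avoid types entirely, is to invoke that $\pi|_{\GL_n(\cO)}$ decomposes with finite multiplicities (admissibility) and that the generic character / Whittaker model forces the new-vector type to occur exactly once, via the Kirillov-model description of $\pi^{K_1(\vpi^m)}$.
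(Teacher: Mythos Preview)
Your approach via JPSS new-vectors is exactly the paper's, but you make the endgame much harder than necessary. Once you know $\dim\pi^{K_1(\vpi^m)}=1$ for $m=a(\pi)$, Frobenius reciprocity gives
\[
\dim\Hom_{\GL_n(\cO)}\bigl(\cInd_{K_1(\vpi^m)}^{\GL_n(\cO)}\bC,\ \pi\bigr)=\dim\Hom_{K_1(\vpi^m)}(\bC,\pi)=1.
\]
Since $\GL_n(\cO)$ is profinite, Peter--Weyl decomposes $\cInd_{K_1(\vpi^m)}^{\GL_n(\cO)}\bC=\bigoplus_\la \la^{\oplus m_\la}$ into irreducibles, so $\sum_\la m_\la\cdot\dim\Hom_{\GL_n(\cO)}(\la,\pi)=1$. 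Hence exactly one $\la$ contributes, and for that $\la$ both $m_\la$ and $\dim\Hom_{\GL_n(\cO)}(\la,\pi)$ equal $1$. That is the entire proof in the paper.

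Your Hecke-algebra route---irreducibility of $\la_0^{K_1(\vpi^m)}$ as an $\cH(\GL_n(\cO),K_1(\vpi^m))$-module, Paskunas-style uniqueness of new-vector types, level-matching---is valid but superfluous: the lemma only asks for \emph{existence} of some $\la$ with multiplicity one, not for an identification of which $\la$ it is. The obstacles you anticipate in your final paragraph evaporate once you stop trying to pin down $\la$ and simply read off its existence from the equation $\sum_\la m_\la\cdot\mathrm{mult}(\la:\pi)=1$.
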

\begin{proof}
Since $\pi$ is cuspidal and hence generic, the theory of new-vectors \cite[Theorem (5.1).(ii)]{JPSS81} shows that there exists a non-negative number $a$ such that $\dim\pi^{K(a)}=1$, where $K(a)$ is the subgroup
\begin{align*}
K(a)\deq\left\{\matr{a}{b}{c}{d}\in\GL_n(\cO)\middle|
a\in\GL_{n-1}(\cO),\,c\equiv0\pmod{\vpi^a},\,d\equiv1\pmod{\vpi^a}\right\}
\end{align*}
for positive $a$, and $K(a)\deq\GL_n(\cO)$ for $a=0$. In other words, $\dim\Hom_{K(a)}(\bC,\res\pi_{K(a)})=1$, where $\bC$ denotes the trivial representation. Frobenius reciprocity yields
\begin{align*}
\dim\Hom_{\GL_n(\cO)}(\cInd_{K(a)}^{\GL_n(\cO)}\bC,\res\pi_{\GL_n(\cO)}) = \dim\Hom_{K(a)}(\bC,\res\pi_{K(a)})=1.
\end{align*}
The Peter--Weyl theorem then implies that any irreducible smooth subrepresentation $\la$ of $\cInd_{K(a)}^{\GL_n(\cO)}\bC$ is contained in $\pi$ with multiplicity $1$, as desired.
\end{proof}

\section{Moduli spaces of $\sD$-elliptic sheaves}\label{s:modulispaces}
At this point, we shift our focus from local considerations to global ones. In this section, we begin by introducing \emph{$\sD$-elliptic sheaves}, which are the equi-characteristic analog of abelian varieties equipped with certain endomorphism structures. Afterwards, we introduce moduli spaces of $\sD$-elliptic sheaves, which therefore correspond to certain Shimura varieties and their integral models. The cohomology of these moduli spaces plays an important role in the Langlands correspondence. We can also obtain local shtukas from $\sD$-elliptic sheaves, and this is the equi-characteristic version of taking the $p$-divisible group of an abelian variety. We conclude this section by introducing a \emph{Serre--Tate theorem}, which relates deformations of $\sD$-elliptic sheaves to deformations of their associated local shtukas. This will eventually allow us to prove condition (c) in Lemma \ref{ss:firstinductivelemma}.

\subsection{}
We start by switching our notation to a global context. In these next few sections, let $\ka$ be a finite field of characteristic $p$ and cardinality $q$, and fix a separable closure $\ov\ka$ of $\ka$. We view all separable extensions of $\ka$ as lying in $\ov\ka$. Let $C$ be a geometrically connected proper smooth curve over $\ka$, and write $\bf{F}$ for its field of rational functions. Denote the adele ring of $C$ using $\bA$, denote the ring of integers of $\bA$ using $\bO$, and for any finite closed subscheme $I$ of $C$, write $\bK_I$ for the ideal of $\bO$ corresponding to $I$.

For any place $x$ of $C$, write $\cO_x$ for the local ring given by the completion of $C$ at $x$, and write $\bf{F}_x$ for the fraction field of $\cO_x$. Then $\bf{F}_x$ is a completion of $\bf{F}$ at $x$. Choose a uniformizer $\vpi_x$ of $x$ in $\bf{F}$, write $\ka_x$ for $\cO_x/\vpi_x$, write $q_x$ for $\#\ka_x$, and write $\deg{x}$ for $[\ka_x:\ka]$. By abuse of notation, we write $x$ for the normalized valuation corresponding to $x$. We also write $\deg:\bA^\times\rar\bZ$ for the function given by sending
\begin{align*}
(a_x)_x\mapsto-\sum_xx(a_x)\deg{x}.
\end{align*}
We fix a closed point $\infty$ in $C$, which shall serve as a replacement for the archimedean places at infinity.

\subsection{}\label{ss:divisionalgebras}
Since division algebras are fundamental to our constructions, we recall some facts about them here. For any central division algebra $D$ over $\bf{F}$, write $D_x$ for $D\otimes_\bf{F}\bf{F}_x$, and write $\inv_x(D)$ for the Hasse invariant of $D_x$. Recall that the Brauer group of $\bf{F}$ fits into an exact sequence
\begin{align*}
0\rar\Br(\bf{F})\rar\bigoplus_x\Br(\bf{F}_x)=\bigoplus_x\bQ/\bZ\rar\bQ/\bZ\rar0,
\end{align*}
where $x$ runs over all places of $C$, the first map sends $D\mapsto(D_x)_x$, and the second map sums the Hasse invariants of any element in $\bigoplus_x\Br(\bf{F}_x)$. In particular, we have $\inv_x(D)=0$ for cofinitely many $x$, that is, $D$ is split at $x$ for cofinitely many $x$. For any central division algebra $D$ in $\Br(\bf{F})$, its dimension equals the square of the least common denominator of $\inv_x(D)$ as $x$ runs over all places of $\bf{F}$.

We now fix a central division algebra $D$ over $\bf{F}$ of dimension $n^2$ such that $\inv_\infty(D)=0$, and we write $\text{Bad}$ for the set of places $x$ of $C$ where $\inv_x(D)\neq0$, that is, where $D$ ramifies. Then $\text{Bad}$ is finite, and $\infty$ does not lie in $\text{Bad}$. We write $C'$ for the open subscheme $C\ssm\text{Bad}$ of $C$.

\subsection{}
Next, we introduce \emph{orders} of $D$, which shall provide some sort of integrality structure on $D$.
\begin{defn*}
Let $\sD$ be a locally free (not necessarily commutative) $\sO_C$-algebra of rank $n^2$. We say $\sD$ is an \emph{order} of $D$ if its generic fiber $\sD_\bf{F}$ is isomorphic to $D$. For an order $\sD$ of $D$, we say $\sD$ is \emph{maximal} if it is maximal with respect to injective $\sO_C$-algebra morphisms $\sD'\inj{}\sD$ between orders of $D$.
\end{defn*}
We denote the completion of $\sD$ at $x$ using $\sD_x$, which is a free $\cO_x$-algebra of rank $n^2$, and we identify its generic fiber $\sD_x[\frac1{\vpi_x}]$ with $D_x$.

\subsection{}\label{ss:orders}
Let $\sD$ be an order of $D$. Let $U$ be an affine open subset of $C$, and let $M$ be an $\bf{F}$-basis of $D$ contained in $\sD(U)$. Because $D$ splits at cofinitely many places, checking valuations shows that $\cO_x\cdot M=\sD_x$ for cofinitely many $x$. Conversely, let $(\sD_x)_x$ be a collection of $\cO_x$-orders (where $x$ ranges over all places of $C$) such that there exists an $\bf{F}$-basis $M$ of $D$ for which $\cO_x\cdot M=\sD_x$ for cofinitely many $x$. One can use the Riemann--Roch theorem to show that the Zariski sheaf
\begin{align*}
U\mapsto\bigcap_{x'}\sD_{x'}\cap D,
\end{align*}
where $x'$ runs over all closed points in $U$, is an order of $D$.

This construction yields a bijection between isomorphism classes of orders of $D$ and collections $(\sM_x)_x$ of $\cO_x$-orders such that there exists an $\bf{F}$-basis $M$ of $D$ satisfying $\cO_x\cdot M=\sM_x$ for cofinitely many $x$. Note that $\sD$ is maximal if and only if every $\sM_x$ is maximal.

\subsection{}
From now on, fix a maximal order $\sD$ of $D$. We may now introduce $\sD$-elliptic sheaves. For any scheme $S$ over $\ka$, write $\sg$ for the absolute $q$-Frobenius on $S$, and write ${}^\sg$ for $(\id_C\times_\ka\sg)^*$. 

\begin{defn*}
Let $S$ be a scheme over $\ka$. A \emph{$\sD$-elliptic sheaf} over $S$ is a commutative diagram of sheaves on $C\times_\ka S$
\begin{align*}
\xymatrixcolsep{5pc}
\xymatrixrowsep{3pc}
\xymatrix{\dotsm\ar[r]^-{j_{i-2}}& \sE_{i-1}\ar[r]^-{j_{i-1}} &\sE_i\ar[r]^-{j_i} &\sE_{i+1}\ar[r]^-{j_{i+1}} &\dotsm\\
\dotsm\ar[r]^-{{}^\sg\!j_{i-2}} \ar[ur]^-{t_{i-2}}& {}^\sg\!\sE_{i-1}\ar[r]^-{{}^\sg\!j_{i-1}} \ar[ur]^-{t_{i-1}}&{}^\sg\!\sE_i\ar[r]^-{{}^\sg\!j_i} \ar[ur]^-{t_i}&{}^\sg\!\sE_{i+1}\ar[r]^-{{}^\sg\!j_{i+1}} \ar[ur]^-{t_{i+1}}&\dotsm
}
\end{align*}
where the $\sE_i$ are locally free right $\sD\boxtimes\sO_S$-modules of rank $1$ (and hence vector bundles over $S$ of rank $n^2$), and the $t_i$ and $j_i$ are injective morphisms of right $\sD\boxtimes\sO_S$-modules satisfying the following conditions:
\begin{enumerate}[$\bullet$]
\item for all $i$, $\sE_{i+n\deg\infty}$ is isomorphic to $\sE_i(\infty)$, and this isomorphism identifies $\sE_i\inj{}\sE_{i+n\deg\infty}$ with the canonical injection $\sE_i\inj{}\sE_i(\infty)$,
\item there exists a morphism $i_\infty:S\rar\infty$ such that $\coker j_1$ is supported on the image of its graph $\Ga_\infty:S\rar\infty\times_\ka S$,
\item there exists a morphism $i_o:S\rar C'\ssm\infty$ such that $\coker t_1$ is supported on the image of its graph $\Ga_o:S\rar C\times_\ka S$,
\item when viewed as $\sO_S$-modules, the $\coker{j_i}$ and $\coker{t_i}$ are locally free of rank $n$.
\end{enumerate}
We denote $\sD$-elliptic sheaves by $(\sE_i,t_i,j_i)_i$. We say that $i_\infty$ is the \emph{pole} of $(\sE_i,t_i,j_i)_i$, and we say that $i_o$ is the \emph{zero} of $(\sE_i,t_i,j_i)_i$.
\end{defn*}
Since the image of $i_o$ is disjoint from $\infty$, we see $t_i$ induces an isomorphism ${}^\sg\!(\sE_i/\sE_{i-1})\rar^\sim\sE_{i+1}/\sE_i$. Thus $\coker j_{i+1}$ is supported on the image of the graph of $i_\infty\circ\sg^i$. And as the image of $i_o$ is disjoint from $\infty$, we see that $\coker t_i$ is supported on the image of $\Ga_o$ for all $i$. 

\subsection{}\label{ss:modulispacedellipticsheavesnolevel}
We proceed to describe the \emph{moduli space} of $\sD$-elliptic sheaves. For any scheme $S$ over $\ka$, write $\cE\ell\ell_\sD(S)$ for the category whose objects are $\sD$-elliptic sheaves and whose morphisms are isomorphisms of $\sD$-elliptic sheaves. Then $\cE\ell\ell_\sD$ forms an fppf stack over $\ka$ \cite[(2.4)]{LRS93}, and the assignment sending a $\sD$-elliptic sheaf over $S$ to its zero $i_o:S\rar C'\ssm\infty$ yields a morphism $\cE\ell\ell_\sD\rar C'\ssm\infty$. We have the following result of Laumon--Rapoport--Stuhler.
\begin{prop*}[{\cite[(4.1)]{LRS93}}] Our $\cE\ell\ell_\sD$ is Deligne--Mumford stack, and the morphism $\cE\ell\ell_\sD\rar C'\ssm\infty$ is smooth of relative dimension $n-1$.
\end{prop*}
This morphism $\cE\ell\ell_\sD\rar C'\ssm\infty$ is the equi-characteristic analog of the structure morphism from an integral model of a Shimura variety to $\Spec\bZ$. Given that $\sD$-elliptic sheaves $(\sE_i,t_i,j_i)_i$ correspond to abelian schemes $A\rar S$ in our analogy, the zero of $(\sE_i,t_i,j_i)_i$ therefore corresponds to the characteristic of $S$.

\subsection{}
Just as with abelian varieties, we have a notion of \emph{level structure} for $\sD$-elliptic sheaves. The situation is more complicated when the characteristic divides the level, so we begin by excluding this case. Let $I$ be a finite closed subscheme of $C\ssm\infty$.
\begin{defn*}
Let $(\sE_i,t_i,j_i)_i$ be $\sD$-elliptic sheaf over $S$ such that the image of $i_o$ does not meet $I$. A \emph{level-$I$ structure} on $(\sE_i,t_i,j_i)_i$ is an isomorphism of right $\res\sD_I\boxtimes\sO_S$-modules
\begin{align*}
\io:\res\sD_I\boxtimes\sO_S\rar^\sim\res\sE_{I\times_\ka S}
\end{align*}
such that the diagram
\begin{align*}
\xymatrixcolsep{3pc}
\xymatrix{
{}^\sg\!(\res\sD_I\boxtimes\sO_S)\ar[r]^-{f_\sg}\ar[d]^-{{}^\sg\!\io} & \res\sD_I\boxtimes\sO_S\ar[d]^-\io\\
{}^\sg\!\res\sE_{I\times_\ka S}\ar[r]^-{\res{j}_{I\times_\ka S}} & \res\sE_{I\times_\ka S}
}
\end{align*}
commutes, where $f_\sg$ denotes the linearization of $\id\times_\ka\sg:\res\sD_I\boxtimes\sO_S\rar\res\sD_I\boxtimes\sO_S$. For any closed subscheme $I'$ of $I$ and level-$I$ structure $\io$, its restriction $\res\io_{I'}$ is a level-$I'$ structure.
\end{defn*}

\subsection{}\label{ss:modulispacedellipticsheaveslevel}
We now incorporate the data of level-$I$ structures in order to introduce more moduli spaces of $\sD$-elliptic sheaves. Write $\cE\ell\ell_{\sD,I}(S)$ for the category whose
\begin{enumerate}[$\bullet$]
\item objects are pairs $((\sE_i,t_i,j_i)_i,\io)$, where $(\sE_i,t_i,j_i)_i$ is a $\sD$-elliptic sheaf over $S$, and $\io$ is a level-$I$ structure on $(\sE_i,t_i,j_i)_i$,
\item morphisms are isomorphisms of pairs.
\end{enumerate}
Then $\cE\ell\ell_{\sD,I}$ forms a Deligne--Mumford stack over $\ka$ \cite[(4.1)]{LRS93}. We naturally identify $\cE\ell\ell_{\sD,\varnothing}=\cE\ell\ell_\sD$, and as in \ref{ss:modulispacedellipticsheavesnolevel}, we have a morphism $\cE\ell\ell_{\sD,I}\rar C'\ssm(I\cup\infty)$ given by sending any $\sD$-elliptic sheaf with level-$I$ structure to its zero.

By abuse of notation, we shall write $\sD\otimes\bO/\bK_I$ for the product ring $\prod_x\sD_x/\bK_{I,x}\sD_x$, where $x$ runs over all places in $C$. Since $\bK_{I,x}=\cO_x$ for cofinitely many $x$, we see that $\sD\otimes\bO/\bK_I$ is finite. As $I$ varies, the stacks $\cE\ell\ell_{\sD,I}$ are related as follows.
\begin{prop*}[{\cite[(4.1)]{LRS93}}] For any closed subscheme $I'$ of $I$, the restriction morphism $\cE\ell\ell_{\sD,I}\rar\res{\cE\ell\ell_{\sD,I'}}_{C'\ssm(I\cup\infty)}$ is finite representable Galois, and the right action of
    \begin{align*}
      \ker\left((\sD\otimes\bO/\bK_I)^\times\rar(\sD\otimes\bO/\bK_{I'})^\times\right)
    \end{align*}
on $\cE\ell\ell_{\sD,I}\rar\res{\cE\ell\ell_{\sD,I'}}_{C'\ssm(I\cup\infty)}$ given by multiplication on the level-$I$ structure yields the Galois action.
\end{prop*}
By combining this with Proposition \ref{ss:modulispacedellipticsheavesnolevel}, we see that $\cE\ell\ell_{\sD,I}\rar C'\ssm(I\cup\infty)$ is smooth of relative dimension $n-1$.

\subsection{}\label{ss:dellipticsheavestwistingbybundle}
In anticipation for studying the cohomology of $\cE\ell\ell_{\sD,I}$, we present some operations that one can perform on $\cE\ell\ell_{\sD,I}$. First, write $\Pic_I(C)$ for the group
\begin{align*}
\Pic_I(C)\deq\{\mbox{pairs }(\sL,\be)\}/\!\sim,
\end{align*}
where $\sL$ is a line bundle on $C$, and $\be$ is an isomorphism $\be:\sO_I\rar^\sim\res\sL_I$ of $\sO_I$-modules, under tensor product. As $\sD$-elliptic sheaves consist of vector bundles, we can twist them by line bundles. That is, for any $\sD$-elliptic sheaf $(\sE_i,t_i,j_i)_i$ over $S$ with level-$I$ structure $\io$, we obtain another $\sD$-elliptic sheaf over $S$ with level-$I$ structure as follows:
\begin{align*}
(\sL,\be)\cdot((\sE_i,t_i,j_i)_i,\io) \deq ((\sE_i\otimes(\sL\boxtimes\sO_S),t_i\otimes\id,j_i\otimes\id)_i,\io\otimes\be).
\end{align*}
This yields an action of $\Pic_I(C)$ on $\cE\ell\ell_{\sD,I}$ over $C'\ssm(I\cup\infty)$.

\subsection{}
As $I$ varies, the moduli spaces of $\sD$-elliptic sheaves with level-$I$ structure form a system of stacks $(\cE\ell\ell_{\sD,I})_I$, which corresponds to how Shimura varieties form systems of schemes. We package this system into one object as follows. First, let $T$ be a finite set of places of $\bf{F}$. Write $\bA^T$ for the ring of adeles away from $T$, and more generally for any subset $X$ of $\bA$, write $X^T$ for the projection of $X$ to $\bA^T$. Similarly, write $\bA_T$ for the ring of adeles at $T$, and write $X_T$ for the projection of $X$ to $\bA_T$. We write $\cE\ell\ell_\sD^\infty$ for the inverse limit
\begin{align*}
\cE\ell\ell_\sD^\infty \deq \invlim_I\cE\ell\ell_{\sD,I},
\end{align*}
where $I$ ranges over all finite closed subschemes of $C$ that do not meet $\infty$. This inverse limit yields a morphism $\cE\ell\ell_\sD^\infty\rar\Spec{\bf{F}}$. By abuse of notation, write $\bO^\infty$ for the $\sO_C$-algebra given by
\begin{align*}
U\mapsto\prod_{x'}\cO_{x'},
\end{align*}
where $x'$ runs over all closed points in $U\ssm\infty$. We see that $\cE\ell\ell_\sD^\infty(S)$ is the category whose \cite[(7.1)]{LRS93}
\begin{enumerate}[$\bullet$]
\item objects are pairs $((\sE_i,t_i,j_i)_i,\io^\infty)$, where $(\sE_i,t_i,j_i)_i$ is a $\sD$-elliptic sheaf over $S$ and $\io^\infty$ is a $(\sD\otimes_{\sO_C}\bO^\infty)\boxtimes\sO_S$-linear isomorphism
\begin{align*}
\io^\infty:(\sD\otimes_{\sO_C}\bO^\infty)\boxtimes\sO_S\rar^\sim\sE\otimes(\bO^\infty\boxtimes\sO_S)
\end{align*}
such that the diagram
\begin{align*}
\xymatrixcolsep{3pc}
\xymatrix{
{}^\sg\!((\sD\otimes_{\sO_C}\bO^\infty)\boxtimes\sO_S)\ar[r]^-{f_\sg}\ar[d]^-{{}^\sg\!\io^\infty} & (\sD\otimes_{\sO_C}\bO^\infty)\boxtimes\sO_S\ar[d]^-{\io^\infty}\\
{}^\sg\!\sE\otimes(\bO^\infty\boxtimes\sO_S)\ar[r]^-{j\boxtimes\id} & \sE\otimes(\bO^\infty\boxtimes\sO_S)
}
\end{align*}
commutes, where $f_\sg$ denotes the linearization of $\id\times_\ka\sg:(\sD\otimes_{\sO_C}\bO^\infty)\boxtimes\sO_S\rar(\sD\otimes_{\sO_C}\bO^\infty)\boxtimes\sO_S$,
\item morphisms are isomorphisms of pairs.
\end{enumerate}

\subsection{}
The action of twisting by line bundles is compatible with the inverse limit, so $\cE\ell\ell_\sD^\infty$ has an action of 
\begin{align*}
\invlim_I\Pic_I(C) = \bf{F}^\times\bs\bA^\times/\bf{F}_\infty^\times = \bf{F}^{\infty,\times}\bs\bA^{\infty,\times}
\end{align*}
over $\bf{F}$, where we identify the idele $(a_x)_x$ with the line bundle $\sO_C(-\sum_xx(a_x))$ along with the trivialization induced via multiplication by $(a_x)_x$. In particular, $\cE\ell\ell_\sD^\infty$ has an action of $\bA^{\infty,\times}$. We see that $\cE\ell\ell_\sD^\infty$ also has a right action of
\begin{align*}
\invlim_I(\sD\otimes\bO/\bK_I)^\times = (\sD\otimes\bO^\infty)^\times
\end{align*}
over $\bf{F}$ given by multiplication on the level structure, and both of these actions actually arise from a right action of $(D\otimes\bA^\infty)^\times$ on $\cE\ell\ell_\sD^\infty$ \cite[(7.3)]{LRS93}. This is the \emph{Hecke} action, analogous to the Hecke action on Shimura varieties. Writing $\sK_I$ for the subgroup
\begin{align*}
\sK_I\deq\ker\left((\sD\otimes\bO)^\times\rar(\sD\otimes\cO/\bK_I)^\times\right),
\end{align*}
we see that Proposition \ref{ss:modulispacedellipticsheaveslevel} implies $\cE\ell\ell_\sD^\infty/\sK_I^\infty=\cE\ell\ell_{\sD,I}$. 

\subsection{}
Since $\sD$-elliptic sheaves are essentially vector bundles, they have a notion of \emph{degree}.
\begin{defn*}
Let $(\sE_i,t_i,j_i)_i$ be a $\sD$-elliptic sheaf. We say its \emph{degree} is the locally constant function
\begin{align*}
\frac{\deg\sE_1-\deg\sD}n
\end{align*}
on the base scheme $S$, where we use $1$ instead of $0$ to avoid confusion with $o$. One can show that the degree is integer-valued \cite[p.~49]{Laf97}.
\end{defn*}
By partitioning the base scheme $S$ by the degree of $(\sE_i,t_i,j_i)_i$, we can write $\cE\ell\ell_{\sD,I}$ as a disjoint union
\begin{align*}
\cE\ell\ell_{\sD,I} = \coprod_d\cE\ell\ell_{\sD,I,d},
\end{align*}
where $d$ ranges over all integers, and $\cE\ell\ell_{\sD,I,d}$ is the open substack of $\sD$-elliptic sheaves with level-$I$ structure whose degree equals $d$. Note that the restriction morphisms $\cE\ell\ell_{\sD,I}\rar\cE\ell\ell_{\sD,I'}$ preserve degree, so we obtain a similar decomposition
\begin{align*}
\cE\ell\ell_\sD^\infty = \coprod_d\cE\ell\ell_{\sD,d}^\infty,
\end{align*}
where $\cE\ell\ell_{\sD,d}^\infty=\invlim_I\cE\ell\ell_{\sD,I,d}$ for $I$ running over all finite closed subschemes of $C\ssm\infty$.

\subsection{}\label{ss:dellipticsheavestranslation}
We shall define an action of $\bZ$ on $\cE\ell\ell_{\sD,I}$ over $C'\ssm(I\cup\infty)$ as follows. For any integer $l$ and $\sD$-elliptic sheaf $(\sE_i,t_i,j_i)_i$, write
\begin{align*}
[l](\sE_i,t_i,j_i)_i\deq(\sE_{i+l},t_{i+l},j_{i+l})_i,
\end{align*}
that is, $[l]$ acts via translation by $l$ on the index $i$. Then $[l](\sE_i,t_i,j_i)_i$ also forms a $\sD$-elliptic sheaf, so this yields an action of $\bZ$ on $\cE\ell\ell_{\sD,I}$. The definition of a $\sD$-elliptic sheaf implies that $\deg\sE_{i+l}=\deg\sE_i+nl$, so the degree of $[l](\cE_i,t_i,j_i)_i$ is $l$ plus that of $(\cE_i,t_i,j_i)_i$. This allows us to identify the quotient stack $\cE\ell\ell_{\sD,I}/\bZ$ with
\begin{align*}
\cE\ell\ell_{\sD,I}/\bZ = \cE\ell\ell_{\sD,I,0}
\end{align*}
as stacks over $\ka$. The translation action of $\bZ$ commutes with restriction morphisms as well as the Hecke action, so we obtain a similar description of $\cE\ell\ell_\sD^\infty/\bZ$. The Hecke action also descends to $\cE\ell\ell_{\sD,I}/\bZ$. By passing to this quotient stack (or equivalently, by restricting the degree), we obtain the following finite-type representability result.
\begin{prop*}[{\cite[(6.2)]{LRS93}}]
Suppose $I$ is nonempty. Then $\cE\ell\ell_{\sD,I}/\bZ$ is actually a projective scheme over $C'\ssm(I\cup\infty)$.
\end{prop*}
Thus $\cE\ell\ell_\sD^\infty/\bZ=\invlim_I\cE\ell\ell_{\sD,I}/\bZ$ is also representable by a scheme.

\subsection{}
Before we proceed to the cohomology of our moduli spaces, we introduce a covering of $\cE\ell\ell_{\sD,I}$ which we shall use to construct our coefficient sheaves. Write $\ov{B}$ for the central division algebra over $\bf{F}_\infty$ of Hasse invariant $-\frac1n$.
\begin{prop*}[{\cite[(8.11)]{LRS93}}]
There exists a pro-Galois covering
\begin{align*}
\wt{\cE\ell\ell}_{\sD,I}\rar\cE\ell\ell_{\sD,I},
\end{align*}
where $\wt{\cE\ell\ell}_{\sD,I}$ is a scheme, whose Galois group is given by a right action of $\ov{B}^\times/\vpi_\infty^\bZ$. This covering is compatible with restriction morphisms $\cE\ell\ell_{\sD,I}\rar\cE\ell\ell_{\sD,I'}$, so taking the inverse limit yields an analogous pro-Galois covering
\begin{align*}
\wt{\cE\ell\ell}_\sD^\infty\rar\cE\ell\ell_\sD^\infty
\end{align*}
whose Galois group is given by a right action of $\ov{B}^\times/\vpi_\infty^\bZ$.
\end{prop*}
Briefly, the covering $\wt{\cE\ell\ell}_{\sD,I}\rar\cE\ell\ell_{\sD,I}$ is given as follows. Given a $\sD$-elliptic sheaf $(\sE_i,t_i,j_i)_i$, we can construct an object resembling a local shtuka at the place $\infty$ (in a fashion similar to the construction to be given in \ref{ss:dellipticsheavestolocalshtuka}). This ``local shtuka at $\infty$'' will always be isomorphic to a fixed object analogous to the local shtuka of slope $-\frac1n$ under the Dieudonn\'e--Manin classification \cite[(2.4.5)]{Lau96}, and the covering $\wt{\cE\ell\ell}_{\sD,I}\rar\cE\ell\ell_{\sD,I}$ parametrizes isomorphisms between our ``local shtuka at $\infty$'' and this fixed object. The Galois action is given by composition with this isomorphism, as the endomorphism ring of a local shtuka of slope $-\frac1n$ at $\infty$ is $\ov{B}$.

The stack $\wt{\cE\ell\ell}_{\sD,I}$ also has an action of $\bZ$ by translation, and it is preserved under the morphism $\wt{\cE\ell\ell}_{\sD,I}\rar\cE\ell\ell_{\sD,I}$. Thus we obtain a pro-Galois morphism $\wt{\cE\ell\ell}_{\sD,I}/\bZ\rar \cE\ell\ell_{\sD,I}/\bZ$ whose Galois group is given by a right action of $\ov{B}^\times/\vpi_\infty^\bZ$. This yields a similar pro-Galois morphism $\wt{\cE\ell\ell}_\sD^\infty/\bZ\rar \cE\ell\ell_\sD^\infty/\bZ$ as well.

The space $\wt{\cE\ell\ell}_\sD^\infty$ is the equi-characteristic analog of the Hermitian symmetric space covering a Shimura variety.

\subsection{}\label{ss:dellipticsheavescohomology}
At this point, we can finally introduce the cohomology of our moduli spaces. Fix a separable closure $\bf{F}^\sep$ of $\bf{F}$, and view all separable extensions of $\bf{F}$ as lying in $\bf{F}^\sep$. For every place $x$ of $C$, choose a separable closure $\bf{F}^\sep_x$ of $\bf{F}_x$, fix an embedding $\bf{F}^\sep\inj{}\bf{F}^\sep_x$, and form the absolute Galois groups $G_\bf{F}$ and $G_x$ of $\bf{F}$ and $\bf{F}_x$, respectively, with respect to these separable closures. Write $\bC_x$ for the completion of $\bf{F}^\sep_x$. We view $G_x$ as a subgroup of $G_\bf{F}$ via our embedding, and we denote the arithmetic $q_x$-Frobenius at $x$ using $\sg_x$.

Fix an irreducible smooth representation
\begin{align*}
\xi:\ov{B}^\times/\vpi_\infty^\bZ\rar\GL_N(\ov\bQ_\ell).
\end{align*}
Since $\ov{B}^\times/\vpi_\infty^\bZ$ is compact, we see $\xi$ has finite image and is therefore defined over a finite extension of $\bQ_\ell$ in $\ov\bQ_\ell$. Write $\cL_{\xi,I}$ for the $\ell$-adic sheaf on $\cE\ell\ell_{\sD,I}/\bZ$ induced from $\xi$ and the pro-Galois morphism $\wt{\cE\ell\ell}_{\sD,I}/\bZ\rar \cE\ell\ell_{\sD,I}/\bZ$ via monodromy, and write $\cL_{\xi}$ for the analogous $\ell$-adic sheaf on $\cE\ell\ell^\infty_\sD/\bZ$. Form the $\ov\bQ_\ell$-vector space
\begin{align*}
H^i_{\xi,\eta} \deq H^i(\cE\ell\ell^\infty_{\sD,\bf{F}^\sep}/\bZ,\cL_{\xi}) = \dirlim_IH^i(\cE\ell\ell_{\sD,I,\bf{F}^\sep}/\bZ,\cL_{\xi,I,\bf{F}^\sep}),
\end{align*}
where $I$ runs over all finite closed subschemes of $C\ssm\infty$. This has a left action of $(D\otimes\bA^\infty)^\times\times G_\bf{F}$, and it vanishes for sufficiently large $i$. Proposition \ref{ss:modulispacedellipticsheaveslevel} implies that 
\begin{align*}
(H^i_{\xi,\eta})^{\sK_I^\infty} = H^i(\cE\ell\ell_{\sD,I,\bf{F}^\sep}/\bZ,\cL_{\xi,I,\bf{F}^\sep}),
\end{align*}
which shows that $H^i_{\xi,\eta}$ is an admissible/continuous representation as in \cite[p.~24]{HT01} of $(D\otimes\bA^\infty)^\times\times G_\bf{F}$ over $\ov\bQ_\ell$. Furthermore, for any $g$ in $(D\otimes\bA^\infty)^\times$, the action of
\begin{align*}
\frac1{\vol(\sK_I^\infty)}\bf1_{\sK_I^\infty g\sK_I^\infty}\in C^\infty_c((D\otimes\bA^\infty)^\times)
\end{align*}
on $H^i_{\xi,\eta}$ is induced by a correspondence \cite[(7.5)]{LRS93}
\begin{align*}
  \xymatrix{& \ar[ld]_-{c_{1,\bf{F}}}\cE\ell\ell_{\sD,\bf{F}}^\infty/(\bZ\times(\sK_I^\infty\cap g^{-1}\sK_I^\infty g))\ar[rd]^-{c_{2,\bf{F}}} & \\
\cE\ell\ell_{\sD,\bf{F}}^\infty/(\bZ\times\sK_I^\infty) & & \cE\ell\ell_{\sD,\bf{F}}^\infty/(\bZ\times\sK_I^\infty)
}
\end{align*}
over $\bf{F}$, where $c_{1,\bf{F}}$ is induced by further quotienting by $\sK_I^\infty$, and $c_{2,\bf{F}}$ is induced by the right action of $g^{-1}$ followed by quotienting by $\sK_I^\infty$. This is the equi-characteristic analog of \emph{Hecke correspondences}.

Proposition \ref{ss:modulispacedellipticsheaveslevel} implies that $c_{1,\bf{F}}$ and $c_{2,\bf{F}}$ are finite \'etale. We write $c_\bf{F}:\cE\ell\ell_{\sD,I,\bf{F}}/\bZ\dashrightarrow\cE\ell\ell_{\sD,I,\bf{F}}/\bZ$ for the correspondence formed by $c_{1,\bf{F}}$ and $c_{2,\bf{F}}$, and we write $[H_\xi]$ for the virtual representation $[H_\xi]\deq\sum_{i=0}^\infty(-1)^iH^i_{\xi,\eta}$.

\subsection{}\label{ss:dellipticsheavestolocalshtukazero}
Now we introduce a construction that takes any $\sD$-elliptic sheaf over a certain base and yields an object resembling a local shtuka. Let $o$ be a closed point in $C'\ssm\infty$, let $S$ be a scheme over $\Spec\cO_o$, and let $(\sE_i,t_i,j_i)_i$ be a $\sD$-elliptic sheaf over $S$. 

Let $\Ga_o$ be the image of the graph of $i_o:S\rar C\times_\ka S$, and denote completions along $\Ga_o$ using $(-)^\wedge_{\Ga_o}$. Then $(\sE_1)^\wedge_{\Ga_o}$ is a vector bundle on $(C\times_\ka S)^\wedge_{\Ga_o}$ of rank $n^2$ with a right action of $\sD_o=\M_n(\cO_o)$, and $(j_1)^\wedge_{\Ga_o}$ is an isomorphism because $\Ga_o$ lies away from $\infty$. Thus we may form the composed morphism
\begin{align*}
(j_1)^{\wedge,-1}_{\Ga_o}\circ(t_1)^\wedge_{\Ga_o}:({}^\sg\!\sE_1)^\wedge_{\Ga_o}\rar(\sE_1)^\wedge_{\Ga_o}.
\end{align*}
We see that the adic pullback of $(j_1)^{\wedge,-1}_{\Ga_o}\circ(t_1)^\wedge_{\Ga_o}$ as in \cite[p.~370]{Ber96} to $\bf{F}_o$ is an isomorphism, and its cokernel is a vector bundle on $S$ of rank $n$.

We may identify $(C\times_\ka S)^\wedge_{\Ga_o}$ with the locally ringed space whose support is $\abs{S}$ and whose structure sheaf is $\sO_S\llb{\vpi_o}$ \cite[Lemma 5.3]{EH14}. From this point of view, the pair
\begin{align*}
((\sE_1)^\wedge_{\Ga_o},(j_1)^{\wedge,-1}_{\Ga_o}\circ(t_1)^\wedge_{\Ga_o})
\end{align*}
corresponds to a pair $(\sM_o,\sF_o)$, where $\sM_o$ is a locally free $\sO_S\llb{\vpi_o}$-module of rank $n^2$ with a right $\sO_S\llb{\vpi_o}$-linear action of $\M_n(\cO_o)$, and $\sF_o:\sg^*_o\sM_o\rar\sM_o$ is an $\sO_S\llb{\vpi_o}$-linear morphism such that $\coker\sF_o$ is a locally free $\sO_S$-module of rank $n$. Note that $\sF_o[\frac1{\vpi_o}]$ is an isomorphism.

Now $\sM_o$ is a free right $\M_n(\cO_o)$-module of rank $1$. Applying Morita equivalence to the right $\M_n(\cO_o)$-action shows that there exists pair $(\sM_o',\sF_o')$ satisfying 
\begin{align*}
(\sM_o,\sF_o) = (\sM_o'^{\oplus n},\sF_o'^{\oplus n}),
\end{align*}
where $\sM_o'$ is a locally free $\sO_S\llb{\vpi_o}$-module of rank $n$, and $\sF_o':\sg^*_o\sM_o'\rar\sM_o'$ is an $\sO_S\llb{\vpi_o}$-linear morphism whose cokernel is a locally free $\sO_S$-module of rank $1$ and whose generic fiber is an isomorphism. Note that if $S$ is actually a scheme over $\Spf\cO_o$, then $(\sM_o',\sF_o')$ actually forms an effective minuscule local shtuka over $S$ of rank $n$ and dimension $1$.

\subsection{}
With the above construction, we can treat level structures in the case when characteristics divide the level. As with abelian varieties, this requires a notion of \emph{Drinfeld level structure}. 

Begin by recalling some material from \S\ref{s:deformationspaces}. We have a notion of finite $\ka_o$-shtukas, which are truncated versions of local shtukas, and they correspond to certain module schemes called strict $\ka_o$-modules under a Dieudonn\'e equivalence. Now the quotient module $\sM'_o/\vpi_o^m$ is a finite $\ka_o$-shtuka, and we denote the corresponding finite $\ka_o$-strict module using $\Dr(\sM'_o/\vpi_o^m)$. As with local shtukas, we define Drinfeld level structures via using $\Dr(-)$ to pass to module schemes.
\begin{defn*}
Let $S$ be a scheme over $\Spec\cO_o$, and let $(\sE_i,t_i,j_i)_i$ be a $\sD$-elliptic sheaf over $S$. We say a \emph{Drinfeld level-$m$ structure} on $(\sE_i,t_i,j_i)_i$ is a Drinfeld level-$m$ structure on $\Dr(\sM'_o/\vpi_o^m)$, that is, an $\cO_o/\vpi_o^m$-module morphism
\begin{align*}
\al:(\vpi^{-m}\cO_o/\cO_o)^n\rar\Dr(\sM'_o/\vpi^m_o)(S)
\end{align*}
such that the collection of all $\al(x)$ for $x$ in $(\vpi^{-m}\cO_o/\cO_o)^n$ forms a full set of sections of $\Dr(\sM'_o/\vpi^m_o)$ as in \cite[(1.8.2)]{KM85}. For any non-negative integer $m'\leq m$ and Drinfeld level-$m$ structure $\al$, its restriction to $(\vpi^{-m'}_o\cO_o/\cO_o)^n$ is a Drinfeld level-$m'$ structure.
\end{defn*}

\subsection{}
We may finally define moduli spaces of $\sD$-elliptic sheaves at bad reduction. For any scheme $S$ over $\Spec\cO_o$, write $\cM_{I}^*(S)$ for the category whose
\begin{enumerate}[$\bullet$]
\item objects are triples $((\sE_i,t_i,j_i)_i,\io,\al)$, where $(\sE_i,t_i,j_i)_i$ is a $\sD$-elliptic sheaf over $S$, $\io$ is a level $(I\ssm o)$-structure on $(\sE_i,t_i,j_i)_i$, and $\al$ is a Drinfeld level-$m$ structure on $(\sE_i,t_i,j_i)_i$, where $m$ is the multiplicity of $o$ in $I$,
\item morphisms are isomorphisms of pairs.
\end{enumerate}
Then $\cM_{I}^*$ forms an fppf stack over $\ka$, and when $I$ does not contain $o$, we see that $\cM_{I}^*$ is simply the pullback of $\cE\ell\ell_{\sD,I}$ to $\Spec\cO_o$. 

By sending a $\sD$-elliptic sheaf to its zero, we obtain a morphism $\cM_{I}^*\rar\Spec\cO_o$. For any closed subscheme $I'$ of $I$, restriction of level structures gives us a morphism $\cM_{I}^*\rar\cM_{I'}^*$ over $\Spec\cO_o$, and partitioning by degrees allows us to obtain a decomposition
\begin{align*}
\cM_{I}^* = \coprod_d\cM_{I,d}^*
\end{align*}
that respects restriction morphisms, where $d$ runs over all integers.

We see that the translation action of $\bZ$ from \ref{ss:dellipticsheavestranslation} naturally extends to an action on $\cM_I^*$, and we write $\cM_{I}$ for the quotient stack $\cM_{I}\deq\cM_I/\bZ$. We have the following representability result, which follows from work of Katz--Mazur.
\begin{prop*}[{\cite[(1.10.13)]{KM85}}]
The restriction morphism $\cM_{I}^*\rar\cM_{I'}^*$ is finite and representable.
\end{prop*}
By combining this with Proposition \ref{ss:dellipticsheavestranslation}, we see that $\cM_{I}$ is a projective scheme of dimension $n-1$ over $\Spec\cO_o$ whenever $I\ssm o$ is nonempty. The scheme $\cM_{I}$ shall be our focus in the next few sections.

\subsection{}\label{ss:badreductionheckecorrespondence}
In this subsection, we extend the Hecke action to $\cM_{I}$. We use $o^m$ to denote the finite closed subscheme of $C$ supported on $o$ with multiplicity $m$. Since Drinfeld level-$m$ structures over $\bf{F}_o$ are equivalent to level-$o^m$ structures over $\bf{F}_o$ \cite[Proposition 7.1.3]{Boy99}, the generic fiber $\cM_{I,\bf{F}_o}$ is isomorphic to the pullback $\cE\ell\ell_{\sD,I,\bf{F}_o}/\bZ$. Under this identification, the right action of $(D\otimes\bA^\infty)^\times$ on $\cE\ell\ell_{\sD,\bf{F}_o}^\infty/a^\bZ$ extends \cite[p.~599]{Boy99} to a right action on the inverse limit
\begin{align*}
\cM^\infty \deq \invlim_I\cM_{I},
\end{align*}
where $I$ runs over all finite closed subschemes of $C\ssm\infty$. This action satisfies $\cM^\infty/\sK_I^\infty=\cM_{I}$. Furthermore, for any $g$ in $D\otimes\bA^\infty$, the correspondence from \ref{ss:dellipticsheavescohomology} extends to a correspondence \cite[p.~600]{Boy99}
\begin{align*}
\xymatrix{& \ar[ld]_-{c_{1,\cO_o}}\cM^\infty/(\sK_I^\infty\cap g^{-1}\sK_I^\infty g)\ar[rd]^-{c_{2,\cO_o}} & \\
\cM^\infty/\sK_I^\infty & & \cM^\infty/\sK_I^\infty
}
\end{align*}
where $c_{1,\cO_o}$ and $c_{2,\cO_o}$ are finite morphisms over $\Spec\cO_o$. We write $c_{\cO_o}:\cM_{I}\dashrightarrow\cM_{I}^\infty$ for the correspondence formed by $c_{1,\cO_o}$ and $c_{2,\cO_o}$.

\subsection{}\label{ss:boyerserretate}
Next, we present the \emph{Serre--Tate theorem} for $\sD$-elliptic sheaves, which relates deformations of $\sD$-elliptic sheaves with deformations of their associated local shtukas. Let $m$ be the multiplicity of $o$ in $I$, let $z$ be a $\ov\ka$-point of $\cM_{I}$, and let $(\sE_i,t_i,j_i)_i$ be the corresponding $\sD$-elliptic sheaf over $\ov\ka$ with level $(I\ssm o)$-structure $\io$ and Drinfeld level-$m$ structure $\al$. Then $\al$ yields a Drinfeld level-$m$ structure for the local shtuka $\sM'_o$ formed in \ref{ss:dellipticsheavestolocalshtukazero}. Recall from \ref{ss:effectiveminusculelocalshtukaconjugacy} that $\sM'_o$ corresponds to some $\de_o$ in
\begin{align*}
\GL_n(\br\cO_o)\diag(\vpi_o,1,\dotsc,1)\GL_n(\br\cO_o)
\end{align*}
uniquely up to $\GL_n(\br\cO_o)$-$\sg_o$-conjugacy, and recall that the deformation space of $(\sM'_o,\al)$ is $\br\fX_{\de_o,\al}$. Write $(\cM_{I,\br\cO_o})_z^\wedge$ for the completion of $\cM_{I,\br\cO_o}$ at $z$. Boyer proves the following comparison result for deformation spaces.
\begin{prop*}[{\cite[Theorem 7.4.4]{Boy99}}]
Our $((\sE_i,t_i,j_i)_i,\io,\al)\mapsto(\sM'_o,\al)$ induces an isomorphism $(\cM_{I,\br\cO_o})_z^\wedge\rar^\sim\br\fX_{\de_o,\al}$ over $\Spf\br\cO_o$ that preserves the right action of $\GL_n(\cO_o)$ as well as the Weil action.
\end{prop*}

\subsection{}\label{ss:extendedserretate}
We extend the above results to $\br\fX_{\de_o,m}$, where $\br\fX_{\de_o,m}$ is the deformation space of $\sM'_o$ along with a Drinfeld level-$m$ structure as in \ref{ss:deformationalgebraicclosure}. Write $\pi:\cM_{I,\br\cO_o}\rar\cM_{I\ssm o,\br\cO_o}$ for the restriction morphism. We see that $\pi^{-1}(\pi(z))$ corresponds to all possible Drinfeld level-$m$ structures on $(\sE_i,t_i,j_i)_i$. Therefore the isomorphism in Proposition \ref{ss:boyerserretate} also induces an isomorphism $(\cM_{I,\br\cO_o})^\wedge_{\pi^{-1}(\pi(z))}\rar^\sim\br\fX_{\de_o,m}$ over $\Spf\br\cO_o$, where $(\cM_{I,\br\cO_o})^\wedge_{\pi^{-1}(\pi(z))}$ is the completion of $\cM_{I,\br\cO_o}$ along $\pi^{-1}(\pi(z))$, and this isomorphism preserves the right action of $\GL_n(\cO_o)$ as well as the Weil action.

\subsection{}\label{ss:dellipticsheavestolocalshtuka}
We conclude this section by discussing a variant of the construction from \ref{ss:dellipticsheavestolocalshtukazero}. Let $S$ be a scheme over $\Spec\cO_o$, let $(\sE_i,t_i,j_i)_i$ be a $\sD$-elliptic sheaf over $S$, let $x$ be a closed point in $C\ssm o$, and let $f:S\rar C$ be a morphism over $\ka$ whose image is $x$.

Write $\Ga_f$ for the image of the graph of $f$ in $C\times_\ka S$, and denote completions along $\Ga_f$ using $(-)^\wedge_{\Ga_f}$. Then $(\sE_1)_{\Ga_f}^\wedge$ is a vector bundle on $(C\times_\ka S)_{\Ga_f}^\wedge$ of rank $n^2$ with a right action of $\sD_x$. The adic generic fibers $(j_1)^\wedge_{\Ga_f,\bf{F}_x}$ and $(t_1)^\wedge_{\Ga_f,\bf{F}_x}$ as in \cite[p.~370]{Ber96} are isomorphisms, so the composed morphism
\begin{align*}
(j_1)^{\wedge,-1}_{\Ga_f,\bf{F}_x}\circ(t_1)^\wedge_{\Ga_f,\bf{F}_x}:({}^\sg\!\sE_1)^\wedge_{\Ga_f,\bf{F}_x}\rar^\sim(\sE_1)^\wedge_{\Ga_f,\bf{F}_x}
\end{align*}
is also an isomorphism. Similarly to \ref{ss:dellipticsheavestolocalshtukazero}, we may identify $(C\times_\ka S)^\wedge_{\Ga_f}$ with the locally ringed space whose support is $\abs{S}$ and whose structure sheaf is $\sO_S\llb{\vpi_x}$ \cite[Lemma 5.3]{EH14}. From this point of view, the pair
\begin{align*}
((\sE_1)_{\Ga_f}^\wedge,(j_1)^{\wedge,-1}_{\Ga_f,\bf{F}_x}\circ(t_1)^\wedge_{\Ga_f,\bf{F}_x})
\end{align*}
corresponds to a pair $(\sM_x,\sF_x)$, where $\sM_x$ is a locally free $\sO_S\llb{\vpi_x}$-module of rank $n^2$ with a right $\sO_S\llb{\vpi_x}$-linear action of $\sD_x$, and $\sF_x:\sg^*_x\sM_x[\frac1{\vpi_x}]\rar^\sim\sM_x[\textstyle\frac1{\vpi_x}]$ is an $\sO_S\llp{\vpi_x}$-linear isomorphism that commutes with the right $\sD_x$-action. Note that $\sM_x$ is a free right $\sD_x$-module of rank $1$.

When $x$ does not equal $\infty$, the graph $\Ga_f$ lies away from both $o\times_\ka S$ and $\infty\times_\ka S$, so the completions $(t_1)_{\Ga_f}^\wedge$ and $(j_1)_{\Ga_f}^\wedge$ are isomorphisms. In this case, the morphism $\sF_x$ is the localization of an $\sO_S\llb{\vpi_x}$-module isomorphism $\sg^*_x\sM_x\rar^\sim\sM_x$, which we also refer to using $\sF_x$ by abuse of notation.

Finally, in the case when $x$ equals $\infty$, we have $\sD_\infty=\M_n(\cO_\infty)$ because $D$ splits at $\infty$. Applying Morita equivalence to the right $\M_n(\cO_\infty)$-action on $\sM_\infty$ provides a pair $(\sM_\infty',\sF_\infty')$ that satisfies
\begin{align*}
(\sM_\infty,\sF_\infty) = (\sM_\infty'^{\oplus n},\sF_\infty'^{\oplus n}),
\end{align*}
where $\sM_\infty'$ is a locally free $\sO_S\llb{\vpi_\infty}$-module of rank $n$, and $\sF_\infty':\sg_\infty^*\sM_\infty'[\frac1{\vpi_\infty}]\rar^\sim\sM_\infty'[\textstyle\frac1{\vpi_\infty}]$ is an $\sO_S\llp{\vpi_\infty}$-linear isomorphism. Furthermore, if $S$ is the spectrum of $\ov\ka$, then the isogeny class of $\sM_\infty'$ has slope $-\frac1n$ \cite[(9.8).(i)]{LRS93} under the Dieudonn\'e--Manin classification \cite[(2.4.5)]{Lau96}.

\section{A nearby cycles calculation and semisimple trace}\label{s:nearbycycles}
Write $\bf{F}_{o,r}$ for the $r$-th degree unramified extension of $\bf{F}_o$, and recall our test function $\phi_{\tau,h}$ in $C^\infty_c(\GL_n(\bf{F}_{o,r}))$ as in \S\ref{s:deformationspaces}. Our goal in this section is to express the integral
\begin{align*}
\int_{\sg_o^{-r}I_{\bf{F}_o}}\!\dif\tau\,\phi_{\tau,h}
\end{align*}
in terms of representations of $\GL_n(\cO_o)$. We shall use this description in \S\ref{s:secondinductivelemma} to show that the preimage of any unramified representation under $\pi\mapsto\rec\pi$ remains unramified, which plays an important role in our proof that $\rec$ is bijective.

To begin, we present a geometric calculation of the inertia invariants of nearby cycles, due to Scholze. This calculation relies on a case of \emph{Grothendieck's purity conjecture} as proved by Thomason, which describes relative cohomology in the \'etale setting. By using results from \S\ref{s:deformationspaces}, we verify that the hypotheses of this calculation apply to $\cM_I$ as well as explicitly compute the inertia invariants in terms of representations of $\GL_n(\cO_o/\vpi_o^m)$, where $m$ denotes the multiplicity of $o$ in $I$. We then relate nearby cycles to \emph{semisimple trace}, which is a better-behaved variant of Frobenius traces for ramified representations of $W_{\bf{F}_o}$. Finally, we conclude by applying results from \S\ref{s:deformationspaces} once more to describe our integral in terms of semisimple traces.

\subsection{}\label{ss:inertiainvariantspullback}
We start by introducing notation for the calculation of nearby cycles. Let $\cX$ be a scheme over $\cO_o$ of finite type, and write
\begin{align*}
\ov\j:\cX_{\br{\bf{F}}_o}\rar\cX_{\br\cO_o}\mbox{ and }\ov\i:\cX_{\ov\ka}\rar\cX_{\br\cO_o}
\end{align*}
for the inclusion morphisms of the generic and special fibers of $\cX_{\br\cO_o}$, respectively. Denote the derived $I_{\bf{F}_o}$-invariants functor by $R_{I_{\bf{F}_o}}$, and denote the derived nearby cycles functor on $\cX_{\br\cO_o}$ by $R\Psi_{\cX_{\br{\bf{F}}_o}}$. The Galois description of \'etale sheaves implies that
\begin{align*}
(R_{I_{\bf{F}_o}}R\Psi_{\cX_{\br{\bf{F}}_o}})(-) = \ov\i^*R\ov\j_*((-)_{\br{\bf{F}}_o})
\end{align*}
as functors from $D^b_c(\cX_{\bf{F}_o},\ov\bQ_\ell)$ to $D^b_c(\cX_{\ka_o},\ov\bQ_\ell)$, since the non-derived versions of both sides are equal.

\subsection{}\label{ss:stratificationcomplex}
Next, we use the combinatorics of the geometry of $\cX$ to construct a certain family of $\ov\bQ_\ell$-vector spaces involved in the calculation of nearby cycles. Assume that $\cX$ is regular, and suppose that the morphism $\cX\rar\Spec\cO_o$ is flat of relative dimension $d$. Suppose we have a \emph{stratification} of $\cX_{\ka_o}$, i.e. assume that $\cX_{\ka_o}$ can be written as
\begin{align*}
\cX_{\ka_o}=\bigcup_j\mathring\cZ_j,
\end{align*}
where $j$ ranges over some finite indexing set $\bJ$, and the $\mathring\cZ_j$ are disjoint irreducible locally closed subsets of $\cX_{\ka_o}$ whose closures $\cZ_j\deq\ov{\mathring\cZ_j}$ are regular and equal to unions of $\mathring\cZ_k$ for some $k$. For any $j$ in $\bJ$, write $c(j)$ for the codimension of $\cZ_j$ in $\cX$, which is positive because $\cZ_j$ lies in $\cX_{\ka_o}$. We write $k\succ j$ if $\cZ_k$ strictly contains $\cZ_j$.

By inducting on $c(j)$, we shall assign a finite-dimensional $\ov\bQ_\ell$-vector space $W_j$ to any $j$ in $\bJ$ as follows. In the $c(j)=1$ case, set $W_j\deq\ov\bQ_\ell$. In the $c(j)=2$ case, we let
\begin{align*}
W_j\deq \ker\left(\bigoplus_kW_k\rar\ov\bQ_\ell\right),
\end{align*}
where $k$ runs over elements of $\bJ$ satisfying $k\succ j$ and $c(k)=1$, and the maps in the direct sum are the identity morphisms. Thus $W_j$ is just the kernel of the summation map. Note that we have a morphism $W_j\rar W_k$ given by
\begin{align*}
W_j\inj{}\bigoplus_kW_k\surj{}W_k
\end{align*}
for all $k\succ j$ with $c(k)=1$. Finally, in the $c(j)\geq3$ case, set
\begin{align*}
W_j\deq \ker\left(\bigoplus_kW_k\rar\bigoplus_lW_l\right),
\end{align*}
where now $k$ runs over all elements of $\bJ$ satisfying $k\succ j$ and $c(k)=c(j)-1$, $l$ runs over all elements of $\bJ$ satisfying $l\succ j$ and $c(l)=c(j)-2$, and the maps in the direct sum are of the form $W_k\rar W_l$ for $l\succ k$. Note that we have a morphism $W_j\rar W_k$ as before via inclusion and projection, so we may indeed inductively continue this construction.

Let $G$ be a group, and suppose it acts on $\cX_{\ka_o}$ in a manner preserving the stratification. Then the stabilizer of $\cZ_j$ in $G$ acts from the left on $W_j$ via permuting the $\cZ_k$ and hence $W_k$ for which $k\succ j$.

\subsection{}\label{ss:scholzenearbycycles}
We now present Scholze's calculation of the inertia invariants of nearby cycles. Let $j$ be in $\bJ$. Consider the chain complex
\begin{align*}
0\rar W_j\rar\bigoplus_{k_1} W_{k_1}\rar\bigoplus_{k_2}W_{k_2}\rar\dotsb\rar\bigoplus_{k_{c(j)-1}}W_{k_{c(j)-1}}\rar\ov\bQ_\ell\rar0,
\end{align*}
where $k_s$ runs over elements of $\bJ$ satisfying $k_s\succ j$ and $c(k_s)=c(j)-s$, and the maps in the direct sum are of the form $W_{k_s}\rar W_{k_{s+1}}$ for $k_{s+1}\succ k_s$. Note that this complex is exact at $W_j$ by construction. For any $\ka_o$-point $z$ of $\cX$, write $\ov{z}$ for the corresponding $\ov\ka$-point of $\cX$. 
\begin{prop*}[{\cite[Theorem 5.3]{Sch13b}}]
If the above chain complex is exact for all $j$ in $\bJ$, then we have a canonical isomorphism
\begin{align*}
(\ov\i^*R^i\ov\j_*\ov\bQ_\ell)_{\ov{z}}\rar^\sim\bigoplus_kW_k(-i)
\end{align*}
for all $\ka_o$-points $z$ of $\cX$ and non-negative integers $i$, where $k$ ranges over all $k$ in $\bJ$ such that $z$ lies in $\cZ_k$ and $c(k)=i$.
\end{prop*}
The proof of this proposition uses a case of \emph{Grothendieck's purity conjecture} as proved by Thomason \cite[Corollary 3.9]{Tho84}, which says that relative \'etale cohomology is concentrated in the expected degree according to codimension.

\subsection{}
In order to apply Proposition \ref{ss:scholzenearbycycles}, we must first describe a stratification of $\cM_{I,\ka_o}$ as in \ref{ss:stratificationcomplex}. This stratification will be defined in terms of Drinfeld level-$m$ structures, and it is called the \emph{Newton stratification}. 

Let $V$ be an $\cO_o/\vpi_o^m$-linear direct summand of $(\vpi_o^{-m}\cO_o/\cO_o)^n$, and write $\cM_I^V$ for the subfunctor of $\cM_I$ consisting of triples $((\sE_i,t_i,j_i)_i,\io,\al)$ that satisfy
\begin{align*}
\sum_v[\al(v)] = \#V\cdot[0],
\end{align*}
where $v$ runs over all elements of $V$, $[0]$ denotes the zero section of $\Dr(\sM'_o/\vpi_o^m)$, the sum is taken as closed subschemes of $\Dr(\sM'_o/\vpi_o^m)$, and $\#V\cdot[0]$ denotes $\#V$-fold sum of $[0]$ as a closed subscheme of $\Dr(\sM'_o/\vpi_o^m)$. Here, $\sM'_o$ denotes the local shtuka of $(\sE_i,t_i,j_i)_i$ at $o$ as in \ref{ss:dellipticsheavestolocalshtukazero}, and $\Dr(\sM'_o/\vpi^m_o)$ denotes the strict $\ka_o$-module associated with the finite $\ka_o$-shtuka $\sM'_o/\vpi^m_o$ under the Dieudonn\'e equivalence \cite[Theorem 5.2]{HS16}.

\begin{prop}\label{ss:initialstratification}\hfill
\begin{enumerate}[(i)]
\item The subfunctor $\cM_I^V$ is a regular closed subscheme of $\cM_I$ that is equidimensional of dimension $n-\rk_{\cO_o/\vpi^m_o}(V)$.
\item The special fiber $\cM_{I,\ka_o}$ equals $\bigcup_V\cM_I^V$, where $V$ ranges over all nonzero $\cO_o/\vpi_o^m$-linear direct summands of $(\vpi_o^{-m}\cO_o/\cO_o)^n$.
\item For any $\cO_o/\vpi^m_o$-linear direct summand $V'$ of $(\vpi^{-m}_o\cO_o/\cO_o)^n$, if any irreducible component of $\cM^{V'}_I$ lies in $\cM^{V}_I$, then $V'$ contains $V$. Conversely, if $V'$ contains $V$, then $\cM^{V'}_I$ lies in $\cM^{V}_I$.
\end{enumerate}
\end{prop}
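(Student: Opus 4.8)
The plan is to reduce everything to the explicit local deformation spaces via the Serre--Tate theorem. Since (i)--(iii) concern $\cM_{I,\ka_o}$ and closed subschemes of it, and since regularity, dimension, and the incidences of irreducible components may all be tested on completed local rings, I would first base change along $\cO_o\to\br\cO_o$ (affecting none of these) so that each closed point $z$ of $\cM_{I,\ka_o}$ becomes an $\ov\ka$-point. Proposition \ref{ss:boyerserretate} then identifies $\wh{\cO}_{\cM_{I,\br\cO_o},z}$ with the completed local ring of $\br\fX_{\de_o,\al_z}=\Spf\br{R}_{\de_o^\circ,\al_z^\circ}\llb{t_1,\dotsc,t_{n-k}}$ at its closed point, where $\al_z$ is the Drinfeld level-$m$ structure on $\br{H}_{\de_o}$ determined by $z$ and $k=\rk\ker\al_z$ (Proposition \ref{ss:deformationalgebraicclosure}); so it suffices to describe the pullback of $\cM_I^V$ into $\br{R}_{\de_o^\circ,\al_z^\circ}\llb{t_1,\dotsc,t_{n-k}}$.

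This local description is the heart of the matter, and I expect it to be the main obstacle. If $V\not\subseteq\ker\al_z$, choose $v\in V\ssm\ker\al_z$; then $\al_z(v)\ne0$, so for any deformation the section $\al'(v)$ has nonzero image in the \'etale quotient of $\Dr(H'/\vpi_o^m)$ and is thus disjoint from the zero section, whence $\sum_{w\in V}[\al'(w)]\ne\#V\cdot[0]$ and $\cM_I^V$ is empty in a neighbourhood of $z$. If $V\subseteq\ker\al_z$, with $j\deq\rk V$, the pullback of $\cM_I^V$ is the locus of deformations along which $\al'$ restricts trivially to $V$; Drinfeld's deformation theory \cite[Proposition 4.3]{Dri74} (cf.\ also \cite{Boy99}), which is the input behind Propositions \ref{ss:connecteddeformationalgebraicclosure} and \ref{ss:drinfeldparameters}, identifies this locus with a regular closed formal subscheme of $\br\fX_{\de_o,\al_z}$ of dimension $n-j$ contained in the special fibre. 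The delicate point is the scheme structure of this locus over Artinian, possibly non-reduced, bases, where the divisor identity $\sum_{w\in V}[\al'(w)]=\#V\cdot[0]$ must be matched against Drinfeld's parameters; this is exactly where the explicit structure of these deformation rings is needed.

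Granting this, the three assertions follow. For (i): the pullback of $\cM_I^V$ to each $\wh{\cO}_{\cM_{I,\br\cO_o},z}$ is regular of dimension $n-\rk V$, so $\cM_I^V$ is regular, and since this holds at every closed point it is equidimensional of dimension $n-\rk V$; it is closed because, locally for the completion, it is cut out by the ideal defining the above locus. For (ii): over a point of $\cM_I$ lying in the generic fibre $\cM_{I,\bf{F}_o}$ the Drinfeld level-$m$ structure corresponds to an honest level-$o^m$ structure (\ref{ss:badreductionheckecorrespondence}), hence $\al$ is injective, so $\cM_I^V$ is empty on $\cM_{I,\bf{F}_o}$ for $V\ne0$; together with the reducedness of $\cM_I^V$ coming from (i), this gives $\cM_I^V\subseteq\cM_{I,\ka_o}$ for $V\ne0$. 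Conversely, for any geometric point $\ov{z}$ of $\cM_{I,\ka_o}$ the local shtuka $\sM'_o$ at $o$ is effective minuscule of dimension $1$ (\ref{ss:dellipticsheavestolocalshtukazero}), so its connected part has rank $k\ge1$; then $V_0\deq\ker(\al_{\ov z})$ is a rank-$k$ direct summand on which $\al_{\ov z}$ vanishes, so $\ov z\in\cM_I^{V_0}$, and as $\cM_I^{V_0}$ is closed we get $\cM_{I,\ka_o}=\bigcup_{V\ne0}\cM_I^V$.

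Finally, for (iii): if $V\subseteq V'$ then at every closed point $z$ of $\cM_I^{V'}$ we have $V\subseteq V'\subseteq\ker\al_z$, and the local models show the locus attached to $V'$ lies inside the one attached to $V$, so $\cM_I^{V'}\subseteq\cM_I^{V}$. Conversely, by the equidimensionality in (i) each irreducible component $Z$ of $\cM_I^{V'}$ has dimension $n-\rk V'$, so it contains an $\ov\ka$-point $z$ with $\ker\al_z=V'$ exactly (otherwise $Z$ would lie in some $\cM_I^{V''}$ with $V''\supsetneq V'$, which has strictly smaller dimension); at such a $z$, by the first step of the local description $\cM_I^{V}$ is empty near $z$ whenever $V\not\subseteq V'$, so $Z\not\subseteq\cM_I^{V}$, which is the contrapositive of the remaining claim. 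This completes the plan.
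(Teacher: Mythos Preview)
Your proposal is correct and follows essentially the same route as the paper: reduce to completed local rings via the Serre--Tate theorem (Proposition \ref{ss:boyerserretate}), pass to the connected deformation ring via Proposition \ref{ss:connecteddeformationalgebraicclosure}.(iii), and use Drinfeld's parameters (Proposition \ref{ss:drinfeldparameters}) to identify $\cM_I^V$ locally with the vanishing of a partial regular system of parameters. The paper makes the ``delicate point'' you flag explicit by writing the divisor condition as $\prod_{v\in V}(T-\al'(v))=(T)^{\#V}$ in $A\llb{T}$ and asserting this is equivalent to $X_1=\dotsb=X_s=0$ for $X_i=\al'(e_i)$; your acknowledgment that this is where the explicit structure of the deformation ring is needed is accurate.

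One small difference worth noting: for the forward direction of (iii) you give a clean dimension-count argument (find a closed point of the component with $\ker\al_z=V'$ exactly, then use that $z\in\cM_I^V$ forces $V\subseteq\ker\al_z=V'$). The paper's proof instead reads this off directly from the local equations, relying on the fact that $\cM_I^{V'}$ is regular and hence locally irreducible, so the containment of one irreducible component forces the containment of ideals $(X_i:e_i\in V)\subseteq(X_j:e_j\in V')$ in the completed local ring. Your version is arguably more transparent and avoids any subtlety about comparing ideals attached to different bases of $\ker\al$; the paper's version is terser. Both are valid, and your proof of the reverse inclusion in (ii) is in fact more complete than the paper's (which only says ``the $\cM_I^V$ cover $\cM_I$ as $V$ varies'' without spelling out that special-fiber points have nonzero $\ker\al$).
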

Hence we obtain a decomposition
\begin{align*}
\cM_{I,\ka_o} = \bigcup_V\mathring\cM_I^V,
\end{align*}
where $V$ runs over all nonzero $\cO_o/\vpi_o^m$-linear direct summands of $(\vpi_o^{-m}\cO_o/\cO_o)^n$, and the $\mathring\cM_I^V$ are the disjoint locally closed subschemes
\begin{align*}
\mathring\cM_I^V \deq \cM^V_I\ssm\bigcup_{V'}\cM^{V'}_I,
\end{align*}
where $V'$ runs over all $\cO_o/\vpi_o^m$-linear direct summands of $(\vpi_o^{-m}\cO_o/\cO_o)^n$ that strictly contain $V$. Note that the only condition in \ref{ss:stratificationcomplex} that this decomposition  of $\cM_{I,\ka_o}$ does not satisfy is that the $\cM_I^V$ might not be connected. Furthermore, we see that the right action of $\GL_n(\cO_o/\vpi_o^m)$ on $\cM_{I,\ka_o}$ given by composition with Drinfeld level-$m$ structures preserves this decomposition.
\begin{proof}[Proof of Proposition \ref{ss:initialstratification}]
The second part of (iii) is immediate. And when $V$ is zero, $\cM^V_I$ equals all of $\cM_I$, so the whole proposition for this $V$ follows immediately. Now suppose $V$ is nonzero. Let $\ov{z}$ be a geometric point of $\cM_I^V$, and write $((\sE_i,t_i,j_i)_i,\io,\al)$ for the corresponding triple. If $\ov{z}$ lay inside the generic fiber $\cM_{I,\bf{F}_o}$, then $\Dr(\sM'_o/\vpi_o^m)$ would be \'etale \cite[Proposition 7.1.3]{Boy99} and hence could not possibly satisfy the relation required for $\ov{z}$ to lie in $\cM_I^V$. Therefore $\cM_I^V$ is contained in $\cM_{I,\ka_o}$. As the $\cM_I^V$ cover $\cM_I$ as $V$ varies, this proves part (ii).

We can check parts (i) and (iii) by passing to the completion of closed points, so now assume that $\ov{z}$ is a $\ov\ka$-point. Write $\de_o$ for the element of
\begin{align*}
\GL_n(\br\cO_o)\diag(\vpi_o,1,\dotsc,1)\GL_n(\br\cO_o),
\end{align*}
unique up to $\GL_n(\br\cO_o)$-$\sg_o$-conjugacy, which corresponds to $\sM'_o$. Then the completion of $\cM_I$ at $\ov{z}$ is isomorphic to $\br\fX_{\de_o,\al}$ by Proposition \ref{ss:boyerserretate}. Recall from Proposition \ref{ss:connecteddeformationalgebraicclosure} that the restriction morphism induces an isomorphism $\br{R}_{\de_o^\circ,\al^\circ}\llb{T_1,\dotsc,T_{n-k}}\rar^\sim\br{R}_{\de_o,\al}$, where $k$ is the height of the connected part $\sM_o'^\circ$ of $\sM_o'$, $\de_o^\circ$ is an element of $\GL_k(\br\cO_o)\diag(\vpi_o,1,\dotsc,1)\GL_k(\br\cO_o)$ corresponding to $\sM_o'^\circ$,\footnote{We apologize for the notation.} and $\al^\circ$ is the restriction of $\al$ to $\ker\al$. The condition defining $\cM_I^V$ depends only on $\sM_o'^\circ$, so we may further narrow our focus to the local deformation ring $\br{R}_{\de_o^\circ,\al^\circ}$.

Let $e_1,\dotsc,e_s$ be an $\cO_o/\vpi_o^m$-basis of $V$. If $\ov{z}$ lies in $\cM_I^V$, then $\ker\al$ contains $V$, so here we may extend this to an $\cO_o/\vpi_o^m$-basis $e_1,\dotsc,e_k$ of $\ker\al$. For any local Artinian $\br\cO_o$-algebra $A$ with residue field $\ov\ka$ and local $\br\cO_o$-algebra morphism $f:\br{R}_{\de_o^\circ,\al^\circ}\rar A$, write $(H',\al',\io')$ for the corresponding triple. Recall from Proposition \ref{ss:drinfeldparameters} that $\br{R}_{\de_o^\circ,\al^\circ}$ has a choice of local parameters $X_1,\dots,X_k$ such that, for all such $A$ and $f$, the image of $X_i$ under $f$ equals the element of $\fm_A$ corresponding to $\al'(e_i)$. Now the condition defining $\cM_I^V$ is equivalent to
\begin{align*}
\prod_v(T-\al'(v)) = (T)^{\#V}
\end{align*}
as ideals in the formal power series ring $A\llb{T}$, where $v$ runs over all elements in $V$. This is equivalent to the condition that $X_1=\dotsb=X_j=0$. This proves the first part of (iii), and as $X_1,\dotsc,X_k$ is a regular sequence, this also proves part (i).
\end{proof}

\subsection{}
The decomposition of $\cM_{I,\ka_o}$ from Proposition \ref{ss:initialstratification} consists of closed subschemes $\cM_I^V$ which might not be connected. We can immediately rectify this issue (which is necessary for applying Proposition \ref{ss:scholzenearbycycles}) by refining this decomposition into its irreducible components. To simplify the exposition, we proceed as follows.

Let $z$ be a $\ka_o$-point of $\cM_I$, and write $\ov{z}$ for the corresponding $\ov\ka$-point of $\cM_I$. Then $z$ corresponds to a $\sD$-elliptic sheaf $(\sE_i,t_i,j_i)_i$ with Drinfeld level-$m$ structure $\al$. Note that, for any $\cO_o/\vpi_o^m$-linear direct summand $V$ of $(\vpi_o^{-m}\cO_o/\cO_o)^n$, the closed subscheme $\cM^V_I$ contains $z$ if and only if $V$ lies in $\ker\al$.

Given such a $V$, write $\mathring\cM^{V,0}_I,\dotsc,\mathring\cM^{V,l_V}_I$ for the irreducible components of $\mathring\cM^V_I$, and write $\cM^{V,0}_I,\dotsc,\cM^{V,l_V}_I$ for their closures, respectively. These are the irreducible components of $\cM_I^V$, and regularity implies that they are disjoint. We label them such that $\cM^{V,0}_I$ is the one containing $z$. For any $\cO_o/\vpi_o^m$-linear direct summand $V'$ lying in $V$, the disjointness of the $\cM^{V',0}_I,\dotsc,\cM^{V',l_{V'}}_I$ implies that precisely one of them contains $\cM_I^{V,a}$, where $a$ is any integer $0\leq a\leq l_V$. Therefore
\begin{align*}
\cM_{I,\ka_o}^z\deq\bigcup_{(V,a)}\mathring\cM_I^{V,a},
\end{align*}
where $(V,a)$ runs over all pairs for which
\begin{enumerate}[$\bullet$]
\item $V$ is a nonzero $\cO_o/\vpi^m_o$-linear direct summand of $(\vpi_o^{-m}\cO_o/\cO_o)^n$ such that $\cM_I^V$ contains $z$,
\item $a$ is an integer $0\leq a\leq l_V$,
\end{enumerate}
is an open subscheme of $\cM_{I,\ka_o}$. Furthermore, the $\mathring\cM_I^{V,a}$ now form a stratification of $\cM_{I,\ka_o}^z$ as in \ref{ss:stratificationcomplex}, where the indexing set equals the collection of such pairs $(V,a)$. Proposition \ref{ss:initialstratification} shows that $c(V,a)=\rk_{\cO_o/\vpi_o^m}(V)$. Note that the right action of $\GL_n(\cO_o/\vpi_o^m)$ on $\cM_{I,\ka_o}$ restricts to an action on $\cM_{I,\ka_o}^z$ that preserves this stratification.

\subsection{}\label{ss:modulispacecomplex}
At this point, we specialize the general constructions from \ref{ss:stratificationcomplex} to our specific situation of $\cM_{I,\ka_o}^z$. Let $W_{V,a}$ be the $\ov\bQ_\ell$-vector space associated with $\cM_I^{V,a}$ as in \ref{ss:stratificationcomplex}, and make a change of basis in $(\vpi_o^{-m}\cO_o/\cO_o)^n$ to identify $V$ with $(\vpi_o^{-m}\cO_o/\cO_o)^k$, where $k\leq n$ is rank of $V$ as a free $\cO_o/\vpi_o^m$-module. Then the stabilizer of $\cM_I^{V,a}$ under the $\GL_n(\cO_o/\vpi_o^m)$-action is $P(\cO_o/\vpi_o^m)$, where $P$ denotes the standard parabolic subgroup of $\GL_n$ with block sizes $(k,n-k)$, so $W_{V,a}$ has a left action by the finite group $P(\cO_o/\vpi_o^m)$.

Proposition \ref{ss:initialstratification}.(iii) implies that the $\cM^{V',a'}_I$ containing $\cM^{V,a}_I$ are precisely those for which $V'$ lies in $V$. Furthermore, after choosing such a $V'$, the integer $a'$ is uniquely determined by this containment condition. By making another change of basis, we may identify $V'$ with $(\vpi_o^{-m}\cO_o/\cO_o)^{k'}$, where $k'\leq k$ is the rank of $V'$ as a free $\cO_o/\vpi_o^m$-module. Then $Q(\cO_o/\vpi_o^m)$ is the stabilizer of $\cM_I^{V',a'}$  under the action of $\GL_n(\cO_o/\vpi_o^m)$, where $Q$ denotes the standard parabolic subgroup of $\GL_n$ with block sizes $(k',n-k')$. Note that $Q$ contains the unipotent radical of $P$ as well as the Levi factor $\GL_{n-k}$ of $P$, so we see that $P(\cO_o/\vpi_o^m)$ acts on $W_{V,a}$ through the quotient $\GL_k(\cO_o/\vpi_o^m)$.

\subsection{}\label{ss:finitesteinberg}
Now \ref{ss:modulispacecomplex} shows that $\cM^{V,a}_I$ is contained in a unique irreducible component of $\cM_I^{V'}$ whenever $V'$ lies in $V$. Therefore, by inducting on $k$ and examining the construction of $W_{V,a}$, we see that $W_{V,a}$ is isomorphic to the Steinberg representation
\begin{align*}
\St_k^{\cO_o/\vpi_o^m}\deq\ker\left(\Ind_{B(\cO_o/\vpi_o^m)}^{\GL_k(\cO_o/\vpi_o^m)}\ov\bQ_\ell\rar\bigoplus_P\Ind_{P(\cO_o/\vpi_o^m)}^{\GL_k(\cO_o/\vpi_o^m)}\ov\bQ_\ell\right)
\end{align*}
of $\GL_k(\cO_o/\vpi_o^m)$, where $B$ denotes the standard Borel subgroup of $\GL_k$ of upper triangular matrices, $\ov\bQ_\ell$ denotes the trivial representation, and $P$ ranges over all standard parabolic subgroups of $\GL_k$ that do not equal $B$. Under this identification, the chain complex from Proposition \ref{ss:scholzenearbycycles} formed from the $\cM_I^{V,a}$ becomes
\begin{align*}
0\rar\St_k^{\cO_o/\vpi_o^m}\rar&\Ind_{P_{k-1}(\cO_o/\vpi_o^m)}^{\GL_k(\cO_o/\vpi_o^m)}\left(\St_{k-1}^{\cO_o/\vpi_o^m}\otimes\,\ov\bQ_\ell\right)\rar  \Ind_{P_{k-2}(\cO_o/\vpi_o^m)}^{\GL_k(\cO_o/\vpi_o^m)}\left(\St_{k-2}^{\cO_o/\vpi_o^m}\otimes\,\ov\bQ_\ell\right)\rar\dotsb\\
\dotsb&\rar\Ind_{P_1(\cO_o/\vpi_o^m)}^{\GL_k(\cO_o/\vpi_o^m)}\left(\ov\bQ_\ell\otimes\ov\bQ_\ell\right)\rar\ov\bQ_\ell\rar0,
\end{align*}
where $P_s$ denotes the standard parabolic subgroup of $\GL_k$ with block sizes $(s,k-s)$.

\subsection{}\label{ss:exacthypothesis}
We proceed to verify that the chain complex in \ref{ss:finitesteinberg} is exact, that is, the hypotheses of Proposition \ref{ss:scholzenearbycycles} hold for the stratification $\cM_{I,\ka_o}^z=\bigcup_{(V,a)}\mathring\cM_I^{V,a}$.
\begin{lem*}
For all positive integers $k$, the chain complex
\begin{align*}
0\rar\St_k^{\cO_o/\vpi_o^m}\rar&\Ind_{P_{k-1}(\cO_o/\vpi_o^m)}^{\GL_k(\cO_o/\vpi_o^m)}\left(\St_{k-1}^{\cO_o/\vpi_o^m}\otimes\,\ov\bQ_\ell\right)\rar  \Ind_{P_{k-2}(\cO_o/\vpi_o^m)}^{\GL_k(\cO_o/\vpi_o^m)}\left(\St_{k-2}^{\cO_o/\vpi_o^m}\otimes\,\ov\bQ_\ell\right)\rar\dotsb\\
\dotsb&\rar\Ind_{P_1(\cO_o/\vpi_o^m)}^{\GL_k(\cO_o/\vpi_o^m)}\left(\ov\bQ_\ell\otimes\ov\bQ_\ell\right)\rar\ov\bQ_\ell\rar0,
\end{align*}
is exact.
\end{lem*}
In the proof, we will frequently pass between induced representations of $\GL_k(\bf{F}_o)$ and $\GL_k(\cO_o)$. For this, one can use the fact that $P(\cO_o)\bs\GL_k(\cO_o)$ equals $P(\bf{F}_o)\bs\GL_k(\bf{F}_o)$ to show that
\begin{align*}
\Ind_{P(\bf{F}_o)}^{\GL_k(\bf{F}_o)}\la = \Ind_{P(\cO_o)}^{\GL_k(\cO_o)}\la
\end{align*}
as representations of $\GL_k(\cO_o)$, where $P$ is a parabolic subgroup of $\GL_k$ over $\cO_o$, and $\la$ is a smooth representation of $P(\bf{F}_o)$.
\begin{proof}[Proof of Lemma \ref{ss:exacthypothesis}]
We reduce this to a fact about representations of $\GL_k(\bf{F}_o)$ as follows. Suppose we could show that the chain complex
\begin{align*}
0\rar\St_k\rar&\nInd_{P_{k-1}(\bf{F}_o)}^{\GL_k(\bf{F}_o)}\left(\St_{k-1}\otimes\,\ov\bQ_\ell\right)\rar  \nInd_{P_{k-2}(\bf{F}_o)}^{\GL_k(\bf{F}_o)}\left(\St_{k-2}\otimes\,\ov\bQ_\ell\right)\rar\dotsb\\
\dotsb&\rar\nInd_{P_1(\bf{F}_o)}^{\GL_k(\bf{F}_o)}\left(\ov\bQ_\ell\otimes\ov\bQ_\ell\right)\rar\ov\bQ_\ell\rar0,
\end{align*}
is exact, where $\St_s$ is the Steinberg representation of $\GL_s(\bf{F}_o)$, that is, the representation $\St_s(\ov\bQ_\ell)$ in the terminology of Definition \ref{ss:generalizedsteinbergspeh}. As the modulus character $\de_{P_s}$ of $P_s(\bf{F}_o)$ vanishes on $P_s(\cO_o)$, we see that the above chain complex is isomorphic as representations of $\GL_n(\cO_o)$ to 
\begin{align*}
0\rar\St_k\rar&\Ind_{P_{k-1}(\cO_o)}^{\GL_k(\cO_o)}\left(\St_{k-1}\otimes\,\ov\bQ_\ell\right)\rar  \Ind_{P_{k-2}(\cO_o)}^{\GL_k(\cO_o)}\left(\St_{k-2}\otimes\,\ov\bQ_\ell\right)\rar\dotsb\\
\dotsb&\rar\Ind_{P_1(\cO_o)}^{\GL_k(\cO_o)}\left(\ov\bQ_\ell\otimes\ov\bQ_\ell\right)\rar\ov\bQ_\ell\rar0,
\end{align*}
which is therefore also exact.

Subgroups of $\cO_o$-points are compact, so modulus characters vanish on them. Therefore we have
\begin{align*}
\St_s=\ker\left(\nInd_{B(\bf{F}_o)}^{\GL_s(\bf{F}_o)}\de_B^{1/2}\rar\bigoplus_P\nInd_{P(\bf{F}_o)}^{\GL_s(\bf{F}_o)}\de_P^{1/2}\right) = \ker\left(\Ind_{B(\cO_o)}^{\GL_s(\cO_o)}\ov\bQ_\ell\rar\bigoplus_P\Ind_{P(\cO_o)}^{\GL_s(\cO_o)}\ov\bQ_\ell\right)
\end{align*}
as representations of $\GL_s(\cO_o)$, where $B$ denotes the standard Borel subgroup of $\GL_s$, $P$ runs over all standard parabolic subgroups of $\GL_b$ that do not equal $B$, and $\de_B$ and $\de_P$ denote the modulus characters, where we have identified $\ov\bQ_\ell$ with $\bC$. Thus taking $(1+\vpi_o^m\M_s(\cO_o/\vpi_o^m))$-invariants of $\St_s$ yields $\St_s^{\cO_o/\vpi_o^m}$. Because the Peter--Weyl theorem implies that taking $(1+\vpi_o^m\M_k(\cO_o))$-invariants preserves exactness, we conclude that
\begin{align*}
0\rar\St_k^{\cO_o/\vpi_o^m}\rar&\Ind_{P_{k-1}(\cO_o/\vpi_o^m)}^{\GL_k(\cO_o/\vpi_o^m)}\left(\St_{k-1}^{\cO_o/\vpi_o^m}\otimes\,\ov\bQ_\ell\right)\rar  \Ind_{P_{k-2}(\cO_o/\vpi_o^m)}^{\GL_k(\cO_o/\vpi_o^m)}\left(\St_{k-2}^{\cO_o/\vpi_o^m}\otimes\,\ov\bQ_\ell\right)\rar\dotsb\\
\dotsb&\rar\Ind_{P_1(\cO_o/\vpi_o^m)}^{\GL_k(\cO_o/\vpi_o^m)}\left(\ov\bQ_\ell\otimes\ov\bQ_\ell\right)\rar\ov\bQ_\ell\rar0
\end{align*}
is exact, as desired.

Therefore it suffices to show that our chain complex of representations of $\GL_k(\bf{F}_o)$ is exact. A routine calculation \cite[Lemma I.3.2]{HT01} using the graph-theoretic description of the Jordan--H\"older factors of 
\begin{align*}
\Ind_{P_s(\bf{F}_o)}^{\GL_k(\bf{F}_o)}\left(\St_s\otimes\,\ov\bQ_\ell\right)
\end{align*}
shows that there exists irreducible smooth representations $\pi_0,\dotsc,\pi_{k-1}$ of $\GL_k(\bf{F}_o)$ such that the above representation has $\pi_s$ as a subrepresentation and $\pi_{s-1}$ as a quotient. Because the maps in our chain complex are nonzero, this fact indicates that our chain complex of representations of $\GL_n(\bf{F}_o)$ is exact, which concludes the proof.
\end{proof}

\subsection{}\label{ss:modulinearbycycles}
With Lemma \ref{ss:exacthypothesis} in hand, we may apply Proposition \ref{ss:scholzenearbycycles} to our situation. This finally allows us to compute the inertia invariants of nearby cycles on $\cM_I$.
\begin{cor*}
Write $k$ for the rank of $\ker\al$ as a free $\cO_o/\vpi_o^m$-module. We have a canonical isomorphism
\begin{align*}
(R_{I_{\bf{F}_o}}^iR\Psi_{\cM_{I,\br{\bf{F}}_o}}\ov\bQ_\ell)_{\ov{z}} \rar^\sim
\begin{cases}
\Ind_{P_i(\cO_o/\vpi_o^m)}^{\GL_k(\cO_o/\vpi_o^m)}\left(\St_i^{\cO_o/\vpi_o^m}\otimes\,\ov\bQ_\ell\right)(-i) & \mbox{ if }0\leq i \leq k,\\
0 & \mbox{otherwise},
\end{cases}
\end{align*}
as representations of $\GL_k(\cO_o/\vpi_o^m)\times W_{\bf{F}_o}$, where $P_i$ denotes the standard parabolic subgroup of $\GL_k$ with block sizes $(i,k-i)$.
\end{cor*}
\begin{proof}
Because $\cM_{I,\ka_o}^z$ is an open subscheme of $\cM_{I,\ka_o}$ and stalks are local, we may replace $\cM_I$ with its open subscheme $\cM_{I,\ka_o}^z\cup\cM_{I,\bf{F}_o}$. In \ref{ss:inertiainvariantspullback}, we saw that
\begin{align*}
R_{I_{\bf{F}_o}}^iR\Psi_{\cM_{I,\br{\bf{F}}_o}}\ov\bQ_\ell = \ov\i^*R^i\ov\j_*\ov\bQ_\ell,
\end{align*}
and Lemma \ref{ss:exacthypothesis} implies that we may compute $\ov\i^*R^i\ov\j_*\ov\bQ_\ell$ by means of Proposition \ref{ss:scholzenearbycycles}. Recall that $z$ lies in $\cM_I^V$ if and only if $V$ lies in $\ker\al$. Therefore the same argument as in \ref{ss:finitesteinberg} shows that the right hand side of Proposition \ref{ss:scholzenearbycycles} yields
\begin{align*}
\Ind_{P_i(\cO_o/\vpi_o^m)}^{\GL_k(\cO_o/\vpi_o^m)}\left(\St_i^{\cO_o/\vpi_o^m}\otimes\,\ov\bQ_\ell\right)(-i),
\end{align*}
if $0\leq i\leq k$, as desired, while we get zero otherwise because $\ker\al$ contains no $\cO_o/\vpi_o^m$-linear direct summands of rank $i>k$.
\end{proof}

\subsection{}
At this point, we turn to \emph{semisimple traces}. Let $H$ be a finite group, and let $Y$ be a finite-dimensional continuous representation of $H\times W_{\bf{F}_{o,r}}$ over $\ov\bQ_\ell$. Because taking $I_{\bf{F}_o}$-invariants is not exact, the operation $Y\mapsto\tr(\sg_o^{-r}|Y^{I_{\bf{F}_o}})$ is not additive. One way of rectifying this starts by passing to certain filtrations of $Y$.
\begin{defn*}
An exhaustive filtration
\begin{align*}
0=Y_0\subseteq Y_1\subseteq\dotsb\subseteq Y_d=Y
\end{align*}
of $H\times W_{\bf{F}_{o,r}}$-subrepresentations is \emph{admissible} if $I_{\bf{F}_o}$ acts through a finite quotient on the associated graded representation
\begin{align*}
\gr Y_\bullet\deq\bigoplus_{i=1}^dY_i/Y_{i-1}.
\end{align*}
\end{defn*}
Note that refinements of admissible filtrations remain admissible, as do their sums and intersections.

\subsection{}\label{ss:ladicmonodromy}
To ensure that admissible filtrations of $Y$ exist, we shall use the following version of Grothendieck's $\ell$-adic monodromy theorem.
\begin{lem*}
There exists an admissible filtration of $Y$.
\end{lem*}
\begin{proof}
One can prove this precisely as in \cite[Lemma 7.2]{Sch11}: the argument proceeds as in the usual proof of Grothendieck's $\ell$-adic monodromy theorem, except we use $W_{\bf{F}_{o,r}}$ in place of $G_o$ and carry around the extra commuting action of $H$ throughout.

\end{proof}
\begin{defn}
Let $r$ be a non-negative integer, and let $h$ be in $H$. The \emph{semisimple trace} of $h\times \sg^{-r}_o$ on $Y$ is
\begin{align*}
\tr^\text{ss}\left(h\times \sg^{-r}_o|Y\right)\deq\tr\left(h\times \sg^{-r}_o|(\gr Y_\bullet)^{I_{\bf{F}_o}}\right),
\end{align*}
where $Y_\bullet$ is any admissible filtration of $Y$. 
\end{defn}
Note that the common refinement of two admissible filtrations remains admissible. Taking $I_{\bf{F}_o}$-invariants on the associated graded representation is exact because $I_{\bf{F}_o}$ acts through a finite quotient, so we see that $\tr^\text{ss}(h\times \sg^{-r}_o|Y)$ is independent of the choice of $Y_\bullet$. This same observation shows that $\tr^\text{ss}(h\times \sg^{-r}_o|-)$ itself is additive. Therefore semisimple trace descends to the derived category of finite-dimensional continuous representations of $H\times W_{\bf{F}_o,r}$ over $\ov\bQ_\ell$, and it is additive on exact triangles.

\subsection{}\label{ss:inertiainvariantssemisimpletrace}
Taking derived $I_{\bf{F}_o}$-invariants is also additive on exact triangles, so we can consider the additive function $\tr(h\times\sg_o^{-r}|-)\circ R_{I_{\bf{F}_o}}$ as well. It has the following relationship with semisimple trace.
\begin{lem*}
We have
\begin{align*}
\tr\left(h\times\sg_o^{-r}|-\right)\circ R_{I_{\bf{F}_o}} = (1-q_o^r)\tr^\text{ss}\left(h\times\sg_o^{-r}|-\right)
\end{align*}
as functions on the derived category of finite-dimensional continuous representations of $H\times W_{\bf{F}_{o,r}}$ over $\ov\bQ_\ell$.
\end{lem*}
\begin{proof}
One can prove this exactly as in \cite[Lemma 7.5]{Sch11}: first, reduce to the case of a complex concentrated in one degree, and take wild inertia invariants. Since wild inertia is a pro-$p$ group, while the modules in question are $\ell$-torsion, Maschke's theorem indicates that this is exact. Next, we take tame inertia invariants: as tame inertia is pro-cyclic, we can take the standard resolution for pro-cyclic groups to conclude the proof.  
\end{proof}

\subsection{}\label{ss:semisimpletraceintegral}
Semisimple traces have the following interpretation in terms of integrals of traces.
\begin{lem*}
We have an equality
\begin{align*}
\tr^\text{ss}\left(h\times\sg_o^{-r}|Y\right) = \int_{\sg_o^{-r}I_{\bf{F}_o}}\!\dif\tau\,\tr\left(h\times\tau|Y\right),
\end{align*}
where $\dif\tau$ is the Haar measure on $W_{\bf{F}_o}$ that gives $I_{\bf{F}_o}$ volume $1$.
\end{lem*}
\begin{proof}
Because both sides are additive in $Y$, it suffices to prove this when $Y$ is irreducible. Lemma \ref{ss:ladicmonodromy} implies that $I_{\bf{F}_o}$ acts on $Y$ through a finite quotient, so we obtain
\begin{align*}
\tr^\text{ss}\left(h\times\sg_o^{-r}|Y\right) = \tr\left(h\times\sg_o^{-r}|Y^{I_{\bf{F}_o}}\right).
\end{align*}
Furthermore, $Y$ is a smooth representation of $W_{\bf{F}_{o,r}}$, and the action of $h\times\sg_o^{-r}$ on $Y^{I_{\bf{F}_o}}$ is given by the action of the function
\begin{align*}
\bf1_{h\times\sg_o^{-r}I_{\bf{F}_o}}\in C^\infty_c(H\times W_{\bf{F}_{o,r}})
\end{align*}
on $Y$, where $\bf1_{h\times\sg_o^{-r}I_{\bf{F}_o}}$ is the indicator function on $h\times\sg_p^{-r}I_{\bf{F}_o}$. The trace of this function is given by the above integral, so we obtain the desired equality.
\end{proof}

\subsection{}\label{ss:sectionsixfinalresult}
Let $h$ be a function in $C^\infty_c(\GL_n(\cO_o))$. We conclude by combining the results in this section to write the integral of $\phi_{\tau,h}$, as $\tau$ ranges over elements of $\sg_o^{-r}I_{\bf{F}_o}$, in terms of representations of $\GL_n(\cO_o)$. Write $\cO_{o,r}$ for the ring of integers of $\bf{F}_{o,r}$, and write $\ka_{o,r}$ for the residue field of $\cO_{o,r}$. Let $\de_o$ be an element of $\GL_n(\bf{F}_{o,r})$, and if $\de_o$ lies in $\GL_n(\cO_{o,r})\diag(\vpi_o,1,\dotsc,1)\GL_n(\cO_{o,r})$, write $k$ for the rank of the connected part of the associated effective minuscule local shtuka $H_{\de_o}$ over $\Spec\ka_{o,r}$ as in \S\ref{s:deformationspaces}.
\begin{cor*}
The integral
\begin{align*}
\int_{\sg_o^{-r}I_{\bf{F}_o}}\!\dif\tau\,\phi_{\tau,h}(\de_o)
\end{align*}
vanishes if $\de_o$ does not lie in $\GL_n(\cO_{o,r})\diag(\vpi_o,1,\dotsc,1)\GL_n(\cO_{o,r})$. On the other hand, if $\de_o$ lies in this double coset, then the above integral equals
\begin{align*}
\fC_k\deq\frac1{1-q_o^r}\sum_{i=0}^k(-1)^iq_o^{ir}\tr\left(h|\Ind_{P_{i,k}(\cO_o)}^{\GL_n(\cO_o)}\left(\St_i\otimes\,\bC\right)\right),
\end{align*}
where $P_{i,k}$ is the standard parabolic subgroup of $\GL_n$ with block sizes $(i,k-i,n-k)$, and $\bC$ denotes the trivial representation of $\GL_{k-i}(\cO_o)\times\GL_{n-k}(\cO_o)$.
\end{cor*}
\begin{proof}
Because $\phi_{\tau,h}(\de_o)$ vanishes for $\de_o$ not in $\GL_n(\cO_{o,r})\diag(\vpi_o,1,\dotsc,1)\GL_n(\cO_{o,r})$ by definition, it suffices to prove this when $\de_o$ actually does lie in this double coset. Recalling the definition of $\phi_{\tau,h}(\de_o)$ for such $\de_o$ yields
\begin{align*}
\int_{\sg_o^{-r}I_{\bf{F}_o}}\!\dif\tau\,\phi_{\tau,h}(\de_o) = \int_{\sg_o^{-r}I_{\bf{F}_o}}\!\dif\tau\,\tr\left(\tau\times h|[R\psi_{\de_o}]\right),
\end{align*}
where $[R\psi_{\de_o}]$ is the virtual $\GL_n(\cO_o)\times I_{\bf{F}_o}$-admissible/continuous representation as in \cite[p.~24]{HT01} of $\GL_n(\cO_o)\times W_{\bf{F}_o}$ over $\ov\bQ_\ell$ from \ref{ss:fundamentalrepresentation}. Let $m$ be a positive integer for which $h$ descends to a function in $\ov\bQ_\ell[\GL_n(\cO_o/\vpi_o^m)]$, where we have identified $\ov\bQ_\ell$ with $\bC$. We denote this element in $\ov\bQ_\ell[\GL_n(\cO_o/\vpi_o^m)]$ using $h$ by abuse of notation. Then this integral of traces becomes
\begin{align*}
\sum_{i=0}^\infty(-1)^i\int_{\sg_o^{-r}I_{\bf{F}_o}}\!\dif\tau\,\tr\left(\tau\times h|R^i\psi_{\de_o,m}\right),
\end{align*}
where $R^i\psi_{\de_o,m}$ is the finite-dimensional continuous representation of $\GL_n(\cO_o/\vpi_o^m)\times W_{\bf{F}_o}$ from \ref{ss:nearbycyclesalgebraization}, since
\begin{align*}
\sum_{i=0}^\infty(-1)^iR^i\psi_{\de_o,m}
\end{align*}
are the $(1+\vpi_o^m\M_n(\cO_o))$-invariants of $[R\psi_{\de_o}]$. Lemma \ref{ss:semisimpletraceintegral} shows that our above sum of integrals equals
\begin{align*}
\sum_{i=0}^\infty(-1)^i\tr^\text{ss}\left(\sg_o^{-r}\times h|R^i\psi_{\de_o,m}\right).
\end{align*}
Next, Lemma \ref{ss:inertiainvariantssemisimpletrace} implies that this sum of traces equals
\begin{align*}
\frac1{1-q_o^r}\sum_{i=0}^\infty(-1)^i\tr\left(\sg_o^{-r}\times h|R_{I_{\bf{F}_o}}R^i\psi_{\de_o,m}\right).
\end{align*}
Using $\cM_{o^m,\br\cO_o}$ as an algebraization of $\br\fX_{\de_o,m}$ as in the proof of Lemma \ref{ss:nearbycyclesalgebraization}, write $z$ for the point of $\cM_{\varnothing,\br\cO_o}$ corresponding to $\de_o$, and write $\pi:\cM_{o^m,\br\cO_o}\rar\cM_{\varnothing,\br\cO_o}$ for the restriction morphism. Then Berkovich's nearby cycles comparison theorem \cite[Theorem 3.1]{Ber96} shows that the above sum becomes
\begin{align*}
\frac1{1-q_o^r}\sum_{i=0}^\infty(-1)^i\tr\left(\sg_o^{-r}\times h|R^i_{I_{\bf{F}_o}}\res{R\Psi_{\cM_{o^m,\br{\bf{F}}_o}}\ov\bQ_\ell}_{\pi^{-1}(z)}\right),
\end{align*}
where now we use the alternating product to expand $R_{I_{\bf{F}_o}}$ instead of $R\Psi_{\cM_{o^m,\br{\bf{F}}_o}}$. Applying the decomposition of $\br\fX_{\de_o,m}$ in Proposition \ref{ss:deformationalgebraicclosure} alongside the calculation of inertia invariants in Corollary \ref{ss:modulinearbycycles} yields the desired expression, where we have identified $\ov\bQ_\ell$ with $\bC$.
\end{proof}

\section{Langlands--Kottwitz counting and the Serre--Tate trick}\label{s:langlandskottwitz}
In this section, all representations shall be taken over $\ov\bQ_\ell$, and we view $\bC$-valued functions as $\ov\bQ_\ell$-valued ones via our fixed identification $\ov\bQ_\ell=\bC$. Let $f_{\tau,h}$ be our test function in $C^\infty_c(\GL_n(\bf{F}_o))$ from \S\ref{s:deformationspaces}, and let $f^{\infty,o}$ be any function in $C^\infty((D\otimes\bA^{\infty,o})^\times)$. Our goal is to relate the trace of $f^{\infty,o}\times h\times\tau$ to the trace of $f^{\infty,o}\times f_{\tau,h}$ on the virtual representation $[H_\xi]$. We begin by using the nearby cycles spectral sequence to pass from the generic fiber to the special fiber. Then, we use \emph{Deligne's conjecture} as proven by Fujiwara, which describes this trace in terms of manageable local terms, to convert this into a sum of terms indexed by geometric points in the special fiber. These points correspond to isomorphism classes of $\sD$-elliptic sheaves with extra structure.

We express this sum in terms of orbital integrals by using the equi-characteristic analog of \emph{Langlands--Kottwitz counting}, which describes isomorphism classes of $\sD$-elliptic sheaves in terms of the algebraic group-theoretic data associated with their local shtukas. Here, the Serre--Tate theorem from \S\ref{s:modulispaces} allows us to convert contributions from $h\times\tau$ into contributions from $f_{\tau,h}$. We then use the \emph{Selberg trace formula} to rewrite this expression in terms of automorphic representations. Finally, results of Laumon--Rapoport--Stuhler on the cohomology of $\cE\ell\ell_{\sD,I}$ allow us to conclude.

\subsection{}\label{ss:galoistohecke}
The following identity is the main result of this section. Write $v_o:W_{\bf{F}_o}\rar\bZ$ for the canonical surjection that sends geometric $q_o$-Frobenius at $o$ to $1$ in $\bZ$. 
\begin{prop*}
Let $\tau$ be an element $W_{\bf{F}_o}$ with $v_o(\tau)>0$, let $h$ be a function in $C^\infty_c(\GL_n(\cO_o))$, and let $f^{\infty,o}$ be a function in $C^\infty_c((D\otimes\bA^{\infty,o})^\times)$. Then we have an equality of traces
\begin{align*}
\tr(f^{\infty,o}\times h\times\tau|[H_\xi]) = \textstyle\frac1n\tr(f^{\infty,o}\times f_{\tau,h}|[H_\xi]),
\end{align*}
where $f_{\tau,h}$ in $C^\infty_c(\GL_n(\bf{F}_o))$ is the function defined as in \ref{ss:ftauhdefinition}.
\end{prop*}
\begin{proof}
We start with some immediate reductions. Note that both sides are $\ov\bQ_\ell$-linear with respect to the function $f^{\infty,o}\times h$ in $C^\infty_c((D\otimes\bA^\infty)^\times)$. Therefore it suffices to prove this equality for
\begin{align*}
f^{\infty,o} = \frac1{\vol(\sK_I^{\infty,o})}\bf1_{\sK^{\infty,o}_Ig^o\sK^{\infty,o}_I}\mbox{ and }h = \frac1{\vol(\sK_{I,o}^\infty)}\bf1_{\sK_{I,o}^\infty g_o\sK_{I,o}^\infty},
\end{align*}
where $I$ can be any sufficiently large finite closed subscheme of $C\ssm\infty$, and $g$ is an element of $(D\otimes\bA^\infty)^\times$ for which $g_o$ lies in $\GL_n(\cO_o)$. With these choices of $f^{\infty,o}$ and $h$, our trace becomes
\begin{align*}
\tr(f^{\infty,o}\times h\times\tau|[H_\xi]) = \tr(c_{\bf{F}^\sep}\times\tau|[H_\xi]),
\end{align*}
where $c_{\bf{F}^\sep}=(c_{1,\bf{F}^\sep},c_{2,\bf{F}^\sep})$ is the pullback \cite[1.1.8]{Var07} to $\bf{F}^\sep$ of the correspondence from \ref{ss:dellipticsheavescohomology} that induces the action of $f^{\infty,o}\times h$. Next, we pass to $\bC_o$ and apply nearby cycles, which shows that this trace equals
\begin{align*}
\sum_{i=0}^\infty(-1)^i\tr\left(c_{\ov\ka}\times\tau|H^i(\cM_{I,\ov\ka},R\Psi_{\cM_{I,\bC_o}}\cL_{\xi,I,\bC_o})\right),
\end{align*}
where $R\Psi_{\cM_{I,\bC_o}}$ denotes the derived nearby cycles functor \cite[XIII (1.3.2.3)]{DK73} on $\cM_{I,\cO_{\bC_o}}$, and $c_{\ov\ka}=(c_{1,\ov\ka},c_{2,\ov\ka})$ is the pullback to $\ov\ka$ of the correspondence $c_{\cO_o}$ from \ref{ss:badreductionheckecorrespondence} that extends $c_\bf{F}$. Writing $\pi:\cM_{I}\rar\cM_{I\ssm o}$ for the restriction morphism, we see that the above sum equals
\begin{align*}
\sum_{i=0}^\infty(-1)^i\tr\left(\pi_{\ov\ka,*}c_{\ov\ka}\times\tau|H^i(\cM_{I\ssm o,\ov\ka},\pi_{\ov\ka,*}R\Psi_{\cM_{I,\bC_o}}\cL_{\xi,I,\bC_o})\right),
\end{align*}
where $\pi_{\ov\ka,*}c_{\ov\ka}=(\pi_{\ov\ka,*}c_{1,\ov\ka},\pi_{\ov\ka,*}c_{1,\ov\ka})$ is the pushforward correspondence \cite[1.1.6]{Var07} of $c_{\ov\ka}$. Next, write $r=v_o(\tau)$. The independence of characters implies that it suffices to prove our desired equalities for sufficiently large $r$. With this reduction in hand, we may now apply Deligne's conjecture as proven by Fujiwara \cite[Corollary 5.4.5]{Fuj97} to obtain the sum
\begin{align*}
\sum_y\tr\left((\pi_{\ov\ka,*}c_{\ov\ka}\times\tau)_y|(\pi_{\ov\ka,*}R\Psi_{\cM_{I,\bC_o}}\cL_{\xi,I,\bC_o})_{c_{2,\ov\ka}(y)}\right),
\end{align*}
where $y$ runs over all $\ov\ka$-points of $\Fix(\sg_o^r\circ\pi_{\ov\ka,*}c_{\ov\ka})$, and $\Fix(\sg_o^r\circ\pi_{\ov\ka,*}c_{\ov\ka})$ in turn denotes the fiber product
\begin{align*}
\xymatrix{\Fix(\sg_o^r\circ c_{\ov\ka})\ar[r]\ar[d] & \cM^\infty_{\ov\ka}/(\sK_I^{\infty,o}\cap g^{o,-1}\sK_I^{\infty,o} g^o)\ar[d]^-{(\sg_o^r\circ\pi_{\ov\ka,*}c_{1,\ov\ka},\pi_{\ov\ka,*}c_{2,\ov\ka})}\\
\cM_{I\ssm o,\ov\ka}\ar[r]^-{\De_{\cM_{I\ssm o},\ov\ka}} & \cM_{I\ssm o,\ov\ka}\times_{\ov\ka}\cM_{I\ssm o,\ov\ka}}
\end{align*}
At this point, we shall use the following lemma to decompose the $\ell$-adic sheaf whose cohomology we are studying.
\begin{lem}\label{lem:derivediso}
We have an isomorphism in the derived category of constructible $\ell$-adic sheaves
\begin{align*}
(\pi_{\ov\ka,*}R\Psi_{\cM_{I,\bC_o}}\ov\bQ_\ell)\otimes\cL_{I\ssm o,\xi,\ov\ka}\rar^\sim\pi_{\ov\ka,*}R\Psi_{\cM_{I,\bC_o}}\cL_{\xi,I,\bC_o}
\end{align*}
that preserves the actions of $I_{\bf{F}_o}$ and $(D\otimes\bA^\infty)^\times$.
\end{lem}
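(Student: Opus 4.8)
The plan is to reduce the statement to two standard compatibilities of nearby cycles, once we record two structural facts about the coefficient sheaf $\cL_{\xi,I}$: that it is \emph{lisse on the integral model} $\cM_I$ over $\Spec\cO_o$, and that it is \emph{pulled back along} $\pi$.

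First I would verify these facts. Since $o$ lies in neither $\text{Bad}$ nor $\{\infty\}$, the construction of the ``local shtuka at $\infty$'' attached to a $\sD$-elliptic sheaf and hence the pro-Galois $\ov{B}^\times/\vpi_\infty^\bZ$-covering $\wt{\cE\ell\ell}_{\sD,I}\rar\cE\ell\ell_{\sD,I}$ of \S\ref{s:modulispaces} propagate over $\Spec\cO_o$, yielding a pro-Galois \'etale covering $\wt\cM_I\rar\cM_I$ of $\cO_o$-schemes. Consequently $\cL_{\xi,I}$, which is induced from $\xi$ by monodromy along this covering as in \ref{ss:dellipticsheavescohomology}, is lisse on all of $\cM_I$; write $\cL_{\xi,I,\bC_o}$ and $\cL_{\xi,I,\ov\ka}$ for its restrictions to the generic and special fibers. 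Moreover the covering at $\infty$ is insensitive to the level structure at $o$, so it is compatible with the restriction morphism $\pi:\cM_I\rar\cM_{I\ssm o}$; hence $\cL_{\xi,I}\cong\pi^*\cL_{\xi,I\ssm o}$ as lisse sheaves over $\cO_o$, and in particular $\cL_{\xi,I,\ov\ka}\cong\pi_{\ov\ka}^*\cL_{I\ssm o,\xi,\ov\ka}$.

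Granting this, the argument is formal. Because $\pi^*\cL_{\xi,I\ssm o}$ is lisse on $\cM_I$ over $\cO_o$, tensoring by it commutes with nearby cycles --- a consequence of the locality of $R\Psi$, which reduces to the case where the lisse sheaf is constant --- so there is a canonical isomorphism $R\Psi_{\cM_{I,\bC_o}}(\cL_{\xi,I,\bC_o})\cong R\Psi_{\cM_{I,\bC_o}}(\ov\bQ_\ell)\otimes\pi_{\ov\ka}^*\cL_{I\ssm o,\xi,\ov\ka}$. Applying $\pi_{\ov\ka,*}$ and the projection formula for the finite (hence proper) morphism $\pi_{\ov\ka}$ then yields
\begin{align*}
\pi_{\ov\ka,*}R\Psi_{\cM_{I,\bC_o}}(\cL_{\xi,I,\bC_o})\cong\left(\pi_{\ov\ka,*}R\Psi_{\cM_{I,\bC_o}}(\ov\bQ_\ell)\right)\otimes\cL_{I\ssm o,\xi,\ov\ka},
\end{align*}
which is the asserted isomorphism. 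Each step is canonical, so the isomorphism respects the $I_{\bf{F}_o}$-action (carried by $R\Psi$, and trivial on $\cL_{I\ssm o,\xi,\ov\ka}$ since the latter lives over $\ov\ka$) as well as the $(D\otimes\bA^\infty)^\times$-action, with respect to which the covering $\wt{\cE\ell\ell}^\infty_\sD$, the morphism $\pi$, and the nearby cycles formalism are all equivariant.

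The step I expect to be the main obstacle is the first structural fact: verifying that the $\infty$-covering really does extend to a lisse sheaf on the integral model $\cM_I$ over $\cO_o$ and is $\pi$-pulled back. This is exactly where the hypotheses $o\notin\text{Bad}\cup\{\infty\}$ and the compatibility of the $\infty$-covering with restriction morphisms (established alongside its construction in \S\ref{s:modulispaces}) do the work; everything afterward is the standard formalism of nearby cycles and the projection formula.
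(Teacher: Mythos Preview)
Your proof is correct, and at its core relies on the same two ingredients as the paper's argument: the projection formula for the finite morphism $\pi$, and the fact that nearby cycles commute with tensoring by a lisse sheaf. The organizational difference is that you work on $\cM_I$ first---using $R\Psi_{\cM_{I,\bC_o}}(\pi^*_{\bC_o}\cL_{I\ssm o,\xi,\bC_o})\cong R\Psi_{\cM_{I,\bC_o}}(\ov\bQ_\ell)\otimes\pi_{\ov\ka}^*\cL_{I\ssm o,\xi,\ov\ka}$---and then push down, whereas the paper first uses the finite-base-change identity $\pi_{\ov\ka,*}R\Psi_{\cM_{I,\bC_o}}=R\Psi_{\cM_{I\ssm o,\bC_o}}\pi_{\bC_o,*}$ to pass to $\cM_{I\ssm o}$, writes out explicit adjunction morphisms there, and checks on an \'etale trivialization that their composite is an isomorphism. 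The paper's route only needs that $\cL_{\xi,I\ssm o}$ is lisse on the \emph{smooth} model $\cM_{I\ssm o}$; your route asks that the coefficient sheaf be lisse on the bad-reduction model $\cM_I$, but as you note this is automatic once one defines it as $\pi^*\cL_{\xi,I\ssm o}$. Your packaging is a bit more transparent; the paper's has the minor advantage of never needing to discuss sheaves on $\cM_I$ beyond the constant one.
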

For the proof of this lemma, we will need the following notation. Write
\begin{align*}
\ov\j:\cM_{I\ssm o,\bC_o}\rar\cM_{I\ssm o,\cO_{\bC_o}}\mbox{ and }\ov\i:\cM_{I\ssm o,\ov\ka}\rar\cM_{I\ssm o,\cO_{\bC_o}}
\end{align*}
for the inclusion morphisms of the generic and special fibers of $\cM_{I\ssm o,\cO_{\bC_o}}$, respectively. In the proof of this lemma, we shall freely use the fact that
\begin{align*}
R\Psi_{\cM_{I\ssm o,\bC_o}}\pi_{\bC_o,*} = \pi_{\ov\ka,*}R\Psi_{\cM_{I,\bC_o}},
\end{align*}
which follows \cite[XIII (1.3.6)]{DK73} from finite base change.
\begin{proof}[Proof of Lemma \ref{lem:derivediso}]
We start by identifying
\begin{align*}
(\pi_{\ov\ka,*}R\Psi_{\cM_{I,\bC_o}}\ov\bQ_\ell)\otimes\cL_{I\ssm o,\xi,\ov\ka} = (R\Psi_{\cM_{I\ssm o,\bC_o}}\pi_{\bC_o,*}\ov\bQ_\ell)\otimes\ov\i^*\cL_{I\ssm o,\xi,\cO_{\bC_o}}.
\end{align*}
Next, applying the projection formula to canonical adjunction morphisms yields a morphism \cite[(0.2)]{Var07}
\begin{align}\label{eq:morphism1}
(\pi_{\ov\ka,*}R\Psi_{\cM_{I,\bC_o}}\ov\bQ_\ell)\otimes\cL_{I\ssm o,\xi,\ov\ka}\rar\ov\i^*(R\ov\j_*\pi_{\bC_o,*}\ov\bQ_\ell\otimes R\ov\j_*\ov\j^*\cL_{I\ssm o,\xi,\cO_{\bC_o}}),\tag{$\star$}
\end{align}
and taking $\ov\i^*$ of a similar morphism \cite[(0.1)]{Var07}
\begin{align*}
R\ov\j_*\pi_{\bC_o,*}\ov\bQ_\ell\otimes R\ov\j_*\ov\j^*\cL_{I\ssm o,\xi,\cO_{\bC_o}}\rar R\ov\j_*(\pi_{\bC_o,*}\ov\bQ_\ell\otimes\ov\j^*\cL_{I\ssm o,\xi,\cO_{\bC_o}})
\end{align*}
provides another morphism
\begin{align}\label{eq:morphism2}
\ov\i^*(R\ov\j_*\pi_{\bC_o,*}\ov\bQ_\ell\otimes R\ov\j_*\ov\j^*\cL_{I\ssm o,\xi,\cO_{\bC_o}})\rar R\Psi_{\cM_{I\ssm o,\bC_o}}(\pi_{\bC_o,*}\ov\bQ_\ell\otimes\cL_{I\ssm o,\xi,\bC_o}).\tag{$\star\star$}
\end{align}
By reducing to finite coefficients and then checking on an \'etale trivialization, we see that the composition of Equation (\ref{eq:morphism1}) and Equation (\ref{eq:morphism2}) is an isomorphism. Applying the projection formula \cite[XVII (5.2.9)]{AGV73} to $\ov\bQ_\ell$ and $\cL_{I\ssm o,\xi,\bC_o}$ implies that
\begin{align*}
R\Psi_{\cM_{I\ssm o,\bC_o}}(\pi_{\bC_o,*}\ov\bQ_\ell\otimes\cL_{I\ssm o,\xi,\bC_o}) =R\Psi_{\cM_{I\ssm o,\bC_o}}\pi_{\bC_o,*}\pi^*_{\bC_o}\cL_{I\ssm o,\xi,\bC_o} = \pi_{\ov\ka,*}R\Psi_{\cM_{I,\bC_o}}\cL_{\xi,I,\bC_o},
\end{align*}
as desired.
\end{proof}
Return to the proof of Proposition \ref{ss:galoistohecke}. With this tensor product decomposition, our sum of traces becomes
\begin{align*}
\sum_y\tr\left((\pi_{\ov\ka,*}c_{\ov\ka}\times\tau)_y|(\pi_{\ov\ka,*}R\Psi_{\cM_{I,\bC_o}}\ov\bQ_\ell)_{c_{2,\ov\ka}(y)}\right)\tr\left((\pi_{\ov\ka,*}c_{\ov\ka}\times\tau)_y|(\cL_{I\ssm o,\xi,\ov\ka})_{c_{2,\ov\ka}(y)}\right).
\end{align*}
Write $z$ for $c_{2,\ov\ka}(y)$, write $(\sE_i,t_i,j_i)_i$ for the $\sD$-elliptic sheaf over $\ov\ka$ corresponding to $z$, write $\sM'_o$ for its local shtuka at $o$ as in \ref{ss:dellipticsheavestolocalshtukazero}, and write $\de_o$ for the corresponding element of the double coset
\begin{align*}
\ang{\vpi_o}\deq\GL_n(\br\cO_o)\diag(\vpi_o,1,\dotsc,1)\GL_n(\br\cO_o),
\end{align*}
which is unique up to $\GL_n(\br\cO_o)$-$\sg_o$-conjugacy. Because $y$ lies in $\Fix(\sg_o^r\circ\pi_{\ov\ka,*}c_{\ov\ka})$, we see $z$ is defined over $\ka_r$, so $\de_o$ can naturally be chosen to lie in $\ang{\vpi_o}\cap\GL_n(\bf{F}_{o,r})$.

Writing $m$ for the multiplicity of $o$ in $I$, we see that finite base change implies that the above sum equals
\begin{align*}
\sum_y\tr\left(c_{\ov\ka}\times\tau|H^i(\pi^{-1}(z),\res{R\Psi_{\cM_{I,\bC_o}}\ov\bQ_\ell}_{\pi^{-1}(z)})\right)\tr\left((\pi_{\ov\ka,*}c_{\ov\ka}\times\tau)_y|(\cL_{I\ssm o,\xi,\ov\ka})_{z}\right).
\end{align*}
Because $\br\fX_{\de_o,m}$ is the completion of $\cM_{I,\br\cO_o}$ along $\pi^{-1}(z)$ by \ref{ss:extendedserretate}, Berkovich's nearby cycles comparison theorem \cite[Theorem 3.1]{Ber96} indicates that this equals
\begin{align*}
&\sum_y\sum_{i=0}^\infty\tr\left(g_o\times\tau|H^i(\fX_{\de_o,m,\ov\ka},R^i\Psi_{\fX_{\de_o,m,\bC_o}}\ov\bQ_\ell)\right)\tr\left((\pi_{\ov\ka,*}c_{\ov\ka}\times\tau)_y|(\cL_{I\ssm o,\xi,\ov\ka})_{z}\right)\\
=&\sum_y\tr(\tau\times h|[R\psi_{\de_o}])\tr\left((\pi_{\ov\ka,*}c_{\ov\ka}\times\tau)_y|(\cL_{I\ssm o,\xi,\ov\ka})_{z}\right) = \sum_y\phi_{\tau,h}(\de_o)\tr\left((\pi_{\ov\ka,*}c_{\ov\ka}\times\tau)_y|(\cL_{I\ssm o,\xi,\ov\ka})_{z}\right),
\end{align*}
where $[R\psi_{\de_o}]$ is the virtual $\GL_n(\cO_o)\times I_{\bf{F}_o}$-admissible/continuous representation of $\GL_n(\cO_o)\times W_{\bf{F}_o}$ defined in \ref{ss:fundamentalrepresentation}, and $\phi_{\tau,h}$ in $C_c^\infty(\GL_n(\bf{F}_{o,r}))$ is the function defined in \ref{ss:sigmaconjugationisopen}.
\subsection{}
In order to express the above sum in terms of orbital integrals, we now introduce \emph{Langlands--Kottwitz counting}. This technique counts isomorphism classes of $\sD$-elliptic sheaves over $\ov\ka$ whose zero lies over $o$ by
\begin{enumerate}[(1)]
\item first describing the \emph{isogeny classes} of $\sD$-elliptic sheaves in terms of conjugacy classes in certain groups,
\item then counting the number of isomorphism classes in each isogeny class by taking orbital integrals.
\end{enumerate}
Since $\sD$-elliptic sheaves consist of vector bundles, it is quite natural to reinterpret them in terms of linear algebraic groups. This incarnation of the Langlands--Kottwitz method is heavily based on work of Drinfeld and Laumon. 

\subsection{}
To define isogeny classes of $\sD$-elliptic sheaves, we first introduce \emph{$\vp$-spaces}.
\begin{defn*}
A \emph{$\vp$-space} is a pair $(V,\vp)$, where
\begin{enumerate}[$\bullet$]
\item $V$ is a finite-dimensional $\bf{F}\otimes_\ka\ov\ka$-vector space,
\item $\vp:{}^\sg\!V\rar^\sim V$ is an $\bf{F}\otimes_\ka\ov\ka$-linear isomorphism,
\end{enumerate}
where ${}^\sg$ denotes $(\id_\bf{F}\otimes_\ka\sg)^*$.
\end{defn*}
Note that $\vp$-spaces resemble local shtukas and $\sD$-elliptic sheaves, with an important difference: $\vp$-spaces lie over the generic fiber of the curve $C$, rather than completions of $C$ at closed points in the case of local shtukas or (a specific open subset of) the entire curve $C$ in the case of $\sD$-elliptic sheaves.

\subsection{}\label{ss:vpguys}
For any $\sD$-elliptic sheaf $(\sE_i,t_i,j_i)_i$ over $\ov\ka$, the generic fiber $\sE_{1,\bf{F}\otimes_\ka\ov\ka}$ is a finite-dimensional $\bf{F}\otimes_\ka\ov\ka$-vector space, and the map $(j_{1,\bf{F}\otimes_\ka\ov\ka})^{-1}\circ t_{1,\bf{F}\otimes_\ka\ov\ka}$ is a $\bf{F}\otimes_\ka\ov\ka$-linear isomorphism. Therefore the pair
\begin{align*}
(V,\vp)\deq\left(\sE_{1,\bf{F}\otimes_\ka\ov\ka},(j_{1,\bf{F}\otimes_\ka\ov\ka})^{-1}\circ t_{1,\bf{F}\otimes_\ka\ov\ka}\right)
\end{align*}
 forms a $\vp$-space, and it has a right action of $D$ for which $V$ is a free right $D$-module of rank $1$. We call this the \emph{generic fiber} of $(\sE_i,t_i,f_i)_i$, and we say two $\sD$-elliptic sheaves over $\ov\ka$ are \emph{isogenous} if they have isomorphic generic fibers. Thus isogeny classes of $\sD$-elliptic sheaves correspond to certain isomorphism classes of $\vp$-spaces.

Suppose that the zero of $(\sE_i,t_i,j_i)_i$ lies over $o$. For every place $x$ of $\bf{F}$, fix a map $\ka_x\rar\ov\ka$ over $\ka$, and write $\sM_x$ for the resulting local shtuka of $(\sE_i,t_i,j_i)_i$ at $x$ as constructed in \ref{ss:dellipticsheavestolocalshtukazero} for $x=o$ or \ref{ss:dellipticsheavestolocalshtuka} for $x\neq o$. Since the set of all maps $\ka_x\rar\ov\ka$ over $\ka$ are cyclically permuted via composition with $\sg$, we can view $\sM_x$ as a free module over
\begin{align*}
\cO_x\wh\otimes_\ka\ov\ka = \prod_\io\cO_x\wh\otimes_{\ka_x,\io}\ov\ka
\end{align*}
equipped with a $\sg$-semilinear automorphism after inverting $\vpi_x$, where the $\io$ runs over all maps $\ka_x\rar\ov\ka$ over $\ka$ \cite[p.~33]{Dri88}. Under this perspective, we may naturally identify $(\sM_x[\frac1{\vpi_x}],\sF_x)$ with $(V\wh\otimes_\bf{F}\bf{F}_x,\vp\wh\otimes_\bf{F}\id_{\bf{F}_x})$.

\subsection{}
Next, we introduce \emph{$\vp$-pairs}, which will provide an alternative way of describing $\vp$-spaces.
\begin{defn*}
A \emph{$\vp$-pair} is a pair $(\wt{\bf{F}},\wt\Pi)$, where
\begin{enumerate}[$\bullet$]
\item $\wt{\bf{F}}$ is a finite separable (ring) extension of $\bf{F}$,
\item $\wt\Pi$ is an element of $\wt{\bf{F}}^\times\otimes_\bZ\bQ$ that is not contained in $\bf{F}'^\times\otimes_\bZ\bQ$ for any proper $\bf{F}$-subalgebra $\bf{F}'$ of $\wt{\bf{F}}$.
\end{enumerate}
Note that $\wt{\bf{F}}$ is a product of field extensions of $\bf{F}$, so it has a well-defined notion of places. For any place $\wt{x}$ of $\wt{\bf{F}}$, the $\bQ$-valued valuation $\wt{x}(\wt\Pi)$ is well-defined, and we write $d(\wt\Pi)$ for the least common denominator of $\wt{x}(\wt\Pi)\deg\wt{x}$ as $\wt{x}$ ranges over all places of $\wt{\bf{F}}$, where $\deg\wt{x}$ is taken with respect to $\ka$.
\end{defn*}
Therefore $\vp$-pairs roughly correspond to elements in extensions (up to roots of unity) that cannot be obtained from subextensions by taking rational powers.

\subsection{}\label{ss:phispacesphipairs}
The notions of a $\vp$-pair and a $\vp$-space are related as follows.
\begin{prop*}[{\cite[Proposition 2.1]{Dri88}}]
We have a canonical map
\begin{align*}
\{\mbox{isomorphism classes of }\vp\mbox{-spaces}\}&\rar\{\mbox{isomorphism classes of }\vp\mbox{-pairs}\}\\
(V,\vp)&\longmapsto(\wt{\bf{F}}_{(V,\vp)},\wt\Pi_{(V,\vp)})
\end{align*}
that induces a bijection from isomorphism classes of irreducible $\vp$-spaces to isomorphism classes of $\vp$-pairs for which $\wt{\bf{F}}$ is a field. In that situation, the dimension of $V$ over $\bf{F}\otimes_\ka\ov\ka$ equals $[\wt{\bf{F}}_{(V,\vp)}:\bf{F}]d(\wt\Pi_{(V,\vp)})$, and the endomorphism ring of $(V,\vp)$ is isomorphic to the central division algebra $\wt{D}$ over $\wt{\bf{F}}_{(V,\vp)}$ with Hasse invariants
\begin{align*}
\inv_{\wt{x}}(\wt{D}) = -\wt{x}(\wt\Pi_{(V,\vp)})\deg\wt{x}.
\end{align*}
\end{prop*}
The above map is constructed by replacing $\ov\ka$ with a finite extension of $\ka$ of degree $a$ and then considering the $\bf{F}$-algebra generated by $\vp^a$, where taking $a$ to be sufficiently divisible ensures that the resulting construction is independent of the choice of $a$ \cite[p.~31]{Dri88}. This is the equi-characteristic analogue of Honda--Tate theory.

\subsection{}\label{ss:dellipticsheavesphipairs}
We can describe which $\vp$-spaces occur as generic fibers of $\sD$-elliptic sheaves in terms of their associated $\vp$-pairs. This completes our description of the isogeny classes of $\sD$-elliptic sheaves.
\begin{prop*}[{\cite[(9.13)]{LRS93}}]
The isomorphism classes of $\vp$-spaces that arise from $\sD$-elliptic sheaves over $\ov\ka$ with zero lying over $o$ map precisely to $\vp$-pairs $(\wt{\bf{F}},\wt\Pi)$ for which
\begin{enumerate}[$\bullet$]
\item $\wt{\bf{F}}$ is a field, and $[\wt{\bf{F}}:\bf{F}]$ divides $n$,
\item there exists only one place $\wt\infty$ of $\wt{\bf{F}}$ lying over $\infty$, and it satisfies $\wt\infty(\wt\Pi)\deg\wt\infty=-[\wt{\bf{F}}:\bf{F}]/n$,
\item there exists only one other place $\wt{o}\neq\wt\infty$ of $\wt{\bf{F}}$ satisfying $\wt{o}(\wt\Pi)\neq0$, and it lies over $o$.
\end{enumerate}
For any $\sD$-elliptic sheaf $(\sE_i,t_i,j_i)_i$ lying in the isogeny class corresponding to $(\wt{\bf{F}},\wt\Pi)$, the height of the connected part of $\sM_o'$ equals $n[\wt{\bf{F}}_{\wt{o}}:\bf{F}_o]/[\wt{\bf{F}}:\bf{F}]$. Furthermore, if we write $(W,\psi)$ for the irreducible $\vp$-space corresponding to $(\wt{\bf{F}},\wt\Pi)$, then $(V,\vp)$ is isomorphic to $(W,\psi)^{\oplus n}$.
\end{prop*}
By checking Hasse invariants of the endomorphism ring $\wt{D}$ of $(W,\psi)$ and applying Proposition \ref{ss:phispacesphipairs}, we also deduce that $\wt{o}(\wt\Pi)\deg\wt{o}=[\wt{\bf{F}}:\bf{F}]/n$.

Return to the proof of Proposition \ref{ss:galoistohecke}. By gathering terms in the same isogeny class, we can rewrite our sum as
\begin{align*}
\sum_{(\wt{\bf{F}},\wt\Pi)}\sum_y\phi_{\tau,h}(\de_o)\tr\left((\pi_{\ov\ka,*}c_{\ov\ka}\times\tau)_y|(\cL_{I\ssm o,\xi,\ov\ka})_{z}\right),
\end{align*}
where $(\wt{\bf{F}},\wt\Pi)$ runs over all $\vp$-pairs satisfying the conditions in Proposition \ref{ss:dellipticsheavesphipairs}, and $y$ runs over $\ov\ka$-points of $\Fix(\sg^r_o\circ c_{\ov\ka})$ that lie in the isogeny class corresponding to $(\wt{\bf{F}},\wt\Pi)$.

\subsection{}\label{ss:dellipticsheavesisomorphismclasses}
At this point, we initiate the second part of the Langlands--Kottwitz method: counting the number of isomorphism classes in each isogeny class. We begin by describing \emph{all} isomorphism classes---we shall refine our description by isogeny class afterwards.
\begin{prop*}[{\cite[(9.4)]{LRS93}}]
The map $(\sE_i,t_i,j_i)_i\mapsto((V,\vp),(\sM_x)_x)$, where
\begin{enumerate}[$\bullet$]
\item $(V,\vp)$ is the generic fiber of $(\sE_i,t_i,j_i)_i$,
\item $x$ runs through all places of $\bf{F}$, and $\sM_x$ is as in \ref{ss:vpguys},
\end{enumerate}
yields a bijection from isomorphism classes of $\sD$-elliptic sheaves over $\ov\ka$ with zero lying over $o$ to isomorphism classes of pairs $((V,\vp),(\sM_x)_x)$, where
\begin{enumerate}[$\bullet$]
\item $(V,\vp)$ is a $\vp$-space with a right action of $D$ such that $V$ is a free right $D\otimes_\ka\ov\ka$-module of rank $1$,
\item $x$ runs over all places in $\bf{F}$, and $\sM_x$ is a local shtuka over $\ov\ka$ corresponding to a right $\sD_x\wh\otimes_\ka\ov\ka$-submodule of rank $1$ in $V\wh\otimes_\bf{F}\bf{F}_x$ that is stable under $\vp\wh\otimes_\bf{F}\bf{F}_x$,
\end{enumerate}
that satisfy the following conditions:
\begin{enumerate}[(i)]
\item the $\vp$-pair corresponding to $(V,\vp)$ satisfies the conditions in Proposition \ref{ss:dellipticsheavesphipairs},
\item the Morita reduction $\sM'_o$ of $\sM_o$ is effective minuscule of dimension $1$ and rank $n$,
\item the isogeny class of the Morita reduction $\sM'_\infty$ of $\sM_\infty$ has slope $-\frac1n$ under the Dieudonn\'e--Manin classification,
\item $\sM_x$ is \'etale for all closed points $x$ in $C\ssm\{o,\infty\}$, 
\item there exists a generator $b$ of $V$ as a free right $D\otimes_\ka\ov\ka$-module such that, for cofinitely many $x$, we have
  \begin{align*}
\sM_x = b(\sD_x\wh\otimes_\ka\ov\ka)
  \end{align*}
as right $\sD_x$-modules equipped with a $\sg$-semilinear automorphism, where we view $\sM_x$ as the corresponding free $\sD_x\wh\otimes_\ka\ov\ka$-submodule of $V\wh\otimes_\bf{F}\bf{F}_x$ of rank $1$ equipped with a $\sg$-semilinear automorphism.
\end{enumerate}
\end{prop*}

\subsection{}\label{ss:dellipticsheavesisogenyclass}
Next, we use Proposition \ref{ss:dellipticsheavesisomorphismclasses} to obtain a description of the isomorphism classes in a given isogeny class. Fix a $\vp$-pair $(\wt{\bf{F}},\wt\Pi)$ satisfying the conditions of Proposition \ref{ss:dellipticsheavesphipairs}, and write $\wt{D}$ for the endomorphism ring of $(V_{(\wt{\bf{F}},\wt\Pi)},\vp_{(\wt{\bf{F}},\wt\Pi)})$. Write $\cM_I(\ov\ka)_{(\wt{\bf{F}},\wt\Pi)}$ for the set of points in $\cM_I(\ov\ka)$ lying in the isogeny class corresponding to $(\wt{\bf{F}},\wt\Pi)$. Our goal is to describe $\cM_I(\ov\ka)_{(\wt{\bf{F}},\wt\Pi)}$ using algebraic group-theoretic data.

Let $k=n[\wt{\bf{F}}_{\wt{o}}:\bf{F}_o]/[\wt{\bf{F}}:\bf{F}]$, write $\ve_o^{\wt{o}}$ for the identity matrix in $\GL_{n-k}(\bf{F}_o)$, write $\ve_{\wt{o}}$ for the $k$-by-$k$ matrix
\begin{align*}
\ve_{\wt{o}}\deq
\begin{bmatrix}
0 & 1 & & \\
 &\ddots & \ddots & \\
 & & \ddots& 1 \\
\vpi_o & & &0
\end{bmatrix}\in\GL_k(\bf{F}_o),
\end{align*}
and write $\ve_o$ for the block matrix $\ve_o^{\wt{o}}\oplus\ve_{\wt{o}}$ in $\GL_n(\bf{F}_o)$. Note that, for any $\sD$-elliptic sheaf in $\cM_I(\ov\ka)_{(\wt{\bf{F}},\wt\Pi)}$, Proposition \ref{ss:dellipticsheavesisomorphismclasses} says that its local shtuka at $o$ corresponds to $\ve_o$ via \ref{ss:localshtukaconjugacy}, by the Dieudonn\'e--Manin classification.

Write $\sY_o$ for the subset
\begin{align*}
\sY_o\deq\{h_o\in\GL_n(\br{\bf{F}}_o)|h_o^{-1}\ve_o\sg_o(h_o)\in\ang{\vpi_o}\},
\end{align*}
and write $\br\sK_{o,m}$ for the subgroup
\begin{align*}
\br\sK_{o,m}\deq\ker\left(\GL_n(\br\cO_o)\rar\GL_n(\br\cO_o/\vpi_o^m)\right).
\end{align*}
\begin{prop*}
We have a bijection
\begin{align*}
\cM_I(\ov\ka)_{(\wt{\bf{F}},\wt\Pi)}\rar^\sim \wt{D}^\times\bs\left((D\otimes\bA^{\infty,o})^\times/\sK^{\infty,o}_I\times \sY_o/\br\sK_{o,m}\right)
\end{align*}
for which $g^{\infty,o}$ acts on the right-hand side via left multiplication on $(D\otimes\bA^{\infty,o})^\times$, and $\sg_o^{-1}$ acts on the right-hand side via sending $h_o\mapsto\ve_o\sg_o(h)$ on $\sY_o$.
\end{prop*}
\begin{proof}
We use the description of $\cM_I(\ov\ka)_{(\wt{\bf{F}},\wt\Pi)}$ given in Proposition \ref{ss:dellipticsheavesisomorphismclasses}. Under this description, the translation action of $\bZ$ sends $((V,\vp),(\sM_x)_x)$ to $((V,\vp),(\vp^l(\sM_x))_x)$ for all integers $l$. The characterization of $\sM_\infty'$ given in Proposition \ref{ss:dellipticsheavesisomorphismclasses}.(iii) implies that we may fix the position of $\sM'_\infty$ in $V\wh\otimes_\bf{F}\bf{F}_\infty$ via translating by $\bZ$.

Next, fix a generator $b$ of $V$ as a free right $D\otimes_\ka\ov\ka$-module. For all closed points $x$ in $C\ssm\infty$, let $h_x$ be an element of $D_x$ such that $bh_x$ generates $\sM_x$ over $\sD_x\wh\otimes_\ka\ov\ka$. Parts (iv) and (v) of Proposition \ref{ss:dellipticsheavesisomorphismclasses} show that $(h_x)_x$ is an element of $(D\otimes\bA^{\infty,o})^\times$ as $x$ runs over all closed points in $C\ssm\{o,\infty\}$, and the level-$I$ structures shows that $(h_x)_x$ is well-defined up to right multiplication by $\sK_I^{\infty,o}$. Similarly, $h_o$ yields a well-defined element of $\sY_o/\br\sK_{o,m}$.

Finally, the left action of $\wt{D}^\times$ on $(V,\vp)$ yields an embedding
\begin{align*}
\wt{D}^\times\inj{}(D\otimes\bA^{\infty,o})^\times\times\sY_o
\end{align*}
that preserves the action of $\wt{D}^\times$ on $(V,\vp)$, and taking the quotient of
\begin{align*}
(D\otimes\bA^{\infty,o})^\times/\sK^{\infty,o}_I\times \sY_o/\br\sK_{o,m}
\end{align*}
with respect to this action concludes the proof, by Proposition \ref{ss:dellipticsheavesisomorphismclasses}.
\end{proof}

\subsection{}\label{ss:dellipticsheavesadelicisogenyclass}
With Proposition \ref{ss:dellipticsheavesisogenyclass} in hand, we shall give an analogous description of $\Fix(\sg_o^r\circ\pi_*c_{\ov\ka})$. Write $\Fix(\sg_o^r\circ\pi_*c_{\ov\ka})_{(\wt{\bf{F}},\wt\Pi)}$ for the set of $y$ in $\Fix(\sg_o^r\circ\pi_*c_{\ov\ka})$ lying in the isogeny class corresponding to $(\wt{\bf{F}},\wt\Pi)$. Our goal is to describe $\Fix(\sg_o^r\circ\pi_*c_{\ov\ka})_{(\wt{\bf{F}},\wt\Pi)}$ in terms of the bijection from Proposition \ref{ss:dellipticsheavesisogenyclass}.

Write $\sK_{r,m}$ for the subgroup
\begin{align*}
\sK_{r,m}\deq\ker\left(\GL_n(\cO_{o,r})\rar\GL_n(\cO_{o,r}/\vpi_o^m)\right).
\end{align*}
For any element $\wt{d}$ in $\wt{D}^\times$, we say $\wt{d}$ is \emph{$r$-admissible} if there exists some $h_{\wt{d}}$ in $\GL_n(\br{\bf{F}}_o)$ such that
\begin{align*}
h^{-1}_{\wt{d}}\cdot\wt{d}^{-1}\cdot\N_o\ve_o\cdot\sg_o^r(h_{\wt{d}})=1,
\end{align*}
where we view $\wt{d}^{-1}$ as an element of $(\wt{D}\otimes_{\wt{\bf{F}}}\br{\bf{F}}_o)^\times=\GL_n(\br{\bf{F}}_o)$, and $\N_o$ denotes the norm map. Note that $r$-admissibility descends to a property on $\wt{D}^\times$-conjugacy classes in $\wt{D}^\times$.

For any group $G$ and element $g$ in $G$, we denote the centralizer of $g$ in $G$ by $G_g$. We write $\sY_{\wt{d}}$ for the double quotient space
\begin{align*}
\sY_{\wt{d}}\deq\wt{D}^\times_{\wt{d}}\bs\left((D\otimes\bA^{\infty,o})^\times/\sK_I^{\infty,o}\times\GL_n(\bf{F}_{o,r})/\sK_{r,m}\right).
\end{align*}
\begin{prop*}\label{prop:bijdisjoint}
We have a bijection from $\Fix(\sg_o^r\circ\pi_*c_{\ov\ka})_{(\wt{\bf{F}},\wt\Pi)}$ to the disjoint union
\begin{align*}
\coprod_{\wt{d}}\left\{\wt{D}^\times_{\wt{d}}\left(h^{\infty,o}\sK_I^{\infty,o},h_{\wt{d}}h_r\sK_{r,m}\right)\in\sY_{\wt{d}}\middle|(h^{\infty,o})^{-1}\wt{d}h^{\infty,o}\in\sK^{\infty,o}_Ig^o\sK_I^{\infty,o}\mbox{ and }h_r^{-1}\wt\ga\sg_o(h_r)\in\ang{\vpi_o}\cap\GL_n(\cO_{o,r})\right\},
\end{align*}
where $\wt{d}$ runs over all $r$-admissible $\wt{D}^\times$-conjugacy classes in $\wt{D}^\times$, and $\wt\ga$ is defined to be $h_{\wt{d}}^{-1}\ve_o\sg_o^r(h_{\wt{d}})$.
\end{prop*}
Note that $\wt\ga$ is fixed by $\sg_o^r$ and hence lies in $\GL_n(\bf{F}_{o,r})$.
\begin{proof}[Proof of Proposition \ref{prop:bijdisjoint}]
Begin by applying Proposition \ref{ss:dellipticsheavesisomorphismclasses} to $g^{o,-1}\sK_I^{\infty,o}g^o$ instead of $\sK_I^{\infty,o}$ to obtain an adelic description of $\Fix(\sg_o^r\circ\pi_*c_{\ov\ka})_{(\wt{\bf{F}},\wt\Pi)}$as in \cite[p.~53]{Lau96}. Next, simplify the resulting description by using our largeness hypothesis on $I$: this ensures that $\sK_I^{\infty,o}$ is small enough to apply the same argument as in \cite[(3.2.6)]{Lau96}. Conclude as in \cite[(3.2.7)]{Lau96}.
\end{proof}
Return to the proof of Proposition \ref{ss:galoistohecke}. Since $\de_o$ corresponds to $h_r^{-1}\wt\ga\sg_o(h_r)$ under Proposition \ref{ss:dellipticsheavesadelicisogenyclass}, our sum equals
\begin{align*}
\sum_{(\wt{\bf{F}},\wt\Pi)}\sum_{\wt{d}}\vol\left(\wt{D}^\times_{\wt{d}}\bs\left((D\otimes\bA^{\infty,o})^\times_{\wt{d}}\times\GL_{n,\wt\ga}^{\sg_o}(\bf{F}_o)\right)\right)\O_{\wt{d}}(f^{\infty,o})\TO_{\wt\ga,\sg_o}(\phi_{\tau,h})\tr\left((\pi_{\ov\ka,*}c_{\ov\ka}\times\tau)_y|(\cL_{I\ssm o,\xi,\ov\ka})_{z}\right),
\end{align*}
where $\wt{d}$ runs over all $r$-admissible $\wt{D}^\times$-conjugacy classes in $\wt{D}^\times$, and $\GL_{n,\wt\ga}^{\sg_o}$ denotes the $\sg_o$-centralizer of $\wt\ga$ in $\GL_n(\bf{F}_{o,r})$. As before, we do not explicate our Haar measures, but we choose them compatibly whenever possible. We may rewrite the volume factors in the above sum via the following lemma.
\begin{lem}[{\cite[(11.7)]{LRS93}}]\label{lem:volumelemma}
The embeddings
\begin{align*}
(\wt{D}\otimes\bA^{\infty,o})^\times_{\wt{d}}\inj{}(D\otimes\bA^{\infty,o})^\times_{\wt{d}}\mbox{ and }(\wt{D}\otimes \bf{F}_o)^\times\inj{}\GL_{n,\wt\ga}^{\sg_o}(\bf{F}_o)
\end{align*}
induced by the left action of $\wt{D}^\times$ as in the proof of Proposition \ref{ss:dellipticsheavesisogenyclass} are isomorphisms.
\end{lem}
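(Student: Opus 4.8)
Both assertions are instances of the principle that, for the $\sD$-elliptic sheaves in the isogeny class attached to $(\wt\bf{F},\wt\Pi)$, the endomorphism algebra commutes with localization. The plan is therefore to realize each of the four groups as an automorphism group of a local object and to identify these via the double centralizer theorem together with the local structure theory recalled in \S\ref{s:modulispaces}.

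First I would set up the global picture. Since the $\vp$-space $(V,\vp)$ underlying such a $\sD$-elliptic sheaf is isomorphic to $(W,\psi)^{\oplus n}$ for the irreducible $\vp$-space $(W,\psi)$ attached to $(\wt\bf{F},\wt\Pi)$, Proposition \ref{ss:phispacesphipairs} gives $\End(W,\psi)=\wt D$, and as $V$ is free of rank $1$ over $D\otimes_\ka\ov\ka$ the double centralizer theorem identifies the endomorphisms of $(V,\vp)$ commuting with the right $D$-action with $\wt D$; this is the group through which the left $\wt D^\times$-action and the Hecke action are compared, as in the proof of Proposition \ref{ss:dellipticsheavesisogenyclass}. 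The element $\wt d\in\wt D^\times$ determines, at each place $x$, a local Frobenius datum: an element $\wt d_x$ of the local Hecke group $(D\otimes_\bf{F}\bf{F}_x)^\times$ for $x\neq o,\infty$, and the $\sg_o$-conjugacy class of $\wt\ga=h_{\wt d}^{-1}\ve_o\sg_o^r(h_{\wt d})$ in $\GL_n(\bf{F}_{o,r})$ at $x=o$.

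Next I would carry out the place-by-place identification. For $x\neq o,\infty$ the local shtuka $\sM_x$ is \'etale by Proposition \ref{ss:dellipticsheavesisomorphismclasses}.(iv), so completed unramified descent, exactly as in the proof of Lemma \ref{ss:normbijection}, trivializes it over $\bf{F}_x$ and shows that the $\sF_x$-automorphisms of $\sM_x$ commuting with $\wt d_x$ form the centralizer of $\wt d_x$ in $(D\otimes_\bf{F}\bf{F}_x)^\times$; applying the double centralizer theorem inside this localization, where one uses that $\wt d$ is regular so that $\bf{F}_x[\wt d_x]$ is a maximal \'etale subalgebra whose centralizer lies in the image of $\wt D\otimes_\bf{F}\bf{F}_x$, identifies this with the centralizer of $\wt d$ in $(\wt D\otimes_\bf{F}\bf{F}_x)^\times$. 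Assembling these over all $x\neq o,\infty$, and using the integrality statement \ref{ss:dellipticsheavesisomorphismclasses}.(v) to match the restricted product with $(D\otimes\bA^{\infty,o})^\times$, yields that the first embedding is an isomorphism. At $x=o$ the same strategy applies after Morita reduction to $\sM_o'$, except that $\sM_o'$ is no longer \'etale: one must use the Dieudonné--Manin classification to compute the endomorphism algebra of the isocrystal $(\sM_o'[\frac1{\vpi_o}],\sF_o)$ and to identify the $\sg_o$-centralizer $\GL_{n,\wt\ga}^{\sg_o}(\bf{F}_o)$ with the unit group of $\wt D\otimes_\bf{F}\bf{F}_o$, which equals $\wt D_{\wt o}^\times$ because $\wt o$ is the unique place of $\wt\bf{F}$ above $o$; here the height computation $k=n[\wt\bf{F}_{\wt o}:\bf{F}_o]/[\wt\bf{F}:\bf{F}]$ and the slope $-\frac1n$ at $\infty$ of Proposition \ref{ss:dellipticsheavesphipairs} enter to pin down the relevant invariants.

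The main obstacle is this last step at $o$: one has to check that $\wt\ga$, as produced by the Langlands--Kottwitz bookkeeping of Proposition \ref{prop:bijdisjoint}, really is the Frobenius of the local shtuka at $o$ of the corresponding $\sD$-elliptic sheaf, and then run the isocrystal endomorphism computation at bad reduction. This is precisely the local ingredient underlying the counting argument, and it can be carried out as in \cite[(11.7)]{LRS93}, reducing everything to the slope and height data of Proposition \ref{ss:dellipticsheavesphipairs} and the uniqueness of $\wt o$.
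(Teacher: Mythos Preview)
The paper does not give its own proof of this lemma: it is stated with the citation \cite[(11.7)]{LRS93} and then used immediately, so there is nothing in the paper to compare your argument against. Your sketch is in the right spirit—interpreting both sides as automorphism groups of the relevant localized $\vp$-spaces/isocrystals and invoking the double centralizer theorem is exactly how Laumon--Rapoport--Stuhler proceed—but two points in your write-up would need correction before it stands on its own.

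First, you invoke that $\wt d$ is regular so that $\bf{F}_x[\wt d_x]$ is a maximal \'etale subalgebra. No regularity hypothesis on $\wt d$ is in force here; $\wt d$ is merely an $r$-admissible element of $\wt D^\times$. The correct input is rather that $\wt D$ is by construction the full commutant of the right $D$-action on $(V,\vp)$, so after localizing at $x$ the centralizer of $\wt d$ in $D_x^\times$ already lies in $(\wt D\otimes\bf{F}_x)^\times$ by the double centralizer theorem applied to the commuting actions of $D$ and $\wt D$, without any need for $\wt d$ itself to be regular. Second, at $o$ you write that $(\wt D\otimes\bf{F}_o)^\times=\wt D_{\wt o}^\times$ ``because $\wt o$ is the unique place of $\wt{\bf{F}}$ above $o$.'' Proposition \ref{ss:dellipticsheavesphipairs} only says $\wt o$ is the unique place above $o$ with $\wt o(\wt\Pi)\neq 0$; there may be other places of $\wt{\bf{F}}$ above $o$, and $(\wt D\otimes\bf{F}_o)^\times$ is the product over all of them. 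The $\sg_o$-centralizer $\GL_{n,\wt\ga}^{\sg_o}(\bf{F}_o)$ likewise decomposes according to the connected--\'etale splitting $\ve_o=\ve_o^{\wt o}\oplus\ve_{\wt o}$, and the identification must be made factor by factor: the \'etale factors match by the same argument as at $x\neq o,\infty$, while only the factor at $\wt o$ requires the Dieudonn\'e--Manin computation you describe.
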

Return to the proof of Proposition \ref{ss:galoistohecke}. Lemma \ref{lem:volumelemma} indicates that we can modify our volume terms to make our expression of interest equal
\begin{align*}
\sum_{(\wt{\bf{F}},\wt\Pi)}\sum_{\wt{d}}\vol\left(\wt{D}^\times_{\wt{d}}\bs(\wt{D}\otimes\bA^\infty)^\times_{\wt{d}}\right)\O_{\wt{d}}(f^{\infty,o})\TO_{\wt\ga,\sg_o}(\phi_{\tau,h})\tr\left((\pi_{\ov\ka,*}c_{\ov\ka}\times\tau)_y|(\cL_{I\ssm o,\xi,\ov\ka})_{z}\right).
\end{align*}
At this point, we have two goals: to rewrite this as a sum over certain conjugacy classes of $D^\times$ instead of $\wt{D}^\times$, and to rewrite the trace on $(\cL_{I\ssm o,\xi})_z$ in terms of algebraic group-theoretic data. For this, we shall use the following lemma, which transfers both conjugacy classes as well as volumes of their stabilizers.

\subsection{}\label{ss:dtildetodtransfer}
Write $\rn:D\rar \bf{F}$ for the reduced norm of $D$, and let $\ga$ be an element of $D^\times$. Write $\bf{F}'$ for the finite extension $\bf{F}[\ga]$ of $\bf{F}$. We say that $\ga$ is \emph{$r$-admissible} if $o(\rn\ga)=r$ and there exists a place $o'$ of $\bf{F}'$ above $o$ such that, for all other places $x'\neq o'$ of $\bf{F}'$ lying above $o$, we have $x'(\ga)=0$. Note that $r$-admissibility descends to a property on $D^\times$-conjugacy classes in $D^\times$. 
\begin{lem*}[{\cite[(11.9)]{LRS93}, \cite[(3.5.4)]{Lau96}}]
We have a bijection
\begin{align*}
&\{D^\times\mbox{-conjugacy classes in }D^\times\mbox{ that are }r\mbox{-admissible and elliptic in }D_\infty^\times\}\\
&\rar^\sim\coprod_{(\wt{\bf{F}},\wt\Pi)}\{\wt{D}^\times\mbox{-conjugacy classes in }\wt{D}^\times\mbox{ that are }r\mbox{-admissible}\},
\end{align*}
where $(\wt{\bf{F}},\wt\Pi)$ ranges over all $\vp$-pairs satisfying the conditions of Proposition \ref{ss:dellipticsheavesphipairs}. If we denote this bijection by $\ga\mapsto\wt{d}$, then the $D^\times_o=\GL_n(\bf{F}_o)$-conjugacy class of $\N_o\wt\ga$ equals that of $\ga$, and we have an equality of traces
\begin{align*}
\tr\left((\pi_{\ov\ka,*}c_{\ov\ka}\times\tau)_y|(\cL_{I\ssm o,\xi,\ov\ka})_{z}\right) = \Te_\xi(\ov\ga),
\end{align*}
where $\ov\ga$ is any elliptic element of $\ov{B}^\times$ with the same characteristic polynomial as $\ga$, and $\Te_\xi$ is the character of $\xi$. Furthermore, we have an equality of volumes
\begin{align*}
 \vol\left(\wt{D}^\times_{\wt{d}}\bs(\wt{D}\otimes\bA^\infty)^\times_{\wt{d}}\right) = a(\ga)\frac1{n\cdot\vol(\vpi_\infty^\bZ\bs\wt{D}_{\wt\infty}^\times)},
\end{align*}
where $a(\ga)$ is another volume factor
\begin{align*}
a(\ga) \deq \vol\left(\vpi_\infty^\bZ D^\times_\ga\bs(D\otimes\bA)^\times_\ga\right).
\end{align*}
\end{lem*}
Return to the proof of Proposition \ref{ss:galoistohecke}. The above lemma implies that our sum becomes
\begin{align*}
\sum_\ga a(\ga)\frac{\ve(\ga)}{n\cdot\vol(\vpi_\infty^\bZ\bs\wt{D}_{\wt\infty}^\times)}\O_\ga(f^{\infty,o})\ve(\ga)\TO_{\wt\ga,\sg_o}(\phi_{\tau,h})\Te_\xi(\ov\ga),
\end{align*}
where $\ga$ runs over all $D^\times$-conjugacy classes in $D^\times$ that are $r$-admissible and elliptic in $D^\times_\infty$, and $\ve(\ga)$ is the Kottwitz sign $\ve(\ga)\deq(-1)^{n/[\bf{F}_\infty[\ga]:\bf{F}_\infty]-1}$. Since $\phi_{\tau,h}$ and $f_{\tau,h}$ have matching orbital integrals, applying Lemma \ref{ss:dtildetodtransfer} further changes our sum to
\begin{align*}
\sum_\ga a(\ga)\frac{\ve(\ga)}{n\cdot\vol(\vpi_\infty^\bZ\bs\wt{D}_{\wt\infty}^\times)}\O_\ga(f^{\infty,o})\O_\ga(f_{\tau,h})\Te_\xi(\ov\ga) = \sum_\ga a(\ga)\frac{\ve(\ga)}{n\cdot\vol(\vpi_\infty^\bZ\bs\wt{D}_{\wt\infty}^\times)}\Te_\xi(\ov\ga)\O_\ga(f^{\infty,o}\times f_{\tau,h}).
\end{align*}
We now want to absorb the $\ve(\ga)\vol(\vpi_\infty^\bZ\bs\wt{D}_{\wt\infty}^\times)^{-1}\Te_\xi(\ov\ga)$ term into our orbital integral. In order to do so, we shall introduce the following special function on $D^\times_\infty=\GL_n(\bf{F}_\infty)$.

\subsection{}
We now introduce \emph{pseudo-coefficients}, which are certain functions in $C^\infty_c(\GL_n(\bf{F}_\infty))$. Recall that $\vpi_\infty$ acts trivially under $\xi$, which implies that $\xi$ has unitary central character. Therefore the local Jacquet--Langlands correspondence as in \ref{ss:localjacquetlanglands} yields an irreducible $L^2$ representation $\JL^{-1}(\xi)$ of $\GL_n(\bf{F}_\infty)$. Let $f_{\xi,\infty}$ in $C^\infty_c(\GL_n(\bf{F}_\infty))$ be the corresponding pseudo-coefficient of $\JL^{-1}(\xi)$ as in \cite[Section 5]{BR17}.
\begin{lem*}[{\cite[p.~2203]{BR17}, \cite[(13.8)]{LRS93}}]
Let $\ga$ be an element of $\GL_n(\bf{F}_\infty)$. Then the function $f_{\xi,\infty}$ satisfies
\begin{align*}
\O_\ga(f_{\xi,\infty}) =
\begin{cases}
\displaystyle\frac{\ve(\ga)}{\vol(\vpi_\infty^\bZ\bs\wt{D}_{\wt\infty}^\times)}\Te_\xi(\ov\ga) &\mbox{if }\ga\mbox{ is elliptic,}\\
0 & \mbox{otherwise.}
\end{cases}
\end{align*}
\end{lem*}

Return to the proof of Proposition \ref{ss:galoistohecke}. The pseudo-coefficient $f_{\xi,\infty}$ allows us to rewrite our sum as
\begin{align*}
\frac1n\sum_\ga a(\ga)\O_\ga(f_{\xi,\infty}\times f^{\infty,o}\times f_{\tau,h}).
\end{align*}
\subsection{}
Before using the Selberg trace formula, we first need some notation regarding automorphic representations. Write $\cA(\vpi_\infty^\bZ D^\times\bs(D\otimes\bA)^\times)$ for the $\ov\bQ_\ell$-vector space
\begin{align*}
\cA(\vpi_\infty^\bZ D^\times\bs(D\otimes\bA)^\times)\deq\{f:\vpi_\infty^\bZ D^\times\bs(D\otimes\bA)^\times\rar\ov\bQ_\ell\mid f\mbox{ is locally constant}\}.
\end{align*}
Then $\cA(\vpi_\infty^\bZ D^\times\bs(D\otimes\bA)^\times)$ has a left action of $\vpi_\infty^\bZ\bs(D\otimes\bA)^\times$ via right multiplication, and we see that this is a smooth representation of $\vpi_\infty^\bZ\bs(D\otimes\bA)^\times$. Now $\vpi_\infty^\bZ D^\times\bs(D\otimes\bA)^\times$ is compact because $D$ is a division algebra, so we obtain a decomposition
\begin{align*}
\cA(\vpi_\infty^\bZ D^\times\bs(D\otimes\bA)^\times) = \bigoplus_{\wt\Pi}\wt\Pi^{\oplus m(\wt\Pi)},
\end{align*}
where $\wt\Pi$ ranges over all irreducible admissible representations of $\vpi^\bZ_\infty\bs(D\otimes\bA)^\times$, and $m(\wt\Pi)$ is a non-negative integer.  If $m(\wt\Pi)$ is nonzero, we say $\wt\Pi$ is \emph{automorphic}. The weak multiplicity one theorem \cite[Theorem 3.3.(a)]{BR17} indicates that $m(\wt\Pi)$ is at most $1$.

Return to the proof of Proposition \ref{ss:galoistohecke}. Applying the Selberg trace formula to $\cA(\vpi_\infty^\bZ D^\times\bs(D\otimes\bA)^\times)$ shows that our sum of orbital integrals equals the sum of traces
\begin{align*}
\frac1n\sum_{\wt\Pi} m(\wt\Pi)\tr(f_{\xi,\infty}\times f^{\infty,o}\times f_{\tau,h}|\wt\Pi),
\end{align*}
where $\wt\Pi$ runs over all irreducible automorphic representations of $\vpi_\infty^\bZ\bs(D\otimes\bA)^\times$. Therefore the following proposition of Laumon--Rapoport--Stuhler concludes the proof of Proposition \ref{ss:galoistohecke}.
\end{proof}
\begin{prop}
We have an equality of traces
\begin{align*}
\sum_{\wt\Pi} m(\wt\Pi)\tr(f_{\xi,\infty}\times f^{\infty,o}\times f_{\tau,h}|\wt\Pi) = \tr(f^{\infty,o}\times f_{\tau,h}|[H_\xi]),
\end{align*}
where $\wt\Pi$ ranges over all irreducible automorphic representations of $\vpi_\infty^\bZ\bs(D\otimes\bA)^\times$.
\end{prop}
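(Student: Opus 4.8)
The plan is to identify the left-hand side as the alternating sum of traces on the $\xi$-isotypic parts of the cohomology groups $H^i_{\xi,\eta}$, and then match this against the automorphic decomposition. First I would recall that $[H_\xi] = \sum_{i=0}^\infty(-1)^iH^i_{\xi,\eta}$ is an admissible/continuous representation of $(D\otimes\bA^\infty)^\times\times G_{\bf{F}}$ built from the cohomology of the moduli space $\cE\ell\ell^\infty_\sD/\bZ$ with coefficients in $\cL_\xi$, which is precisely the sheaf induced from $\xi$ via the pro-Galois covering $\wt{\cE\ell\ell}^\infty_\sD/\bZ\rar\cE\ell\ell^\infty_\sD/\bZ$ with Galois group $\ov{B}^\times/\vpi_\infty^\bZ$. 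The crucial input is the description (due to Laumon--Rapoport--Stuhler, e.g.\ \cite[(14.12)]{LRS93} or \cite[(9.8)--(9.9)]{LRS93}) of $[H_\xi]$ as a sum $\bigoplus_{\wt\Pi}\wt\Pi^{\infty}\otimes(\text{Galois part})$ over automorphic representations $\wt\Pi$ of $\vpi_\infty^\bZ\bs(D\otimes\bA)^\times$ whose component at $\infty$ is $\JL^{-1}(\xi)$-related, where the $G_{\bf{F}}$-action on the multiplicity space recovers the relevant Galois representation. Since $f^{\infty,o}\times f_{\tau,h}$ acts only through the $(D\otimes\bA^\infty)^\times$-factor, the trace against this operator reads off only the $\wt\Pi^\infty$-isotypic pieces, which is why the Galois part collapses.

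The key steps, in order, are: (1) unwind the definition of $[H_\xi]$ and its $(D\otimes\bA^\infty)^\times$-action in terms of $\cE\ell\ell^\infty_\sD/\bZ$ and the coefficient sheaf $\cL_\xi$, as set up in \ref{ss:dellipticsheavescohomology}; (2) invoke the Matsushima-type decomposition of the cohomology of the moduli space of $\sD$-elliptic sheaves, following \cite[\S14]{LRS93}, to write $[H_\xi]$ as a direct sum indexed by automorphic $\wt\Pi$ of $\vpi_\infty^\bZ\bs(D\otimes\bA)^\times$, noting that the relation $\inv_\infty(D)=0$ and the choice of $\xi$ pin down the archimedean behavior so that $f_{\xi,\infty}$ (the pseudo-coefficient of $\JL^{-1}(\xi)$) detects exactly those $\wt\Pi$ contributing; (3) match the multiplicity factors---here the weak multiplicity one theorem \cite[Theorem 3.3.(a)]{BR17} forces $m(\wt\Pi)\in\{0,1\}$, reconciling the combinatorial multiplicity in the cohomological decomposition with the automorphic one; and (4) compute the trace of $f^{\infty,o}\times f_{\tau,h}$ against both sides, observing that on the right-hand sum $f_{\xi,\infty}$ has already been absorbed via the pseudo-coefficient identity of the preceding lemma, so the equality is a bookkeeping identity once the decompositions are aligned.

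The main obstacle will be step (2): making the passage from the geometric cohomology $[H_\xi]$ to the automorphic spectral decomposition rigorous in equal characteristic. In the mixed-characteristic Shimura-variety setting one appeals to Matsushima's formula or to the stabilized trace formula; here one must instead use the Laumon--Rapoport--Stuhler analysis of $H^*(\cE\ell\ell_{\sD,I})$, which identifies this cohomology (after tensoring with $\cL_\xi$ and passing to the limit over $I$) with a space of automorphic forms on $(D\otimes\bA)^\times$, cut out by the condition at $\infty$ coming from $\xi$. One subtlety is that the cohomology a priori sits in several degrees, but because $D$ is a division algebra the moduli space $\cE\ell\ell_{\sD,I}/\bZ$ is proper and the relevant automorphic representations are cuspidal/discrete, so the cohomology is concentrated (up to the expected twist) and the alternating sum $[H_\xi]$ does not suffer cancellation that would obstruct reading off traces. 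A second subtlety is keeping careful track of the $\ov{B}^\times$ versus $D^\times_\infty$ dictionary through the Jacquet--Langlands correspondence at $\infty$, so that the $\wt\Pi_\infty$ appearing are exactly those with $\tr(f_{\xi,\infty}|\wt\Pi_\infty)\neq0$; this is precisely where the lemma of \cite[p.~2203]{BR17} and \cite[(13.8)]{LRS93} feeds in. Once these identifications are in place, the remaining computation is formal.
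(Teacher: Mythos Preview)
Your proposal has a genuine circularity problem. The ``Matsushima-type decomposition'' you want to invoke in step (2) --- the description of $[H_\xi]$ as $\bigoplus_{\wt\Pi}\wt\Pi^\infty\otimes(\text{Galois part})$ from \cite[\S14]{LRS93} --- is not an independent input; in Laumon--Rapoport--Stuhler it is \emph{deduced from} the trace identity (13.6)/(13.8), which is exactly the proposition you are trying to prove. The logical order in \cite{LRS93} is: first establish the trace identity by comparing the Lefschetz fixed-point formula on the geometric side with the Selberg trace formula on the automorphic side (point-counting on both sides), then use it together with purity and multiplicity one to extract the isotypic decomposition and the vanishing/concentration statements of \S14. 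So citing \S14 to prove \S13 reverses the flow.

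Your own discussion of the ``main obstacle'' essentially concedes this: you say one must ``use the Laumon--Rapoport--Stuhler analysis of $H^*(\cE\ell\ell_{\sD,I})$, which identifies this cohomology \dots\ with a space of automorphic forms,'' but that identification \emph{is} the trace identity. Likewise, your remark that the cohomology is concentrated in a single parity (so the alternating sum suffers no cancellation) is not a priori available --- it is part of what \cite[(14.7), (14.21)]{LRS93} extract \emph{after} the trace identity is known. The paper accordingly does not attempt a spectral-decomposition argument: it simply cites \cite[(13.6), (13.8)]{LRS93}, whose proof is the Lefschetz/Selberg comparison (as the paper's remark following the proposition makes explicit), with the only additional ingredient being the existence of the pseudo-coefficient $f_{\xi,\infty}$ supplied by \cite[Section 5]{BR17}.
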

\begin{proof}
This is the generalization of \cite[(13.6)]{LRS93} as described in \cite[(13.8)]{LRS93}. This generalization relies on the existence of the pseudo-coefficient $f_{\xi,\infty}$, which we have already seen exists as in \cite[Section 5]{BR17}.
\end{proof}
We remark that the proof of this proposition uses similar point-counting methods as we do, except that Laumon--Rapoport--Stuhler immediately pass to the level of $(D\otimes\bA^\infty)^\times$, rather than remaining at the level of $\GL_n(\bf{F}_{o,r})$ at the place $o$. The latter is necessary for incorporating the contributions of $\phi_{\tau,h}$ and hence $f_{\tau,h}$.

\section{Local-global compatibility}\label{s:localglobal}
Our goal in this section is to prove that condition (c) in Lemma \ref{ss:firstinductivelemma} holds, which completes our proof of Theorem A. Along the way, we shall prove Theorem B. Returning to the local notation of \S\ref{s:deformationspaces}--\S\ref{s:lubintatetower} for a moment, we shall construct the desired virtual representation $\rho(\pi)$ of $W_F$ from our cohomology representation $[H_\xi]$.

Now revert back to global notation conventions. We start by recalling results of Laumon--Rapoport--Stuhler that specify the automorphic representations of $(D\otimes\bA^\infty)^\times$ occurring in $[H_\xi]$. Next, we use the \emph{strong multiplicity one theorem}, which says that automorphic representations of $(D\otimes\bA)^\times$ are determined by their local components at cofinitely many places, to convert Proposition \ref{ss:galoistohecke} into a global analog of condition (c) in Lemma \ref{ss:firstinductivelemma}. Finally, we introduce the \emph{global Jacquet--Langlands correspondence}, which enables us to pass between automorphic representations of $(D\otimes\bA)^\times$ and $\GL_n(\bA)$. This finishes the proof of Theorem B, our titular local-global compatibility result.

At this point, we want to use Theorem B to prove condition (c) in Lemma \ref{ss:firstinductivelemma}. To do so, we need to embed local representations of $\GL_n$ into global representations. We start by presenting such an embedding for $L^2$ representations, which uses the \emph{Deligne--Kazhdan simple trace formula}. Trace formula methods also allow us to embed cuspidal representations globally. From here, we use M\oe glin--Waldspurger's description of the discrete automorphic spectrum of $\GL_n$ in order to realize Speh modules in a global setting, and this enables us to complete the proof of condition (c) in Lemma \ref{ss:firstinductivelemma}.

\subsection{}
First, we introduce some notation on multiplicity spaces. Let $\wt\Pi^\infty$ be an irreducible admissible representation of $(D\otimes\bA^\infty)^\times$, and write $H_{\xi,\eta}^i(\wt\Pi^\infty)$ for the $\ov\bQ_\ell$-vector space
\begin{align*}
H^i_{\xi,\eta}(\wt\Pi^\infty) \deq \Hom_{(D\otimes\bA^\infty)^\times}(\wt\Pi^\infty,H^{i,\text{ss}}_{\xi,\eta}),
\end{align*}
that is, the multiplicity of $\wt\Pi^\infty$ in $H^{i,\text{ss}}_{\xi,\eta}$, where $H^{i,\text{ss}}_{\xi,\eta}$ denotes the semisimplification of $H^{i}_{\xi,\eta}$. We see that $H^{i}_{\xi,\eta}(\wt\Pi^\infty)$ is a continuous finite-dimensional representation of $G_\bf{F}$ over $\ov\bQ_\ell$, where $G_\bf{F}$ denotes the Galois group of the global field $\bf{F}$. Write $[H_\xi(\wt\Pi^\infty)]$ for the virtual representation $\sum_{i=0}^\infty(-1)^i H^i_{\xi,\eta}(\wt\Pi^\infty)$, which we see equals the multiplicity of $\wt\Pi^\infty$ in $[H_\xi]$. Therefore $\wt\Pi^\infty$ occurs in $[H_\xi]$ if and only if $[H_\xi(\wt\Pi^\infty)]$ is nonzero.

\subsection{}\label{ss:laumonrapoportstuhlerisotypiccomponents}
The question of whether $\wt\Pi^\infty$ occurs in $[H_\xi]$ is connected to automorphic representations of $\vpi_\infty^\bZ\bs(D\otimes\bA)^\times$ as well as the representation $\xi$ of $\ov{B}^\times$. Since $\vpi_\infty$ acts trivially under $\xi$, we see $\xi$ has unitary central character. Therefore the representation $\JL^{-1}(\xi)$ as in \ref{ss:localjacquetlanglands} is an irreducible $L^2$ representation of $\GL_n(\bf{F}_\infty)$ and hence isomorphic to a Steinberg module
\begin{align*}
\JL^{-1}(\xi) = \St_t(\pi_{0,\xi})
\end{align*}
as in \ref{ss:generalizedsteinbergspeh}, where $t$ is a positive divisor of $n$, $d=\frac{n}t$, and $\pi_{0,\xi}$ is an irreducible cuspidal representation of $\GL_d(\bf{F}_\infty)$ with unitary central character. Recall that we can also form the Speh module $\Sp_t(\pi_{0,\xi})$ as in \ref{ss:generalizedsteinbergspeh}, which is an irreducible smooth representation of $\GL_n(\bf{F}_\infty)$ The following result of Laumon--Rapoport--Stuhler determines precisely when $\wt\Pi^\infty$ occurs in $[H_\xi]$.
\begin{prop*}\hfill
  \begin{enumerate}[(i)]
  \item The virtual representation $[H_\xi(\wt\Pi^\infty)]$ of $G_\bf{F}$ is nonzero if and only if either $\wt\Pi^\infty\otimes\St_t(\pi_{0,\xi})$ or $\wt\Pi^\infty\otimes\Sp_t(\pi_{0,\xi})$ is an irreducible automorphic representation of $\vpi_\infty^\bZ\bs(D\otimes\bA)^\times$. Furthermore, we have $\dim[H_\xi(\wt\Pi^\infty)]=n$.
\item In addition, we have $H^i_{\xi,\eta}(\wt\Pi^\infty)=0$ either for all odd $i$ or all even $i$.
  \end{enumerate}
\end{prop*}
\begin{proof}
Part (ii) and the first statement of part (i) comprise the generalization of \cite[(14.7)]{LRS93} as described in \cite[(14.21)]{LRS93}. This generalization follows from the global Jacquet--Langlands correspondence between $D^\times$ and $\GL_n(\bf{F})$ (which is now known \cite[Theorem 3.2]{BR17}), by the remark made in \cite[(14.24)]{LRS93}. As for the dimension statement in part (i), this follows from Deligne's purity theorem, c.f. \cite[(14.11)(ii)]{LRS93}.
\end{proof}

\subsection{}\label{ss:globalconditionc}
Next, we use Proposition \ref{ss:galoistohecke} to obtain the following result, which resembles condition (c) in Lemma \ref{ss:firstinductivelemma}.
\begin{prop*}
Let $\tau$ be an element of $W_{\bf{F}_o}$ with $v_o(\tau)>0$, and let $h$ be a function in $C^\infty_c(\GL_n(\cO_o))$. Let $\wt\Pi^\infty$ be an irreducible admissible representation of $(D\otimes\bA^\infty)^\times$ for which $[H_\xi(\wt\Pi^\infty)]$ is nonzero. Then we have an equality of traces
\begin{align*}
\tr(f_{\tau,h}|\wt\Pi_o^\infty) = \tr(\tau|[H_\xi(\wt\Pi^\infty)])\tr(h|\wt\Pi_o^\infty),
\end{align*}
where $\wt\Pi_o^\infty$ denotes the component of $\wt\Pi^\infty$ at $o$.
\end{prop*}
For any irreducible admissible representation $\wt\Pi'^\infty$ of $(D\otimes\bA^\infty)^\times$, we write $\wt\Pi'^{\infty,o}$ for the component of $\wt\Pi'^\infty$ away from $o$.
\begin{proof}
Let $I$ be a sufficiently large finite closed subscheme of $C\ssm\infty$ such that $\wt\Pi^{\infty,o}$ has nonzero $\sK_I^{\infty,o}$-invariants and $h$ is invariant under $\sK_{I,o}^\infty$. Because $[H_\xi]$ is admissible, we see that $[H_\xi]^{\sK_I^\infty}$ is finite-dimensional, so $[H_\xi]$ can only contain finitely many non-isomorphic irreducible admissible representations $\wt\Pi'^\infty$ of $(D\otimes\bA^\infty)^\times$ with nonzero $\sK_I^\infty$-invariants. By applying the Chinese remainder theorem to these left $C^\infty_c((D\otimes\bA^{\infty,o})^\times\!/\!/\sK_I^{\infty,o})$-modules $\wt\Pi'^\infty$, we obtain a function $f^{\infty,o}$ in $C^\infty_c((D\otimes\bA^{\infty,o})^\times)$ satisfying the following properties:
\begin{enumerate}[(i)]
\item $\tr(f^{\infty,o}|\wt\Pi^{\infty,o})=1$,
\item if $\tr(f^{\infty,o}|\wt\Pi'^\infty)\neq0$ for one of our aforementioned $\wt\Pi'^\infty$, then $\wt\Pi'^{\infty,o}$ is isomorphic to $\wt\Pi^{\infty,o}$.
\end{enumerate}
In order to strengthen property (ii) to show that $\wt\Pi'^\infty$ is isomorphic to $\wt\Pi^\infty$ (hence extending our isomorphism to the component at $o$), we use the \emph{strong multiplicity one theorem}. We recall its statement below.

\subsection{}\label{ss:strongmultiplicityone}
Let $T$ be a finite set of places of $\bf{F}$. For any irreducible admissible representation $\wt\Pi$ of $\vpi^\bZ_\infty\bs(D\otimes\bA)^\times$, we write $\wt\Pi^T$ for the component of $\wt\Pi$ away from $T$, and we write $\wt\Pi_T$ for the component of $\wt\Pi$ at $T$.
\begin{thm*}[{\cite[Theorem 3.3.(b)]{BR17}}]
Let $\wt\Pi$ and $\wt\Pi'$ be two irreducible automorphic representations of $\vpi_\infty^\bZ\bs(D\otimes\bA)^\times$, and let $T$ be a finite set of places of $\bf{F}$. If $\wt\Pi^T$ is isomorphic to $\wt\Pi'^T$, then $\wt\Pi$ is isomorphic to $\wt\Pi'$.
\end{thm*}
We remark that \cite{BR17} deduces the strong multiplicity one theorem by using the global Jacquet--Langlands correspondence \cite[Theorem 3.2]{BR17} to reduce to the case of $\GL_n(\bf{F})$.

Return to the proof of Proposition \ref{ss:globalconditionc}. The strong multiplicity one theorem and Proposition \ref{ss:laumonrapoportstuhlerisotypiccomponents}.(i) imply that, if the hypothesis of property (ii) holds, then $\wt\Pi'^\infty$ is actually isomorphic to $\wt\Pi^\infty$. Therefore plugging $f^{\infty,o}$ into Proposition \ref{ss:galoistohecke} shows that
\begin{align*}
\tr(h|\wt\Pi_o^\infty)\tr(\tau|[H_\xi(\wt\Pi^\infty)]) = \tr(f^{\infty,o}\times h\times\tau|[H_\xi]) = \textstyle\frac1n\tr(f^{\infty,o}\times f_{\tau,h}|[H_\xi]) = \frac1n\dim[H_\xi(\wt\Pi^\infty)]\tr(f_{\tau,h}|\wt\Pi_o^\infty),
\end{align*}
and applying the second statement in Proposition \ref{ss:laumonrapoportstuhlerisotypiccomponents}.(i) concludes the proof of Proposition \ref{ss:globalconditionc}.
\end{proof}

\subsection{}\label{ss:globaljacquetlanglands}
To convert automorphic representations of $\GL_n(\bA)$ into those of $(D\otimes\bA)^\times$, we use the \emph{global Jacquet--Langlands correspondence}. Recall from \ref{ss:divisionalgebras} that $\text{Bad}$ denotes the set of places where $D$ ramifies.

From this point onwards, assume that $D_x$ is a division algebra for all places $x$ in $\text{Bad}$. We say that an irreducible discrete automorphic representation $\Pi$ of $\GL_n(\bA)$ is \emph{$D$-admissible} if, for all places $x$ in $\text{Bad}$, the local component $\Pi_x$ is a Steinberg module or Speh module, as a representation of $\GL_n(\bf{F}_x)$.

There exists a unique bijection \cite[Theorem 3.2]{BR17}
\begin{align*}
\JL:\left\{
  \begin{tabular}{c}
    isomorphism classes of irreducible\\
$D$-admissible representations of $\GL_n(\bA)$
  \end{tabular}\right\}\rar^\sim  \left\{\begin{tabular}{c}
    isomorphism classes of irreducible\\
    automorphic representations of $(D\otimes\bA)^\times$
  \end{tabular}\right\}
\end{align*}
such that, for all places $x$ of $\bf{F}$,
\begin{enumerate}[(i)]
\item if $x$ does not lie in $\text{Bad}$, then $\JL(\Pi)_x$ is isomorphic to $\Pi_x$,
\item if $x$ lies in $\text{Bad}$, and $\Pi_x$ is isomorphic to either $\St_t(\pi_0)$ or $\Sp_t(\pi_0)$ as in Definition \ref{ss:generalizedsteinbergspeh}, then $\JL(\Pi)_x$ is isomorphic to $\JL(\St_t(\pi_0))$, where the latter $\JL$ denotes the local Jacquet--Langlands correspondence from \ref{ss:localjacquetlanglands}.
\end{enumerate}

\subsection{}\label{ss:choiceofdivisionalgebra}
Before using the global Jacquet--Langlands correspondence to deduce Theorem B, let us fix our choice of division algebra. Fix three distinct places $x_1$, $x_2$, and $\infty$ of $\bf{F}$, and let $D$ be the central division algebra over $\bf{F}$ of dimension $n^2$ defined by
\begin{align*}
\inv_x(D) =
\begin{cases}
\frac1n &\mbox{ if }x=x_1,\,\\
-\frac1n &\mbox{ if }x=x_2,\,\\
0 & \mbox{otherwise,}
\end{cases}
\end{align*}
as in \ref{ss:divisionalgebras}. Let $\sD$ be a maximal order of $D$, which can be constructed using \ref{ss:orders} because division algebras split at cofinitely many places.

\subsection{}\label{prop:localglobalcompatibility}
We now proceed to prove Theorem B, using Proposition \ref{ss:globalconditionc} and the global Jacquet--Langlands correspondence.
\begin{prop*}
Let $\Pi$ be an irreducible discrete automorphic representation of $\GL_n(\bA)$ whose components at $x_1$, $x_2$, and $\infty$ are either irreducible $L^2$ representations or Speh modules of $\GL_n(\bf{F}_{x_1})$, $\GL_n(\bf{F}_{x_2})$, and $\GL_n(\bf{F}_\infty)$, respectively. Then there exists a unique $n$-dimensional semisimple continuous representation $R(\Pi)$ of $G_\bf{F}$ over $\ov\bQ_\ell$ such that, for all places $o$ of $\bf{F}$ not lying in $\{x_1,x_2,\infty\}$, the restriction of $R(\Pi)$ to $W_{\bf{F}_o}$ satisfies
\begin{align*}
\res{R(\Pi)}_{W_{\bf{F}_o}} = \rho(\Pi_o),
\end{align*}
where we identify $\ov\bQ_\ell$ with $\bC$.
\end{prop*}
\begin{proof}
The Chebotarev density theorem implies that, as $o$ varies over all places of $\bf{F}$ not lying in $\{x_1,x_2,\infty\}$, the conjugacy classes of geometric $q_o$-Frobenius elements at $o$ in $G_\bf{F}$ are dense. Now $n$-dimensional continuous semisimple representations of $G_\bf{F}$ are determined, up to isomorphism, by their characteristic polynomials, and said polynomials are continuous in $G_\bf{F}$, so we see that the above condition determines $R(\Pi)$ uniquely.

We turn to the existence of $R(\Pi)$. Our hypotheses indicate that $\Pi$ is $D$-admissible, so we can form the automorphic representation $\wt\Pi\deq\JL(\Pi)$ of $(D\otimes\bA)^\times$. Now $\wt\Pi_\infty=\Pi_\infty$ is either a Steinberg module or a Speh module, so Proposition \ref{ss:laumonrapoportstuhlerisotypiccomponents}.(i) ensures that there exists an irreducible smooth representation of $\ov{B}^\times/\vpi_\infty^\bZ$ such that $[H_\xi(\wt\Pi^\infty)]$ has dimension $n$. Finally, Proposition \ref{ss:globalconditionc} indicates that $\res{[H_\xi(\wt\Pi^\infty)]}_{W_{\bf{F}_o}}$ satisfies the defining condition Theorem A.(i) of $\rho(\Pi_o)$, so taking $R(\Pi)=[H_\xi(\wt\Pi^\infty)]$ yields the desired result.
\end{proof}

\subsection{}
Briefly return to the local notation of \S\ref{s:deformationspaces}--\S\ref{s:lubintatetower}. In order to convert Theorem B into a proof of condition (c) in Lemma \ref{ss:firstinductivelemma}, we must find some global automorphic representation $\wt\Pi$ of $\GL_n(\bA)$ such that our local representation $\pi$ is isomorphic to $\wt\Pi_o$ as a representation of $\GL_n(\bf{F}_o)$. That is, we must embed local representations into global ones. Recall that the two classes of possibilities for $\pi$ from condition (c) of Lemma \ref{ss:firstinductivelemma} that we must consider are
\begin{enumerate}[(1)]
\item essentially $L^2$ representations, which we will reduce to the case of $L^2$ representations via Proposition \ref{ss:unramifiedtwistsofrho},
\item Speh modules, which we reduce to the case of cuspidal representations by M\oe glin--Waldspurger's classification of the discrete automorphic spectrum of $\GL_n$. 
\end{enumerate}

\subsection{}\label{ss:embedl2}
Return to our global notation. We start by embedding local $L^2$ representations into global representations.
\begin{lem*}
Let $\pi$ be an irreducible $L^2$ representation of $\GL_n(\bf{F}_o)$. Then there exists an irreducible automorphic representation $\Pi$ of $\GL_n(\bA)$ whose
\begin{enumerate}[(i)]
\item component at $o$ is isomorphic to $\pi$,
\item components at $x_1$, $x_2$, and $\infty$ are irreducible $L^2$ representations of $\GL_n(\bf{F}_{x_1})$, $\GL_n(\bf{F}_{x_2})$, and $\GL_n(\bf{F}_\infty)$, respectively, with unitary central characters.
\end{enumerate}
\end{lem*}
\begin{proof}
This is proven precisely as in \cite[Corollary VI.2.5]{HT01} using the simple trace formula \cite[A.1.d Theorem]{DKV84}.
\end{proof}

\subsection{}\label{ss:embedcuspidal}
As a first step towards embedding Speh modules into a global setting, we first realize certain local cuspidal representations in a global context.
\begin{lem*}
Let $\pi$ be an irreducible cuspidal representation of $\GL_n(\bf{F}_o)$ with unitary central character. Then there exists an irreducible cuspidal automorphic representation $\Pi$ of $\GL_n(\bA)$ whose
\begin{enumerate}[(i)]
\item component at $o$ is isomorphic to $\pi$,
\item components at $x_1$, $x_2$, and $\infty$ are irreducible cuspidal representations of $\GL_n(\bf{F}_{x_1})$, $\GL_n(\bf{F}_{x_2})$, and $\GL_n(\bf{F}_\infty)$, respectively, with unitary central characters.
\end{enumerate}
\begin{proof}
This follows from the proof of \cite[(15.2)]{LRS93}, where we use $\infty$ instead of $x_3$ and $o$ instead of $x_0$.
\end{proof}
\end{lem*}
\subsection{}\label{ss:moeglinwaldspurger}
We want to use M\oe glin--Waldspurger's classification of the discrete automorphic spectrum of $\GL_n$. To present this classification, let us recall some notation from \ref{ss:bernsteinzelevinsky} on local representations. Briefly return to the local notation of \S\ref{s:deformationspaces}--\S\ref{s:lubintatetower}, and let $\{\De_1,\dotsc,\De_t\}$ be a collection of segments as in \cite[3.1]{Zel80} such that $\De_i$ does not precede $\De_j$ as in \cite[4.1]{Zel80} for $i<j$. Then each associated $Q(\De_i)$ is an irreducible essentially $L^2$ representation of $\GL_{n_i}(F)$, and recall that $Q(\De_1,\dotsc,\De_t)$ denotes the unique irreducible quotient of
\begin{align*}
\nInd_{P(F)}^{\GL_n(F)}\left(Q(\De_1)\otimes\dotsb\otimes Q(\De_t)\right),
\end{align*}
where $n=n_1+\dotsb+n_t$, and $P$ is the standard parabolic subgroup of $\GL_n$ with block sizes $(n_1,\dotsc,n_t)$.

Now return to the global notation. Work of M\oe glin--Waldspurger implies the following.
\begin{lem*}[{\cite[Theorem]{MW89}}]
Let $d$ be a positive divisor of $n$, write $d\deq\frac{n}t$, and let $\Pi_0$ be an irreducible automorphic cuspidal representation of $\GL_d(\bA)$. Then the restricted tensor product
\begin{align*}
\Pi\deq\sideset{}{'}\bigotimes_xQ\left(\{\Pi_{0,x}[\textstyle\frac{t-1}2]\},\dotsc,\{\Pi_{0,x}[\frac{1-t}2]\}\right),
\end{align*}
where $x$ ranges over all places of $\bf{F}$, is an irreducible discrete automorphic representation of $\GL_n(\bA)$.
\end{lem*}

\subsection{}\label{ss:embedspeh}
We apply Lemma \ref{ss:moeglinwaldspurger} to embed Speh modules into global representations.
\begin{lem*}
Let $\pi$ be a Speh module, as a representation of $\GL_n(\bf{F}_o)$. Then there exists an irreducible discrete automorphic representation $\Pi$ of $\GL_n(\bA)$ whose
\begin{enumerate}[(i)]
\item component at $o$ is isomorphic to $\pi$,
\item components at $x_1$, $x_2$, and $\infty$ are Speh modules, as representations of $\GL_n(\bf{F}_{x_1})$, $\GL_n(\bf{F}_{x_2})$, and $\GL_n(\bf{F}_\infty)$, respectively.
\end{enumerate}
\end{lem*}
\begin{proof}
Write $\pi$ as $\pi=\Sp_t(\pi_0)$, where $t$ is some positive divisor of $n$, and $\pi_0$ is an irreducible cuspidal representation of $\GL_{n/t}(\bf{F}_o)$ with unitary central character. Now Lemma \ref{ss:embedcuspidal} provides an irreducible cuspidal automorphic representation $\wt\Pi_0$ of $\GL_{n/t}(\bA)$ whose component at $o$ is isomorphic to $\pi_0$ and whose components at $x_1$, $x_2$, and $\infty$ are irreducible cuspidal representations of $\GL_n(\bf{F}_{x_1})$, $\GL_n(\bf{F}_{x_2})$, and $\GL_n(\bf{F}_\infty)$, respectively, with unitary central characters. Then restricted tensor product $\Pi$ associated with $\Pi_0$ as in Lemma \ref{ss:moeglinwaldspurger} has components
\begin{align*}
\Pi_o  = \Sp_t(\pi_0) = \pi,\, \Pi_{x_1}= \Sp_t(\Pi_{0,x_1}),\, \Pi_{x_2}= \Sp_t(\Pi_{0,x_2}),\mbox{ and }\Pi_\infty = \Sp_t(\Pi_{0,\infty}),
\end{align*}
as desired.
\end{proof}

\subsection{}\label{s:conditionc}
Return to the local notation of \S\ref{s:deformationspaces}--\S\ref{s:lubintatetower}. Thus $F$ is a local field of positive characteristic, $\cO$ denotes its ring of integers, $\vpi$ is a fixed uniformizer, and $\ka$ denotes $\cO/\vpi$. We will finally complete our proof of condition (c) in Lemma \ref{ss:firstinductivelemma} by explicitly embedding our local situation into the global situation of $\sD$-elliptic sheaves.
\begin{prop*}
Let $\pi$ be a Speh module or an irreducible essentially $L^2$ representation of $\GL_n(F)$. Then there exists an $n$-dimensional $\bQ_{\geq0}$-virtual continuous representation $\rho(\pi)$ of $W_F$ satisfying the trace condition
  \begin{align*}
    \tr(f_{\tau,h}|\pi) = \tr\left(\tau|\rho(\pi)\right)\tr(h|\pi)
  \end{align*}
for all $\tau$ in $W_F$ with $v(\tau)>0$ and $h$ in $C^\infty_c(\GL_n(\cO))$.
\end{prop*}
\begin{proof}
Our goal is to apply Theorem B. Let $C=\bP^1_\ka$ be our curve of interest, choose $o$ in \ref{ss:choiceofdivisionalgebra} to be a $\ka$-point of $C$, and choose $x_1$, $x_2$, and $\infty$ in \ref{ss:choiceofdivisionalgebra} to be distinct closed points in $C\ssm o$. Note that $\bf{F}_o$ is isomorphic to $F$.

Let us first consider the case when $\pi$ has unitary central character, that is, $\pi$ is either a Speh module or $L^2$. Then Lemma \ref{ss:embedl2} or Lemma \ref{ss:embedspeh}, respectively, yields an irreducible discrete automorphic representation $\Pi$ of $\GL_n(\bA)$ satisfying the hypotheses of Theorem B such that $\Pi_o$ is isomorphic to $\pi$. Proceeding to apply Theorem B to $\Pi$ shows that we may take $\rho(\pi)=\res{R(\Pi)}_{W_{\bf{F}_o}}$.

In the last remaining case when $\pi$ is an arbitrary irreducible essentially $L^2$ representation $\pi$ of $\GL_n(F)$, note that $\pi$ is isomorphic to an unramified twist of an $L^2$ representation. Thus the above work, along with Proposition \ref{ss:unramifiedtwistsofrho}, concludes the proof in this case.
\end{proof}

By using Proposition \ref{ss:conditionb}, Proposition \ref{s:conditionc}, and Proposition \ref{ss:conditiond} to verify that conditions (b)--(d) of Lemma \ref{ss:firstinductivelemma} hold and using Proposition \ref{ss:theoremabasecase} to check the $n=1$ base case, this concludes our proof by induction of Theorem A.

\section{The second inductive lemma: bijectivity of the correspondence}\label{s:secondinductivelemma}
Retain the local notation of \S\ref{s:deformationspaces}--\S\ref{s:lubintatetower}. The purpose of this section is to prove Theorem C, i.e. that $\pi\mapsto\rec\pi$ yields a bijection from isomorphism classes of irreducible cuspidal representations of $\GL_n(F)$ to isomorphism classes of $n$-dimensional irreducible continuous representations of $W_F$. We start by collecting some facts on restricting representations of $W_F$ to representations of $W_E$ for cyclic extensions $E/F$. Next, we use these facts to motivate \emph{automorphic base change}, an analogous operation that turns representations of $\GL_n(F)$ into representations of $\GL_n(E)$. Automorphic base change yields section's first main result: a lemma which allows us to prove Theorem C by inducting on $n$, provided that we verify certain conditions.

We then proceed to verify the conditions needed for this inductive lemma, thus completing the proof of Theorem C. The main ingredients are Theorem A, Theorem B, and our nearby cycles calculation from \S\ref{s:nearbycycles}. More precisely, we apply results from \S\ref{s:nearbycycles} to show that the preimage of any unramified representation under $\rec$ remained unramified, and we use Theorem B to show that $\rec$ is compatible with automorphic base change as well as twisting by characters.

\subsection{}\label{ss:galoissidebasechange}
We begin by establishing some notation on field extensions. Let $E$ be a finite extension of $F$ inside $F^\sep$. For any finite-dimensional semisimple continuous representation $\rho'$ of $W_E$ and any map $\al$ in $G_F$, write $\rho'^\al$ for the finite-dimensional semisimple continuous representation of $W_{\al^{-1}(E)}$ given by $\tau\mapsto\rho'(\al\circ\tau\circ\al^{-1})$ for all $\tau$ in $W_{\al^{-1}(E)}$.

Now assume that $E/F$ is a cyclic extension. Hence $\al^{-1}(E)=E$. Write $r$ for the degree of $E/F$, and fix a generator $\sg$ of $\Gal(E/F)=W_F/W_E$, which identifies it with $\bZ/r\bZ$. Write $\fK(E/F)$ for the set of group homomorphisms $\Gal(E/F)\rar\bC^\times$, and interpret $\fK(E/F)$ as a set of characters of $W_F$. Note that $\fK(E/F)$ acts on the set of isomorphism classes of irreducible continuous finite-dimensional representations of $W_F$ via twisting.

When $r$ is prime, one can use Frobenius reciprocity and the Mackey formula to verify the following assertions:
\begin{enumerate}[(i)]
\item Let $\rho'$ be an irreducible continuous finite-dimensional representation of $W_E$. Then $\rho'$ extends to a representation $\rho$ of $W_F$ if and only if $\rho'=\rho'^\sg$. Furthermore, if this is the case, then there are $r$ isomorphism classes of such $\rho'$, and any two differ by a twist of a character in $\fK(E/F)$.
\item Let $\rho$ be an irreducible continuous finite-dimensional representation of $W_F$. Then $\res\rho_{W_E}$ is reducible if and only if the stabilizer of $\rho$ in $\fK(E/F)$ is nontrivial (and hence all of $\fK(E/F)$). Furthermore, if this is the case, then
  \begin{align*}
    \res\rho_{W_E} = \rho'\oplus\dotsb\oplus\rho'^{\sg^{r-1}}
  \end{align*}
  for some irreducible continuous finite-dimensional representation $\rho'$ of $W_E$ satisfying $\rho'\neq\rho'^\sg$, and $\rho$ is the unique finite-dimensional semisimple continuous representation of $W_F$ with this property.
\end{enumerate}
We remark that the same argument works in the global setting, for which we shall use entirely analogous notation.

\subsection{}\label{ss:basechangeliftcuspidal}
Next, we recall \emph{automorphic base change} in the local setting. For any finite extension $E$ of $F$ inside $F^\sep$, any irreducible smooth representation $\pi'$ of $\GL_n(E)$, and any map $\al$ in $G_F$, write $\pi'^\al$ for the irreducible smooth representation of $\GL_n(\al^{-1}(E))$ given by $g\mapsto\pi'(\al(g))$ for all $g$ in $\GL_n(\al^{-1}(E))$. Note that $\Te_{\pi'^\al}=\Te_{\pi'}\circ\al$.

As in \ref{ss:galoissidebasechange}, specialize to cyclic extensions $E$ of $F$, and adopt the notation of \ref{ss:galoissidebasechange} as well. Since $\Art$ induces an isomorphism $F^\times/\Nm_{E/F}(E^\times)\rar^\sim\Gal(E/F)$, we can view $\fK(E/F)$ as a set of characters of $\GL_n(F)$ via precomposition with $\det\circ\Art$. Thus $\fK(E/F)$ acts on the set of isomorphism classes of irreducible smooth representations of $\GL_n(F)$ by twisting.

First, let $\pi$ be an irreducible cuspidal representation of $\GL_n(F)$. Write $u$ for the cardinality of the stabilizer of $\pi$ in $\fK(E/F)$. Then $u$ divides $n$, and there exists an irreducible cuspidal representation $\pi'$ of $\GL_{n/u}(E)$ such that
\begin{align*}
Q\left(\{\pi'\},\dotsc,\{\pi'^{\sg^{u-1}}\}\right)
\end{align*}
is an irreducible generic representation of $\GL_n(E)$ and is isomorphic to the base change lift of $\pi$ to $\GL_n(E)$ \cite[(II.4.12, prop.)]{HL11}. In particular, the description of genericity from \ref{ss:bernsteinzelevinsky} indicates that $\pi'$ is isomorphic to $\pi'^\sg$ if and only if $u=1$. Note that this is the automorphic analogue of \ref{ss:galoissidebasechange}.

\subsection{}\label{ss:basechangeliftlocal}
We now turn to local automorphic base change for generic representations. For any irreducible cuspidal representation $\tau$ of $\GL_d(F)$, write $\De(\tau,m)$ for the segment $\{\tau,\dotsc,\tau[m-1]\}$ as in \cite[3.1]{Zel80}. Recall from \ref{ss:bernsteinzelevinsky} that every irreducible generic representation $\pi$ of $\GL_n(F)$ is of the form
\begin{align*}
\nInd_{P(F)}^{\GL_n(F)}\left(Q(\De(\pi_1,m_1))\otimes\dotsb\otimes Q(\De(\pi_t,m_t))\right),
\end{align*}
where $n=d_1m_1+\dotsb+d_tm_t$, the $\pi_i$ are irreducible cuspidal representations of $\GL_{d_i}(F)$, and $P$ is the standard parabolic subgroup of $\GL_n$ with block sizes $(d_1m_1,\dotsc,d_tm_t)$. Furthermore, none of the $\De(\pi_i,m_i)$ are linked.

At this point, suppose that $\pi$ also has $\fK(E/F)$-regular segments as in \cite[(II.3.4)]{HL11}. Write $u_i$ for the divisor of $d_i$ and $\pi'_i$ for the irreducible cuspidal representation of $\GL_{d_i/u_i}(E)$ associated with $\pi_i$ as in \ref{ss:basechangeliftcuspidal}. Then the base change lift $\pi_E$ of $\pi$ to $\GL_n(E)$ is isomorphic to \cite[(II.4.4)]{HL11}, \cite[(II.4.12, cor.)]{HL11}
\begin{align*}
Q\left(\De(\pi'_1,m_1),\dotsc,\De(\pi_1'^{\sg^{u_1-1}},m_1),\dotsc,\De(\pi'_t,m_t),\dotsc,\De(\pi_t'^{\sg^{u_t-1}},m_t)\right).
\end{align*}
Note that $\pi_E$ is an irreducible generic representation of $\GL_n(E)$.

Next, let $\xi$ be any irreducible generic representation of $\GL_n(F)$ with $\fK(E/F)$-regular segments such that the base change lift of $\xi$ to $\GL_n(E)$ is isomorphic to $\pi_E$. Then $\xi$ must be isomorphic to
\begin{align*}
\nInd_{P(F)}^{\GL_n(F)}\left(Q(\De(\chi_1\cdot\pi_1,m_1))\otimes\dotsb\otimes Q(\De(\chi_t\cdot\pi_t,m_t))\right)
\end{align*}
for some $\chi_1,\dotsc,\chi_t$ in $\fK(E/F)$ \cite[(II.4.13, lem.)]{HL11}. Finally, every $\sg$-stable irreducible generic representation $\pi'$ of $\GL_n(E)$ is the base change lift of some irreducible generic representation of $\GL_n(F)$ with $\fK(E/F)$-regular segments \cite[(II.1.4)]{HL11}.

\subsection{}\label{ss:basechangeliftglobal}

We will also need automorphic base change in the global setting, for which we briefly return to our global notation. Hence $\bf{F}$ is a global function field, and $\bA$ denotes its ring of adeles. Let $\bf{E}$ be a cyclic extension of $\bf{F}$, write $\bA_\bf{E}$ for the ring of adeles of $\bf{E}$, and fix a generator $\sg$ of $\Gal(\bf{E}/\bf{F})=W_{\bf{F}}/W_{\bf{E}}$. Then $\sg$ acts on the set of isomorphism classes of irreducible discrete automorphic representations of $\GL_n(\bA_\bf{E})$ via precomposition.

Let $\Pi$ be an irreducible cuspidal automorphic representation of $\GL_n(\bA)$. Then there exists a unique irreducible discrete automorphic representation $\Pi_{\bf{E}}$ of $\GL_n(\bA_{\bf{E}})$ such that, for all places $o$ of $\bf{F}$ and places $o'$ of $\bf{E}$ lying above $o$, the base change lift of $\Pi_o$ to $\GL_n(\bf{E}_{o'})$ is isomorphic to $\Pi_{\bf{E},o'}$ \cite[(IV.1.3)]{HL11}. Next, let $\Xi$ be any irreducible cuspidal automorphic representation of $\GL_n(\bA)$ such that $\Xi_\bf{E}$ is isomorphic to $\Pi_\bf{E}$. Then $\Xi$ must be isomorphic to $\Pi\otimes(X\circ\det)$ for some character $X:\bA^\times/(\bf{F}^\times\Nm_{\bf{E}/\bf{F}}(\bA_\bf{E}^\times))\rar\bC^\times$ \cite[Theorem III.3.1]{AC89}.\footnote{The proof given here is stated for number fields, but it only uses the relationship between unramified local $L$-functions and Satake parameters, as well as the fact that $L(\Pi\times\Pi',s)$ has a pole at $s=1$ if and only if $\Pi\cong\Pi'^\vee$. In particular, it carries over to arbitrary global fields.} Finally, every $\sg$-stable irreducible cuspidal representation $\Pi'$ of $\GL_n(\bA_\bf{E})$ is isomorphic to $\Pi_{\bf{E}}$ for some irreducible cuspidal automorphic representation $\Pi$ of $\GL_n(\bA)$. 

\subsection{}\label{ss:automorphismscompatibility}
Return to the local notation of \S\ref{s:deformationspaces}--\S\ref{s:lubintatetower}. We first verify that $\rec$ is compatible with automorphisms.
\begin{lem*}
For any $\al$ in $G_F$ and any irreducible smooth representation $\pi'$ of $\GL_n(E)$, we have an isomorphism
  \begin{align*}
    \rec(\pi'^\al) = \rec(\pi')^\al.
  \end{align*}

\end{lem*}
\begin{proof}
  We immediately see $\al$ preserves the Tate twist $(\textstyle\frac{1-n}2)$, so it suffices to show that $\rho(\pi'^\al) = \rho(\pi')^\al$. We will do so by verifying that $\rho(\pi')^\al$ satisfies the defining property of $\rho(\pi'^\al)$ as in Theorem A.(i). Write $\cO_E$ for the ring of integers of $E$. Let $\tau$ be an element of $W_{\al^{-1}(E)}$ satisfying $r\deq v(\tau)>0$, and let $h$ be a function in $C^\infty_c(\GL_n(\al^{-1}(\cO_E)))$. Now \ref{ss:twistedcharacters} yields
  \begin{align*}
    \tr\left(f_{\tau,h}|\pi'^\al\right) = \tr\left((\phi_{\tau,h},\sg)|\pi'^\al/E_r\right) = \int_{\GL_n(\al^{-1}(E))}\!\dif\de'\,\phi_{\tau,h}(\de')\Te_{\pi'^\al}(\N\de'),
  \end{align*}
  where $\phi_{\tau,h}$ is our test function in $C^\infty_c(\GL_n(\al^{-1}(E)_r))$ from \S\ref{s:deformationspaces}. Fixing a uniformizer $\vpi_E$ of $E$ and expanding the definition of $\phi_{\tau,h}$, we see that this integral equals
  \begin{align*}
    \int_{\GL_n(\al^{-1}(\cO_E))\diag(\al^{-1}(\vpi_E),1,\dotsc,1)\GL_n(\al^{-1}(\cO_E))}\!\dif\de'\,\tr(\tau\times h|[R\psi_{\de'}])\Te_{\pi'}(\al(\N\de')).
  \end{align*}
  Upon making the change of variables $\de=\al(\de')$, our integral becomes
  \begin{align*}
    &\int_{\GL_n(\cO_E)\diag(\vpi_E,1,\dotsc,1)\GL_n(\cO_E)}\!\dif\de\,\tr(\tau\times h|[R\psi_{\al^{-1}(\de)}])\Te_{\pi'}(\N(\al(\de')))\\
    =\,&\int_{\GL_n(\cO_E)\diag(\vpi_E,1,\dotsc,1)\GL_n(\cO_E)}\!\dif\de\,\tr((\al\circ\tau\circ\al^{-1})\times h\circ\al^{-1}|[R\psi_\de])\Te_{\pi'}(\N\de)\\
    =\,&\int_{\GL_n(E)}\!\dif\de\,\phi_{\al\circ\tau\circ\al^{-1},h\circ\al^{-1}}(\de)\Te_{\pi'}(\N\de) = \tr\left((\phi_{\al\circ\tau\circ\al^{-1},h\circ\al^{-1}},\sg)|\pi'/E_r\right).
  \end{align*}
  Applying \ref{ss:twistedcharacters} once more indicates that this equals $\tr\left(f_{\al\circ\tau\circ\al^{-1},h\circ\al^{-1}}|\pi'\right)$. From here, Theorem A.(i) shows that
  \begin{align*}
    \tr\left(f_{\al\circ\tau\circ\al^{-1},h\circ\al^{-1}}|\pi'\right) = \tr\big(\al^{-1}\circ\tau\circ\al|\rho(\pi')\big)\tr\big(h\circ\al^{-1}|\pi'\big) = \tr\left(\tau|\rho(\pi')^\al\right)\tr\big(h\circ\al^{-1}|\pi'\big).
  \end{align*}
Thus all that remains is to prove that $\tr\big(h\circ\al^{-1}|\pi'\big)=\tr\big(h|\pi'^\al\big)$. Now $\tr\big(h|\pi'^\al\big)$ is the trace of the operator
  \begin{align*}
    v\mapsto\int_{\GL_n(\al^{-1}(E))}\!\dif g'\,h(g')\pi'(\al(g'))v = \int_{\GL_n(E)}\!\dif g\,h(\al^{-1}(g))\pi'(g)v,
  \end{align*}
  where we have made the change of variables $g=\al(g')$. This shows that $\tr(h|\pi'^\al) = \tr(h\circ\al^{-1}|\pi')$, concluding our proof of Lemma \ref{ss:automorphismscompatibility}.
\end{proof}

\subsection{}\label{ss:secondinductivelemma}
Finally, we can introduce our inductive lemma.
\begin{lemsecond}
Assume that the following conditions hold for all irreducible smooth representations $\pi$ of $\GL_n(F)$:
\begin{enumerate}[(a)]
 \item Theorem C is true for $n'<n$,
  \item if $n=1$, then $\rec\pi$ is isomorphic to $\pi\circ\Art^{-1}$,
  \item if $\pi$ is isomorphic to a subquotient of the normalized parabolic induction of
    \begin{align*}
      \pi_1\otimes\dotsb\otimes\pi_t,
    \end{align*}
    where the $\pi_i$ are irreducible smooth representations of $\GL_{n_i}(F)$ for which $n_1+\dotsb+n_t=n$. Then
    \begin{align*}
      \rec(\pi) = \rec(\pi_1)\oplus\dotsb\oplus\rec(\pi_t),
    \end{align*}
  \item for all smooth characters $\chi:F^\times\rar\bC^\times$, we have
    \begin{align*}
      \rec(\pi\otimes(\chi\circ\det)) = \rec(\pi)\otimes\rec(\chi),
    \end{align*}
  \item if $\pi$ is generic, then for all cyclic extensions $E/F$ of prime degree, we have
    \begin{align*}
      \rec(\pi_E) = \res{\rec(\pi)}_{W_E},
    \end{align*}
    where $\pi_E$ denotes the base change lift of $\pi$ to $\GL_n(E)$,
  \item if $\rec(\pi)$ is an unramified representation of $W_F$, then $\pi$ is an unramified representation of $\GL_n(F)$.
  \end{enumerate}
Then Theorem C is true for $n$.
\end{lemsecond}
In the first part of this section, our goal is to prove the second inductive lemma. To this end, starting from this point, assume that conditions (a)--(f) hold. We have already proved condition (b) in Proposition \ref{ss:theoremabasecase} and condition (c) in Theorem A. After proving Lemma \ref{ss:secondinductivelemma}, we shall prove conditions (d)--(f) in the remainder of this section, using results from \S\ref{s:nearbycycles} and \S\ref{s:localglobal}.

\subsection{}\label{ss:solvablebasechange}
We begin by immediately upgrading condition (e) to finite solvable extensions $E/F$.
\begin{prop*}
  Let $\pi$ be generic. Then, for all finite solvable extensions $E/F$, we have
\begin{align*}
      \rec(\pi_E) = \res{\rec(\pi)}_{W_E},
\end{align*}
where $\pi_E$ denotes the base change lift of $\pi$ to $\GL_n(E)$.
\end{prop*}
\begin{proof}
  We induct on the degree of $E$ over $F$. The result is immediate for $E=F$, and condition (e) ensures that it holds for prime $[E:F]$. In general, the solvability of $E/F$ yields a Galois subextension $E\supset E'\supseteq F$ such that $E/E'$ is cyclic of prime degree and $E'/F$ is solvable. Writing $\pi_{E'}$ for the base change lift of $\pi$ to $\GL_n(E')$, we see that $\rec(\pi_{E'}) = \res{\rec(\pi)}_{W_{E'}}$ by the inductive hypothesis. Transitivity of base change indicates that $\pi_E$ is also the base change lift of $\pi_{E'}$ to $\GL_n(E)$, so condition (e) gives us
  \begin{gather*}
\rec(\pi_E) = \res{\rec(\pi_{E'})}_{W_E} = \res{\rec(\pi)}_{W_E}.\qedhere
  \end{gather*}
\end{proof}

\subsection{}\label{ss:unramifiedsolvablebasechange}
We make the following observation. Let $\rho$ be a finite-dimensional semisimple continuous representation of $W_F$. Then $\rho$ is smooth, so $\res\rho_{I_E}$ is trivial for some finite Galois extension $E$ of $F$. As $F$ is a nonarchimedean local field, $E/F$ is solvable. Altogether, $\rho$ becomes unramified after passing to a solvable extension.

In particular, by letting $\rho=\rec\pi$ for any irreducible generic representation $\pi$ of $\GL_n(F)$, Proposition \ref{ss:solvablebasechange} and condition (f) imply that the base change lift $\pi_E$ of $\pi$ to $\GL_n(E)$ is unramified. 

\subsection{}\label{ss:cuspidaltoirreducible}
We now check that $\rec$ sends irreducible cuspidal representations to irreducible ones.
\begin{prop*}
Suppose that $\pi$ is irreducible cuspidal. Then $\rec\pi$ is an irreducible representation of $W_F$.
\end{prop*}
\begin{proof}
  This is immediate for $n=1$, so suppose that $n\geq2$. Now \ref{ss:unramifiedsolvablebasechange} yields a finite solvable extension $E/F$ for which the base change lift $\pi_E$ of $\pi$ to $\GL_n(E)$ is unramified. As $n\geq2$, we see that $\pi_E$ cannot be cuspidal. Therefore, by replacing $E/F$ with a subextension if necessary, we can find a Galois subextension $E\supset E'\supseteq F$ satisfying the following properties:
  \begin{enumerate}[$\bullet$]
  \item $E/E'$ is cyclic of prime degree $r$,
  \item $E'/F$ is solvable,
  \item the base change lift $\pi_{E'}$ of $\pi$ to $\GL_n(E')$ remains cuspidal.
  \end{enumerate}
  Since $\pi_E$ is also the base change lift of $\pi_{E'}$ to $\GL_n(E)$, we can apply \ref{ss:basechangeliftcuspidal} to obtain a positive integer $u$ and an irreducible cuspidal representation $\pi'$ of $\GL_{n/u}(E)$ such that $\pi_E$ is isomorphic to
  \begin{align*}
    Q\left(\{\pi'\},\dotsc,\{\pi'^{\sg^{u-1}}\}\right),
  \end{align*}
  where $\sg$ is a generator of $\Gal(E/E')$. Thus condition (e) and condition (c) indicate that $\rec(\pi_{E'})$ restricts to 
  \begin{align*}
    \res{\rec(\pi_{E'})}_{W_E} = \rec(\pi_E) = \rec(\pi')\oplus\dotsb\oplus\rec(\pi')^{\sg^{r-1}}.
  \end{align*}
  Now $u$ is the cardinality of a subgroup of $\fK(E/E')$, so $u$ divides $r$. But $\pi_E$ is not cuspidal, which forces $u=r$. Hence $\pi'$ is not isomorphic to $\pi'^\sg$. From here, Theorem C for $\pi'$ and Lemma \ref{ss:automorphismscompatibility} show that $\rec(\pi')$ is not isomorphic to $\rec(\pi'^\sg) = \rec(\pi')^\sg$, so \ref{ss:galoissidebasechange}.(ii) indicates that $\rec(\pi_{E'})$ is irreducible.

  Finally, because Proposition \ref{ss:solvablebasechange} shows that $\rec(\pi_{E'}) = \res{\rec(\pi)}_{W_{E'}}$, we see that $\rec(\pi)$ is irreducible as well.
\end{proof}

\subsection{}\label{ss:lemmas=1step}
Next, we establish a few lemmas on the bijectivity of $\rec$ in special cases.
\begin{lem*}
Let $\rho$ be an $n$-dimensional irreducible continuous representation of $W_F$, and let $E/F$ be a cyclic extension of prime degree. If $\res\rho_{W_E}$ is reducible, then there exists a unique irreducible cuspidal representation $\pi$ of $\GL_n(E)$ such that $\rec\pi=\rho$.
\end{lem*}
\begin{proof}
  Write $r$ for the degree of $E/F$, and fix a generator $\sg$ of $\Gal(E/F)$. Then \ref{ss:galoissidebasechange}.(ii) yields an irreducible continuous finite-dimensional representation $\rho'$ of $W_E$ satisfying $\res\rho_{W_E}=\rho'\oplus\dots\oplus\rho'^{\sg^{r-1}}$ and $\rho'\neq\rho'^\sg$. Now Theorem C for $\rho'$ provides a unique irreducible cuspidal representation $\pi'$ of $\GL_{n/r}(E)$ such that $\rec(\pi')$ is isomorphic to $\rho'$.

Define $\wt\pi$ to be the irreducible smooth representation
\begin{align*}
  \wt\pi\deq Q\left(\{\pi'\},\dotsc,\{\pi'^{\sg^{r-1}}\}\right)
\end{align*}
of $\GL_n(E)$. We see that $\wt\pi$ is generic, and van Dijk's formula \cite[Theorem 5.9]{Lem16} implies that $\wt\pi$ is $\sg$-stable. Therefore \ref{ss:basechangeliftlocal} shows that $\wt\pi$ is the base change lift of some irreducible generic representation $\pi$ of $\GL_n(F)$. Furthermore, because the segments $\{\pi'\},\dotsc,\{\pi'^{\sg^{r-1}}\}$ in $\wt\pi$ have length $1$, our description of possibilities for $\pi$ in \ref{ss:basechangeliftlocal} indicates $\pi$ is cuspidal.

Condition (e), condition (c), and Lemma \ref{ss:automorphismscompatibility} yield
\begin{align*}
\res{\rec(\pi)}_{W_E} = \rec(\wt\pi) = \rec(\pi')\oplus\dotsb\oplus\rec(\pi'^{\sg^{r-1}}) = \rho'\oplus\dotsb\oplus\rho'^{\sg^{r-1}}.
\end{align*}
The uniqueness of $\rho$ from \ref{ss:galoissidebasechange}.(ii) therefore indicates that $\rec\pi=\rho$. This takes care of the existence of $\pi$.

We now tackle uniqueness. Suppose $\xi$ is any irreducible cuspidal representation of $\GL_n(F)$ with $\rec(\xi)=\rho$, and write $\xi_E$ for the base change lift of $\xi$ to $\GL_n(E)$. Condition (e) and condition (c) show that $\rec(\xi_E)=\res\rho_{W_E}$ is reducible, so Proposition \ref{ss:cuspidaltoirreducible} implies that $\xi_E$ is not cuspidal. Therefore our description of base change in \ref{ss:basechangeliftcuspidal} gives
\begin{align*}
  \xi_E = Q\left(\{\xi'\},\dotsc,\{\xi'^{\sg^{r-1}}\}\right)
\end{align*}
for some irreducible cuspidal representation $\xi'$ of $\GL_{n/r}(E)$. Applying condition (e), condition (c), and Lemma \ref{ss:automorphismscompatibility} once more yields
\begin{align*}
\rho'\oplus\dotsb\oplus\rho'^{\sg^{r-1}} = \res{\rec(\pi)}_{W_E} = \rec(\xi_E) = \rec(\xi')\oplus\dotsb\oplus\rec(\xi'^{\sg^{r-1}}) = \rec(\xi')\oplus\dotsb\oplus\rec(\xi')^{\sg^{r-1}}.
\end{align*}
Theorem C for $\xi'$ tells us that $\rec(\xi')$ is irreducible. As $\rho'$ is irreducible too, we see that $\rho'$ is isomorphic to $\rec(\xi')^{\sg^i}$ for some $0\leq i\leq r-1$. After replacing $\xi'$ with $\xi'^{\sg^i}$, we get $\rho' = \rec(\xi')$. Similarly replacing $\pi'$ with $\pi'^{\sg^i}$ for some $0\leq i\leq r-1$ yields $\rho'=\rec(\pi')$, and hence $\pi'=\xi'$ via Theorem C for $\rho'$. This in turn yields $\wt\pi=\xi_E$. Since $\wt\pi$ is the base change lift of $\pi$ to $\GL_n(E)$, we see from \ref{ss:basechangeliftlocal} that $\xi$ must be isomorphic to $\chi\cdot\pi$ for some $\chi$ in $\fK(E/F)$. But the stabilizer of $\pi$ in $\fK(E/F)$ has cardinality $r$ by \ref{ss:basechangeliftcuspidal}. Thus we finally obtain $\xi=\chi\cdot\pi=\pi$, as desired.
\end{proof}
\begin{lem}\label{lem:lemmas=2step}
Let $\rho$ be an $n$-dimensional irreducible continuous representation of $W_F$, and let $E/F$ be a cyclic extension of prime degree. Suppose that there exists a unique irreducible cuspidal representation $\wt\pi$ of $\GL_n(E)$ such that $\rec\wt\pi = \res\rho_{W_E}$. If $\res\rho_{W_E}$ is irreducible, then there also exists a unique irreducible cuspidal representation $\pi$ of $\GL_n(F)$ such that $\rec\pi=\rho$.
\end{lem}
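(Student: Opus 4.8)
The plan is to descend the given $\wt\pi$ from $\GL_n(E)$ to $\GL_n(F)$ via automorphic base change and then correct by a character in $\fK(E/F)$ so as to land exactly on $\rho$. Fix a generator $\sg$ of $\Gal(E/F)=W_F/W_E$. Since $\rho$ extends to $W_F$, conjugation by $\rho(\sg)$ gives $(\res\rho_{W_E})^\sg\cong\res\rho_{W_E}$, so Lemma \ref{ss:automorphismscompatibility} shows $\rec(\wt\pi^\sg)=\rec(\wt\pi)^\sg=\res\rho_{W_E}$; the uniqueness clause in the hypothesis then forces $\wt\pi^\sg\cong\wt\pi$. As $\wt\pi$ is cuspidal and hence a $\sg$-stable irreducible generic representation, the last assertion of \ref{ss:basechangeliftlocal} yields an irreducible generic representation $\pi_0$ of $\GL_n(F)$ with $\fK(E/F)$-regular segments whose base change lift $\pi_{0,E}$ to $\GL_n(E)$ is isomorphic to $\wt\pi$.

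Next I would verify that $\pi_0$ is cuspidal: writing $\pi_0$ in its Bernstein--Zelevinsky form as in \ref{ss:basechangeliftlocal} and base changing, the cuspidal support of $\pi_{0,E}\cong\wt\pi$ is the disjoint union of the base-changed segments, so it can be a singleton (as it must be, $\wt\pi$ being cuspidal) only when $\pi_0$ consists of a single segment of length one, i.e. $\pi_0$ is cuspidal of $\GL_n(F)$. Then condition (e) and the hypothesis give $\res{\rec\pi_0}_{W_E}=\rec(\pi_{0,E})=\rec\wt\pi=\res\rho_{W_E}$, which is irreducible; hence $\rec\pi_0$ and $\rho$ are both extensions to $W_F$ of one irreducible $\sg$-stable representation of $W_E$, and \ref{ss:galoissidebasechange}.(i) provides $\chi$ in $\fK(E/F)$ with $\rec\pi_0\cong\rho\otimes\chi$. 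Using condition (b) and local class field theory to identify $\rec$ of the character of $F^\times$ attached to $\chi$ (via \ref{ss:basechangeliftcuspidal}) with $\chi$ as a character of $W_F$, condition (d) gives $\rec(\chi^{-1}\cdot\pi_0)=\rec(\pi_0)\otimes\chi^{-1}=\rho$, so $\pi\deq\chi^{-1}\cdot\pi_0$, which is again irreducible cuspidal, proves existence.

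For uniqueness, given another irreducible cuspidal $\xi$ of $\GL_n(F)$ with $\rec\xi=\rho$, condition (e) gives $\rec(\xi_E)=\res\rho_{W_E}$, which is irreducible. If $\xi_E$ were not cuspidal, \ref{ss:basechangeliftcuspidal} together with condition (c) and Lemma \ref{ss:automorphismscompatibility} would exhibit $\rec(\xi_E)$ as a direct sum of $r=[E:F]\geq2$ nonzero pieces, contradicting irreducibility; so $\xi_E$ is cuspidal, and then $\rec(\xi_E)=\rec\wt\pi$ forces $\xi_E\cong\wt\pi$ by the uniqueness in the hypothesis. Since the character of $F^\times$ attached to $\chi^{-1}$ becomes trivial after composing with $\Nm_{E/F}$, we also have $\pi_E\cong\pi_{0,E}\cong\wt\pi$, so $\xi_E\cong\pi_E$ with both $\xi$ and $\pi$ cuspidal; \ref{ss:basechangeliftlocal} (as invoked in the proof of Lemma \ref{ss:lemmas=1step}) then gives $\xi\cong\chi'\cdot\pi$ for some $\chi'$ in $\fK(E/F)$. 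Finally $\rho=\rec\xi=\rec(\pi)\otimes\chi'=\rho\otimes\chi'$, so $\chi'$ lies in the stabilizer of $\rho$ in $\fK(E/F)$, which is trivial by \ref{ss:galoissidebasechange}.(ii) precisely because $\res\rho_{W_E}$ is irreducible; hence $\chi'=1$ and $\xi\cong\pi$.

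The step I expect to be most delicate is the character bookkeeping: one must keep careful track of $\chi\in\fK(E/F)$ in its two incarnations --- as a character of $W_F$ twisting $\rho$, and as a character of $\GL_n(F)$ obtained via $\det\circ\Art$ twisting $\pi_0$ --- and combine conditions (b) and (d) with local class field theory to see that $\rec$ intertwines these twists. The verification that $\pi_0$ is cuspidal, and that the relevant $\fK(E/F)$-regularity hypotheses needed to apply \ref{ss:basechangeliftlocal} in the uniqueness step are met, is the other point requiring care, though both are of the same flavor as what is already carried out in the proof of Lemma \ref{ss:lemmas=1step}.
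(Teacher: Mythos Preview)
Your proposal is correct and follows essentially the same route as the paper's proof: establish $\sg$-stability of $\wt\pi$ via the uniqueness hypothesis and Lemma~\ref{ss:automorphismscompatibility}, descend through \ref{ss:basechangeliftlocal} to a cuspidal $\pi_0$, correct by a character in $\fK(E/F)$ using \ref{ss:galoissidebasechange}.(i) and condition~(d), and argue uniqueness via the trivial stabilizer from \ref{ss:galoissidebasechange}.(ii). The only notable variation is that the paper invokes Proposition~\ref{ss:cuspidaltoirreducible} to obtain irreducibility of $\rec\pi_0$, whereas you deduce it more directly from the irreducibility of $\res{\rec\pi_0}_{W_E}=\res\rho_{W_E}$; your argument here is actually slightly cleaner.
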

\begin{proof}
Write $r$ for the degree of $E/F$, and fix a generator $\sg$ of $\Gal(E/F)$. As \ref{ss:galoissidebasechange}.(i) indicates that
  \begin{align*}
    (\res\rho_{W_E})^\sg = \res\rho_{W_E},
  \end{align*}
  our uniqueness hypothesis on $\wt\pi$ and \ref{ss:automorphismscompatibility} yield $\wt\pi^\sg = \wt\pi$. Thus $\wt\pi$ is $\sg$-stable, so \ref{ss:basechangeliftlocal} shows that it is the base change lift of some irreducible generic representation $\pi$ of $\GL_n(F)$. Furthermore, because $\wt\pi$ is cuspidal, our description of possibilities for $\pi$ implies that $\pi$ is also cuspidal. Consequently, Proposition \ref{ss:cuspidaltoirreducible} indicates that $\rec\pi$ is irreducible. Now condition (e) yields
  \begin{align*}
    \res{\rec(\pi)}_{W_E} = \rec(\wt\pi) = \res\rho_{W_E},
  \end{align*}
  so \ref{ss:galoissidebasechange}.(i) shows that $\rec(\pi)$ is isomorphic to $\chi\cdot\rho$ for some $\chi$ in $\fK(E/F)$. After replacing $\pi$ with $\chi^{-1}\cdot\pi$, condition (d) tells us that $\rec\pi=\rho$. This takes care of the existence of $\pi$.

  As for uniqueness, suppose $\xi$ is any irreducible cuspidal representation of $\GL_n(F)$ satisfying $\rec(\xi)=\rho$. Writing $\xi_E$ for the base change lift of $\xi$ to $\GL_n(E)$, we see that condition (e) gives us
  \begin{align*}
    \rec(\xi_E) = \res\rho_{W_E} = \rec(\wt\pi).
  \end{align*}
  Condition (c) and the irreducibility of $\res\rho_{W_E}$ imply that $\xi_E$ is cuspidal, so our uniqueness hypothesis on $\wt\pi$ indicates that $\xi_E = \wt\pi$. As $\xi_E$ is the base change lift of $\xi$ to $\GL_n(E)$, we see from \ref{ss:basechangeliftlocal} that $\xi$ must be isomorphic to $\chi\cdot\pi$ for some $\chi$ in $\fK(E/F)$. From here, condition (d) yields
  \begin{align*}
    \rho = \rec\xi = \rec(\chi\cdot\pi) = \chi\cdot\rec\pi = \chi\cdot\rho,
  \end{align*}
  and \ref{ss:galoissidebasechange} along with the irreducibility of $\res\rho_{W_E}$ imply that $\chi=1$. Hence $\xi=\pi$, as desired.
\end{proof}
We shall finally wrap up the proof of Lemma \ref{ss:secondinductivelemma} itself, that is, prove Theorem C holds for $n$.
\begin{proof}[Proof of Lemma \ref{ss:secondinductivelemma}] Proposition \ref{ss:cuspidaltoirreducible} shows that $\pi\mapsto\rec\pi$ indeed yields a map from isomorphism classes of irreducible cuspidal representations of $\GL_n(F)$ to isomorphism classes of $n$-dimensional irreducible continuous representations of $W_F$.

Next, we proceed towards bijectivity. Let $\rho$ be an $n$-dimensional irreducible continuous representation of $W_F$. Bijectivity is immediate for $n=1$, so suppose that $n\geq2$. Now \ref{ss:unramifiedsolvablebasechange} yields a finite solvable extension $E/F$ for which $\res\rho_{W_E}$ is unramified. As $n\geq2$, this implies that $\res\rho_{W_E}$ cannot be irreducible. Thus, by replacing $E/F$ with a subextension if necessary, we obtain a tower of field extensions $E=E_s\supset\dotsb\supset E_0 = F$ such that
  \begin{enumerate}[$\bullet$]
  \item $E_{j+1}/E_j$ is cyclic of prime degree for all $0\leq j\leq s-1$,
  \item $\res\rho_{E_{s-1}}$ is irreducible,
  \item $\res\rho_{E_s}$ is reducible.
  \end{enumerate}
Lemma \ref{ss:lemmas=1step} provides a unique irreducible cuspidal representation $\pi_{s-1}$ of $\GL_n(E_{s-1})$ satisfying $\rec\pi_{s-1}=\res\rho_{E_{s-1}}$. From here, repeated applications of Lemma \ref{lem:lemmas=2step} yield unique irreducible cuspidal representations $\pi_j$ of $\GL_n(E_j)$ such that $\rec\pi_j=\res\rho_{E_j}$, and the $j=0$ case is precisely the desired result.
\end{proof}
Now that we have Lemma \ref{ss:secondinductivelemma}, we turn our attention towards verifying that conditions (b)--(f) hold. Recall that we already proved condition (b) in Proposition \ref{ss:theoremabasecase} and condition (c) in Theorem A.

\subsection{}\label{lem:unramifiedlocallanglands}
To prove condition (d) and condition (e), we use the following explicit description of $\rec\pi$ for unramified $\pi$.
\begin{lem*}
  Let $\pi$ be isomorphic to $Q\left(\{\chi_1\},\dotsc,\{\chi_n\}\right)$, where the $\chi_i:F^\times\rar\bC^\times$ are unramified characters sending $\vpi$ to $z_i$ in $\bC^\times$. Then $\rec\pi$ is isomorphic to the  $n$-dimensional unramified representation of $W_F$ where geometric $q$-Frobenius acts via $\diag(z_1,\dotsc,z_n)$. In particular, for all irreducible unramified representations $\pi$ and $\pi'$ of $\GL_n(F)$, we have $L(\pi\times\pi',s)=L(\rec(\pi)\otimes\rec(\pi'),s)$.
\end{lem*}
\begin{proof}
  Theorem A shows that $\rec\pi$ is isomorphic to $\rec(\chi_1)\oplus\dotsb\oplus\rec(\chi_n)$, and Proposition \ref{ss:theoremabasecase} allows us to conclude $\rec\pi$ has the desired form. Next, any irreducible unramified representation $\pi$ of $\GL_n(F)$ is isomorphic to $Q\left(\{\chi_1\},\dotsc,\{\chi_n\}\right)$ for some unramified characters $\chi_i:F^\times\rar\bC^\times$ \cite{Cas80}. Write $z_i$ for $\chi_i(\vpi)$, and form the analogous $\chi'$ and $z_i'$ for $\pi'$ too. Then we have
  \begin{gather*}
    L(\pi\times\pi',s) = \prod_{i,j=1}^nL(\chi_i\times\chi'_j,s) = \prod_{i,j=1}^n(1-z_iz_jq^{-s})^{-1} = L(\rec(\pi)\otimes\rec(\pi'),s).\qedhere
  \end{gather*}
\end{proof}

\subsection{}
From here, we shall prove condition (d) and condition (e) via embedding into the global situation. More precisely, we will use Theorem B along with the Chebotarev density theorem. Let us recall some global notation: $C$ denotes a geometrically connected proper smooth curve over $\ka$, $\bf{F}$ denotes its field of rational functions, $\bA$ denotes its ring of adeles, and $W_{\bf{F}}$ denotes the Weil group of $\bf{F}$. By abuse of notation, we write $\Art:\bA^\times/\bf{F}^\times\rar^\sim W_{\bf{F}}^\ab$ for the global Artin isomorphism normalized by sending uniformizers to geometric Frobenii. Fix distinct places $x_1$, $x_2$, and $\infty$ of $\bf{F}$. 
\begin{prop}\label{prop:compatwithtwisting}
  Let $\pi$ be an irreducible smooth representation of $\GL_n(F)$. For all smooth characters $\chi:F^\times\rar\bC^\times$, we have
      \begin{align*}
      \rec(\pi\otimes(\chi\circ\det)) = \rec(\pi)\otimes\rec(\chi).
    \end{align*}
  \end{prop}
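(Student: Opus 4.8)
The plan is to reduce the statement to the case where $\pi$ is cuspidal (since the general case follows immediately from Theorem A.(ii) together with the fact that $\chi \circ \det$ twisting is compatible with parabolic induction), and then to embed the cuspidal $\pi$ into a global automorphic representation and play off the global picture. First I would handle the reductions. If $\pi$ is isomorphic to a subquotient of the normalized parabolic induction of $\pi_1 \otimes \dotsb \otimes \pi_t$, then $\pi \otimes (\chi \circ \det)$ is a subquotient of the normalized parabolic induction of $(\pi_1 \otimes (\chi \circ \det)) \otimes \dotsb \otimes (\pi_t \otimes (\chi \circ \det))$, so by Theorem A.(ii) it suffices to prove the identity for each $\pi_i$. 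Hence we may assume $\pi$ is cuspidal. Also, by twisting with an unramified character and using Proposition \ref{ss:unramifiedtwistsofrho} (compatibility of $\rho$ with unramified twists, noting $\rec(\chi_{\mathrm{unr}})$ is the corresponding unramified character), we may further reduce to the case where $\pi$ and $\chi$ both have unitary central character.

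Next, for cuspidal $\pi$ with unitary central character and $\chi : F^\times \rar \bC^\times$ a smooth character, I would choose a global setup: take $C$, $\bf{F}$, and places $o, x_1, x_2, \infty$ as in the local-global embedding apparatus of \S\ref{s:localglobal}, with $\bf{F}_o \cong F$. By Lemma \ref{ss:embedcuspidal}, find an irreducible cuspidal automorphic representation $\Pi$ of $\GL_n(\bA)$ whose component at $o$ is $\pi$ and whose components at $x_1, x_2, \infty$ are cuspidal with unitary central character. Separately, using global class field theory, extend $\chi$ to a Hecke character $X : \bA^\times / \bf{F}^\times \rar \bC^\times$ whose local component at $o$ is $\chi$ (shrinking the ramification of $X$ at the auxiliary places is harmless). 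Then $\Pi \otimes (X \circ \det)$ is again an irreducible cuspidal automorphic representation of $\GL_n(\bA)$ satisfying the hypotheses of Theorem B, with component $\pi \otimes (\chi \circ \det)$ at $o$. Theorem B now produces $R(\Pi)$ and $R(\Pi \otimes (X \circ \det))$, global Galois representations whose restrictions to $W_{\bf{F}_v}$ realize $\rho(\Pi_v)$ and $\rho(\Pi_v \otimes (X_v \circ \det))$ at all places $v \notin \{x_1, x_2, \infty\}$.

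The heart of the argument is then to show $R(\Pi \otimes (X \circ \det)) \cong R(\Pi) \otimes (X \circ \Art^{-1})$ as $n$-dimensional semisimple representations of $G_{\bf{F}}$. Both are semisimple and continuous, and by the Chebotarev density theorem it suffices to check that their characteristic polynomials of geometric Frobenius agree at almost all places $v$. At a place $v$ outside $\{x_1, x_2, \infty\}$ where everything is unramified, Lemma \ref{lem:unramifiedlocallanglands} (unramified local Langlands) identifies $\rho(\Pi_v)$ and $\rho(\Pi_v \otimes (X_v \circ \det))$ in terms of the Satake parameters, and one checks directly that twisting the Satake parameters of $\Pi_v$ by $X_v(\vpi_v)$ twists $\rho(\Pi_v)$ by the unramified character $X_v \circ \Art^{-1}$; since $X \circ \Art^{-1}$ is unramified at such $v$, this is exactly the local comparison. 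Restricting the resulting global isomorphism to $W_{\bf{F}_o} = W_F$ gives $\rho(\pi \otimes (\chi \circ \det)) = \rho(\pi) \otimes (\chi \circ \Art^{-1}) = \rho(\pi) \otimes \rec(\chi)$, and the Tate-twist bookkeeping $\rec(\pi) = \rho(\pi)(\frac{1-n}{2})$ combined with $\rec(\chi) = \chi \circ \Art^{-1}$ (the $n=1$ case, i.e. Proposition \ref{ss:theoremabasecase}) yields $\rec(\pi \otimes (\chi \circ \det)) = \rec(\pi) \otimes \rec(\chi)$, as desired.

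The main obstacle I anticipate is purely bookkeeping rather than conceptual: one must be careful that the global Hecke character $X$ can be chosen with prescribed local component $\chi$ at $o$ while keeping the auxiliary components $X_{x_1}, X_{x_2}, X_\infty$ tame enough (or at worst still of the type permitted by Theorem B, e.g. twisting Speh/cuspidal components by characters preserves the relevant classes) so that $\Pi \otimes (X \circ \det)$ still satisfies the hypotheses of Theorem B; this is where I would invoke that twisting an $L^2$ or cuspidal local representation by a character stays in the same class, together with the freedom in choosing $X$ afforded by global class field theory. The Chebotarev argument and the unramified comparison via Lemma \ref{lem:unramifiedlocallanglands} are then routine.
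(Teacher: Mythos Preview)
Your approach is essentially the same as the paper's: reduce to cuspidal $\pi$ via Theorem A, globalize both $\pi$ and $\chi$, compare $R(\Pi\otimes(X\circ\det))$ with $R(\Pi)$ twisted by the Galois character attached to $X$ at almost all unramified places via Lemma \ref{lem:unramifiedlocallanglands}, and conclude by Chebotarev and restriction to $o$.

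One small technical point where the paper is sharper: rather than reducing to $\chi$ unitary, the paper writes $\chi$ as a finite order character times $\abs\cdot^s$, handles the unramified factor $\abs\cdot^s$ directly via Proposition \ref{ss:unramifiedtwistsofrho}, and then assumes $\chi$ has \emph{finite order}. This matters for two reasons. First, the globalization of $\chi$ to a Hecke character $X$ is done via \cite[Theorem 5 of Chapter X]{AT09}, which is stated for finite order characters. Second, and more importantly, the comparison $R(\Pi\otimes(X\circ\det))\cong R(\Pi)\otimes\Xi$ takes place in the category of representations of $G_{\bf{F}}$, and for the character $X\circ\Art^{-1}$ of $W_{\bf{F}}$ to extend to a continuous character $\Xi$ of the profinite group $G_{\bf{F}}$ one needs finite image. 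Your reduction to ``unitary'' does not quite give this; you would either need to further reduce to finite order (which is immediate, since over a local function field any smooth character of $F^\times$ is an unramified twist of a finite order character) or carry out the Chebotarev comparison at the level of $W_{\bf{F}}$ rather than $G_{\bf{F}}$. With that adjustment, your proof goes through and your anticipated obstacle about the auxiliary places dissolves: since $X$ is finite order, each $X_{x_i}$ is unitary, so $\Pi_{x_i}\otimes(X_{x_i}\circ\det)$ remains cuspidal with unitary central character and Theorem B applies.
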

  \begin{proof}
    We begin with some reductions. Theorem A indicates that both sides are compatible with parabolic induction, so it suffices to prove this for cuspidal $\pi$. As $\rec(\pi)=\rho(\pi)(\frac{1-n}2)$ and $\rec(\chi)=\chi\circ\Art^{-1}$, this is equivalent to
      \begin{align*}
      \rho(\pi\otimes(\chi\circ\det)) = \rho(\pi)\otimes(\chi\circ\Art^{-1}).
    \end{align*}
Finally, note that $\chi$ is the product of a finite order character with $\abs\cdot^s$ for some complex number $s$. The $\chi=\abs\cdot^s$ case follows immediately from Proposition \ref{ss:unramifiedtwistsofrho}, so we need only consider the case when $\chi$ has finite order.

Assume this is the case, and let $C=\bP^1_\ka$ be our curve of interest. Choose $o$ to be a $\ka$-point of $C$, and choose $x_1$, $x_2$, and $\infty$ in \ref{ss:choiceofdivisionalgebra} to be distinct closed points in $C\ssm o$. Note that $\bf{F}_o$ is isomorphic to $F$. Since $\chi$ has finite order, there exists a finite order smooth character $X:\bA^\times/\bf{F}^\times\rar\bC^\times$ such that $\res{X}_{\bf{F}_o^\times}=\chi$ \cite[Theorem 5 of Chapter X]{AT09}. Furthermore, as $\pi$ is cuspidal, Lemma \ref{ss:embedcuspidal} provides an irreducible cuspidal representation $\Pi$ of $\GL_n(\bA)$ whose component at $o$ is isomorphic to $\pi$ and whose components at $x_1$, $x_2$, and $\infty$ are irreducible cuspidal representations of $\GL_n(\bf{F}_{x_1})$, $\GL_n(\bf{F}_{x_2})$, and $\GL_n(\bf{F}_\infty)$, respectively, with unitary central characters.

Because the smooth character $X\circ\Art^{-1}:W_{\bf{F}}\rar\bC^\times$ has finite image, it extends to a continuous character $\Xi:G_{\bf{F}}\rar\ov\bQ_\ell^\times$, where we identify $\ov\bQ_\ell$ with $\bC$ \cite[(IV.2.2)]{HL11}. Note also that $\Pi\otimes(X\circ\det)$ is an irreducible cuspidal representation of $\GL_n(\bA)$. Therefore we may apply Theorem B to obtain the $n$-dimensional semisimple continuous representations
\begin{align*}
R_1\deq R(\Pi\otimes(X\circ\det))\mbox{ and } R_2\deq R(\Pi)\otimes \Xi
\end{align*}
of $G_{\bf{F}}$ over $\ov\bQ_\ell$. Write $T$ for the set of places $x$ of $\bf{F}$ such that
\begin{enumerate}[$\bullet$]
\item $\res{X}_{\bf{F}_x^\times}$ is unramified,
\item $\Pi_x$ is unramified,
\item $x$ does not lie in $\{x_1,x_2,\infty\}$,
\end{enumerate}
and note that $T$ is cofinite. For $x$ in $T$, Lemma \ref{lem:unramifiedlocallanglands} indicates that $\res{R_1}_{W_{\bf{F}_x}}$ and $\res{R_2}_{W_{\bf{F}_x}}$ are isomorphic.

The Chebotarev density theorem implies that, as $x$ varies over all places of $\bf{F}$ not lying in $T$, the conjugacy classes in $G_{\bf{F}}$ of arithmetic $q_x$-Frobenius elements at $x$ are dense. Because $n$-dimensional continuous semisimple representations of $G_{\bf{F}}$ are determined, up to isomorphism, by their characteristic polynomials (and said polynomials are continuous in $G_{\bf{F}}$), we see that $R_1=R_2$ as representations of $G_{\bf{F}}$. Restricting both sides to $W_{\bf{F}_o}$ and applying Theorem B again yields the desired result.
\end{proof}

\begin{prop}\label{prop:compatwithbasechange}
  Let $\pi$ be an irreducible generic representation of $\GL_n(F)$. For all cyclic extensions $E/F$ of prime degree, we have
  \begin{align*}
    \rec(\pi_E) = \res{\rec(\pi)}_{W_E},
  \end{align*}
  where $\pi_E$ denotes the base change lift of $\pi$ to $\GL_n(E)$.
\end{prop}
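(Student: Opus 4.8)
The plan is to reduce the statement about general irreducible generic representations to the case of cuspidal representations, where we can embed globally and invoke Theorem B together with the Chebotarev density theorem, mirroring the proof of Proposition \ref{prop:compatwithtwisting}. First I would use the Bernstein--Zelevinsky classification from \ref{ss:basechangeliftlocal}: write $\pi$ as a normalized parabolic induction of segments $Q(\De(\pi_i,m_i))$, where the $\pi_i$ are irreducible cuspidal. By Theorem A, both $\rec(\pi)$ and its restriction to $W_E$ are additive along parabolic induction, and the description of $\pi_E$ in \ref{ss:basechangeliftlocal} writes $\pi_E$ as a parabolic induction of the segments $\De(\pi_i'^{\sg^j},m_i)$. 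So using Theorem A again on the $\GL_n(E)$ side, it suffices to prove the identity for cuspidal $\pi$, i.e. to show $\rec(\pi_E) = \res{\rec(\pi)}_{W_E}$ when $\pi$ is cuspidal. Here I should be careful about the hypothesis of $\fK(E/F)$-regular segments in \ref{ss:basechangeliftlocal}; after unramified twisting (handled by Proposition \ref{ss:unramifiedtwistsofrho}) and the freedom in choosing the cuspidal support, this regularity can be arranged, or one argues directly at the level of cuspidal data.

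For cuspidal $\pi$, the approach is: apply Lemma \ref{ss:embedcuspidal} to find an irreducible cuspidal automorphic representation $\Pi$ of $\GL_n(\bA)$ with $\Pi_o\cong\pi$ (after reducing to the unitary central character case via Proposition \ref{ss:unramifiedtwistsofrho}) and with the appropriate local conditions at $x_1,x_2,\infty$, where $\bf{F}_o\cong F$ and $C=\bP^1_\ka$. Realize $E/F$ globally: choose a cyclic extension $\bf{E}/\bf{F}$ of the same prime degree whose completion at $o$ is $E/F$; this is possible by global class field theory since $F^\times/\Nm_{E/F}(E^\times)$ can be realized as a local component of an idele class group quotient, paralleling the use of \cite[Theorem 5 of Chapter X]{AT09} in the proof of Proposition \ref{prop:compatwithtwisting}. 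Then form $\Pi_\bf{E}$, the global base change lift, using \ref{ss:basechangeliftglobal}, noting $\Pi_\bf{E}$ is discrete automorphic and its local components at places over $o,x_1,x_2,\infty$ are of the required shape (either still cuspidal, or of Speh/$L^2$ type arising from base change of cuspidals). Apply Theorem B to both $\Pi$ and $\Pi_\bf{E}$ to obtain $R(\Pi)$ over $G_\bf{F}$ and $R(\Pi_\bf{E})$ over $G_\bf{E}$.

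The key comparison is then: for cofinitely many places $x$ of $\bf{F}$ and $x'$ of $\bf{E}$ above $x$, both $\Pi_x$ and $\Pi_{\bf{E},x'}$ are unramified, and Theorem B gives $\res{R(\Pi)}_{W_{\bf{F}_x}} = \rho(\Pi_x)$, $\res{R(\Pi_\bf{E})}_{W_{\bf{E}_{x'}}} = \rho(\Pi_{\bf{E},x'})$. By Lemma \ref{lem:unramifiedlocallanglands}, $\rho(\Pi_{\bf{E},x'})$ is the unramified representation with the base-changed Satake parameters, so $\res{\rho(\Pi_x)}_{W_{\bf{E}_{x'}}} = \rho(\Pi_{\bf{E},x'})$ at all such $x'$. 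Since the Frobenius conjugacy classes at unramified places of $\bf{E}$ are dense in $G_\bf{E}$ by Chebotarev, and $n$-dimensional semisimple continuous representations are determined by characteristic polynomials, we conclude $\res{R(\Pi)}_{G_\bf{E}} = R(\Pi_\bf{E})$. Restricting to $W_{\bf{E}_{o'}} = W_E$ and applying Theorem B one last time to $\Pi_\bf{E}$ at $o'$ gives $\res{\rho(\pi)}_{W_E} = \res{\rho(\Pi_o)}_{W_E} = \rho(\Pi_{\bf{E},o'}) = \rho(\pi_E)$, using that $\Pi_{\bf{E},o'}$ is the local base change of $\pi$; Tate-twisting by $(\frac{1-n}{2})$ (which is compatible with restriction) converts this into the statement for $\rec$.

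The main obstacle I anticipate is the bookkeeping at the bad places $x_1,x_2,\infty$ and $o'$ under global base change: one must check that $\Pi_\bf{E}$ still satisfies the hypotheses of Theorem B (components at the three distinguished places of $\bf{E}$ over $x_1,x_2,\infty$ being irreducible $L^2$ or Speh modules), which requires understanding how base change acts on $L^2$ representations and Speh modules --- this should follow from \ref{ss:basechangeliftlocal} together with the explicit form of Speh modules in Definition \ref{ss:generalizedsteinbergspeh}, but it needs care. A secondary point is ensuring the cyclic extension $\bf{E}/\bf{F}$ with prescribed local behavior at $o$ exists and is itself handled by the (solvable) automorphic base change machinery of \ref{ss:basechangeliftglobal}; since $\bf{F}$ is a global function field and $E/F$ is cyclic of prime degree, Grunwald--Wang-type obstructions are not an issue in this degree, so a suitable $\bf{E}$ exists. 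Finally, the reduction from generic to cuspidal via parabolic induction must be compatible with base change on both sides, which is exactly the content of \ref{ss:basechangeliftlocal}, so that step is routine.
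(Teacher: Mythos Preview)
Your overall strategy---reduce to cuspidal via Theorem A, globalize both $\pi$ and the extension $E/F$, apply Theorem B on both sides, and compare via Chebotarev---is exactly the paper's approach. The gap is in the step you yourself flag as the main obstacle, and your proposed resolution does not work.

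Concretely: the components $\Pi_{x_1},\Pi_{x_2},\Pi_\infty$ produced by Lemma~\ref{ss:embedcuspidal} are cuspidal, but the base change of a cuspidal representation to a nontrivial cyclic local extension need \emph{not} be $L^2$ or a Speh module. By \ref{ss:basechangeliftcuspidal}, if the stabilizer of $\Pi_{x_i}$ in $\fK(\bf{E}_{e}/\bf{F}_{x_i})$ is nontrivial (so $u=r$, since $r$ is prime), then $(\Pi_{x_i})_{\bf{E}_e}=Q(\{\pi'\},\dotsc,\{\pi'^{\sg^{r-1}}\})$ is a proper parabolic induction of distinct cuspidals: tempered, but neither $L^2$ nor a Speh module. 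Nothing in \ref{ss:basechangeliftlocal} or Definition~\ref{ss:generalizedsteinbergspeh} rules this out, and you have no control over which cuspidal $\Pi_{x_i}$ Lemma~\ref{ss:embedcuspidal} produces. So Theorem~B may fail to apply to $\Pi_{\bf{E}}$.

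The paper's fix is to reorder the construction and insert one extra idea. First build the global cyclic extension $\bf{E}/\bf{F}$ with $\bf{E}_{o'}/\bf{F}_o\cong E/F$ (via Krasner and passing to the decomposition group at $o'$ inside the Galois closure). \emph{Then}, using Chebotarev, choose the auxiliary places $x_1,x_2,\infty$ to \emph{split completely} in $\bf{E}$. Only after that do you invoke Lemma~\ref{ss:embedcuspidal} to produce $\Pi$. At a split place the local algebra $\bf{E}\otimes_{\bf{F}}\bf{F}_{x_i}$ is a product of copies of $\bf{F}_{x_i}$, so $\Pi_{\bf{E},x_i'}=\Pi_{x_i}$ remains cuspidal, and Theorem~B applies to $\Pi_{\bf{E}}$ with no further analysis. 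Once you have $R(\Pi_{\bf{E}})$ and $\res{R(\Pi)}_{G_{\bf{E}}}$, the Chebotarev comparison you describe goes through verbatim.
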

\begin{proof}
  Applying Theorem A to the decomposition of $\pi_E$ from \ref{ss:basechangeliftlocal} shows that it suffices to consider cuspidal $\pi$. With this reduction in hand, first set $C=\bP^1_\ka$ as our curve of interest, and choose a $\ka$-point $o$ of $C$. We may identify $\bf{F}_o$ with $F$. By writing $E=F[t]/f(t)$ and approximating $f(t)$ using a polynomial with entries in $\bf{F}$, Krasner's lemma yields a separable extension $\bf{E}/\bf{F}$ such that $o$ is inert in $\bf{E}$, and $\bf{E}_{o'}/\bf{F}_o$ can be identified with $E/F$, where $o'$ is the unique place of $\bf{E}$ dividing $o$. By replacing $\bf{E}$ with its Galois closure and replacing $\bf{F}$ (and changing $C$ accordingly) with the subfield corresponding to the decomposition group of $o'$, we may assume that $\bf{E}/\bf{F}$ is Galois and hence cyclic as well.

  Since $\bf{E}$ is a global function field, it corresponds to a geometrically connected proper smooth curve $C'$ over a finite field $\ka'$. Thus we can apply the results of \S\ref{s:modulispaces}--\S\ref{s:localglobal} to $\bf{E}$ and $C'$. By the Chebotarev density theorem, we may choose $x_1$, $x_2$, and $\infty$ in \ref{ss:choiceofdivisionalgebra} for $\bf{F}$ to be distinct closed points in $C\ssm o$ such that they split completely in $C'$. Choose $x_1'$, $x_2'$, and $\infty'$ in \ref{ss:choiceofdivisionalgebra} for $\bf{E}$ lying above $x_1$, $x_2$, and $\infty$, respectively. This allows us to identify $\bf{E}_{x_1'}$ with $\bf{F}_{x_1}$, $\bf{E}_{x_2'}$ with $\bf{F}_{x_2}$, and $\bf{E}_{\infty'}$ with $\bf{F}_\infty$.

  Because $\pi$ is cuspidal, Lemma \ref{ss:embedcuspidal} gives an irreducible cuspidal representation $\Pi$ of $\GL_n(\bA)$ whose component at $o$ is isomorphic to $\pi$ and whose components at $x_1$, $x_2$, and $\infty$ are irreducible cuspidal representations of $\GL_n(\bf{F}_{x_1})$, $\GL_n(\bf{F}_{x_2})$, and $\GL_n(\bf{F}_\infty)$, respectively, with unitary central characters. As $x_1$, $x_2$, and $\infty$ split in $\bf{E}$, the irreducible discrete automorphic representation $\Pi_{\bf{E}}$ formed in \ref{ss:basechangeliftglobal} has components
  \begin{align*}
    \Pi_{\bf{E},x_1'}=\Pi_{x_1},\,\Pi_{\bf{E},x_2'}=\Pi_{x_2},\mbox{ and }\Pi_{\bf{E},\infty'}=\Pi_\infty.
  \end{align*}
  In particular, these components are irreducible cuspidal with unitary central characters. Hence we can apply Theorem B to $\Pi_{\bf{E}}$ to form the $n$-dimensional semisimple continuous representations
  \begin{align*}
    R_1\deq R(\Pi_{\bf{E}})\mbox{ and }R_2\deq \res{R(\Pi)}_{G_{\bf{E}}}
  \end{align*}
  of $G_{\bf{E}}$ over $\ov\bQ_\ell$.

  Write $T$ for the set of places $x$ of $\bf{F}$ such that $\Pi_x$ is unramified and $x$ does not lie in $\{x_1,x_2,\infty\}$, which is a cofinite set of places. For all $x$ in $T$, Lemma \ref{lem:unramifiedlocallanglands} and the description of base change lifts in \ref{ss:basechangeliftcuspidal} and \ref{ss:basechangeliftglobal} indicate that $\res{R_1}_{W_{\bf{F}_x}}$ is isomorphic to $\res{R_2}_{W_{\bf{F}_x}}$. From here, we conclude using the Chebotarev density theorem and Theorem B, as in the proof of Proposition \ref{prop:compatwithtwisting}.
\end{proof}

\subsection{}\label{ss:compatwithunramified}
Finally, we prove condition (f) using our results from \S\ref{s:nearbycycles}.
\begin{prop*}
Let $\pi$ be an irreducible smooth representation of $\GL_n(F)$. If $\rec(\pi)$ is an unramified representation of $W_F$, then $\pi$ is an unramified representation of $\GL_n(F)$.
\end{prop*}
\begin{proof}
  Write $\pi_1,\dotsc,\pi_t$ for the cuspidal support of $\pi$, where the $\pi_i$ are irreducible cuspidal representations of $\GL_{n_i}(F)$ such that $n=n_1+\dotsb+n_t$. Now Theorem A.(ii) yields 
  \begin{align*}
    \rec(\pi) = \rec(\pi_1)\oplus\dotsb\oplus\rec(\pi_t).
  \end{align*}
  If $\pi$ is not unramified, then there exists some $i$ such that either
  \begin{enumerate}[(1)]
  \item $n_i=1$ and $\pi_i:F^\times\rar\bC^\times$ is not an unramified character,
  \item $n_i\geq2$.
  \end{enumerate}
  In case (1), Proposition \ref{ss:theoremabasecase} shows that $\rec(\pi_i)$ and hence $\rec(\pi)$ is not unramified, so we need only tackle case (2). By replacing $\pi$ with $\pi_i$, it suffices to assume that $\pi$ is cuspidal and $n\geq2$. Finally, because $\rec(\pi)$ is an unramified twist of $\rho(\pi)$, we need only show that $\rho(\pi)$ is not unramified.

We begin by using Schur orthogonality to obtain a function $h$ in $C^\infty_c(\GL_n(\cO))$ that satisfies $\tr(h|\pi)=1$. Then we have
  \begin{align*}
    \tr\left(\sg^{-r}|\rho(\pi)^{I_F}\right) = \tr\left(\bf1_{\sg^{-r}I_F}|\rho(\pi)\right)\tr(h|\pi) = \int_{\sg^{-r}I_F}\!\dif\tau\,\tr\left(\tau|\rho(\pi)\right)\tr(h|\pi),
  \end{align*}
  where $\bf1_{\sg^{-r}I_F}$ is the indicator function on $\sg^{-r}I_F$. Applying Theorem A.(i), we obtain
\begin{align*}
\int_{\sg^{-r}I_F}\!\dif\tau\,\tr\left(f_{\tau,h}|\pi\right),
\end{align*}
and \ref{ss:twistedcharacters} indicates that this integral becomes
\begin{align*}
\int_{\sg^{-r}I_F}\!\dif\tau\,\tr\left((\phi_{\tau,h},\sg)|\pi/F_r\right) = \int_{\sg^{-r}I_F}\!\dif\tau\,\int_{\GL_n(\cO_r)\diag(\vpi,1,\dotsc,1)\GL_n(\cO_r)}\!\dif\de\,\phi_{\tau,h}(\de)\Te_\pi^\sg(\de).
\end{align*}
Recall the subset $B_n\subseteq\GL_n(\cO_r)\diag(\vpi,1,\dotsc,1)\GL_n(\cO_r)$, which was defined in \ref{ss:localshtukaunramifiedextension} using the decomposition $\de=\de^\circ\oplus\de^\et$. As $\pi$ is cuspidal, we can use Lemma \ref{lem:cuspidaltwistedcharacter} to convert our integral into
\begin{align*}
\int_{\sg^{-r}I_F}\!\dif\tau\,\int_{B_n}\!\dif\de\,\phi_{\tau,h}(\de)\Te_\pi^\sg(\de).
\end{align*}
Now let $C=\bP^1_\ka$ be our curve of interest, and choose a $\ka$-point $o$ of $C$. Since we may identify $\bf{F}_o$ with $F$, the results of \S\ref{s:nearbycycles} apply to our situation. Namely, Fubini's theorem and Corollary \ref{ss:sectionsixfinalresult} allow us to rewrite this integral as
\begin{align*}
\int_{\sg^{-r}I_F}\!\dif\tau\,\phi_{\tau,h}(\de)\int_{B_n}\!\dif\de\,\Te_\pi^\sg(\de) = \fC_n\int_{B_n}\!\dif\de\,\Te_\pi^\sg(\de),
\end{align*}
where $\fC_n$ is as in Corollary \ref{ss:sectionsixfinalresult}. The definition of $\Te_\pi^\sg$ turns the above expression into
\begin{align*}
\fC_n\int_{B_n}\!\dif\de\,\Te_\pi(\N\de),
\end{align*}
and from here Lemma \ref{ss:noncommutativenormbijection} and the local Jacquet--Langlands correspondence yield
\begin{align*}
\fC_n(-1)^{n-1}\int_{B_r}\!\dif b\,\Te_{\JL(\pi)}(b) = \fC_n(-1)^{n-1}\tr\left(\bf1_{B_r}|\JL(\pi)\right) = \fC_n(-1)^{n-1}\tr\left(\bf1_{B_r}|\JL(\pi)^{\cO_B^\times}\right),
\end{align*}
where $B$ is the central division algebra over $F$ of Hasse invariant $\frac1n$, and $B_r$ is the subset of valuation $r$ elements in $B$. Because $\pi$ has no $\GL_n(\cO)$-invariants, its image under the local Jacquet--Langlands correspondence has no $\cO_B^\times$-invariants. Hence the above trace vanishes.

Altogether, we have shown that $\tr\left(\sg^{-r}|\rho(\pi)^{I_F}\right)$ vanishes for any positive integer $r$. As semisimple representations are determined by their traces, we have $\rho(\pi)^{I_F}=0$. In particular, $\rho(\pi)$ is not unramified.
\end{proof}
By using Theorem A, Proposition \ref{prop:compatwithtwisting}, Proposition \ref{prop:compatwithbasechange}, and Proposition \ref{ss:compatwithunramified} to verify that conditions (b)--(f) of Lemma \ref{ss:secondinductivelemma} hold and using Proposition \ref{ss:theoremabasecase} to check the $n=1$ base case, this concludes our proof by induction of Theorem C.

\section{Duals, $L$-functions, and $\eps$-factors}\label{s:llc}
In this section, our goal is to prove Theorem D, i.e. that $\pi\mapsto\rec\pi$ satisfies Henniart's properties \cite[Theorem 1.2]{Hen85} characterizing the local Langlands correspondence for $\GL_n$ over $F$. We begin by collecting facts on inducing representations of $W_\bf{E}$ to representations of $W_\bf{F}$ for separable extensions $\bf{E}/\bf{F}$. Similar statements hold for extensions $E/F$ of local fields. We use these facts to motivate \emph{automorphic induction}, an analogous operation that turns representations of $\GL_{n}(\bA_\bf{E})$ (respectively $\GL_{n}(E)$) into representations of $\GL_{n[\bf{E}:\bf{F}]}(\bA)$ (respectively $\GL_{n[E:F]}(F)$) for cyclic extensions. Combining this with Theorem B allows us to prove automorphic induction for some non-Galois extensions $\bf{E}/\bf{F}$.

We use this \emph{non-Galois automorphic induction} to show that $\rec$ is compatible with central characters. More precisely, we apply Brauer induction to reduce to the case of induced representations, embed into the global setting, and then invoke our non-Galois automorphic induction. Afterwards, we use compatibility with central characters to prove that $\rec$ preserves $L$-functions and $\eps$-factors, by twisting with highly ramified characters. Finally, compatibility with duals follows from the decomposition of $L$-functions of pairs in terms of $L$-functions of characters. This concludes our proof of the local Langlands correspondence for $\GL_n$ over $F$.

\subsection{}\label{ss:inductionongroups}
Let us recall some global notation: $C$ denotes a geometrically connected proper smooth curve over $\ka$, $\bf{F}$ denotes its field of rational functions, $\bA$ denotes its ring of adeles, and $W_{\bf{F}}$ denotes the Weil group of $\bf{F}$. 

Let $\bf{E}$ be a finite extension of $\bf{F}$ inside $\bf{F}^\sep$. Now write $\wt{\bf{E}}$ for the Galois closure of $\bf{E}$ over $\bf{F}$, and let $R:W_\bf{E}\rar\bC^\times$ be a smooth character. One can use Frobenius reciprocity and the Mackey formula to show that $\Ind_{W_\bf{E}}^{W_\bf{F}}R$ is irreducible if and only if the stabilizer of $\res{R}_{W_{\wt{\bf{E}}}}$ in $\Gal(\wt{\bf{E}}/\bf{F})$ equals $\Gal(\wt{\bf{E}}/\bf{E})$. The same argument works in the local setting: let $E$ be a finite extension of $F$ in $F^\sep$, and let $\rho:W_E\rar\bC^\times$ be a smooth character. Writing $\wt{E}$ for the Galois closure of $E$ over $F$, we see that $\Ind_{W_E}^{W_F}\rho$ is irreducible if and only if the stabilizer of $\res\rho_{W_{\wt{E}}}$ in $\Gal(\wt{E}/F)$ equals $\Gal(\wt{E}/E)$.

\subsection{}\label{ss:automorphicinductionlocal}
We begin by recalling \emph{automorphic induction} in the local case. Let $E$ be a cyclic $F$-algebra, and write $\fK(E/F)$ for the set of group homomorphisms $\Gal(E/F)\rar\bC^\times$. By reducing to the case when $E$ is a field, one can show that $\Art$ induces an isomorphism $F^\times/\Nm_{E/F}(E^\times)\rar^\sim\Gal(E/F)$, so $\fK(E/F)$ acts on isomorphism classes of smooth representations of $\GL_{n[E:F]}(F)$ as in \ref{ss:basechangeliftcuspidal}.

Let $\pi$ be an irreducible tempered representation of $\GL_n(E)$. Then there exists a unique irreducible tempered representation $I_{E/F}(\pi)$ of $\GL_{n[E:F]}(F)$ that is fixed by $\fK(E/F)$ and satisfies a certain character identity involving $\Te_\pi$ \cite[Theorem 1.3]{HH95}. This representation satisfies $L(I_{E/F}(\pi),s) = L(\pi,s)$ \cite[Theorem 1.4.(a)]{HH95}. Furthermore, $I_{E/F}(\pi)$ is cuspidal if and only if the stabilizer of $\pi$ in $\Gal(E/F)$ is trivial \cite[Proposition 5.5]{HH95}. Note that this is the automorphic analog of \ref{ss:inductionongroups}.

\subsection{}\label{ss:automorphicinductionglobal}
Next, we introduce automorphic induction in the global setting. Let $\bf{E}/\bf{F}$ be a finite cyclic extension, and write $\bA_\bf{E}$ for the ring of adeles of $\bf{E}$. Then for any irreducible cuspidal automorphic representation $\Pi$ of $\GL_n(\bA_\bf{E})$, there exists a unique irreducible automorphic representation $I_{\bf{E}/\bf{F}}(\Pi)$ of $\GL_{n[\bf{E}:\bf{F}]}(\bA)$ such that, for cofinitely many places $o$ of $\bf{F}$ for which $\Pi$ is unramified at every place $o'$ of $\bf{E}$ above $o$, we have $L(I_{\bf{E}/\bf{F}}(\Pi)_o,s)=\prod_{o'\mid o}L(\Pi_{o'},s)$ \cite[(IV.1.8)]{HL11}. In addition, for any place $o$ of $\bf{F}$ where the irreducible smooth representation $\Pi_o$ of $\GL_n(\bf{E}\otimes_{\bf{F}}\bf{F}_o)$ is tempered, we have $I_{\bf{E}/\bf{F}}(\Pi)_o=I_{\bf{E}\otimes_{\bf{F}}\bf{F}_o/\bf{F}_o}(\Pi_o)$ \cite[(IV.1.9)]{HL11}.

\subsection{}\label{defn:associatedreps}
We establish some terminology that reflects the connection between automorphic and Galois representations.
\begin{defn*}
  Let $R$ be an $n$-dimensional semisimple continuous representation of $G_\bf{F}$ over $\ov\bQ_\ell$, and let $\Pi$ be an irreducible automorphic representation of $\GL_n(\bA)$. We say $R$ and $\Pi$ are \emph{associated} if there exists a finite set $T$ of places of $\bf{F}$ such that, for all places $x$ of $\bf{F}$ not in $T$,
  \begin{enumerate}[$(a)$]
  \item $\res{R}_{W_{\bf{F}_x}}$ and $\Pi_x$ are unramified,
  \item $\rec\Pi_x$ is isomorphic to $\res{R}_{W_{\bf{F}_x}}$, where we identify $\ov\bQ_\ell$ with $\bC$.
  \end{enumerate}
\end{defn*}
Note that either one of $R$ or $\Pi$ determines the other, by the strong multiplicity one theorem and the Chebotarev density theorem. Furthermore, when there exist three places $\{x_1,x_2,\infty\}$ of $\bf{F}$ where $\Pi$ is irreducible $L^2$ or a Speh module, Theorem B shows that $R$ is isomorphic to $R(\Pi)(\frac{1-n}2)$ and that condition (b) is true for all $x$ not in $\{x_1,x_2,\infty\}$, even if $\res{R}_{W_{\bf{F}_x}}$ or $\Pi_x$ are ramified.

\subsection{}\label{ss:associatedreps}
We view automorphic induction within the framework of Definition \ref{defn:associatedreps} as follows. Let $\bf{E}/\bf{F}$ be a finite cyclic extension. Let $R'$ be an $n$-dimensional continuous semisimple continuous representation of $G_\bf{E}$ over $\ov\bQ_\ell$, and let $\Pi'$ be an irreducible automorphic representation of $\GL_n(\bA_\bf{E})$. If $R'$ and $\Pi'$ are associated, then Lemma \ref{lem:unramifiedlocallanglands}, \ref{ss:automorphicinductionglobal}, and \ref{ss:automorphicinductionlocal} show that $\Ind_{G_{\bf{E}}}^{G_\bf{F}}R'$ and $I_{\bf{E}/\bf{F}}(\Pi')$ are associated.

Next, let $X:\bA^\times/\bf{F}^\times\rar\bC^\times$ be a finite order smooth character. The smooth character $X\circ\Art^{-1}:W_{\bf{F}}\rar\bC^\times$ has finite image, so it extends uniquely to a continuous character $\Xi:G_{\bf{F}}\rar\bC^\times$ \cite[(IV.2.2)]{HL11}. Proposition \ref{ss:theoremabasecase} and the local-global compatibility of the Artin map imply that $\Xi$ and $X$ are associated. 

Finally, let $R$ be an $n$-dimensional semisimple continuous representation of $G_\bf{F}$ over $\ov\bQ_\ell$, and let $\Pi$ be an irreducible automorphic representation of $\GL_n(\bA_\bf{F})$ such that $R$ and $\Pi$ are associated. Since any irreducible unramified representation of $\GL_n(\bf{F}_x)$ is isomorphic to $Q\left(\{\chi_1\},\dotsc,\{\chi_n\}\right)$ for some unramified characters $\chi_i:\bf{F}^\times_x\rar\bC^\times$  \cite{Cas80}, Lemma \ref{ss:compatwithunramified} shows that $\rec\om_{\Pi,x}=\det\res{R}_{W_{\bf{F}_x}}$ for cofinitely many places $x$ of $\bf{F}$. Then the Chebotarev density theorem, Proposition \ref{ss:theoremabasecase}, and the local-global compatibility of the Artin map indicate that $\det{R}$ is the unique extension of $\om_\Pi\circ\Art^{-1}$ to a continuous character $G_\bf{F}\rar\ov\bQ_\ell^\times$, where we identify $\ov\bQ_\ell^\times$ with $\bC$.

\subsection{}\label{ss:nongaloisinduction} We apply Theorem B via \ref{defn:associatedreps} to prove automorphic induction in a non-Galois setting.
\begin{prop*}
  Let $\bf{E}/\bf{F}$ be a finite separable extension, and let $X:\bA_\bf{E}^\times/\bf{E}^\times\rar\bC^\times$ be a finite order smooth character. Assume that
  \begin{enumerate}[(a)]
  \item the Galois closure $\wt{\bf{E}}$ of $\bf{E}/\bf{F}$ is solvable,
  \item there exist three places $\{x_1,x_2,\infty\}$ of $\bf{F}$ inert in $\wt{\bf{E}}$ such that, for all $x$ in $\{x_1,x_2,\infty\}$, the stabilizer of $\res{(X\circ\Art^{-1})}_{W_{\wt{\bf{E}}_{\wt{e}}}}$ in $\Gal(\wt{\bf{E}}_{\wt{e}}/\bf{F}_x)$ equals $\Gal(\wt{\bf{E}}_{\wt{e}}/\bf{E}_e)$, where $\wt{e}$ (respectively $e$) is the unique place of $\wt{\bf{E}}$ (respectively $\bf{E}$) lying above $x$.
  \end{enumerate}
Then there exists an irreducible cuspidal automorphic representation $I_{\bf{E}}^{\bf{F}}(X)$ of $\GL_{[\bf{E}:\bf{F}]}(\bA)$ associated with $\Ind_{G_{\bf{E}}}^{G_{\bf{F}}}(\wh{X})$, where $\Xi:G_{\bf{E}}\rar\bC^\times$ is the character associated with $X$ as in \ref{ss:associatedreps}. Furthermore, the components of $I^\bf{F}_\bf{E}(X)$ at $x_1$, $x_2$, and $\infty$ are irreducible cuspidal representations with unitary central characters.
\end{prop*}

\begin{proof}
  We induct on the degree of $\bf{E}$ over $\bf{F}$, where the result is immediate for $\bf{E}=\bf{F}$. In general, the solvability of $\bf{E}/\bf{F}$ yields a Galois subextension $\wt{\bf{E}}\supseteq\bf{K}\supset\bf{F}$ such that $\bf{K}/\bf{F}$ is cyclic of prime degree. Write $k$ for the place of $\bf{K}$ above $x$, and write $k'$ for the place of $\bf{K}\bf{E}$ above $x$. Then the stabilizer of
\begin{align*}
    \res{(X\circ\Nm_{\bf{K}\bf{E}/\bf{E}}\circ\Art^{-1})}_{W_{\wt{\bf{E}}_{\wt{e}}}} = \res{\Big(\res{(X\circ\Art^{-1})}_{W_{\bf{K}\bf{E}}}\Big)}_{W_{\wt{\bf{E}}_{\wt{e}}}} = \res{(X\circ\Art^{-1})}_{W_{\wt{\bf{E}}_{\wt{e}}}}
\end{align*}
in $\Gal(\wt{\bf{E}}_{\wt{e}}/\bf{K}_k)$ equals $\Gal(\wt{\bf{E}}_{\wt{e}}/\bf{E}_e)\cap\Gal(\wt{\bf{E}}_{\wt{e}}/\bf{K}_k)=\Gal(\wt{\bf{E}}_{\wt{e}}/\bf{K}\bf{E}_{k'})$. Also, note that the unique extension of $X\circ\Nm_{\bf{K}\bf{E}/\bf{E}}\circ\Art^{-1}$ to a continuous character of $G_{\bf{K}\bf{E}}$ is $\res{\Xi}_{G_{\bf{K}\bf{E}}}$. Finally, as $[\wt{\bf{E}}:\bf{K}]<[\wt{\bf{E}}:\bf{F}]$, we can apply the inductive hypothesis to obtain an irreducible cuspidal automorphic representation $I^\bf{K}_{\bf{K}\bf{E}}(X\circ\Nm_{\bf{K}\bf{E}/\bf{E}})$ of $\GL_{[\bf{K}\bf{E}:\bf{K}]}(\bA_\bf{K})$ associated with $\Ind_{G_{\bf{K}\bf{E}}}^{G_\bf{K}}(\res{\Xi}_{G_{\bf{K}\bf{E}}})$. Furthermore, $I^\bf{K}_{\bf{K}\bf{E}}(X\circ\Nm_{\bf{K}\bf{E}/\bf{E}})$ is cuspidal with unitary central character at $k$. We now casework:
\begin{enumerate}[(1)]
\item Suppose that $\bf{E}$ contains $\bf{K}$. Then $\bf{K}\bf{E}=\bf{E}$, and we define $I^\bf{F}_\bf{E}(X)$ to be $I_{\bf{K}/\bf{F}}(I^\bf{K}_{\bf{E}}(X))$. Now \ref{ss:associatedreps} indicates that $I^\bf{F}_\bf{E}(X)$ and $\Ind^{G_{\bf{F}}}_{G_{\bf{K}}}\Ind_{G_{\bf{E}}}^{G_\bf{K}}(\Xi)=\Ind_{G_\bf{E}}^{G_\bf{F}}(\Xi)$ are associated. Because $x$ is inert in $\bf{K}$ and $I^\bf{K}_\bf{E}(X)$ is cuspidal at $k$, we see from \ref{ss:automorphicinductionglobal} that $I^\bf{F}_\bf{E}(X)_x = I_{\bf{K}_k/\bf{F}_x}(I^\bf{K}_\bf{E}(X)_k)$. Note that the stabilizer of $I^\bf{K}_\bf{E}(X)_k$ in $\Gal(\bf{K}_k/\bf{F}_x)$ equals the image of $\Gal(\wt{\bf{E}}_{\wt{e}}/\bf{E}_e)$. Since $\bf{E}_e$ contains $\bf{K}_k$, this is trivial, and hence \ref{ss:automorphicinductionlocal} shows that $I_{\bf{K}_k/\bf{F}_x}(I^\bf{K}_\bf{E}(X)_k)$ is cuspidal. 

\item Suppose that $\bf{E}$ does not contain $\bf{K}$. Then $\bf{K}\cap\bf{E} = \bf{F}$ and $[\bf{E}:\bf{F}]=[\bf{K}\bf{E}:\bf{K}]$, so $\Ind^{G_\bf{K}}_{G_{\bf{K}\bf{E}}}(\res{\Xi}_{G_{\bf{K}\bf{E}}})$ is isomorphic to $\res{\Ind^{G_{\bf{F}}}_{G_{\bf{E}}}(\Xi)}_{G_{\bf{K}}}$. Next, fix a generator $\sg$ of $\Gal(\bf{K}/\bf{F})$. As $I^\bf{K}_{\bf{K}\bf{E}}(X\circ\Nm_{\bf{K}\bf{E}/\bf{E}})$ and $\res{\Ind^{G_{\bf{F}}}_{G_{\bf{E}}}(\Xi)}_{G_{\bf{K}}}$ are associated, there exist cofinitely many places $v$ of $\bf{K}$ where both are unramified and
  \begin{align*}
    \rec(I^\bf{K}_{\bf{K}\bf{E}}(X\circ\Nm_{\bf{K}\bf{E}/\bf{E}})_v^\sg) = \rec(I^\bf{K}_{\bf{K}\bf{E}}(X\circ\Nm_{\bf{K}\bf{E}/\bf{E}})_v)^\sg = \Big(\res{\Ind^{G_{\bf{F}}}_{G_{\bf{E}}}(\Xi)}_{W_{\bf{K}_v}}\Big)^\sg = \res{\Ind^{G_{\bf{F}}}_{G_{\bf{E}}}(\Xi)}_{W_{\bf{K}_v}},
  \end{align*}
by Lemma \ref{ss:automorphismscompatibility} and \ref{ss:galoissidebasechange}.(i). Then Lemma \ref{lem:unramifiedlocallanglands} indicates that the local representations $I^\bf{K}_{\bf{K}\bf{E}}(X\circ\Nm_{\bf{K}\bf{E}/\bf{E}})_v^\sg$ and $I^\bf{K}_{\bf{K}\bf{E}}(X\circ\Nm_{\bf{K}\bf{E}/\bf{E}})_v$ are isomorphic. Hence the strong multiplicity one theorem shows that the global representations $I^\bf{K}_{\bf{K}\bf{E}}(X\circ\Nm_{\bf{K}\bf{E}/\bf{E}})^\sg$ and $I^\bf{K}_{\bf{K}\bf{E}}(X\circ\Nm_{\bf{K}\bf{E}/\bf{E}})$ are isomorphic, so \ref{ss:basechangeliftglobal} yields an irreducible discrete automorphic representation $I^\bf{F}_\bf{E}(X)$ of $\GL_{[\bf{E}:\bf{F}]}(\bA)$ such that $I^\bf{F}_\bf{E}(X)_\bf{K}$ is isomorphic to $I^\bf{K}_{\bf{K}\bf{E}}(X\circ\Nm_{\bf{K}\bf{E}/\bf{E}})$. Because $\Gal(\bf{K}_{k_\infty}/\bf{F}_\infty)=\Gal(\bf{K}/\bf{F})$, where $k_\infty$ is the place of $\bf{K}$ above $\infty$, Proposition \ref{prop:compatwithbasechange} and \ref{ss:galoissidebasechange}.(i) show that we may choose $I^\bf{F}_\bf{E}(X)$ such that $\rec I^\bf{F}_\bf{E}(X)_\infty=\res{\Ind_{G_\bf{E}}^{G_\bf{F}}(\Xi)}_{W_{\bf{F}_\infty}}$.

Now the base change lift of $I^\bf{F}_\bf{E}(X)_x$ to $\GL_{[\bf{E}:\bf{F}]}(\bf{K}_k)$ is cuspidal with unitary central character, so \ref{ss:basechangeliftlocal} indicates $I^\bf{F}_\bf{E}(X)_x$ is also cuspidal with unitary central character. Thus we may apply Theorem B to obtain a semisimple continuous representation $\Sg$ of $G_\bf{F}$ over $\ov\bQ_\ell$ associated with $I^\bf{F}_\bf{E}(X)$. Proposition \ref{prop:compatwithbasechange} implies that $\res{\Sg}_{G_\bf{K}}$ is isomorphic to $\res{\Ind^{G_{\bf{F}}}_{G_{\bf{E}}}(\Xi)}_{G_{\bf{K}}}$, so \ref{ss:galoissidebasechange}.(i) shows that $\Sg$ is isomorphic to $\Ind^{G_{\bf{F}}}_{G_{\bf{E}}}(\Xi)$ tensored with a character $\Gal(\bf{K}/\bf{F})\rar\bC^\times$. By restricting to $W_{\bf{F}_\infty}$, we see that this character is trivial. Altogether, $I_\bf{E}^\bf{F}(X)$ is associated with $\Sg=\Ind^{G_\bf{F}}_{G_\bf{E}}(\Xi)$.
\end{enumerate}
Finally, as $I^\bf{F}_\bf{E}(X)$ is cuspidal at one place, we see that $I^\bf{F}_\bf{E}(X)$ itself is cuspidal.
\end{proof}

\subsection{}\label{ss:galoisgrothendieckgroup}
We now introduce some notation for $\bZ$-virtual representations of $W_F$. Write $\sG_F$ for the Grothendieck group of finite-dimensional continuous representations of $W_F$. Then $\sG_F$ is free over $\bZ$, with a $\bZ$-basis given by isomorphism classes of finite-dimensional irreducible continuous representations $\rho$ of $W_F$. As every such $\rho$ is of the form $\sg(s)$ for some complex number $s$ and continuous representation $\sg$ of $G_F$, Brauer induction shows that $\sG_F$ is $\bZ$-spanned by elements of the form $\Ind_{W_E}^{W_F}\chi$, where $E$ runs over finite separable extensions of $F$, and $\chi:W_E\rar\bC^\times$ runs over smooth characters \cite[4.10]{Del73}.

The assignments $\rho\mapsto\det\rho$ and $\rho\mapsto\rho^\vee$ for finite-dimensional continuous representations $\rho$ of $W_F$ are additive. Therefore they extend to homomorphisms
\begin{align*}
\det:\sG_F\rar\Hom_{\text{cts}}(W_F,\bC^\times)\mbox{ and }(-)^\vee:\sG_F\rar\sG_F.
\end{align*}
Write $\fM$ for the field of meromorphic functions on $\bC$, and fix a nontrivial continuous homomorphism $\psi:F\rar\bC^\times$. We also define $\bZ$-bilinear maps
\begin{align*}
L(-\otimes-,s):\sG_F\times\sG_F\rar\fM^\times\mbox{ and }\eps(-\otimes-,\psi,s):\sG_F\times\sG_F\rar\fM^\times
\end{align*}
by extending via $\bZ$-linearity from their values on pairs $(\rho,\rho')$ of isomorphism classes of finite-dimensional irreducible continuous representations of $W_F$.

\subsection{}\label{ss:automorphicgrothendieckgroup}
The following forms an automorphic analogue of $\sG_F$. Write $\sA_F$ for the free $\bZ$-basis with generators given by isomorphism classes of irreducible cuspidal representations $\pi$ of $\GL_n(F)$, where $n$ ranges over all positive integers. The assignment $\pi\mapsto\rec(\pi)$ extends to a map $\rec:\sA_F\rar\sG_F$, and Theorem C implies that this is an isomorphism.

Write $\om_\pi:F^\times\rar\bC^\times$ for the central character of $\pi$. We define $\bZ$-linear maps
\begin{gather*}
  \om_\pi:\sA_F\rar\Hom_\text{cts}(F^\times,\bC^\times)\mbox{ and }(-)^\vee:\sA_F\rar\sA_F,\\
  L(-\times-,s):\sA_F\otimes_\bZ\sA_F\rar\fM^\times\mbox{ and }\eps(-\times-,\psi,s):\sA_F\otimes_\bZ\sA_F\rar\fM^\times.
\end{gather*}
by extending via $\bZ$-linearity from their values on isomorphism classes of irreducible cuspidal representations of $\GL_n(F)$, or pairs thereof.

\subsection{}
We now prove Theorem D, by using Proposition \ref{ss:nongaloisinduction} and embedding into the global situation.
\begin{prop*}\hfill
\begin{enumerate}[(i)]
\item For any smooth character $\chi:F^\times\rar\bC^\times$, we have $\rec(\chi) = \chi\circ\Art^{-1}$.

\item For any irreducible cuspidal representation $\pi$ of $\GL_n(F)$ and smooth character $\chi:F^\times\rar\bC^\times$, we have
  \begin{align*}
    \rec(\pi\otimes(\chi\circ\det)) = \rec(\pi)\otimes\rec(\chi).
  \end{align*}

\item For any irreducible cuspidal representation $\pi$ of $\GL_n(F)$, we have
\begin{align*}
\rec(\om_\pi) = \det\circ\rec(\pi)\mbox{ and }\rec(\pi^\vee) = \rec(\pi)^\vee.
\end{align*}
\item For any irreducible cuspidal representations $\pi$ of $\GL_n(F)$ and $\pi'$ of $\GL_{n'}(F)$, we have
\begin{align*}
L(\pi\times\pi',s) = L(\rec(\pi)\otimes\rec(\pi'),s)\mbox{ and } \eps(\pi\times\pi',\psi,s) = \eps(\rec(\pi)\otimes\rec(\pi'),\psi,s).
\end{align*}
\end{enumerate}
\end{prop*}

\begin{proof}
  We have already proved (i) in Proposition \ref{ss:theoremabasecase} and (ii) in Proposition \ref{prop:compatwithtwisting}. Now both sides of (iii) are $\bZ$-linear in $\pi$, and both sides of (iv) are $\bZ$-bilinear in $(\pi,\pi')$. Therefore \ref{ss:galoisgrothendieckgroup} and \ref{ss:automorphicgrothendieckgroup} show that (iii) and (iv) follow from considering the same equations for $\pi=\rec^{-1}(\Ind_{W_E}^{W_F}\rho)$ and $\pi'=\rec^{-1}(\Ind_{W_{E'}}^{W_F}\rho')$ in $\sA_F$, where $E$ and $E'$ are separable extensions of $F$, and $\rho:W_E\rar\bC^\times$ and $\rho':W_{E'}\rar\bC^\times$ are smooth characters.

  Note that $\rho$ is the product of a finite order character with a Tate twist $(s)$ for some complex number $s$. The projection formula and (ii) indicate that twisting $\rho$ by $(-s)$ results in twisting $\pi$ by $(-s)$. Now (ii) implies that both sides of (iii) and (iv) are compatible with Tate twists, so we need only consider the case when $\rho$ has finite order. In the same way, we may assume $\rho'$ also has finite order.

  With this reduction in hand, write $\wt{E}$ for the Galois closure of $E$. The proof of Proposition \ref{prop:compatwithbasechange} yields a Galois extension $\wt{\bf{E}}/\bf{F}$ of global function fields and a place $z$ of $\wt{\bf{E}}$ such that $z$ is inert over $\bf{F}$ and $\wt{\bf{E}}_z/\bf{F}_o$ can be identified with $\wt{E}/F$, where $o$ is the place of $\bf{F}$ below $z$. Note then that $\Gal(\wt{\bf{E}}/\bf{F})$ is identified with $\Gal(\wt{E}/F)$. Writing $\bf{E}$ for the subfield of $\wt{\bf{E}}$ corresponding to $\Gal(\wt{E}/E)$, we see that this identifies $\bf{E}_y$ with $E$, where $y$ is the place of $\bf{E}$ below $z$. We apply this to $E'$ to similarly obtain extensions $\wt{\bf{E}}'\supseteq\bf{E}'\supseteq\bf{F}'$ with places $z'$, $y'$, and $o'$.

Because $\wt{\bf{E}}/\bf{F}$ is inert at one place $o$, the Chebotarev density theorem provides three more places $\{x_1,x_2,\infty\}$ of $\bf{F}$ that are inert in $\wt{\bf{E}}$. For all $x$ in $\{x_1,x_2,\infty\}$, there exists a finite order smooth character $\chi_x:\bf{E}_e^\times\rar\bC^\times$ such that the stabilizer of $\chi_x\circ\Nm_{\wt{\bf{E}}_{\wt{e}}/\bf{E}_e}$ in $\Gal(\wt{\bf{E}}_{\wt{e}}/\bf{F}_x)$ equals $\Gal(\wt{\bf{E}}_{\wt{e}}/\bf{E}_e)$, where $\wt{e}$ (respectively $e$) is the unique place of $\wt{\bf{E}}$ (respectively $\bf{E}$) lying above $x$ \cite[Lemma 4.7]{Har98}. As $\rho\circ\Art$ also has finite order, there exists a finite order smooth character $X:\bA_{\bf{E}}^\times/\bf{E}^\times\rar\bC^\times$ such that $\res{X}_{\bf{E}_y^\times}=\rho\circ\Art$ and $\res{X}_{\bf{E}_x^\times}=\chi_x$ for all $x$ in $\{x_1,x_2,\infty\}$ \cite[Theorem 5 of Chapter X]{AT09}. The same discussion yields analogous places $\{x_1',x_2',\infty'\}$ of $\bf{F}'$ and an analogous finite order smooth character $X':\bA^\times_{\bf{E}'}/\bf{E}'^\times\rar\bC^\times$.
  
Since $F$ is a nonarchimedean local field, $\wt{E}/F$ and hence $\wt{\bf{E}}/\bf{F}$ are solvable. Therefore we may apply Proposition \ref{ss:nongaloisinduction} to obtain an irreducible cuspidal automorphic representation $I^\bf{F}_\bf{E}(X)$ of $\GL_{[\bf{E}:\bf{F}]}(\bA)$ associated with $\Ind_{G_\bf{E}}^{G_\bf{F}}(\Xi)$, where $\Xi:G_{\bf{E}}\rar\bC^\times$ is the character associated with $X$ as in \ref{ss:associatedreps}. Furthermore, the components of $I^\bf{F}_\bf{E}(X)$ at $x_1$, $x_2$, and $\infty$ are irreducible cuspidal representations, so \ref{defn:associatedreps} shows that
\begin{align*}
\rec(I^\bf{F}_\bf{E}(X)_v) = \res{\Ind_{G_\bf{E}}^{G_\bf{F}}(\Xi)}_{W_{\bf{F}_v}}
\end{align*}
for all places $v$ of $\bf{F}$ not in $\{x_1,x_2,\infty\}$. Taking $v=o$ yields
\begin{align*}
\rec(I^\bf{F}_\bf{E}(X)_o) = \res{\Ind_{G_\bf{E}}^{G_\bf{F}}(\Xi)}_{W_{\bf{F}_o}} = \Ind^{W_{\bf{F}_o}}_{W_{\bf{E}_y}}(\res{X}_{\bf{E}^\times_y}\circ\Art^{-1}) = \Ind^{W_F}_{W_E}\rho,
\end{align*}
so $\pi$ is the component of $I^\bf{F}_\bf{E}(X)$ at $o$. We apply this to $\bf{E}'/\bf{F}'$ and $X'$ to similarly obtain an irreducible cuspidal automorphic representation $I^{\bf{F}'}_{\bf{E}'}(X')$ associated with $\Ind^{G_{\bf{F}'}}_{G_{\bf{E}'}}(\Xi')$.

Now \ref{ss:associatedreps} implies that $\om_{I^\bf{F}_\bf{E}(X)}$ equals $\big(\det\Ind_{G_\bf{E}}^{G_\bf{F}}(\Xi)\big)\circ\Art$, so restricting to $o$ tells us that
\begin{align*}
\om_\pi = \om_{I^\bf{F}_\bf{E}(X)_o} = \det\res{\Ind^{G_\bf{F}}_{G_\bf{E}}(\Xi)}_{W_{\bf{F}_o}} = \det\Ind^{W_F}_{W_E}\rho = \det\rec\pi.
\end{align*}
This completes the first part of (iii). Additionally, because $\big(\det\Ind_{G_\bf{E}}^{G_\bf{F}}(\Xi)\big)\circ\Art$ has finite image, $\om_{I^\bf{F}_\bf{E}(X)}$ does as well. And \ref{ss:inductionongroups} shows that the restriction of $\Ind_{G_\bf{E}}^{G_\bf{F}}(\Xi)$ to $W_{\bf{F}_\infty}$ is irreducible, so $\Ind_{G_\bf{E}}^{G_\bf{F}}(\Xi)$ itself is irreducible. The same discussion holds for $I^{\bf{F}'}_{\bf{E}'}(X')$ and $\Ind^{G_{\bf{F}'}}_{G_{\bf{E}'}}(\Xi')$, so we can apply \cite[Theorem 2.4]{Hen00} to conclude that (iv) holds.

Finally, return to the situation where $\pi$ and $\pi'$ are irreducible cuspidal representations. Then
\begin{align*}
L(\pi\times\pi',s) = \prod_\chi L(\chi,s),
\end{align*}
where $\chi$ ranges over unramified characters $F^\times\rar\bC^\times$ such that $\pi'^\vee\otimes(\chi\circ\det)$ is isomorphic to $\pi$. Since $L(\chi,s)$ has a pole at $s=1$ if and only if $\chi$ is trivial, we see that $L(\pi\times\pi',s)$ has a pole at $s=1$ if and only if $\pi'^\vee=\pi$. We have an analogous decomposition of $L(\rec(\pi)\otimes\rec(\pi'),s)$, and the same argument indicates that $L(\rec(\pi)\otimes\rec(\pi'),s)$ has a pole at $s=1$ if and only if $\rec(\pi')^\vee=\rec(\pi)$. Applying (iv) completes the second part of (iii).
\end{proof}

\bibliographystyle{habbrv}
\bibliography{senior_thesis}
\end{document}